\providecommand{\href}[2]{#2}
\providecommand{\texorpdfstring}[2]{#1}
\providecommand*{\backref}{}
\providecommand*{\backrefalt}{}
\renewcommand*{\backref}[1]{}
\renewcommand*{\backrefalt}[4]{%
	\ifcase #1 %
	\or
	  Cited page~#2.
	\else
	  Cited pages~#2.
	\fi
}
\newcommand\MTkillspecial[1]{
  \bgroup
  \catcode`\&=9
  \let\\\relax%
  \scantokens{#1}%
  \egroup
}
\newcommand\DeclarePairedDelimiterMultiline[3]{
  \DeclarePairedDelimiter{#1}{#2}{#3}
  \reDeclarePairedDelimiterInnerWrapper{#1}{star}{
    \mathopen{##1\vphantom{\MTkillspecial{##2}}\kern-\nulldelimiterspace\right.}
    ##2
    \mathclose{\left.\kern-\nulldelimiterspace\vphantom{\MTkillspecial{##2}}##3}}
}
\newcommand{\gromprod}[3]{(#1 | #2)_{#3}}
\newcommand{\under}{\backslash}
\newcommand{\boA}{\mathcal{A}}
\newcommand{\boC}{\mathcal{C}}
\newcommand{\boE}{\mathcal{E}}
\newcommand{\boO}{\mathcal{O}}
\newcommand{\Sbb}{\mathbb{S}}
\newcommand{\Pbb}{\mathbb{P}}
\newcommand{\E}{\mathbb{E}}
\newcommand{\Z}{\mathbb{Z}}
\newcommand{\FF}{\mathbb{F}}
\newcommand{\HH}{\mathbb{H}}
\newcommand{\N}{\mathbb{N}}
\newcommand{\R}{\mathbb{R}}
\newcommand{\C}{\mathbb{C}}
\newcommand{\dd}{\mathop{}\!\mathrm{d}}
\DeclarePairedDelimiterMultiline{\abs}{\lvert}{\rvert}
\DeclarePairedDelimiter{\Card}{\lvert}{\rvert}
\DeclarePairedDelimiter{\paren}{(}{)}
\DeclarePairedDelimiter{\norm}{\lVert}{\rVert}
\newcommand{\st}{\::\:}
\DeclareMathOperator{\diam}{diam}
\DeclareMathOperator{\Supp}{Supp}
\newcommand{\coloneqq}{\mathrel{\mathop:}=}
\renewcommand{\epsilon}{\varepsilon}
\renewcommand{\phi}{\varphi}
\renewcommand{\leq}{\leqslant}
\renewcommand{\geq}{\geqslant}
\newtheorem{thm}{Theorem}[section]
\newtheorem{prop}[thm]{Proposition}
\newtheorem{definition}[thm]{Definition}
\newtheorem{lem}[thm]{Lemma}
\theoremstyle{definition}
\newtheorem{ex}[thm]{Example}
\newtheorem{rmk}[thm]{Remark}
\numberwithin{equation}{section}
\title{Entropy and drift in word hyperbolic groups}
\author{S\'ebastien Gou\"ezel, Fr\'ed\'eric Math\'eus, Fran\c{c}ois Maucourant}
\address{S\'ebastien Gou\"ezel, IRMAR, Univ.\ Rennes 1,
35042 Rennes Cedex, France} \email{sebastien.gouezel@univ-rennes1.fr}
\address{Fr\'ed\'eric Math\'eus, Univ.\ Bret.\ Sud, L.M.B.A., UMR 6205,
BP 573, 56017 Vannes, France} \email{Frederic.Matheus@univ-ubs.fr}
\address{Fran\c{c}ois Maucourant, IRMAR, Univ.\ Rennes 1,
35042 Rennes Cedex, France}
\email{francois.maucourant@univ-rennes1.fr}
\date{January 20, 2015}
\begin{document}
\begin{abstract}
The fundamental inequality of Guivarc'h relates the entropy and the drift
of random walks on groups. It is strict if and only if the random walk does
not behave like the uniform measure on balls. We prove that, in any
nonelementary hyperbolic group which is not virtually free, endowed with a
word distance, the fundamental inequality is strict for symmetric measures
with finite support, uniformly for measures with a given support. This
answers a conjecture of S.~Lalley. For admissible measures, this is proved
using previous results of Ancona and Blach\`ere-Ha\"{\i}ssinsky-Mathieu. For
non-admissible measures, this follows from a counting result, interesting
in its own right: we show that, in any infinite index subgroup, the number
of non-distorted points is exponentially small. The uniformity is obtained
by studying the behavior of measures that degenerate towards a measure
supported on an elementary subgroup.
\end{abstract}

\maketitle

\section{Main results}

Let $\Gamma$ be a finitely generated infinite group. Although the
following discussion makes sense in a much broader context, we will
assume that $\Gamma$ is hyperbolic since all results of this article
are devoted to this setting. There are two natural ways to construct
random elements in $\Gamma$:
\begin{itemize}
\item Let $d$ be a proper left-invariant distance on $\Gamma$
    (for instance a word distance). For large $n$, one can pick
    an element at random with respect to the uniform measure
    $\rho_n$ on the ball $B_n=B(e,n)$ (where $e$ denotes the
    identity of $\Gamma$).
\item Let $\mu$ be a probability measure on $\Gamma$. For large
    $n$, one can pick an element at random with respect to the
    measure $\mu^{*n}$ (the $n$-th convolution of the measure
    $\mu$). Equivalently, let $g_1,g_2,\dotsc$ be a sequence of
    random elements of $\Gamma$ that are distributed
    independently according to $\mu$. Form the random walk
    $X_n=g_1\dotsm g_n$. Then the distribution of $X_n$ is
    $\mu^{*n}$.
\end{itemize}
From a theoretical point of view, these methods share a lot of properties.
From a computational point of view, the second method is much easier to
implement in general groups since it does not require the computation of the
ball $B_n$ (note however that, in hyperbolic groups, simulating the uniform
measure is very easy thanks to the automatic structure of the group). It is
therefore of interest to find probability measures $\mu$ such that these two
methods give equivalent results, in a sense that will be made precise below.
This is the main question of Vershik in~\cite{vershik}. In free groups (with
the word distance coming from the usual set of generators), everything can be
computed: if $\mu$ is the uniform measure on the generators, then $\mu^{*n}$
and $\rho_n$ behave essentially in the same way. The situation is the same in
free products of finite groups, again thanks to the underlying tree
structure. However, in more complicated groups, explicit computations are
essentially impossible, and it is expected that the methods always differ.
Our main result confirms this intuition in a special class of groups: In
hyperbolic groups which are not virtually free (i.e., there is no finite
index free subgroup), if $d$ is a word distance, the two methods are always
different, in a precise quantitative way.

\begin{rmk}
We emphasize that the question really depends on the choice of the
distance $d$, since the shape of the balls $B_n$ depends on $d$. For
instance, for any symmetric probability measure $\mu$ on $\Gamma$
whose support is finite and generates $\Gamma$, there exists a
distance $d$ (called the Green distance, see~\cite{BHM_2}) for which
the measures $\rho_n$ and $\mu^{*n}$ behave in the same way. A famous
open problem (to which our methods do not apply) is to understand
what happens when $\Gamma$ acts cocompactly on the hyperbolic space
$\HH^k$, and the distance $d$ is given by $d(e,\gamma) =
d_{\HH^k}(O,\gamma\cdot O)$ where $O$ is a base point in $\HH^k$. In
this case, it is also expected that the two methods are always
different. Here are the main partial results in this context:
\begin{enumerate}
\item The two methods are different for some symmetric measures
    with finite support (\cite{leprince}, see also
    Theorem~\ref{thm:infinite_limit} below).
\item If, instead of a cocompact lattice, one considers a lattice
    with cusps, the two methods are always
    different~\cite{guivarch_lejan}.
\item If, instead of a lattice, one considers a nice dense
    subgroup, there exist symmetric measures with finite support
    for which the two methods are
    equivalent~\cite{bourgain_spectral_SL2}.
\end{enumerate}
This question also makes sense in continuous time, for negatively
curved manifolds. A conjecture of Sullivan asserts that, in this
setting, the two methods coincide if and only if the manifold is
locally symmetric, see~\cite{ledrappier_rigidity}.
\end{rmk}

One can give several meanings to the question ``are the two methods
equivalent?'' Let us first discuss an interpretation in terms of behavior at
infinity. The measures $\mu^{*n}$ converge in the geometric compactification
$\Gamma \cup \partial \Gamma$ to a measure $\mu_\infty$, supported on the
boundary, called the exit measure of the random walk, or its stationary
measure. Geometrically, the random walk $(X_n)_{n\geq 1}$ converges almost
surely to a random point on the boundary $\partial\Gamma$, the measure
$\mu_\infty$ is its distribution. On the other hand, let $\rho_\infty$ be the
Patterson-Sullivan measure on $\partial\Gamma$ associated to the distance
$d$, constructed in~\cite{coornaert} in this context. One should think of it
as the uniform measure on the boundary (it is equivalent to the Hausdorff
measure of maximal dimension on the boundary, for any visual distance coming
from $d$). The measures $\rho_n$ do not always converge to $\rho_\infty$, but
all their limit points are equivalent to $\rho_\infty$, with a density
bounded from above and from below (this follows from the arguments
of~\cite{coornaert}, see Lemma~\ref{lem:converge_PS} below). A version of the
question is then to ask if the measures $\mu_\infty$ and $\rho_\infty$ are
mutually singular: in this case, the random walk mainly visits parts of the
groups that are not important from the point of view of the uniform measure.

Another version of the same question is quantitative: Does the random walk
visit parts of the groups that are exponentially negligible from the point of
view of the uniform measure? This is made precise through the notions of
drift and entropy. Define
  \begin{equation}
  \label{defLmu}
  L(\mu) = \sum_{g\in \Gamma} \mu(g) \abs{g},\quad H(\mu) = \sum_{g\in \Gamma} \mu(g) (-\log \mu(g)),
  \end{equation}
where $\abs{g}=d(e,g)$. The quantity $L(\mu)$ is the average distance of an
element to the identity. The quantity $H(\mu)$, called the time one entropy
of $\mu$, is the average logarithmic weight of the points. They can both be
finite or infinite. The functions $L$ and $H$ both behave in a subadditive
way with respect to convolution: $L(\mu_1 * \mu_2)\leq L(\mu_1) + L(\mu_2)$
and $H(\mu_1*\mu_2) \leq H(\mu_1)+H(\mu_2)$. It follows that the sequences
$L(\mu^{*n})$ and $H(\mu^{*n})$ are subadditive. Hence, the following
quantities are well defined:
  \begin{equation}
  \label{eq:def_h_ell}
  \ell(\mu) = \lim L(\mu^{*n})/n,\quad h(\mu) = \lim H(\mu^{*n})/n.
  \end{equation}
They are called respectively the drift and the asymptotic entropy of
the random walk. They also admit characterizations along typical
trajectories. If $L(\mu)$ is finite, then almost surely $\ell(\mu) =
\lim \abs{X_n}/n$. In the same way, if $H(\mu)$ is finite, then
almost surely $h(\mu) = \lim (-\log \mu^{*n}(X_n))/n$. The most
intuitive characterization of the entropy is probably the following
one: at time $n$, the random walk is essentially supported by
$e^{h(\mu)n}$ points (see Lemma~\ref{lem:charac_entropy} for a
precise statement). Let us also define the exponential growth rate of
the group with respect to $d$, i.e.,
  \begin{equation}
  \label{eq:fundam}
  v= \liminf_{n\to \infty} \frac{\log \Card{B_n}}{n},
  \end{equation}
where $B_n$ is the ball of radius $n$ around $e$. In hyperbolic
groups, it satisfies the apparently stronger inequality $C^{-1}
e^{nv} \leq \Card{B_n} \leq Ce^{nv}$, by~\cite{coornaert}. For large
$n$, most points for $\mu^{*n}$ are contained in a ball $B_{
(1+\epsilon)\ell n}$, which has cardinality at most $e^{
(1+2\epsilon)\ell nv}$. Since the random walk at time $n$ essentially
visits $e^{h n}$ points, we deduce the fundamental inequality of
Guivarc'h~\cite{guivarch_fundamental_ineq}
  \begin{equation*}
  h \leq \ell v.
  \end{equation*}
If this inequality is an equality, this means that the walk visits most parts
of the group. Otherwise, it is concentrated in an exponentially small subset.
Another version of our main question is therefore: Is the inequality $h\leq
\ell v$ strict?

In hyperbolic groups, it turns out that the two versions of the question are
equivalent, at least for finitely supported measures, and that they also have
a geometric interpretation in terms of Hausdorff dimension. If $\mu$ is a
probability measure on a group, we write $\Gamma_\mu^+$ for the semigroup
generated by the support of $\mu$, and $\Gamma_\mu$ for the group it
generates. When $\mu$ is symmetric, they coincide. We say that $\mu$ is
admissible if $\Gamma_\mu^+=\Gamma$. The following result is Corollary~1.4
and Theorem~1.5 in~\cite{BHM_2} (see also~\cite{haissinsky}) when the measure
is symmetric, and is proved in~\cite{tanaka} when $\mu$ is not necessarily
symmetric and $d$ is a word distance.

\begin{thm}
\label{thm:BHM} Let $\Gamma$ be a non-elementary hyperbolic group, endowed
with a left-invariant distance $d$ which is hyperbolic and quasi-isometric to
a word distance. Let $v$ be the exponential growth rate of $(\Gamma,d)$. Let
$d_{\partial\Gamma}$ be a visual distance on $\partial \Gamma$ associated to
$d$. Consider an admissible probability measure $\mu$ on $\Gamma$, with
finite support. Assume additionally either that the measure $\mu$ is
symmetric, or that the distance $d$ is a word distance. The following
conditions are equivalent:
\begin{enumerate}
\item The equality $h=\ell v$ holds.
\item The Hausdorff dimension of the exit measure $\mu_\infty$ on
    $(\partial \Gamma, d_{\partial \Gamma})$ is equal to the
    Hausdorff dimension of this space.
\item The measure $\mu_\infty$ is equivalent to the
    Patterson-Sullivan measure $\rho_\infty$.
\item The measure $\mu_\infty$ is equivalent to the
    Patterson-Sullivan measure $\rho_\infty$, with density
    bounded from above and from below.
\item There exists $C>0$ such that, for any $g \in \Gamma$,
      \begin{equation*}
      \abs{ vd(e,g) - d_\mu(e,g)}\leq C,
      \end{equation*}
    where $d_\mu$ is the ``Green distance'' associated to $\mu$, i.e.,
    $d_\mu(e,g) = -\log \Pbb(\exists n, X_n=g)$ where $X_n$ is the random
    walk given by $\mu$ starting from the identity (it is an asymmetric
    distance in general, and a genuine distance if $\mu$ is symmetric).
\end{enumerate}
\end{thm}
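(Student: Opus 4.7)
The plan is to establish the cyclic chain of implications $(5) \Rightarrow (4) \Rightarrow (3) \Rightarrow (2) \Leftrightarrow (1) \Rightarrow (5)$, relying on three external pillars: Coornaert's Ahlfors-regularity result for the Patterson--Sullivan measure $\rho_\infty$ (which has full Hausdorff dimension $v/a$, where $a$ is the parameter of the visual metric $d_{\partial \Gamma}$); Ancona-type inequalities for admissible finitely supported random walks on hyperbolic groups, which identify the Martin boundary with $\partial \Gamma$ and yield the multiplicative comparison $G(e,y) \asymp G(e,x) G(x,y)$ whenever $x$ lies on a quasi-geodesic from $e$ to $y$, so that the Green distance $d_\mu$ behaves as a genuine hyperbolic quasi-metric; and the dimension formula $\dim \mu_\infty = h/(a\ell)$ for the exit measure on $(\partial \Gamma, d_{\partial \Gamma})$ in the spirit of Ledrappier.

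The easy side of the chain is then almost formal. The implication $(4) \Rightarrow (3)$ is tautological. For $(3) \Rightarrow (2)$, I use that the Hausdorff dimension of a measure depends only on its measure class; combined with Coornaert's identity $\dim \rho_\infty = v/a = \dim_H (\partial \Gamma, d_{\partial \Gamma})$, this transfers full dimension to $\mu_\infty$. The equivalence $(2) \Leftrightarrow (1)$ is then a direct comparison of the two dimension formulas: $h/(a\ell) = v/a$ if and only if $h = \ell v$.

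For $(5) \Rightarrow (4)$, I compare two shadow lemmas. Via Ancona's inequalities one shows that the $\mu_\infty$-mass of the shadow of a group element $g$ is comparable to $e^{-d_\mu(e,g)}$, while Coornaert's construction yields that the $\rho_\infty$-mass of the same shadow is comparable to $e^{-v\, \abs{g}}$. Under hypothesis $(5)$ these two shadow masses are comparable uniformly in $g$; since shadows form a Vitali-like basis on $\partial\Gamma$ and $\rho_\infty$ is doubling, a standard covering argument upgrades this to a uniform two-sided bound on the Radon--Nikodym derivative $\dd\mu_\infty/\dd\rho_\infty$.

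The main obstacle is the rigid converse $(1) \Rightarrow (5)$. The natural strategy is to control the cocycle $\phi(g) = v\abs{g} - d_\mu(e,g)$. The fundamental inequality gives $\phi \geq -C$, and hypothesis $(1)$ combined with the laws of large numbers $\abs{X_n}/n \to \ell$, $-\log\mu^{*n}(X_n)/n \to h$, and $d_\mu(e, X_n)/n \to h$ (the last shown from Ancona's inequalities along trajectories) forces $\phi(X_n) = o(n)$ almost surely. Ancona's inequality then provides an approximate additivity of $\phi$ along group geodesics, which via a subadditive-type argument promotes the zero-drift statement into a uniform bound $\abs{\phi(g)} \leq C$. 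The key technical difficulty, and where admissibility together with either symmetry or the word-distance assumption enters, is to promote the almost-sure statement along typical trajectories to a deterministic bound on every $g \in \Gamma$; this is the content of \cite{BHM_2} for symmetric $\mu$ and of \cite{tanaka} for word distances, and is where I expect the bulk of the effort to lie.
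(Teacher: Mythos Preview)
The paper does not actually prove this theorem: it is stated as a result from the literature, attributed to \cite{BHM_2} (and \cite{haissinsky}) in the symmetric case and to \cite{tanaka} in the word-distance case. The only proof-like content the paper offers is the remark following the statement, which explains $(1)\Leftrightarrow(2)$ via the dimension formulas $HD(\mu_\infty)=h/(\epsilon\ell)$ and $HD(\rho_\infty)=HD(\partial\Gamma)=v/\epsilon$, and notes that the chain $(5)\Rightarrow(4)\Rightarrow(3)\Rightarrow(2)\Rightarrow(1)$ goes from strongest to weakest while the rigidity step $(1)\Rightarrow(5)$ is the substance of the cited references.

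Your sketch is consistent with this: you reproduce exactly the dimension-formula argument the paper highlights for $(1)\Leftrightarrow(2)$, and you correctly isolate $(1)\Rightarrow(5)$ as the nontrivial step, deferring it to \cite{BHM_2} and \cite{tanaka} just as the paper does. Your treatment of $(5)\Rightarrow(4)$ via the two shadow lemmas and a Vitali/doubling argument, and of $(3)\Rightarrow(2)$ via invariance of Hausdorff dimension under equivalence of measures, is the standard route in those references. In short, there is nothing to compare against here beyond the paper's brief commentary, and your outline matches both that commentary and the structure of the external proofs it cites.
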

The different statements in this theorem go from the weakest to the
strongest: since entropy is an asymptotic quantity, an assumption on
$h$ seems to allow subexponential fluctuations, so the assumption (1)
is rather weak. On the other hand, (3) says that two measures are
equivalent, so most points are controlled. Finally, in (5), all
points are uniformly controlled. The equivalence between these
statements is a strong rigidity theorem. The equivalence between (1)
and (2) follows from a formula for the respective dimensions. The
definition of a visual distance at infinity $d_{\partial\Gamma}$
involves a small parameter $\epsilon$. In terms of this parameter,
one has $HD(\mu_\infty)=h/(\epsilon \ell)$ and $HD(\rho_\infty) =
HD(\partial\Gamma) = v/\epsilon$, so that these dimensions coincide
if and only if $h=\ell v$.

In this theorem, the finite support assumption can be weakened to an
assumption of superexponential moment (i.e., for all $M>0$, $\sum_{g\in
\Gamma} \mu(g) e^{M \abs{g}} < \infty$), thanks
to~\cite{gouezel_infinite_support}. The assumption that $\mu$ is symmetric or
that $d$ is a word distance is probably not necessary. However, the most
important assumption in Theorem~\ref{thm:BHM} is admissibility: it ensures
that the random walk can see the geometry of the whole group (which is
hyperbolic). For a random walk living in a strict (maybe distorted) subgroup,
one would not be expecting the same nice behavior.

\medskip

Our main theorem follows. It states that, in hyperbolic groups which are not
virtually free, endowed with a word distance, the different equivalent
conditions of Theorem~\ref{thm:BHM} are never satisfied, uniformly on
measures with a fixed support.

\begin{thm}
\label{thm:principal}
Let $\Gamma$ be a hyperbolic group which is not virtually free,
endowed with a word distance $d$. Let $\Sigma$ be a finite subset of
$\Gamma$. There exists $c<1$ such that, for any symmetric probability
measure $\mu$ supported in $\Sigma$,
  \begin{equation*}
  h(\mu)\leq c \ell(\mu) v,
  \end{equation*}
where $v$ is the exponential growth rate of balls in $(\Gamma,d)$.
\end{thm}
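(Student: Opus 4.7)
The plan is to combine a case analysis on the subgroup $\Gamma_\mu$ generated by the support of $\mu$ with a compactness argument on the simplex of symmetric probability measures supported in $\Sigma$, which produces the uniform constant $c < 1$.

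\emph{Admissible case.} When $\Gamma_\mu = \Gamma$, Theorem~\ref{thm:BHM} reduces $h(\mu) = \ell(\mu) v$ to the rigid condition~(5), i.e., that $d_\mu$ and $v \cdot d$ agree up to a bounded additive error. Via Ancona-type estimates, such a bi-Lipschitz comparison between the Green and word metrics forces the visual metric on $\partial\Gamma$ coming from $d$ to carry a very constrained (essentially tree-like) combinatorial structure that is incompatible with a non-virtually-free hyperbolic group; this is exactly what is extracted from~\cite{BHM_2} (and from~\cite{haissinsky} for the symmetric case). Hence $h(\mu) < \ell(\mu) v$ pointwise for each admissible $\mu$, with the uniformity deferred to the compactness step.

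\emph{Non-admissible case.} Let $H = \Gamma_\mu$ be a proper subgroup (which is automatic since $\mu$ is symmetric and not admissible). If $[\Gamma : H] < \infty$, then $H$ is itself a non-virtually-free hyperbolic group on which $\mu$ is admissible, so I apply the admissible case inside $H$ and transport the inequality back to $\Gamma$ via the quasi-isometry between the two word metrics. The substantive subcase is $[\Gamma : H] = \infty$, where the key new ingredient is a counting lemma: fixing a word metric $d_H$ on $H$, the number of non-distorted points, namely the cardinality of $\{ g \in H : d(e,g) \leq R \text{ and } d_H(e,g) \leq K d(e,g) \}$, is bounded by $C e^{\alpha R}$ for some $\alpha = \alpha(H,K) < v$. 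Since the walk at time $n$ is essentially supported on $e^{h n}$ points inside $H \cap B(e, (1+\epsilon)\ell n)$, and these points, reached in $n$ steps, are necessarily non-distorted in the above sense, one gets $h(\mu) \leq \alpha \ell(\mu) < v \ell(\mu)$.

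\emph{Uniformity.} Suppose for contradiction no uniform $c < 1$ exists: then one can find symmetric measures $\mu_n$ supported in $\Sigma$ with $h(\mu_n) / (\ell(\mu_n) v) \to 1$. By compactness of the probability simplex on $\Sigma$, extract a limit $\mu_\infty$. If $\Gamma_{\mu_\infty}$ is still non-elementary, continuity of $h$ and $\ell$ on this stratum (via standard subadditivity and comparison arguments) contradicts one of the two previous steps. The delicate case is when $\mu_n$ degenerates to a $\mu_\infty$ supported on an elementary (finite or virtually cyclic) subgroup: then both $\ell(\mu_\infty)$ and $h(\mu_\infty)$ can vanish and the ratio becomes indeterminate. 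I expect this degeneration to be the main obstacle: it requires a quantitative perturbative analysis of $h$ and $\ell$ near the elementary locus, exploiting the explicit structure of virtually cyclic subgroups of $\Gamma$ and the way the walk interacts with their quasi-geodesic axes, to rule out $h(\mu_n)/(\ell(\mu_n) v) \to 1$ along any such degenerating sequence.
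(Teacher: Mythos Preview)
Your three-part architecture---finite-index (essentially admissible) case, infinite-index case, then uniformity via compactness on the $\Sigma$-simplex with a separate degeneration analysis---is exactly the paper's strategy, packaged as Theorems~\ref{thm:hlv_indice_fini}, \ref{thm:hlv_indice_infini} and \ref{thm:hsurl_atteint}. Two substantive gaps remain in your sketch.

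In the infinite-index case you invoke a word metric $d_H$ on $H=\Gamma_\mu$, but nothing guarantees $H$ is finitely generated. The paper avoids this by replacing ``undistorted'' with \emph{quasi-convex}: a point $g\in H$ is $(\epsilon,M)$-quasi-convex if a $\Gamma$-geodesic from $e$ to $g$ spends proportion at least $\epsilon$ of its time within distance $M$ of $H$. This needs no metric on $H$, and both the counting lemma (exponentially few quasi-convex points in any infinite-index subgroup) and the concentration of the walk on such points are proved in that formulation. Your sketch of the admissible case is also not quite the actual mechanism: the paper does not extract a ``tree-like'' structure from $|d_\mu - v d|\leq C$, but rather uses that stable translation lengths for a word metric lie in $\tfrac{1}{N}\Z$, applies a Liv\v{s}ic-type theorem to the Martin cocycle modulo $\Z$, and then exploits connectedness of a boundary component (available precisely when $\Gamma$ is not virtually free) to produce a hyperbolic $g$ with $c_M(g,g^-)=c_M(g,g^+)$, a contradiction.

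The genuine gap is the degeneration step, which you correctly flag but do not resolve. The paper's key insight is that the supremum of $h/\ell$ over symmetric measures supported in $\Sigma$ is in general \emph{not} attained on $\Sigma$. When $\mu_n$ degenerates onto a finite subgroup $K$, one proves $h(\mu_n)/\ell(\mu_n)\to h(\lambda)/\ell(\lambda)$ for an explicit new measure $\lambda=\pi_K*\beta*\pi_K$ supported on $K\cdot(\Sigma\cup\{e\})\cdot K$ (here $\pi_K$ is uniform on $K$ and $\beta$ captures the escaping mass), and one then applies the pointwise inequality to $\lambda$ rather than to anything on $\Sigma$. When instead $\mu_n$ degenerates onto an infinite virtually cyclic subgroup, a comparison/Nash-inequality argument gives $\ell$ of order $\sqrt{\eta/\epsilon}$ against $h$ of order $\log(\eta/\epsilon)$ (for appropriate scale parameters $\eta\gg\epsilon\to 0$), forcing $h(\mu_n)/\ell(\mu_n)\to 0$, so this branch never approaches the supremum. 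Without these two mechanisms---especially the first, which moves the maximizer off $\Sigma$---plain compactness on the $\Sigma$-simplex cannot close the argument.
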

This theorem gives a positive answer to a conjecture of S.~Lalley~\cite[slide
16]{lalley}. In the language of Vershik~\cite{vershik}, this theorem says
that no finite subset of $\Gamma$ is extremal. On the other hand, if one lets
$\Sigma$ grow, $h/\ell$ can converge to $v$:

\begin{thm}
\label{thm:tends_to_v}
Let $\Gamma$ be a hyperbolic group, endowed with a left invariant
distance $d$ which is hyperbolic and quasi-isometric to a word
distance. Let $\rho_i$ be the uniform measure on the ball of radius
$i$. Then $h(\rho_i)/\ell(\rho_i) \to v$, where $v$ is the
exponential growth rate of balls in $(\Gamma,d)$.
\end{thm}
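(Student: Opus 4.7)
The plan is to prove the matching lower bound $\liminf_{i\to\infty} h(\rho_i)/\ell(\rho_i)\geq v$, since the upper bound is Guivarc'h's inequality $h(\rho_i)\leq v\ell(\rho_i)$.

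\emph{Step 1: elementary one-step estimates.} Using the Coornaert-type bounds $|B_i|\asymp e^{vi}$ and $|S_i|\asymp e^{vi}$ (which hold for any $d$ as in the theorem), I would verify
\[
H(\rho_i)=\log|B_i|=vi+O(1),\qquad L(\rho_i)=\frac{1}{|B_i|}\sum_{k\leq i}k|S_k|=i+O(1),
\]
the second because the sum is dominated by its terms near $k=i$. Subadditivity of $L$ and $H$ under convolution then yields $\ell(\rho_i)\leq L(\rho_i)\leq i+O(1)$ and $h(\rho_i)\leq H(\rho_i)\leq vi+O(1)$, so it suffices to show $h(\rho_i)\geq vi-o(i)$; combined with the upper bound on $\ell$ this gives
\[
\frac{h(\rho_i)}{\ell(\rho_i)}\geq\frac{vi-o(i)}{i+O(1)}\longrightarrow v.
\]

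\emph{Step 2: lower bound on entropy via Green function decay.} The core estimate is to produce $\eta_i\to 0$ and $C_i$ with $\log C_i=o(i)$ such that for every $g\in\Gamma$ and every $n$,
\[
\rho_i^{*n}(g)\leq G_{\rho_i}(e,g)\leq C_i\, e^{-(v-\eta_i)|g|}.
\]
Plugging this into the definition of $H$ gives $H(\rho_i^{*n})\geq (v-\eta_i)L(\rho_i^{*n})-\log C_i$; dividing by $n$ and letting $n\to\infty$ yields $h(\rho_i)\geq(v-\eta_i)\ell(\rho_i)$, which is what we need.

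\emph{Main obstacle.} The hard point is establishing the pointwise Green-function bound with exponential rate approaching $v$. Equivalently, condition (5) of Theorem~\ref{thm:BHM} must hold approximately for $\rho_i$: the Green distance $d_{\rho_i}(e,g)$ should be within an error sublinear in $i$ of $v\cdot d(e,g)$, even though by Theorem~\ref{thm:principal} it is bounded away from $vd$ at each fixed $i$. Intuitively, for large $i$ a random $\rho_i$-step resembles a sample from the Patterson-Sullivan measure $\rho_\infty$ on $\partial\Gamma$, via the comparison $\rho_n\sim\rho_\infty$ with bounded density (Lemma~\ref{lem:converge_PS}); iterating along the walk should drive the Martin kernel of $\rho_i$ towards the Patterson-Sullivan kernel as $i\to\infty$. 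Making this quantitative---extracting effective rates as $i\to\infty$ from the equivalence arguments behind Theorem~\ref{thm:BHM}, rather than merely the dichotomy between equality and strict inequality---will be the technical core. An alternative route is to use the Hausdorff-dimension interpretation $HD(\rho_{i,\infty})=h(\rho_i)/(\epsilon\ell(\rho_i))$ (mentioned after Theorem~\ref{thm:BHM}), showing that the exit measures $\rho_{i,\infty}$ converge, up to subsequences, to a measure equivalent to $\rho_\infty$ with density controlled well enough to force $HD(\rho_{i,\infty})\to HD(\partial\Gamma)=v/\epsilon$.
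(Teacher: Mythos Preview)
Your Step~1 is correct and matches the paper: the reduction to $h(\rho_i)\geq vi-o(i)$ via $\ell(\rho_i)\leq L(\rho_i)\leq i$ and Guivarc'h is exactly how the paper frames it.

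Step~2 is where the gap lies. The Green-function bound $G_{\rho_i}(e,g)\leq C_i e^{-(v-\eta_i)|g|}$ would indeed suffice, but neither of your proposed routes reaches it. Making the Ancona/BHM machinery quantitative as $i\to\infty$ is not something the existing estimates provide: Ancona inequalities give $d_{\rho_i}$ quasi-isometric to $d$ with constants depending on $\rho_i$, and there is no known mechanism to show these constants drift toward the optimal value $v$. Your alternative route via Hausdorff dimension has a more basic flaw: Hausdorff dimension is not continuous (not even semicontinuous in the right direction) under weak convergence of measures, so even granting that $\rho_{i,\infty}$ converges to something equivalent to $\rho_\infty$, you cannot conclude $HD(\rho_{i,\infty})\to HD(\partial\Gamma)$.

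The paper bypasses the Green function entirely. It works directly with the entropy formula $h(\rho_i)\geq\int_{\Gamma\times\partial\Gamma}(-\log\frac{dg^{-1}_*\nu_i}{d\nu_i})\,d\rho_i\,d\nu_i$ from Proposition~\ref{prop:express_entropy}. For each $g$, a Jensen-type inequality (Lemma~\ref{lem:minore_entropie}) bounds the inner integral from below by roughly $-\log\nu_i(\boO(g,C))$, the log of the $\nu_i$-mass of a shadow of $g$. Summing over $g\in\Sbb^k$ and using that shadows at fixed radius have bounded overlap, one gets $h(\rho_i)\geq(1-\epsilon)h_i-O(1)$ where $h_i$ is essentially $H(\rho_i)\sim vi$. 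The only hypothesis needed is that all weak limits of the stationary measures $\nu_i$ are atomless; this follows because $\rho_i$ and $\check\rho_i=\rho_i$ converge (up to bounded density) to the Patterson-Sullivan measure (Lemmas~\ref{lem:describe_nu} and~\ref{lem:converge_PS}), forcing $\nu_i$ to have the same limits. So the convergence-to-PS observation you mention is indeed the key input, but it is exploited through the shadow/Jensen argument of Theorem~\ref{thm:h_grande_limite}, not through dimension or Green-function continuity.
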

More precisely, we prove that $\ell(\rho_i)\sim i$ and $h(\rho_i)\sim iv$.
The only difficulty is to prove the lower bound on $h(\rho_i)$: since $h$ is
defined in~\eqref{eq:def_h_ell} using a subadditive sequence, upper bounds
are automatic, but to get lower bounds one should show that additional
cancellations do not happen later on. This difficulty already appears
in~\cite{erschler_kaim}, where the authors prove that the entropy depends
continuously on the measure. Our proof of Theorem~\ref{thm:tends_to_v}, given
in Paragraph~\ref{subsec:tends_to_v}, also applies to this situation and
gives a new proof of their result, under slightly weaker assumptions. There
is nothing special about the uniform measure on balls, our proof also gives
the same conclusion for the uniform measure on spheres, or for the measures
$\sum e^{-s\abs{g}}\delta_g/\sum e^{-s \abs{g}}$ when $s \searrow v$.

\medskip

Our main result is Theorem~\ref{thm:principal}. It is a consequence
of the three following results. Since their main aim is
Theorem~\ref{thm:principal}, they are designed to handle finitely
supported symmetric measures. However, these theorems are all valid
under weaker assumptions, which we specify in the statements as they
carry along implicit information on the techniques used in the
proofs.

The first result deals with admissible (or virtually admissible)
measures.
\begin{thm}
\label{thm:hlv_indice_fini} Let $\Gamma$ be a hyperbolic group which is not
virtually free, endowed with a word distance. Let $\mu$ be a probability
measure with a superexponential moment, such that $\Gamma_\mu^+$ is a finite
index subgroup of $\Gamma$. Then $h(\mu)<\ell(\mu)v$.
\end{thm}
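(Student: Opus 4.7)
My plan is to reduce to the admissible case inside the subgroup $\Gamma' = \Gamma_\mu^+$, and to combine Theorem~\ref{thm:BHM} with the rigidity established in~\cite{BHM_2} via Ancona's inequalities. Since $\Gamma' = \Gamma_\mu^+$ has finite index in $\Gamma$, it inherits from $\Gamma$ the properties of being nonelementary hyperbolic and not virtually free (both properties are clearly preserved under passage to finite-index subgroups). The restricted metric $d|_{\Gamma'}$ is left-invariant, hyperbolic, and quasi-isometric to a word distance on $\Gamma'$, with the same exponential growth rate $v$. Because the trajectories of the random walk remain in $\Gamma'$, the drift $\ell(\mu)$ and the entropy $h(\mu)$ can equivalently be read off from $(\Gamma', d|_{\Gamma'})$, and $\mu$ is admissible on $\Gamma'$ with superexponential moment.

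I would then apply Theorem~\ref{thm:BHM} to the triple $(\Gamma', d|_{\Gamma'}, \mu)$ to obtain the equivalence of its five conditions. Assume toward a contradiction that $h(\mu) = \ell(\mu) v$, so that condition~(5) holds: there exists $C > 0$ with $\abs{vd(e,g) - d_\mu(e,g)} \leq C$ for every $g \in \Gamma'$, i.e., the Green metric $d_\mu$ of $\mu$ is at bounded additive distance from $v \cdot d|_{\Gamma'}$. The rigidity proved by Blach\`ere--Ha\"{\i}ssinsky--Mathieu in~\cite{BHM_2} then forces $\Gamma'$ to be virtually free, contradicting the first paragraph. Hence $h(\mu) < \ell(\mu) v$.

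The principal difficulty lies in this last step: transferring the BHM rigidity -- originally established for literal word distances and symmetric measures -- to the restricted metric $d|_{\Gamma'}$, which is only quasi-isometric to a word distance on $\Gamma'$. A closely related technical point, when $\mu$ is not symmetric, is to ensure that Tanaka's version of Theorem~\ref{thm:BHM} extends from literal word distances to metrics quasi-isometric to word distances, so that the implication ``(1) $\Rightarrow$ (5)'' still holds in our setting. Once these two points are checked, the argument is essentially as outlined above.
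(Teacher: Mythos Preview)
Your reduction to the admissible case inside $\Gamma'=\Gamma_\mu^+$ is correct and is also what the paper does; so is the appeal to Theorem~\ref{thm:BHM} to obtain $\abs{vd(e,g)-d_\mu(e,g)}\leq C$ on $\Gamma'$. The genuine gap is the sentence ``the rigidity proved by Blach\`ere--Ha\"{\i}ssinsky--Mathieu in~\cite{BHM_2} then forces $\Gamma'$ to be virtually free''. No such result is in~\cite{BHM_2}: that paper establishes the \emph{equivalence} of the five conditions of Theorem~\ref{thm:BHM}, but says nothing about which groups can satisfy them. Showing that a non-virtually-free hyperbolic group with a word distance cannot satisfy condition~(5) is precisely the new content of the present paper, and your proposal does not supply it. Your ``principal difficulty'' is thus mis-identified: even for a genuine word distance and a symmetric measure, the implication ``condition~(5) $\Rightarrow$ virtually free'' is not in the literature you cite.

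Here is what the paper actually does to fill this gap. From $\abs{d_\mu-vd}\leq C$ one deduces that the Martin cocycle $c_M$ satisfies $c_M(g,g^+)=v\,l(g)$ for every hyperbolic $g$, where $l(g)$ is the stable translation length for $d$ (Lemma~\ref{lem_explicite}). The crucial arithmetic input is that for a word distance the numbers $l(g)$ are rational with bounded denominator $N$ (Lemma~\ref{lem_rationnel}), so the rescaled cocycle $c'=Nc_M/v$ takes integer values at the attracting fixed points. The paper then proves a general cocycle statement (Proposition~\ref{prop:cocycle}): on a hyperbolic group that is not virtually free, any H\"older cocycle with $c(g,g^+)\in\Z$ for all hyperbolic $g$ must satisfy $c(g,g^-)=c(g,g^+)$ for some hyperbolic $g$. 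The proof uses a Livsic-type theorem of~\cite{izumi_hyperbolic} together with the fact that $\partial\Gamma$ has a nontrivial connected component (this is where ``not virtually free'' enters). Applied to $c_M$, this yields $c_M(g,g^+)=c_M(g,g^-)$ for some $g$, while Lemma~\ref{lem_explicite} gives $c_M(g,g^+)=v\,l(g)>0$ and $c_M(g,g^-)=-v\,l(g)<0$, a contradiction. The extension from $\Gamma_\mu^+=\Gamma$ to the finite-index case goes exactly as you outline, once one observes that the two properties actually used (hyperbolicity of $d$ quasi-isometric to a word distance, and rationality of stable translation lengths with bounded denominator) pass to the restriction of $d$ to $\Gamma'$.
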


The second result deals with non-admissible measures.
\begin{thm}
\label{thm:hlv_indice_infini}
Let $\Gamma$ be a hyperbolic group endowed with a word distance. Let
$\mu$ be a probability measure with a moment of order $1$ (i.e.,
$L(\mu)<\infty$). Assume that $\ell(\mu)>0$ and that $\Gamma_\mu$ has
infinite index in $\Gamma$. Then $h(\mu)<\ell(\mu) v$.
\end{thm}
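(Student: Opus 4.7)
\emph{Proof plan.} The plan is to reduce the theorem to a purely geometric counting statement for infinite-index subgroups, and then deduce the entropy bound from the resulting concentration of the walk on a sub-exponential set.

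Set $H = \Gamma_\mu$, which by hypothesis has infinite index, and let $\Sigma$ be the symmetrisation of $\Supp(\mu)$, a generating set of $H$. The walk $X_n = g_1\cdots g_n$ takes values in $H$. Two estimates on $X_n$ drive the argument. First, since $L(\mu) < \infty$, Kingman's subadditive ergodic theorem gives $\abs{X_n}/n \to \ell(\mu)$ almost surely, so for every $\epsilon > 0$,
\begin{equation*}
\Pbb\bigl((\ell(\mu)-\epsilon) n \leq \abs{X_n} \leq (\ell(\mu)+\epsilon) n\bigr) \longrightarrow 1.
\end{equation*}
Second, by construction $X_n$ is a $\Sigma$-word of length at most $n$, so its word length $\abs{X_n}_H$ relative to $\Sigma$ satisfies $\abs{X_n}_H \leq n$ deterministically. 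Combined with $\ell(\mu) > 0$, this forces typical values of $X_n$ to be \emph{non-distorted}: the ratio $\abs{g}_H/\abs{g}$ remains bounded by $K := 2/\ell(\mu)$ with probability tending to one.

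The geometric core of the argument, which I would isolate as a separate counting lemma, is the following: for every $K > 0$ there exist $v' < v$ and $C$ such that
\begin{equation*}
\Card*{\{g \in H \st \abs{g}_H \leq K \abs{g},\ \abs{g} \leq r\}} \leq C e^{v' r} \qquad \text{for all } r \geq 0.
\end{equation*}
Granting this, the set $E_n := \{g \in H \st \abs{g}_H \leq n,\ (\ell(\mu)-\epsilon)n \leq \abs{g} \leq (\ell(\mu)+\epsilon)n\}$ satisfies $\Pbb(X_n \in E_n) \to 1$ and $\Card{E_n} \leq C e^{v'(\ell(\mu)+\epsilon) n}$. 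A standard entropy inequality (split $H(\mu^{*n})$ along $\{X_n \in E_n\}$, bound the main piece by $\log\Card{E_n}$, and control the complementary piece using the finite first moment of $\abs{X_n}$ together with the exponential growth bound for $\Gamma$) then yields $H(\mu^{*n}) \leq \log\Card{E_n} + o(n)$. Dividing by $n$ and letting $\epsilon \to 0$ gives $h(\mu) \leq v' \ell(\mu) < v\ell(\mu)$, as required.

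The hard part is of course the counting lemma. Infinite-index subgroups of hyperbolic groups can be very wild---by Grigorchuk's theorem, any infinite normal subgroup with amenable quotient has the same exponential growth rate as the ambient group---so counting \emph{all} elements of $H$ in a $\Gamma$-ball cannot work, and the non-distortion assumption is truly essential. My plan for the lemma is to exploit hyperbolicity of $\Gamma$: non-distortion forces the $\Sigma$-geodesic in $H$ from $e$ to a non-distorted $g$ to track the $\Gamma$-geodesic $[e,g]$ at bounded Hausdorff distance, so the non-distorted part of $H$ behaves like a quasi-convex subgroup, and a coset-pigeonhole argument over infinitely many cosets of $H$ (of which arbitrarily many admit representatives of size $\leq R$ for $R$ large) then gives the strict inequality $v' < v$ in the classical manner.
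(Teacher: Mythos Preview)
Your reduction to a counting lemma for non-distorted points is exactly the paper's strategy, and in the finite-support case your argument is essentially correct and matches the paper's own ``easy case'' discussion in Subsection~4.4. However, there is a genuine gap once you allow $\mu$ to have infinite support, which the theorem does (it only asks for $L(\mu)<\infty$).

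The problem is your choice of $\Sigma$ as the symmetrisation of $\Supp(\mu)$. If the support is infinite, $\Sigma$ is an infinite generating set whose elements have unbounded $\Gamma$-length. You still get $\abs{X_n}_H\leq n$ for free, but your proposed counting lemma is then false in general. The tracking step you invoke (``the $\Sigma$-geodesic tracks $[e,g]$'') is an application of the Morse lemma and requires each $\Sigma$-step to have bounded $\Gamma$-length; without that, a $\Sigma$-word of length $\leq K\abs{g}$ is not a quasi-geodesic in $\Gamma$ and need not fellow-travel $[e,g]$ at all. Worse, with an unbounded $\Sigma$ the set $\{g\in H:\abs{g}_H\leq K\abs{g},\ \abs{g}\leq r\}$ can have cardinality $\asymp e^{vr}$: think of the surface-fibre subgroup $H$ of a fibred hyperbolic $3$-manifold group, where $v_H=v$, and take $\mu$ with rapidly decaying but full support in $H$ so that every $g\in H$ has $\abs{g}_\Sigma=1$. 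The paper makes precisely this point: the undistorted-points route needs a first moment for the \emph{intrinsic} distance $d_\Lambda$ (equivalently, after your trick, a bounded support), which is strictly stronger than a first moment for $d_\Gamma$.

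The paper's fix is to replace ``$D$-undistorted'' by the weaker, purely $\Gamma$-side notion of ``$(\epsilon,M)$-quasi-convex'' (a definite proportion of the $\Gamma$-geodesic $[e,g]$ lies in the $M$-neighbourhood of $H$), which makes sense even when $H$ is not finitely generated. Concentration of $X_n$ on $\Lambda_{QC(\epsilon,M)}$ is then proved by a bilateral random-walk argument (Lemma~4.7) that uses only the finite first moment for $d_\Gamma$: one shows that a positive density of the partial products $X_i$ lie close to the geodesic $[e,X_n]$, which is a statement about the \emph{ambient} geometry and does not require any intrinsic metric on $H$. Your tracking argument, by contrast, lives on the $H$-side and cannot be salvaged without a boundedness assumption on $\Sigma$.

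A secondary point: even granting finite support, your sketch of the counting lemma (``the non-distorted part of $H$ behaves like a quasi-convex subgroup, and a coset-pigeonhole argument\ldots'') understates the difficulty. The non-distorted elements do \emph{not} form a subgroup, nor anything with a sub-multiplicative structure, so the classical pigeonhole proof that quasi-convex infinite-index subgroups have strictly smaller growth does not apply directly. The paper's proof of the counting estimate (Theorem~4.2) goes through the automatic structure of $\Gamma$: one lifts the geodesic automaton to $V\times (H\backslash\Gamma)$, interprets the count as a return probability for a Markov chain on this extended graph, and shows via a Perron--Frobenius and stationary-measure argument (Lemmas~4.3--4.5) that the relevant states are null-recurrent or transient, forcing exponential decay of returns. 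This is where the real work is, and it is not a routine pigeonhole.
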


Finally, the third result is a kind of continuity statement, to get the
uniformity.
\begin{thm}
\label{thm:hsurl_atteint}
Let $\Gamma$ be a hyperbolic group, endowed with a left-invariant
distance which is hyperbolic and quasi-isometric to a word distance.
Let $\Sigma$ be a subset of $\Gamma$ which does not generate an
elementary subgroup. There exists a probability measure $\mu_\Sigma$
with finite support such that $\ell(\mu_\Sigma)>0$ and
  \begin{equation*}
  \sup \{ h(\mu)/ \ell(\mu) \st \mu\text{ probability}, \Supp(\mu) \subset \Sigma, \ell(\mu)>0\}
  =h(\mu_\Sigma)/\ell(\mu_\Sigma).
  \end{equation*}
The same statement holds if the maximum is taken over symmetric
probability measures, the resulting maximizing measure being
symmetric.
\end{thm}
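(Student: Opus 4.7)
The plan is to exploit compactness and semicontinuity in the space of probability measures supported on $\Sigma$. First I reduce to the case of finite $\Sigma$: any probability measure $\mu$ with $\Supp(\mu) \subset \Sigma$ can be approximated by its restrictions to larger finite subsets, with $h/\ell$ passing to the limit, so the supremum over all admissible $\mu$ coincides with that over finitely supported ones. Henceforth assume $\Sigma$ finite, and work in the compact simplex $\Delta(\Sigma)$ of probability measures on $\Sigma$ (or, in the symmetric setting, in the closed convex subsimplex of symmetric measures). For each $n$, the map $\mu \mapsto H(\mu^{*n})/n$ is continuous (the weights of $\mu^{*n}$ are polynomial in those of $\mu$, and Shannon entropy is continuous in finitely many weights), and similarly $\mu \mapsto L(\mu^{*n})/n$ is continuous. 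Subadditivity yields $h(\mu) = \inf_n H(\mu^{*n})/n$ and $\ell(\mu) = \inf_n L(\mu^{*n})/n$, so both functions are upper semicontinuous on $\Delta(\Sigma)$; the full continuity of $\ell$, needed below, will follow from the stability arguments used to prove Theorem~\ref{thm:tends_to_v}.

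Let $R$ be the supremum in the statement. Pick a maximizing sequence $\mu_n$ with $\ell(\mu_n) > 0$ and $h(\mu_n)/\ell(\mu_n) \to R$, and extract a subsequence converging to some $\mu_\infty \in \Delta(\Sigma)$. In the favorable case $\ell(\mu_\infty) > 0$, continuity of $\ell$ together with upper semicontinuity of $h$ gives
\begin{equation*}
h(\mu_\infty) \;\geq\; \limsup_n h(\mu_n) \;=\; R\lim_n \ell(\mu_n) \;=\; R\, \ell(\mu_\infty),
\end{equation*}
while the reverse inequality is immediate from the definition of $R$, so $\mu_\Sigma := \mu_\infty$ is the required maximizer.

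The one obstacle is the degenerate case $\ell(\mu_\infty) = 0$, which, for symmetric measures, occurs exactly when $\Supp(\mu_\infty)$ lies in an elementary subgroup $\Gamma_0$ of $\Gamma$. The plan here is a quantitative analysis of this degeneration: write $\mu_n = (1-\epsilon_n)\mu_\infty + \epsilon_n \nu_n$ with $\epsilon_n \to 0$ and $\nu_n$ uniformly bounded away from $\Gamma_0$. The $\mu_n$-trajectory then consists of long excursions of typical length $1/\epsilon_n$ inside $\Gamma_0$, punctuated by rare escape steps drawn from $\nu_n$. Because $\Gamma_0$ is amenable, drift and entropy inside $\Gamma_0$ contribute negligibly, and estimating the contribution of the escape steps yields precise orders of magnitude for $h(\mu_n)$ and $\ell(\mu_n)$ in terms of $\epsilon_n$. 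The ratio $h(\mu_n)/\ell(\mu_n)$ is thereby shown to converge to a value strictly below $R$ (using $R > 0$, which is guaranteed since $\Sigma$ does not generate an elementary subgroup, so some interior measure has both positive drift and positive entropy). This contradicts the maximality of the sequence, so in fact $\ell(\mu_\infty) > 0$, and the previous paragraph yields the maximizer. Running the same argument inside the symmetric subsimplex produces a symmetric $\mu_\Sigma$. The principal difficulty is the excursion analysis: one must carefully track the interaction between the recurrent behavior of the walk inside $\Gamma_0$ and the rare escape steps, and this forms the technical core of the degeneration part of the paper.
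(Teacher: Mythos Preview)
Your overall architecture is right---take a maximizing sequence, pass to a limit, and treat separately the nondegenerate and degenerate cases---but the degenerate case is mis-analyzed in a way that makes the whole argument fail.

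You claim that when the limit $\mu_\infty$ is supported on an elementary subgroup, the excursion analysis shows $h(\mu_n)/\ell(\mu_n)$ converges to something strictly below $R$, yielding a contradiction; hence the maximizer is $\mu_\infty\in\Delta(\Sigma)$. This cannot be correct, because the theorem does \emph{not} assert that $\mu_\Sigma$ is supported on $\Sigma$, and Example~\ref{ex:not_in_Sigma} shows it need not be: in $\Gamma=\Z/2*\Z/4$ with $\Sigma=\{a,b,b^{-1}\}$, the supremum equals $v$ but no measure on $\Sigma$ attains it. So the degenerate case does not always lead to a contradiction. In the paper's proof, the elementary case splits further according to the nature of an auxiliary subgroup $\Gamma_s$ (roughly, the largest elementary subgroup on which the walk equidistributes before escaping). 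When $\Gamma_s$ is \emph{finite}, one shows $h(\mu_\epsilon)/\ell(\mu_\epsilon)\to h(\lambda)/\ell(\lambda)$ for an explicit measure $\lambda=\pi_{\Gamma_s}*\beta$ (with $\beta$ the limit of the escape part); this $\lambda$---symmetrized to $\pi_{\Gamma_s}*\beta*\pi_{\Gamma_s}$ if needed---is the maximizer, and it is supported on $\Gamma_s\cdot(\Sigma\cup\{e\})$, not on $\Sigma$. Only when $\Gamma_s$ is infinite virtually cyclic does one get $h/\ell\to 0$ and a contradiction.

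Two smaller points. First, your case split should be on whether $\Gamma_{\mu_\infty}$ is nonelementary, not on whether $\ell(\mu_\infty)>0$: the continuity of $\ell$ you invoke (Proposition~\ref{prop:ell_continuous}) requires a nonelementary limit, and the paper gives an explicit dihedral counterexample where $\ell$ jumps at an elementary limit with positive drift. Second, your heuristic that ``drift and entropy inside $\Gamma_0$ contribute negligibly'' is wrong in the virtually cyclic case: there the excursions inside $\Gamma_0$ contribute the \emph{dominant} term to the drift (of order $(\eta/\epsilon)^{1/2}$ via a central-limit-type estimate), while the entropy is only $O(\log(\eta/\epsilon))$; this mismatch is exactly what forces $h/\ell\to 0$.
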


Theorem~\ref{thm:principal} is a consequence of these three
statements.
\begin{proof}[Proof of Theorem~\ref{thm:principal} using the three auxiliary theorems]
As in the statement of the theorem, consider a finite subset $\Sigma$ of
$\Gamma$. If $\Sigma$ generates an elementary subgroup of $\Gamma$, all
measures supported on $\Sigma$ have zero entropy. Hence, one can take $c=0$
in the statement of the theorem. Otherwise, by
Theorem~\ref{thm:hsurl_atteint}, there exists a symmetric measure
$\mu_\Sigma$ with finite support that maximizes the quantity
$h(\mu)/\ell(\mu)$ over $\mu$ symmetric supported by $\Sigma$. If
$\Gamma_{\mu_\Sigma}=\Gamma_{\mu_\Sigma}^+$ has finite index,
$h(\mu_\Sigma)/\ell(\mu_\Sigma)<v$ by Theorem~\ref{thm:hlv_indice_fini}. If
it has infinite index, the same conclusion follows from
Theorem~\ref{thm:hlv_indice_infini}.
\end{proof}

The three auxiliary theorems are non-trivial. Their proofs are
independent, and use completely different tools. Here are some
comments about them.
\begin{itemize}
\item At first sight, Theorem~\ref{thm:hlv_indice_fini} seems to be the
    most delicate (this is the only one with the assumption that $\Gamma$
    is not virtually free). However, this is also the setting that has been
    mostly studied in the literature. Hence, we may use several known
    results, including most notably results of Ancona~\cite{ancona}, of
    Blach\`ere, Ha\"{\i}ssinsky and Mathieu~\cite{BHM_2} and Tanaka~\cite{tanaka}
    (Theorem~\ref{thm:BHM} above) and of Izumi, Neshveyev and
    Okayasu~\cite{izumi_hyperbolic} on rigidity results for cocycles. The
    proof relies mainly on the fact that the word distance is integer
    valued, contrary to the Green distance (more precisely, we use the fact
    that the stable translation length of hyperbolic elements is rational
    with bounded denominator).
\item In Theorem~\ref{thm:hlv_indice_infini}, the difficulty comes from the
    lack of information on the subgroup $\Gamma_\mu$. If it has good
    geometric properties (for instance if it is quasi-convex), one may use
    the same kind of techniques as for Theorem~\ref{thm:hlv_indice_fini}.
    Otherwise, the random walk does not really see the hyperbolicity of the
    ambient group. The fundamental inequality always gives $h\leq \ell
    v_{\Gamma_\mu}$, where $v_{\Gamma_\mu}$ is the growth rate of the
    subgroup $\Gamma_\mu$ (for the initial word distance on $\Gamma$). If
    $v_{\Gamma_\mu}<v$, the result follows. Unfortunately, there exist
    non-quasi-convex subgroups of some hyperbolic groups with the same
    growth as the ambient group. However, a random walk does not typically
    visit all points of $\Gamma_\mu$, it concentrates on those points that
    are not distorted (i.e., their distances to the identity in $\Gamma$
    and $\Gamma_\mu$ are comparable). To prove
    Theorem~\ref{thm:hlv_indice_infini}, we will show that in any infinite
    index subgroup of a hyperbolic group, the number of non-distorted
    points is exponentially smaller than $e^{nv}$.
\item Theorem~\ref{thm:hsurl_atteint} is less simple than it may
    seem at first sight: it does not claim that $\mu_\Sigma$ is
    supported by $\Sigma$, and indeed this is not the case in
    general (see Example~\ref{ex:not_in_Sigma}). Hence, the proof
    is not a simple continuity argument: We need to understand
    precisely the behavior of sequences of measures that
    degenerate towards a measure supported on an elementary
    subgroup. The proof will show that $\mu_\Sigma$ is supported
    by $K\cdot (\Sigma\cup \{e\}) \cdot K$, where $K$ is a finite
    subgroup generated by some elements in $\Sigma$.
\end{itemize}

A natural question is whether Theorem~\ref{thm:principal} holds for
non-symmetric measures. For admissible measures, (i.e.,
$\Gamma_\mu^+=\Gamma$), Theorem~\ref{thm:hlv_indice_fini} holds. For
non-symmetric measures such that $\Gamma_\mu$ has infinite index,
Theorem~\ref{thm:hlv_indice_infini} applies directly. However, since
$\Gamma_\mu\not=\Gamma_\mu^+$ for general non-symmetric measures, there is
another case to consider: the case of measures $\mu$ such that
$\Gamma_\mu=\Gamma$ (or $\Gamma_\mu$ has finite index in $\Gamma$), but
$\Gamma_\mu^+$ is much smaller than $\Gamma$. In this case, it seems that our
arguments do not suffice. We give in Section~\ref{sec:non-symmetric} two
examples illustrating the new difficulties:
\begin{enumerate}
\item One can not rely on growth arguments, as for
    Theorem~\ref{thm:hlv_indice_infini}. Indeed, there are
    subsemigroups $\Lambda^+$ with bad asymptotic behavior, for
    instance such that $\liminf \Card{B_n \cap
    \Lambda^+}/\Card{B_n}=0$ and $\limsup \Card{B_n \cap
    \Lambda^+}/\Card{B_n}>0$.
\item The arguments of Theorem~\ref{thm:hlv_indice_fini} work for finitely
    supported measures, or for measures with a superexponential moment, but
    also more generally for measures with a nice geometric behavior (they
    should satisfy so-called Ancona inequalities). In the non-symmetric
    situation, we give in Proposition~\ref{prop:nonsymmetric_equal}
    explicit examples of (non-admissible) measures with an exponential
    moment and a very nice geometric behavior, and such that nevertheless
    $h=\ell v$. So, arguments similar to those of
    Theorem~\ref{thm:hlv_indice_fini} can not suffice, one needs a new
    argument that distinguishes in a finer way between measures with finite
    support and measures with infinite support.
\end{enumerate}

This article is organized as follows. In Section~\ref{sec:general},
we give more details on the notions of hyperbolic group, drift and
entropy. We also prove Theorem~\ref{thm:tends_to_v} on the asymptotic
entropy and drift of the uniform measure on large balls. The
following three sections are then devoted to the proofs of the three
auxiliary theorems. Finally, we describe in
Section~\ref{sec:non-symmetric} what can happen in the non-symmetric
setting. In  particular, we show that in any torsion-free group with
infinitely many ends, there exist (non-admissible, non-symmetric)
measures with an exponential moment satisfying $h=\ell v$.

\section{General properties of entropy and drift in hyperbolic
groups}
\label{sec:general}

\subsection{Hyperbolic spaces}

In this paragraph, we recall classical properties of hyperbolic
spaces. See for instance~\cite{ghys_hyperbolique}
or~\cite{bridson_haefliger}.

Consider a metric space $(X,d)$. The Gromov product of two points
$y,y'\in X$, based at $x_0\in X$, is by definition
  \begin{equation}
  \label{def:gromov_product}
  \gromprod{y}{y'}{x_0} = (1/2) [ d(x_0,y)+d(x_0,y')-d(y,y') ].
  \end{equation}
The space $(X,d)$ is hyperbolic if there exists $\delta\geq 0$ such
that, for any $x_0,y_1,y_2,y_3$, the following inequality holds:
  \begin{equation*}
  \gromprod{y_1}{y_3}{x_0}\geq \min( \gromprod{y_1}{y_2}{x_0}, \gromprod{y_2}{y_3}{x_0}) -\delta.
  \end{equation*}

The main intuition to have is that, in hyperbolic spaces,
configurations of finitely many points look like configurations in
trees: for any $k$, for any subset $F$ of $X$ with cardinality at
most $k$, there exists a map $\Phi$ from $F$ to a tree such that, for
all $x,y\in F$,
  \begin{equation*}
  d(x,y)-2k\delta \leq d(\Phi(x),\Phi(y)) \leq d(x,y).
  \end{equation*}
Hence, a lot of distance computations can be reduced to equivalent
computations in trees (which are essentially combinatorial), up to a
bounded error. Up to $\delta$, the Gromov product
$\gromprod{y}{y'}{x_0}$ is, in the approximating tree, the length of
the part that is common to the geodesics from $x_0$ to $y$ and from
$x_0$ to $y'$.

A space $(X,d)$ is geodesic if there exists a geodesic between any pair of
points. For such spaces, there is a convenient characterization of
hyperbolicity. A geodesic space $(X,d)$ is hyperbolic if and only if there
exists $\delta\geq 0$ such that its geodesic triangles are $\delta$-thin,
i.e., each side is included in the $\delta$-neighborhood of the union of the
two other sides.

Assume that $(X,d_X)$ and $(Y,d_Y)$ are two geodesic metric spaces,
and that they are quasi-isometric. If $(X,d_X)$ is hyperbolic, then
so is $(Y,d_Y)$. Note however that this equivalence only holds for
geodesic spaces.

Let $(X,d)$ be a geodesic hyperbolic metric space. A subset $Y$ of
$X$ is quasi-convex if there exists a constant $C$ such that, for any
$y,y'\in Y$, the geodesics from $y$ to $y'$ stay in the
$C$-neighborhood of $Y$.

We will sometimes encounter hyperbolic spaces which are not geodesic,
but only quasi-geodesic: there exist constants $C>0$ and $\lambda$
such that any two points can be joined by a
$(\lambda,C)$-quasi-geodesic, i.e., a map $f$ from a real interval to
$X$ such that $\lambda^{-1} \abs{t'-t}-C \leq d(f(t), f(t')) \leq
\lambda \abs{t'-t} + C$. When the space is geodesic, a quasi-geodesic
stays a bounded distance away from a true geodesic. Most properties
that hold or can be defined using geodesics (for instance the notion
of quasi-convexity) can be extended to this setting, simply replacing
geodesics with quasi-geodesics in the statements.

\medskip

Let $(X,d)$ be a proper geodesic hyperbolic space. Its boundary at
infinity $\partial X$ is by definition the set of geodesics
originating from a base point $x_0$, where two such geodesics are
identified if they remain a bounded distance away. It is a compact
space, which does not depend on $x_0$. The space $X \cup
\partial X$ is also compact. If $X$ is only quasi-geodesic, all these
definitions extend using quasi-geodesics instead of geodesics.

Any isometry (or, more generally, quasi-isometry) of a hyperbolic
space extends continuously to its boundary, giving a homeomorphism of
$\partial X$.

The Gromov product may be extended to $X\cup \partial X$: we define
$\gromprod{\xi}{\eta}{x_0}$ as the infimum limit of
$\gromprod{x_n}{y_n}{x_0}$ for $x_n$ and $y_n$ converging
respectively to $\xi$ and $\eta$. The choice to take the infimum is
arbitrary, one could also take the supremum or any accumulation
point, those quantities differ by at most a constant only depending
on $\delta$. Hence, one should think of the Gromov product at
infinity to be canonically defined only up to an additive constant.
Heuristically, $\gromprod{\xi}{\eta}{x_0}$ is the time after which
two geodesics from $x_0$ to $\xi$ and to $\eta$ start diverging.

Let $(X,d)$ be a proper geodesic (or quasi-geodesic) hyperbolic space. For
any small enough $\epsilon>0$, one may define a visual distance $d_{\partial
X,\epsilon}$ on $\partial X$ such that $d_{\partial X,\epsilon}(\xi,\eta)
\asymp e^{-\epsilon \gromprod{\xi}{\eta}{x_0}}$ (meaning that the ratio
between these quantities is uniformly bounded from above and from below).

\medskip

Let $(X,d)$ be a proper hyperbolic metric space. One can define
another boundary of $X$, the Busemann boundary (or horoboundary), as
follows. Let $x_0$ be a fixed basepoint in $X$. To $x \in X$, one
associates its horofunction $h_x(y) = d(y,x)-d(x_0,x)$, normalized so
that $h_x(x_0)=0$. The map $\Phi:x\mapsto h_x$ is an embedding of $X$
into the space of $1$-Lipschitz functions on $X$, with the topology
of uniform convergence on compact sets. The horoboundary is obtained
by taking the closure of $\Phi(X)$. In other words, a sequence $x_n
\in X$ converges to a boundary point if $h_{x_n}(y)$ converges,
uniformly on compact sets. Its limit is the horofunction $h_\xi$
associated to the corresponding boundary point $\xi$ (it is also
called the Busemann function associated to $\xi$). We denote by
$\partial_B X$ the Busemann boundary of $X$. There is a continuous
projection $\pi_B : \partial_B X\to \partial X$, which is onto but
not injective in general. The boundary $\partial_B X$ is rather
sensitive to fine scale details of the distance $d$, while $\partial
X$ only depends on its quasi-isometry class.

Any isometry $\phi$ of $X$ acts on horofunctions, by the formula
$h_{\phi(x)}(y) = h_x(\phi^{-1}y) - h_x(\phi^{-1} x_0)$. This implies
that $\phi$ extends to a homeomorphism on $\partial_B X$, given by
the same formula $h_{\phi(\xi)}(y) = h_\xi(\phi^{-1}y) -
h_\xi(\phi^{-1} x_0)$. Note that, contrary to the action on the
geometric boundary, this only works for isometries of $X$, not
quasi-isometries.

\subsection{Hyperbolic groups}

Let $\Gamma$ be a finitely generated group, with a finite symmetric
generating set $S$. Denote by $d=d_S$ the corresponding word
distance. The group $\Gamma$ is hyperbolic if the metric space
$(\Gamma,d_S)$ is hyperbolic. Since hyperbolicity is invariant under
quasi-isometry for geodesic spaces, this notion does not depend on
the choice of the generating set $S$. However, if one considers
another left-invariant distance on $\Gamma$ which is equivalent to
$d_S$ but not geodesic, its hyperbolicity is not automatic. Hence,
one should postulate its hyperbolicity if it is needed, as in the
statement of Theorem~\ref{thm:BHM}. We say that the pair $(\Gamma,d)$
is a metric hyperbolic group if the group $\Gamma$ is hyperbolic for
one (or, equivalently, for any) word distance, and if the distance
$d$ is left-invariant, hyperbolic, and quasi-isometric to one (or
equivalently, any) word distance. Such a distance $d$ does not have
to be geodesic, but it is quasi-geodesic since geodesics for a given
word distance form a system of quasi-geodesics for $d$, going from
any point to any point.

Let $(\Gamma,d)$ be a metric hyperbolic group. The
left-multiplication by elements of $\Gamma$ is isometric. Hence,
$\Gamma$ acts by homeomorphisms on its compactifications
$\Gamma\cup\partial \Gamma$ and $\Gamma\cup\partial_B \Gamma$.
Moreover, any infinite order element $g\in \Gamma$ acts
hyperbolically on $\Gamma\cup\partial \Gamma$: it has two fixed
points at infinity $g^-$ and $g^+$, the points in $\Gamma \cup
\partial\Gamma \setminus\{g^-\}$ are attracted to $g^+$ by forward
iteration of $g$, and the points in $\Gamma \cup \partial\Gamma
\setminus\{g^+\}$ are attracted to $g^-$ by backward iteration of
$g$.

\begin{definition}
\label{def:cocycle}
Consider an action of a group $\Gamma$ on a space $Z$. A function
$c:\Gamma\times Z \to \R$ is a cocycle if, for any $g,h\in \Gamma$
and any $\xi\in Z$,
  \begin{equation}
  \label{eq:cocycle}
  c(gh,\xi) = c(g,h\xi) + c(h, \xi).
  \end{equation}
The cocycle is H\"older-continuous if $Z$ is a metric space and each
function $\xi \mapsto c(g,\xi)$ is H\"older-continuous.
\end{definition}
There is a choice to be made in the definition of cocycles, since one may
compose with $g$ or $g^{-1}$. Our definition is the most customary. With this
definition, the map $c_B:\Gamma\times \partial_B \Gamma \to \R$ given by
$c_B(g,\xi)=h_\xi(g^{-1})$ is a cocycle, called the Busemann cocycle.

A subgroup $H$ of $\Gamma$ is nonelementary if its action on $\partial\Gamma$
does not fix a finite set. Equivalently, $H$ is not virtually the trivial
group or $\Z$. We say that a probability measure $\mu$ on $\Gamma$ is
nonelementary if the subgroup $\Gamma_\mu$ generated by its support is itself
nonelementary.

Let $\mu$ be a probability measure on $\Gamma$. Since $\Gamma$ acts by
homeomorphisms on the compact space $\partial \Gamma$, it admits a stationary
measure: there exists a probability measure $\nu$ on $\partial \Gamma$ such
that $\mu* \nu=\nu$, i.e., $\sum_{g\in \Gamma} \mu(g) g_* \nu = \nu$. If
$\mu$ is nonelementary, this measure is unique, and has no atom
(see~\cite{kaimanovich_poisson}). It is also the exit measure of the
corresponding random walk $X_n=g_1\dotsm g_n$: almost every trajectory
$X_n(\omega)$ converges to a point $X_\infty(\omega) \in
\partial\Gamma$, and moreover the distribution of $X_\infty$ is
precisely $\nu$.

In the same way, since $\Gamma$ acts on $\partial_B \Gamma$, it
admits a stationary measure $\nu_B$ there. This measure is not unique
in general, even if $\mu$ is nonelementary. However, all such
measures project under $\pi_B$ to the unique stationary measure on
$\partial\Gamma$.

\subsection{The drift}

Let $(\Gamma,d)$ be a metric hyperbolic group. Consider a probability
measure $\mu$ on $\Gamma$, with finite first moment $L(\mu)$ (defined
in~\eqref{defLmu}). The drift of the random walk has been defined
in~\eqref{eq:def_h_ell} as $\ell(\mu)=\lim L(\mu^{*n})/n$. Let
$X_n=g_1\dotsm g_n$ be the position at time $n$ of the random walk
generated by $\mu$ (where the $g_i$ are independent and distributed
according to $\mu$). Then, almost surely, $\ell(\mu) = \lim
\abs{X_n}/n$.

The drift also admits a description in terms of the Busemann boundary. The
following result is well-known (compare with~\cite[Theorem
18]{karlsson_ledrappier_noncommutative}).
\begin{prop}
\label{prop:express_drift}
Let $(\Gamma,d)$ be a metric hyperbolic group. Let $\mu$ be a
nonelementary probability measure on $\Gamma$ with finite first
moment. Let $\nu_B$ be a $\mu$-stationary measure on $\partial_B
\Gamma$. Then
  \begin{equation}
  \label{eq:asymp_ell}
  \ell(\mu) =\int_{\Gamma\times\partial_B \Gamma} c_B(g,\xi)\dd\mu(g) \dd\nu_B(\xi).
  \end{equation}
\end{prop}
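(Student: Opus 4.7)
The plan is to first compute $A := \int_{\Gamma \times \partial_B \Gamma} c_B(g,\xi) \, d\mu(g) \, d\nu_B(\xi)$ using the cocycle identity and stationarity, then to identify $A$ with $\ell(\mu)$ via a hyperbolic-geometry argument combined with the standard a.s.\ limit $\abs{X_n}/n \to \ell(\mu)$.

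First I would check well-definedness: the bound $\abs{c_B(g,\xi)} = \abs{h_\xi(g^{-1})} \leq d(e, g^{-1}) = \abs{g}$ together with $L(\mu) < \infty$ shows that $A$ is finite. Next, iterating the cocycle identity \eqref{eq:cocycle} yields
\begin{equation*}
c_B(X_n, \xi) = \sum_{k=1}^n c_B(g_k,\, g_{k+1} \cdots g_n\, \xi).
\end{equation*}
For each $k$, the shifted point $\eta_k := g_{k+1}\cdots g_n\,\xi$ has distribution $\mu^{*(n-k)}*\nu_B = \nu_B$ by iterated stationarity, and is independent of $g_k$. Each summand thus contributes $A$ to the expectation, so $\E[c_B(X_n, \xi)] = nA$ for every $n$, the expectation being taken over $(g_1,\dots,g_n,\xi) \sim \mu^{\otimes n}\otimes \nu_B$.

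The heart of the argument is the geometric step: I would show that for $(\Pbb \otimes \nu_B)$-a.e.\ pair $(\omega, \xi)$, $c_B(X_n, \xi) = \abs{X_n} + O(1)$ almost surely. By the standard convergence theorem for nonelementary random walks, $X_n^{-1}$ (whose marginal law at each time is that of the $\check\mu$-walk) converges a.s.\ to a boundary point $X_\infty^- \in \partial \Gamma$ with non-atomic distribution. Since $\nu := \pi_B \nu_B$ also has no atoms (being the unique $\mu$-stationary measure on $\partial\Gamma$), for $(\Pbb \otimes \nu_B)$-a.e.\ $(\omega, \xi)$ one has $\pi_B \xi \neq X_\infty^-(\omega)$, and hence $\limsup_n \gromprod{X_n^{-1}}{\xi}{e} < \infty$, since distinct boundary points of a hyperbolic space have finite Gromov product. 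The hyperbolic identity $d(y, z) = d(e,y) + d(e,z) - 2\gromprod{y}{z}{e} + O(\delta)$ (with extra additive error when $d$ is only quasi-geodesic), applied for $z \to \xi$, then gives $h_\xi(X_n^{-1}) = \abs{X_n} + O(1)$, i.e.\ $c_B(X_n, \xi) = \abs{X_n} + O(1)$. Combined with $\abs{X_n}/n \to \ell(\mu)$ a.s.\ (Kingman applied to the subadditive sequence $L(\mu^{*n})$), this yields $c_B(X_n, \xi)/n \to \ell(\mu)$ a.s.

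To close, I would invoke uniform integrability: $\abs{c_B(X_n, \xi)/n} \leq \abs{X_n}/n \leq (1/n)\sum_{k=1}^n \abs{g_k}$, and the latter converges in $L^1$ to $L(\mu)$ by the strong law of large numbers, so $c_B(X_n, \xi)/n$ is uniformly integrable. Hence $A = \E[c_B(X_n, \xi)]/n \to \ell(\mu)$, giving $A = \ell(\mu)$. The main obstacle is the geometric step: the a priori bound $\abs{c_B(X_n, \xi)} \leq \abs{X_n}$ carries no definite sign, and it is only hyperbolicity, together with the non-atomicity of the boundary distributions forced by nonelementarity of $\mu$, that keeps the Gromov overlap $\gromprod{X_n^{-1}}{\xi}{e}$ bounded for typical pairs and thereby pins down the cocycle to $+\abs{X_n} + O(1)$.
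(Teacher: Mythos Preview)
Your overall strategy coincides with the paper's: expand $\E[c_B(X_n,\xi)]$ via the cocycle relation and stationarity to get $nA$, identify $c_B(X_n,\xi)$ with $\abs{X_n}$ up to lower order, and pass to the limit using uniform integrability of $\abs{X_n}/n$.

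There is, however, a genuine gap in your geometric step. You assert that $X_n^{-1}$ converges almost surely to a boundary point $X_\infty^-\in\partial\Gamma$, invoking the convergence theorem for the $\check\mu$-walk. But the process $(X_n^{-1})_{n\ge 0}$ is \emph{not} a right random walk: one has $X_{n+1}^{-1}=g_{n+1}^{-1}X_n^{-1}$, so increments act on the left. In a free group, for instance, the initial segment of the reduced word $X_n^{-1}=g_n^{-1}\dotsm g_1^{-1}$ changes at every step, and the sequence does not converge in $\partial\Gamma$. The fact that $X_n^{-1}$ and the $\check\mu$-walk share the same one-time marginals does not yield almost-sure convergence of the former. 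Consequently your claim that $\limsup_n \gromprod{X_n^{-1}}{\xi}{e}<\infty$ is unjustified (and is in general false, even for a.e.\ $\xi$).

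A clean repair uses exactly the equality of marginals you noted. Let $\tilde X_n$ be the genuine $\check\mu$-walk; for each fixed $n$ the pairs $(X_n^{-1},\xi)$ and $(\tilde X_n,\xi)$ have the same law under $\Pbb\otimes\nu_B$, so $\E[c_B(X_n,\xi)]=\E[h_\xi(\tilde X_n)]$. Now $\tilde X_n$ \emph{does} converge a.s.\ to a non-atomic limit $\tilde X_\infty\in\partial\Gamma$; hence for $(\tilde\Pbb\otimes\nu_B)$-a.e.\ $(\omega,\xi)$ one has $\tilde X_\infty\ne\pi_B\xi$, so $\gromprod{\tilde X_n}{\pi_B\xi}{e}$ stays bounded and $h_\xi(\tilde X_n)=\abs{\tilde X_n}+O(1)$. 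Since $\abs{\tilde X_n}/n\to\ell(\check\mu)=\ell(\mu)$ a.s., uniform integrability (exactly as in your last paragraph) gives $A=\lim_n \E[h_\xi(\tilde X_n)]/n=\ell(\mu)$. The paper's own write-up of this step is terse and phrases the non-atomicity argument in terms of the convergence of $X_n$ rather than of the reflected walk; in either formulation the substance is that the Gromov product entering $h_\xi$ is $o(n)$, which the marginal argument above makes rigorous.
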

\begin{proof}
Let $X_n$ be the position of the random walk at time $n$. Using the
cocycle property of the Busemann cocycle, we have
  \begin{align*}
  \int c_B(X_n(\omega),\xi) \dd\Pbb(\omega) \dd\nu_B(\xi)
  &=\int c_B(g_1\dotsm g_n, \xi) \dd\mu(g_1) \dotsm \dd\mu(g_n) \dd\nu_B(\xi)
  \\&
  =\sum_{k=1}^n \int c_B(g_k, g_{k+1}\dotsm g_n \xi)  \dd\mu(g_k)\dotsm \dd\mu(g_n)\dd\nu_B(\xi).
  \end{align*}
Since the measure $\nu_B$ is stationary, the point $g_{k+1}\dotsm g_n
\xi$ is distributed according to $\nu_B$. Hence, the terms in the
above sum do not depend on $k$. We get
  \begin{equation}
  \label{eq:equal_drift}
  \int_{\Gamma\times\partial_B \Gamma} c_B(g,\xi)\dd\mu(g) \dd\nu_B(\xi)
  = \frac{1}{n} \int c_B(X_n(\omega),\xi) \dd\Pbb(\omega) \dd\nu_B(\xi).
  \end{equation}
We have $\abs{c_B(X_n,\xi)}/n \leq \abs{X_n}/n$, which converges in
$L^1$ and almost surely to $\ell$. Hence, the sequence of functions
$c_B(X_n(\omega),\xi)/n$ is uniformly integrable on $\Omega\times
\partial_B \Gamma$. Moreover, $X_n$ converges almost surely to a
point on the boundary $\partial\Gamma$, distributed according to the
exit measure, which has no atom. It follows that, for all $\xi$, the
trajectory $X_n(\omega)$ converges almost surely to a point different
from $\pi_B(\xi)$. This implies that, almost surely, one has
$c_B(X_n, \xi) = \abs{X_n}+O(1)$, giving in particular $c_B(X_n,\xi)/
n\to \ell$. The result follows by taking the limit in $n$ in the
equality~\eqref{eq:equal_drift}.
\end{proof}

This formula easily implies that the drift depends continuously on
the measure, as explained in~\cite{erschler_kaim}.

\begin{prop}
\label{prop:ell_continuous}
Let $(\Gamma,d)$ be a metric hyperbolic group. Consider a sequence of
probability measures $\mu_i$ with finite first moment, converging
simply to a nonelementary probability measure $\mu$ (i.e.,
$\mu_i(g)\to \mu(g)$ for all $g\in \Gamma$). Assume moreover that
$L(\mu_i)\to L(\mu)$. Then $\ell(\mu_i) \to \ell(\mu)$.
\end{prop}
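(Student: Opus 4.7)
The plan is to reduce to the integral representation of Proposition~\ref{prop:express_drift} and pass to the limit. First, I would observe that for all sufficiently large $i$ the measure $\mu_i$ is itself nonelementary: $\Gamma_\mu$ contains two hyperbolic elements with disjoint fixed-point pairs on $\partial\Gamma$, each a word in finitely many elements of $\Supp(\mu)$; the pointwise convergence $\mu_i(g)\to\mu(g)$ forces these finitely many factors to lie in $\Supp(\mu_i)$ for large $i$, so $\Gamma_{\mu_i}$ is nonelementary. Proposition~\ref{prop:express_drift} then supplies $\mu_i$-stationary probability measures $\nu_{B,i}$ on the compact space $\partial_B\Gamma$ with
\begin{equation*}
\ell(\mu_i)=\int_{\Gamma\times\partial_B\Gamma} c_B(g,\xi)\dd\mu_i(g)\dd\nu_{B,i}(\xi).
\end{equation*}

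Since probability measures on the compact space $\partial_B\Gamma$ form a weakly compact set, along any subsequence one can extract a further subsequence $\nu_{B,i_k}\to\nu_B$. I would then verify that $\nu_B$ is automatically $\mu$-stationary: for every $\phi\in C(\partial_B\Gamma)$, the equality $\int \phi\dd\nu_{B,i_k}=\sum_g \mu_{i_k}(g)\int \phi(g\xi)\dd\nu_{B,i_k}(\xi)$ passes to the limit, using weak convergence on the left, and on the right the uniform bound $\norm{\phi}_\infty$ together with Scheff\'e's lemma (which upgrades the simple convergence $\mu_{i_k}\to\mu$ on the countable set $\Gamma$ to total variation convergence). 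Applying Proposition~\ref{prop:express_drift} to $\mu$ with this stationary measure then identifies $\int c_B\dd\mu\dd\nu_B$ with $\ell(\mu)$.

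It remains to show $\int c_B\dd\mu_{i_k}\dd\nu_{B,i_k}\to\int c_B\dd\mu\dd\nu_B$, and this is the main obstacle: $c_B$ is not bounded, only $\abs{c_B(g,\xi)}\leq\abs{g}$. I would split at some radius $N$. The contribution of $\{g\st\abs{g}\leq N\}$ is a finite sum of terms $\mu_{i_k}(g)\int c_B(g,\xi)\dd\nu_{B,i_k}(\xi)$; since $\xi\mapsto c_B(g,\xi)=h_\xi(g^{-1})$ is continuous on $\partial_B\Gamma$, weak convergence of $\nu_{B,i_k}$ combined with $\mu_{i_k}(g)\to\mu(g)$ handles each term. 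The tail over $\{\abs{g}>N\}$ is dominated by $\sum_{\abs{g}>N}\mu_{i_k}(g)\abs{g}$, and the hypotheses $\mu_i(g)\to\mu(g)$ and $L(\mu_i)\to L(\mu)<\infty$ are precisely the conditions for Scheff\'e-type uniform integrability of $\abs{\cdot}$ under the family $(\mu_i)$, so this tail is arbitrarily small uniformly in $k$ for $N$ large. The analogous estimate controls the corresponding tail for $\mu$. Letting first $k\to\infty$ and then $N\to\infty$ yields $\ell(\mu_{i_k})\to\ell(\mu)$. Since every subsequence of $(\nu_{B,i})$ has a further weakly convergent subsequence and the limit $\ell(\mu)$ does not depend on the choice of stationary measure, the full sequence $\ell(\mu_i)$ converges to $\ell(\mu)$.
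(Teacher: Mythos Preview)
Your proof is correct and follows essentially the same route as the paper: extract a weak limit of stationary measures on $\partial_B\Gamma$, check it is $\mu$-stationary, and pass to the limit in the integral formula of Proposition~\ref{prop:express_drift} using the hypothesis $L(\mu_i)\to L(\mu)$ as a uniform domination for the tail. Your version is in fact slightly more careful, since you explicitly verify that $\mu_i$ is nonelementary for large $i$ (needed to invoke Proposition~\ref{prop:express_drift} for $\mu_i$) and spell out the Scheff\'e-type uniform integrability argument that the paper compresses into the phrase ``uniform domination''.
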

\begin{proof}
Let $\nu_i$ be stationary measures for $\mu_i$ on $\partial_B\Gamma$.
Taking a subsequence if necessary, we may assume that $\nu_i$
converges to a limiting measure $\nu$. By continuity of the action on
the boundary, it is stationary for $\mu$.

For each $g\in \Gamma$, the quantity $\int_{\partial_B \Gamma}
c_B(g,\xi) \dd\nu_i(\xi)$ converges to $\int_{\partial_B \Gamma}
c_B(g,\xi) \dd\nu(\xi)$ since $\xi\mapsto c_B(g,\xi)$ is continuous.
Averaging over $g$ (and using the assumption $L(\mu_i)\to L(\mu)$ to
get a uniform domination), we deduce that
  \begin{equation*}
  \sum_{g\in \Gamma} \mu_i(g) \int_{\partial_B \Gamma} c_B(g,\xi) \dd\nu_i(\xi)
  \to
  \sum_{g\in \Gamma} \mu(g) \int_{\partial_B \Gamma} c_B(g,\xi) \dd\nu(\xi).
  \end{equation*}
Together with the formula~\eqref{eq:asymp_ell} for the drift, this
completes the proof.
\end{proof}

In this proposition, it is important that $\mu$ is nonelementary: the result
is wrong otherwise. For instance, in the infinite dihedral group $\Z \rtimes
\Z/2$, the measures $\mu_i = (1-2^{-i}) \delta_{(1,0)} + 2^{-i}
\delta_{(0,1)}$ have zero drift since the $\Z/2$ element symmetrizes
everything in $\Z$, while the limiting measure $\mu=\delta_{(1,0)}$ has drift
$1$. The reason is the non-uniqueness of the stationary measure for $\mu$ on
the boundary.

\subsection{The entropy}

Let $\Gamma$ be a countable group. Consider a probability measure $\mu$ on
$\Gamma$, with finite time one entropy $H(\mu)$ (defined in~\eqref{defLmu}).
The entropy of the random walk has been defined in~\eqref{eq:def_h_ell} as
$h(\mu)=\lim H(\mu^{*n})/n$. Let $X_n=g_1\dotsm g_n$ be the position at time
$n$ of the random walk generated by $\mu$ (where the $g_i$ are independent
and distributed according to $\mu$). Then, almost surely, $h(\mu) = \lim
(-\log \mu^{*n}(X_n))/n$. The fundamental inequality~\eqref{eq:fundam} shows
that if $h>0$ then $\ell>0$.

The entropy has several equivalent characterizations. The first one
is in terms of the size of the typical support of the random walk:
This support has size roughly $e^{hn}$. The following lemma follows
from~\cite[Proposition~1.13]{haissinsky}.
\begin{lem}
\label{lem:charac_entropy} Consider a probability measure $\mu$ with
$H(\mu)<\infty$ on a countable group. Let $h=h(\mu)$ be its asymptotic
entropy. Let $\eta>0$ and $\epsilon>0$.
\begin{enumerate}
\item For large enough $n$, there exists a subset $K_n$ of $\Gamma$ with
    $\mu^{*n}(K_n) \geq 1-\eta$ and $\Card{K_n} \leq e^{(h+\epsilon)n}$.
\item For large enough $n$, there exists no subset $K_n$ of $\Gamma$ with
    $\mu^{*n}(K_n) \geq \eta$ and $\Card{K_n} \leq e^{(h-\epsilon)n}$.
\end{enumerate}
\end{lem}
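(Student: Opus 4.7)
The plan is to derive both statements from the Kaimanovich--Vershik almost sure characterization of the entropy: since $H(\mu)<\infty$, one has $(-\log \mu^{*n}(X_n))/n \to h$ almost surely (this is the statement recalled just before the lemma). In particular, for every $\epsilon'>0$, the probability of the event $\{e^{-(h+\epsilon')n} \leq \mu^{*n}(X_n) \leq e^{-(h-\epsilon')n}\}$ tends to $1$.

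For part~(1), I would simply take
\begin{equation*}
K_n = \{g\in\Gamma \st \mu^{*n}(g) \geq e^{-(h+\epsilon)n}\}.
\end{equation*}
The cardinality bound $\Card{K_n}\leq e^{(h+\epsilon)n}$ is immediate from the trivial inequality $\Card{K_n}\cdot e^{-(h+\epsilon)n}\leq \sum_g \mu^{*n}(g)=1$. For the mass bound, note that $X_n\in K_n$ is exactly the event $-\log \mu^{*n}(X_n)/n \leq h+\epsilon$, whose probability tends to $1$ by the almost sure convergence; hence $\mu^{*n}(K_n)\geq 1-\eta$ for $n$ large enough.

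For part~(2), I would argue by contradiction: assume there exist arbitrarily large $n$ and sets $K_n\subset\Gamma$ with $\Card{K_n}\leq e^{(h-\epsilon)n}$ and $\mu^{*n}(K_n)\geq \eta$. Split $K_n=K_n^{\mathrm{low}}\sqcup K_n^{\mathrm{high}}$ according to whether $\mu^{*n}(g)\leq e^{-(h-\epsilon/2)n}$ or $>e^{-(h-\epsilon/2)n}$. The low part contributes at most
\begin{equation*}
\mu^{*n}(K_n^{\mathrm{low}}) \leq \Card{K_n}\cdot e^{-(h-\epsilon/2)n} \leq e^{-\epsilon n/2},
\end{equation*}
which tends to $0$. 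The high part is contained in $\{g\st \mu^{*n}(g)>e^{-(h-\epsilon/2)n}\}$, so $\mu^{*n}(K_n^{\mathrm{high}})\leq \Pbb(-\log \mu^{*n}(X_n)/n < h-\epsilon/2)\to 0$ by almost sure convergence applied with $\epsilon'=\epsilon/2$. Summing the two bounds contradicts $\mu^{*n}(K_n)\geq\eta$.

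There is essentially no obstacle here: the lemma is a direct translation of the almost sure equipartition into the language of covering sets, and the only real input is the Kaimanovich--Vershik theorem identifying $h$ as the almost sure limit of $-\log\mu^{*n}(X_n)/n$, which is precisely the statement cited from~\cite{haissinsky}. The only minor subtlety is to use two different values $\epsilon$ and $\epsilon/2$ when splitting $K_n$ in part~(2), so that the ``low-mass'' part really shrinks despite $K_n$ being only barely smaller than $e^{hn}$.
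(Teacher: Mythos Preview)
Your proof is correct and is the standard argument. The paper itself does not give a proof of this lemma at all: it simply states that the result follows from \cite[Proposition~1.13]{haissinsky}. Your derivation from the almost sure convergence $-\log\mu^{*n}(X_n)/n\to h$ is exactly the intended mechanism, and the splitting of $K_n$ into low-mass and high-mass parts with the intermediate threshold $e^{-(h-\epsilon/2)n}$ is the right way to handle part~(2).
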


Another description is in terms of the Poisson boundary of the walk.
To avoid general definitions, let us only state this description for
measures on hyperbolic groups. The following proposition is a
consequence of~\cite{kaimanovich_poisson}.

\begin{prop}
\label{prop:express_entropy}
Let $\Gamma$ be a hyperbolic group. Let $\mu$ be a nonelementary
probability measure on $\Gamma$ with $H(\mu)<\infty$. Let $\nu$ be
its unique stationary measure on $\partial \Gamma$. Define the Martin
cocycle on $\Gamma \times \partial\Gamma$ by $c_M(g,\xi)=-\log (\dd
g^{-1}_* \nu / \dd\nu)(\xi)$. Then
  \begin{equation}
  \label{eq:asymp_h}
  h(\mu) \geq \int_{\Gamma \times \partial \Gamma} c_M(g,\xi) \dd\mu(g) \dd\nu(\xi),
  \end{equation}
with equality if $\mu$ has a logarithmic moment.
\end{prop}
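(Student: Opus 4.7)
The approach is to interpret $\int c_M\dd\mu\dd\nu$ as the Furstenberg entropy of the $\mu$-stationary space $(\partial\Gamma,\nu)$, to express the gap $H(\mu^{*n})-n\int c_M\dd\mu\dd\nu$ as a nonnegative conditional entropy, and then to invoke Kaimanovich's identification~\cite{kaimanovich_poisson} of the Poisson boundary with the geometric boundary.

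The first ingredient is the cocycle identity
\begin{equation*}
\int c_M(X_n,\xi)\dd\Pbb(\omega)\dd\nu(\xi) = n\int c_M(g,\xi)\dd\mu(g)\dd\nu(\xi),
\end{equation*}
obtained by exactly the same stationarity trick as in the proof of Proposition~\ref{prop:express_drift}: decompose $c_M(X_n,\xi)$ via its cocycle property into a sum of terms $c_M(g_k,g_{k+1}\dotsm g_n\xi)$, and observe that, under $\mu^{\otimes(n-k)}\otimes\nu$, the variable $g_{k+1}\dotsm g_n\xi$ again has law $\nu$. The second ingredient is an exact entropy formula: since the conditional law of the exit point $X_\infty$ given $X_n=g$ is $g_*\nu$, the joint law of $(X_n,X_\infty)$ on $\Gamma\times\partial\Gamma$ has density $\mu^{*n}(g)(\dd g_*\nu/\dd\nu)(\xi)$ against counting$\,\times\,\nu$. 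Using the $g$-invariance of KL-divergence to rewrite $\int c_M(g,\xi)\dd\nu(\xi)=D_{KL}(g_*\nu\,\|\,\nu)$, and computing the joint entropy via both marginals, one gets
\begin{equation*}
H(\mu^{*n}) = H(X_n\mid X_\infty) + n\int c_M\dd\mu\dd\nu.
\end{equation*}
Since $H(X_n\mid X_\infty)\geq 0$, dividing by $n$ and letting $n\to\infty$ proves the inequality $h(\mu)\geq\int c_M\dd\mu\dd\nu$.

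For the equality under a finite logarithmic moment, the same identity shows that equality is equivalent to $H(X_n\mid X_\infty)/n\to 0$, which is Kaimanovich's entropy criterion for $(\partial\Gamma,\nu)$ to be a realization of the Poisson boundary of $(\Gamma,\mu)$. For a nonelementary random walk on a hyperbolic group with finite entropy and finite logarithmic moment, this identification is precisely the main theorem of~\cite{kaimanovich_poisson}, obtained via the strip criterion applied to geodesic strips between pairs of boundary points. This last step is the only genuinely deep point of the argument; once it is granted, the remainder of the proof consists of purely formal manipulations of the cocycle.
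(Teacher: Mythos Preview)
The paper does not give its own proof of this proposition; it simply records the statement as a consequence of~\cite{kaimanovich_poisson}. Your sketch is a correct and standard unpacking of that citation: the inequality is the general fact that the Furstenberg entropy of any $\mu$-boundary bounds $h(\mu)$ from below (your identity $H(\mu^{*n})=H(X_n\mid X_\infty)+n\int c_M\dd\mu\dd\nu$ is exactly this), and the equality case is Kaimanovich's identification of the Poisson boundary with $(\partial\Gamma,\nu)$ under a logarithmic moment. One small remark: the paper does offer, in Remark~\ref{rmk:recover_kaim}, an alternative self-contained route to the equality case via its Theorem~\ref{thm:h_grande_limite}, which you could mention as a way to avoid importing the strip criterion wholesale.
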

When $\mu$ has a logarithmic moment, this proposition has a very similar
flavor to Proposition~\ref{prop:express_drift} expressing the drift of a
random walk. Indeed, for symmetric measures, \cite{BHM_2} interprets
Proposition~\ref{prop:express_entropy} as a special case of
Proposition~\ref{prop:express_drift}, for a distance $d=d_\mu$ related to the
random walk, the Green distance, which we defined in Theorem~\ref{thm:BHM}.
This distance is hyperbolic if $\mu$ is admissible and has a superexponential
moment, by~\cite{ancona, gouezel_infinite_support}. It is not geodesic in
general, but this is not an issue since we were careful enough to state
Proposition~\ref{prop:express_drift} without this assumption. The Busemann
cocycle for the Green distance is precisely the Martin cocycle.

An important difference between the formulas~\eqref{eq:asymp_ell} for the
drift and~\eqref{eq:asymp_h} for the entropy is that, in the latter
situation, the cocycle $c_M$ depends on the measure $\nu$ (and, therefore, on
$\mu$). This makes it more complicated to prove continuity statements such as
Proposition~\ref{prop:ell_continuous} for the entropy. Nevertheless, Erschler
and Kaimanovich proved in~\cite{erschler_kaim} that, in hyperbolic groups,
the entropy also depends continuously on the measure. As $h(\mu) = \inf
H(\mu^{*n})/n$ by subadditivity, it is easy to prove that when $\mu_i \to
\mu$ one has $\limsup h(\mu_i) \leq h(\mu)$. The main difficulty to prove the
continuity is to get lower bounds. We will need a slightly stronger (and more
pedestrian) version of the results of~\cite{erschler_kaim} to prove
Theorem~\ref{thm:tends_to_v}. Although our argument may seem very different
at first sight from the arguments in~\cite{erschler_kaim}, the techniques are
in fact closely related (an illustration is that we can recover with our
techniques the result of Kaimanovich that, for measures with finite
logarithmic moment, equality holds in~\eqref{eq:asymp_h}, i.e., the Poisson
boundary coincides with the geometric boundary, see
Remark~\ref{rmk:recover_kaim}). Our main criterion to get lower bounds on the
entropy is the following. We write $\Sbb^k=\{g\in \Gamma \st \abs{g}\in (k-1,
k]\}$ for the thickened sphere, so that the union of these spheres covers the
whole group.

\begin{thm}
\label{thm:h_grande_limite}
Let $(\Gamma,d)$ be a metric hyperbolic group. Let $\mu_i$ be a
sequence of nonelementary probability measures on $\Gamma$ with
$H(\mu_i)<\infty$. Let $\nu_i$ be the unique stationary measure for
$\mu_i$ on $\partial \Gamma$. Assume that:
\begin{enumerate}
\item The limit points of $\nu_i$ have no atom.
\item The sequence
  \begin{equation}
  \label{eq:def_hn}
  h_i = \adjustlimits\sum_k \sum_{g\in \Sbb^k} \mu_i(g) (-\log(\mu_i(g)/\mu_i(\Sbb^k)))
  \end{equation}
  tends to infinity.
\end{enumerate}
Then $\liminf h(\mu_i)/h_i \geq 1$.
\end{thm}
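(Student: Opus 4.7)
The key observation is that $h_i = H(g_1 \mid K_1)$, where $K_1$ is the shell of $g_1$ (i.e.\ $K_1 = k$ iff $g_1 \in \Sbb^k$), so $H(\mu_i) = h_i + H(K_1)$. My plan is to recover this conditional entropy as asymptotic entropy via the chain-rule identity
\[
H(X_n) = n H(\mu_i) - \sum_{k=1}^n H(g_1 \mid X_k),
\]
obtained by combining $H(g_1, X_n) = H(\mu_i) + H(X_{n-1})$ (left-invariance plus independence) with $H(g_1, X_n) = H(X_n) + H(g_1 \mid X_n)$. Since $\sigma(X_{k+1}) \subset \sigma(X_k, g_{k+1})$ with $g_{k+1}$ independent of $g_1$, the sequence $H(g_1 \mid X_k)$ is monotone nondecreasing in $k$, hence convergent, and the identity above forces its limit to equal $H(\mu_i) - h(\mu_i)$. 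Applying the chain rule to the shell gives $H(g_1 \mid X_k) \leq H(K_1) + H(g_1 \mid X_k, K_1)$, so in the limit
\[
h(\mu_i) \geq h_i - L_i', \qquad L_i' \coloneqq \lim_{k \to \infty} H(g_1 \mid X_k, K_1).
\]
Since $h_i \to \infty$, it suffices to prove a uniform bound $L_i' \leq C$ with $C$ independent of $i$.

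To bound $L_i'$, I would pass to the a.s.\ limit $X_k \to X_\infty \in \partial\Gamma$ with distribution $\nu_i$, reducing to an estimate on $H(g_1 \mid X_\infty, K_1)$. By Bayes' rule, the posterior distribution of $g_1$ given $(X_\infty = \xi,\, K_1 = k)$ is supported on $\Sbb^k$ with weights proportional to $\mu_i(g)\,(dg_* \nu_i / d\nu_i^{(k)})(\xi)$, where $\nu_i^{(k)} = \sum_{g \in \Sbb^k}(\mu_i(g)/\mu_i(\Sbb^k))\, g_* \nu_i$ is the conditional law of $X_\infty$ given $K_1 = k$. The shadow lemma in hyperbolic groups says that the translates $g_* \nu_i$ for distinct $g \in \Sbb^k$ concentrate on essentially disjoint shadow regions of $\partial\Gamma$ (the portions of the boundary visible from the identity through $g$); consequently, for a typical $\xi$ only $O(1)$ elements of $\Sbb^k$ carry significant posterior mass, and $H(g_1 \mid X_\infty = \xi, K_1 = k) = O(1)$. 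The hypothesis that limit points of $\nu_i$ have no atom is used exactly to make this $O(1)$ uniform in $i$: by compactness of the space of probability measures on $\partial\Gamma$, non-atomicity of all limit points is equivalent to a uniform-in-$i$ modulus of non-concentration, preventing $\nu_i$ from accumulating mass near fixed-point-like regions of elements of $\Sbb^k$ where the shadow decomposition would degenerate.

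The main obstacle is turning the qualitative non-atom assumption on limits of $\nu_i$ into a quantitative shadow-overlap estimate uniform in $i$. Classical shadow-lemma arguments are formulated for the Patterson--Sullivan measure, where Coornaert's explicit density estimates give direct control on shadow measures; transposing them to an arbitrary sequence of stationary measures $\nu_i$, using only non-atomicity of its limit points, is the technical heart of the proof. Once this uniform estimate is in place, one concludes $h(\mu_i) \geq h_i - C$, and dividing by $h_i \to \infty$ yields $\liminf h(\mu_i)/h_i \geq 1$.
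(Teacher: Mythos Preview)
Your information-theoretic reduction is sound and, once unpacked, is essentially a reformulation of the paper's argument. The identity $h(\mu_i)=H(\mu_i)-\lim_k H(g_1\mid X_k)$ combined with $\lim_k H(g_1\mid X_k)=H(g_1\mid\mathcal T)\leq H(g_1\mid X_\infty)$ (via the decreasing filtration $\sigma(X_k,g_{k+1},\dots)\downarrow\mathcal T$ and $\sigma(X_\infty)\subset\mathcal T$) gives exactly $h(\mu_i)\geq I(g_1;X_\infty)$, which is the content of Proposition~\ref{prop:express_entropy} used by the paper as its starting point. The chain-rule split by the shell $K_1$ then yields $h(\mu_i)\geq h_i-H(g_1\mid X_\infty,K_1)$, matching the paper's sphere decomposition.

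The genuine gap is the claimed uniform bound $H(g_1\mid X_\infty,K_1)\leq C$. The shadow argument you sketch does not give this. Under the hypothesis, one gets $(g_*\nu_i)(\boO(g,C))\geq 1-\epsilon$ uniformly, so for $\nu_i^{(k)}$-typical $\xi$ the posterior places mass $\geq 1-O(\epsilon)$ on the $\leq D$ elements whose shadow contains $\xi$; but the remaining mass $O(\epsilon)$ can be spread over all of $\Sbb^k\cap\Supp\mu_i$, contributing $O(\epsilon)\cdot\log\Card{\Sbb^k\cap\Supp\mu_i}$ to the conditional entropy. Averaging over $k$ gives $H(g_1\mid X_\infty,K_1)\leq C(\epsilon)+O(\epsilon)\,h_i$, not $O(1)$. (Concretely: with $\mu_i$ uniform on $M$ points of $\Sbb^k$ having disjoint shadows, the posterior at a typical $\xi$ has one atom of mass $\approx 1-\epsilon$ and $M-1$ atoms of mass $\approx\epsilon/M$, so entropy $\approx\epsilon\log M=\epsilon h_i$.) The non-atom hypothesis on limits of $\nu_i$ lets you make $\epsilon$ arbitrarily small, but only after choosing $C$ and $D$; it does not drive the leakage to zero for fixed shadow parameters.

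This weaker bound is still enough: it gives $h(\mu_i)\geq(1-O(\epsilon))h_i-C(\epsilon)$, hence $\liminf h(\mu_i)/h_i\geq 1-O(\epsilon)$ for every $\epsilon>0$. This is precisely the estimate the paper obtains, though by a more direct route: rather than analyzing the posterior, it lower-bounds the Furstenberg integral $\int(-\log\frac{dg^{-1}_*\nu_i}{d\nu_i})\,d\nu_i$ via the convexity lemma (Lemma~\ref{lem:minore_entropie}) applied with $A=g^{-1}\boO(g,C)$, and then uses Jensen together with the covering bound $\sum_{g\in\Sbb^k}\nu_i(\boO(g,C))\leq D$ to extract $(1-\epsilon)h_i-\log D-2e^{-1}$. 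So your approach is correct in spirit and equivalent in substance; you just need to replace the $O(1)$ target by the $(1-\epsilon)$-multiplicative one.
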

The quantity $h_i$ can be written
  \begin{equation*}
  h_i = \sum_{g\in \Gamma} \mu_i(g) (-\log \mu_i(g)) - \sum_k \mu_i(\Sbb^k) (-\log \mu_i(\Sbb^k)).
  \end{equation*}
The first term is the time one entropy $H(\mu_i)$ of the measure $\mu_i$. In
most reasonable cases, the second term is negligible. The theorem then states
that the asymptotic entropy $h(\mu_i)$ is comparable to the time one entropy
$H(\mu_i)$. In other words, if the measure is supported close to infinity,
and sufficiently spread out in the group (this is the meaning of the
assumption that the limit points of $\nu_i$ have no atom), then there are few
coincidences and the entropy does not decrease significantly with time.

To prove this theorem, we will use the following technical lemma.
\begin{lem}
\label{lem:minore_entropie}
On a probability space $(X,\mu)$, consider a nonnegative function $f$ with
average $1$. For any subset $A$ of $X$,
  \begin{equation*}
  \int_X (-\log f) \geq \mu(A) \paren*{-\log \int_A f} - 2 e^{-1}.
  \end{equation*}
\end{lem}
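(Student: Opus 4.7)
The plan is to split the integral as $\int_X (-\log f) = \int_A (-\log f) + \int_{A^c} (-\log f)$ and estimate each piece separately by an elementary inequality, losing at most $e^{-1}$ in each.

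For the integral over $A$, I would renormalize $\mu$ to a probability measure on $A$ and apply Jensen's inequality to the convex function $-\log$. This yields
\begin{equation*}
\frac{1}{\mu(A)}\int_A (-\log f) \dd\mu \geq -\log\paren*{\frac{1}{\mu(A)}\int_A f \dd\mu},
\end{equation*}
which rearranges to
\begin{equation*}
\int_A (-\log f) \dd\mu \geq \mu(A)\paren*{-\log \int_A f \dd\mu} + \mu(A)\log \mu(A).
\end{equation*}
Since the function $x\mapsto x\log x$ on $[0,1]$ attains its minimum $-e^{-1}$ at $x=e^{-1}$, the second term is at least $-e^{-1}$.

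For the integral over $A^c$, which may well be negative since $f$ can exceed $1$, I would use the elementary calibration $\log t \leq t/e$ valid for all $t>0$ (both sides agree at $t=e$ and a derivative comparison gives the rest). Equivalently $-\log f \geq -f/e$, so
\begin{equation*}
\int_{A^c} (-\log f) \dd\mu \geq -\frac{1}{e}\int_{A^c} f \dd\mu \geq -\frac{1}{e},
\end{equation*}
using $f\geq 0$ and $\int_X f \dd\mu =1$. Adding the two estimates yields exactly the claimed $-2e^{-1}$ remainder.

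There is no genuine obstacle here; the only thing to get right is picking the correct pointwise lower bound for $-\log f$ on $A^c$ (the naive $-\log f \geq 1-f$ fails to give a useful bound when $f$ is large, which is precisely what forces the use of $-\log f \geq -f/e$). Degenerate cases where $f$ vanishes on a set of positive measure in $A$ make both sides equal to $+\infty$, and can be handled with the usual convention or by a truncation argument.
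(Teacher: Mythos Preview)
Your proof is correct and follows essentially the same decomposition as the paper: split into $A$ and $A^c$, apply Jensen on $A$, and lose $e^{-1}$ on each piece. The only difference is the treatment of $A^c$: the paper reuses the same Jensen estimate there (observing that $-\log\int_{A^c} f \geq 0$ since $\int_{A^c} f \leq \int_X f = 1$), whereas you use the pointwise bound $\log t \leq t/e$; both routes give the same $-e^{-1}$ loss.
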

\begin{proof}
As the function $x\mapsto -\log x$ is convex, Jensen's inequality
gives $\int (-\log f) \geq -\log (\int f)$. The last quantity
vanishes when $\int f=1$.

Let $B\subset X$. Write $a=\int_B f \dd\mu/\mu(B)$. The measure
$\dd\mu/\mu(B)$ is a probability measure on $B$, and the function
$f/a$ has integral $1$ for this measure. The previous inequality
gives $\int_B (-\log(f/a)) \dd\mu/\mu(B) \geq 0$, that is,
  \begin{equation*}
  \int_B (-\log f) \dd\mu \geq -\mu(B)\log a = -\mu(B)\log\paren*{\int_B f}+\mu(B)\log \mu(B).
  \end{equation*}
The quantity $\mu(B)\log \mu(B)$ is bounded from below by
$\inf_{[0,1]} x \log x=-e^{-1}$. Therefore,
  \begin{equation*}
  \int_B (-\log f) \dd\mu \geq -\mu(B) \log\paren*{\int_B f} - e^{-1}.
  \end{equation*}

We apply this inequality to the complement $A^c$ of $A$. As $-\log
\paren*{\int_{A^c} f} \geq 0$, we get a lower bound $-e^{-1}$. Let us also
apply this inequality to $A$, and add the results. We obtain
  \begin{equation*}
  \int_X (-\log f)\dd\mu \geq -\mu(A) \log\paren*{\int_A f} -2e^{-1}.
  \qedhere
  \end{equation*}
\end{proof}

We will use the notion of shadow, due to Sullivan and considered in this
context by Coornaert~\cite{coornaert}. Let $C>0$ be large enough. The shadow
$\boO(g,C)$ of $g\in \Gamma$ is $\{\xi \in \partial \Gamma \st
\gromprod{g}{\xi}{e} \geq \abs{g}-C\}$. In geometric terms (and assuming the
space is geodesic), this is essentially the trace at infinity of geodesics
originating from $e$ and going through the ball $B(g, C)$. We will use the
following properties of shadows~\cite{coornaert}:
\begin{enumerate}
\item Their covering number is finite. More precisely, there
    exists $D>0$ (depending on $C$) such that, for any integer
    $k$, for any $\xi \in \partial \Gamma$,
    \begin{equation*}
    \Card{ \{g\in \Sbb^k \st \xi \in \boO(g,C)\} } \leq D.
    \end{equation*}
\item The preimages of shadows are large. More precisely, for any $\eta>0$,
    there exists $C>0$ such that, for all $g\in \Gamma$, the complement of
    $g^{-1} \boO(g,C)$ has diameter at most $\eta$ (for a fixed visual
    distance on the boundary).
\end{enumerate}

\begin{proof}[Proof of Theorem~\ref{thm:h_grande_limite}]
Fix $\epsilon>0$. As the limit points of $\nu_i$ have no atom, there exists
$\eta>0$ such that any ball of radius $\eta$ in $\partial\Gamma$ has measure
at most $\epsilon$ for $\nu_i$, for $i$ large enough. We can then choose a
shadow size $C$ so that $g^{-1} \boO(g,C)$ has for all $g$ a complement with
diameter at most $\eta$. This yields $\nu_i(g^{-1} \boO(g,C))\geq
1-\epsilon$.

By~\eqref{eq:asymp_h}, the entropy of $\mu_i$ satisfies
  \begin{equation*}
  h(\mu_i) \geq \sum_{g\in \Gamma}\mu_i(g) \int_{\partial \Gamma}
    \paren*{-\log \frac{\dd g^{-1}_* \nu_i}{\dd\nu_i}(\xi) } \dd\nu_i(\xi).
  \end{equation*}

The function $f_{i,g}=\frac{\dd g^{-1}_* \nu_i}{\dd\nu_i}(\xi)$ is
nonnegative and has integral $1$. For any $A\subset \partial\Gamma$,
Lemma~\ref{lem:minore_entropie} gives
  \begin{align*}
  \int_{\partial \Gamma} \paren*{-\log \frac{\dd g^{-1}_* \nu_i}{\dd\nu_i}(\xi) } \dd\nu_i(\xi)
  &\geq -\nu_i(A) \log\paren*{\int_A \frac{\dd g^{-1}_* \nu_i}{\dd\nu_i}(\xi) \dd\nu_i(\xi)} - 2e^{-1}
  \\&=-\nu_i(A) \log (g^{-1}_*\nu_i(A)) - 2e^{-1}
  \\&
  =-\nu_i(A) \log(\nu_i(g A))-2e^{-1}.
  \end{align*}
Let us take $A=g^{-1} \boO(g ,C)$, so that $\nu_i(A) \geq
1-\epsilon$. Summing over $g$, we get
  \begin{equation}
  \label{eq:qsoidufpioqsdf}
  h(\mu_i) \geq (1-\epsilon) \sum_{g\in \Gamma} \mu_i(g) (-\log \nu_i(\boO(g,C))) - 2e^{-1}.
  \end{equation}

We split the sum according to the spheres $\Sbb^k$. Let $\Sigma_k =
\sum_{g\in \Sbb^k} \nu_i(\boO(g,C))$, it is at most $D$ since the
shadows have a covering number bounded by $D$. We have
  \begin{multline*}
  \sum_{g\in \Sbb^k} \mu_i(g) (-\log \nu_i(\boO(g,C)))
  \\
  =-\mu_i(\Sbb^k) \sum_{g\in \Sbb^k} \frac{\mu_i(g)}{\mu_i(\Sbb^k)}
    \left[ \log \paren*{ \frac{ \nu_i(\boO(g,C))}{\Sigma_k \mu_i(g)/\mu_i(\Sbb^k)}}
    + \log \Sigma_k + \log(\mu_i(g)/\mu_i(\Sbb^k)) \right].
  \end{multline*}
The point of this decomposition is that the function on $\Sbb^k$
given by $\phi : g\mapsto \frac{ \nu_i(\boO(g,C))}{\Sigma_k
\mu_i(g)/\mu_i(\Sbb^k)}$ has integral $1$ for the probability measure
$\mu_i(g)/\mu_i(\Sbb^k)$. By Jensen's inequality, the integral of
$-\log \phi$ is nonnegative. This yields
  \begin{equation*}
  \sum_{g\in \Sbb^k} \mu_i(g) (-\log \nu_i(\boO(g,C)))
  \geq -\mu_i(\Sbb^k) \log D + \sum_{g\in \Sbb^k} \mu_i(g) (-\log(\mu_i(g)/\mu_i(\Sbb^k)).
  \end{equation*}
Summing over $k$, we deduce from~\eqref{eq:qsoidufpioqsdf} the
inequality
  \begin{equation*}
  h(\mu_i) \geq (1-\epsilon)h_i -2e^{-1} -\log D.
  \end{equation*}
As $h_i$ tends to infinity, this gives $h(\mu_i) \geq
(1-2\epsilon)h_i$ for large enough $i$, completing the proof.
\end{proof}

To apply the previous theorem, we need to estimate $h_i$. In this
respect, the following lemma is often useful.
\begin{lem}
\label{lem:eq_borne_inf_hn}
Let $R_i\geq 1$. The quantity $h_i$ defined in~\eqref{eq:def_hn}
satisfies
  \begin{equation*}
  h_i \geq  \sum_{\abs{g}\leq R_i} \mu_i(g) (-\log \mu_i(g)) - \log(2+R_i).
  \end{equation*}
\end{lem}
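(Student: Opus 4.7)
The plan is to rewrite $h_i$ as a difference of two Shannon entropies and then use the trivial upper bound on the entropy of a distribution supported on few atoms.

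First I would expand the definition to get the decomposition
\begin{equation*}
h_i = \sum_{g\in\Gamma}\mu_i(g)(-\log\mu_i(g)) - \sum_k \mu_i(\Sbb^k)(-\log\mu_i(\Sbb^k)),
\end{equation*}
which is just the chain rule: $h_i$ is the conditional entropy of $\mu_i$ given the sphere-index map $g\mapsto k$ (where $g\in\Sbb^k$). Each inner sum $\sum_{g\in\Sbb^k}\mu_i(g)(-\log(\mu_i(g)/\mu_i(\Sbb^k)))$ is non-negative (it is $\mu_i(\Sbb^k)$ times the entropy of a probability measure on $\Sbb^k$), so dropping the terms with large $k$ gives a valid lower bound. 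Since the word distance takes integer values, $\{g : |g|\leq R_i\}$ is the disjoint union of $\Sbb^0 = \{e\}, \Sbb^1, \ldots, \Sbb^{\lfloor R_i\rfloor}$, so keeping only these indices yields
\begin{equation*}
h_i \geq \sum_{|g|\leq R_i}\mu_i(g)(-\log\mu_i(g)) - \sum_{k=0}^{\lfloor R_i\rfloor}\mu_i(\Sbb^k)(-\log\mu_i(\Sbb^k)).
\end{equation*}

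Next I would bound the second sum. The numbers $p_k = \mu_i(\Sbb^k)$ for $0\leq k\leq \lfloor R_i\rfloor$ form a sub-probability sequence on at most $\lfloor R_i\rfloor + 1$ atoms. Appending the remaining mass $p_\infty = 1-\sum_{k\leq \lfloor R_i\rfloor} p_k$ as an extra atom produces a genuine probability distribution on at most $\lfloor R_i\rfloor + 2 \leq R_i+2$ atoms, whose Shannon entropy is at most $\log(R_i+2)$. Since the added term $p_\infty(-\log p_\infty)$ is non-negative, we conclude
\begin{equation*}
\sum_{k=0}^{\lfloor R_i\rfloor}\mu_i(\Sbb^k)(-\log\mu_i(\Sbb^k)) \leq \log(R_i+2),
\end{equation*}
and substituting this into the previous inequality gives the lemma.

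There is no real obstacle here: the argument is purely algebraic once one notices that $h_i$ is a conditional entropy and that the ``loss'' from conditioning is controlled by the entropy of the partition into spheres, which in turn is controlled by the trivial $\log(\text{number of atoms})$ bound. The only point requiring a moment of care is the bookkeeping that $\{g : |g|\leq R_i\}$ is covered by $\lfloor R_i\rfloor + 1$ of the thickened spheres $\Sbb^k$, which uses that $d$ is integer-valued.
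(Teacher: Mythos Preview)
Your proof is correct and essentially identical to the paper's: both restrict the sum over $k$ using nonnegativity of the inner terms, rewrite as a difference of two entropy sums, append the leftover mass as an extra atom, and invoke the $\log(\text{number of atoms})$ bound. The conditional-entropy framing you add is a nice gloss but not a different argument. One small remark: the lemma sits in the context of a metric hyperbolic group where $d$ need not be a word distance, so ``$d$ is integer-valued'' is not literally available; however the paper's own proof is equally informal on this point (writing $\sum_{k\leq R_i}$ and ``cardinality $R_i+2$''), and the issue is harmless since one may always take $R_i$ to be an integer.
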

\begin{proof}
In the definition of $h_i$, all the terms are nonnegative.
Restricting the sum to those $g$ with $\abs{g}\leq R_i$, we get
  \begin{align*}
  h_i &
  \geq \adjustlimits\sum_{k\leq R_i} \sum_{g\in \Sbb^k} \mu_i(g) (-\log (\mu_i(g)/\mu_i(\Sbb^k)))
  \\&
  = \sum_{\abs{g} \leq R_i} \mu_i(g) (-\log \mu_i(g)) - \sum_{k\leq R_i} \mu_i(\Sbb^k) (-\log \mu_i(\Sbb^k)).
  \end{align*}
A probability measure supported on a set with $N$ elements has
entropy at most $\log N$. The number $\mu_i(\Sbb^k)$ for $0\leq k
\leq R_i$ are not a probability measure in general, let us add a last
atom with mass $m=\mu_i(\bigcup_{k>R_i} \Sbb^k)$. We are considering
a space of cardinality $R_n+2$, hence
  \begin{equation*}
  m(-\log m) + \sum_{k\leq R_i} \mu_i(\Sbb^k) (-\log \mu_i(\Sbb^k)) \leq \log(2+R_i),
  \end{equation*}
completing the proof.
\end{proof}

Let us see how Theorem~\ref{thm:h_grande_limite} implies a slightly
stronger version of the continuity result for the entropy of Erschler
and Kaimanovich~\cite{erschler_kaim}.
\begin{thm}
\label{thm:h_continuous}
Let $\Gamma$ be a hyperbolic group. Consider a probability measure
$\mu$ with finite time one entropy and finite logarithmic moment. Let
$\mu_i$ be a sequence of probability measures converging simply to
$\mu$ with $H(\mu_i) \to H(\mu)$. Then $h(\mu_i) \to h(\mu)$.
\end{thm}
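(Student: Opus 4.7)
The plan is to prove the two inequalities $\limsup_i h(\mu_i)\leq h(\mu)$ and $\liminf_i h(\mu_i)\geq h(\mu)$ separately. As noted in the paragraph preceding the theorem, the upper bound is the easier of the two; the lower bound is the crux.

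For the upper bound, I would start from the subadditive bound $h(\mu_i) \leq H(\mu_i^{*n})/n$ valid for every $n$. Since $\mu_i\to\mu$ simply and both are probability measures on the countable set $\Gamma$, Scheff\'e's theorem upgrades simple convergence to total variation convergence, and this mode of convergence is preserved under convolution, so $\mu_i^{*n}\to\mu^{*n}$ in total variation (hence pointwise) for each fixed $n$. Combined with $H(\mu_i)\to H(\mu)<\infty$, the subadditive bound $H(\mu_i^{*n})\leq nH(\mu_i)$ provides the uniform integrability needed to pass the pointwise convergence of the summands $-\mu_i^{*n}(g)\log\mu_i^{*n}(g)$ to a convergence of sums, yielding $H(\mu_i^{*n})\to H(\mu^{*n})$ for each $n$. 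Taking $\limsup_i$ and then $\inf_n$ gives $\limsup_i h(\mu_i)\leq H(\mu^{*n})/n$ for every $n$, and letting $n\to\infty$ yields $\limsup_i h(\mu_i)\leq h(\mu)$.

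For the lower bound, a direct application of Theorem~\ref{thm:h_grande_limite} to $\mu_i$ fails, since the associated $h_i$ typically converges to a finite quantity rather than to infinity, so hypothesis~(2) of that theorem is not met. I would instead apply the \emph{argument inside} the proof of Theorem~\ref{thm:h_grande_limite} to the convolved measures $\tilde\mu_i = \mu_i^{*N}$ for a large fixed $N$, and let $N\to\infty$ at the end. The stationary measure of $\tilde\mu_i$ coincides with that of $\mu_i$, and any weak limit of the latter is $\mu$-stationary by continuity of the action on $\partial\Gamma$, exactly as in the proof of Proposition~\ref{prop:ell_continuous}; since $\mu$ is nonelementary, this limit is the unique $\mu$-stationary measure $\nu$, which has no atoms. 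Hypothesis~(1) is thus satisfied for $\tilde\mu_i$, and the non-asymptotic inequality embedded in the proof of Theorem~\ref{thm:h_grande_limite} yields, for $i$ sufficiently large,
\begin{equation*}
N h(\mu_i) = h(\tilde\mu_i) \geq (1-\epsilon)\, h_i^{(N)} - C_\epsilon,
\end{equation*}
where $h_i^{(N)}$ denotes~\eqref{eq:def_hn} applied to $\tilde\mu_i$ and $C_\epsilon = 2e^{-1}+\log D$ depends only on $\epsilon$ (through the shadow-size constants), not on $N$.

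To bound $h_i^{(N)}$ from below I would apply Lemma~\ref{lem:eq_borne_inf_hn} with a radius $R_N$, giving $h_i^{(N)} \geq \sum_{\abs{g}\leq R_N}\tilde\mu_i(g)(-\log\tilde\mu_i(g)) - \log(2+R_N)$. The truncated sum is a finite linear combination of continuous functions of the measure, so it converges as $i\to\infty$ to the corresponding sum for $\mu^{*N}$. Combined with $H(\mu^{*N})/N\to h(\mu)$, dividing by $N$, passing to the liminf in $i$, and letting $N\to\infty$ gives $\liminf_i h(\mu_i) \geq (1-\epsilon)h(\mu)$, and letting $\epsilon\to 0$ concludes. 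The hardest step will be choosing $R_N$ so that simultaneously $\log R_N=o(N)$ and $H(\mu^{*N}) - \sum_{\abs{g}\leq R_N}\mu^{*N}(g)(-\log\mu^{*N}(g)) = o(N)$, i.e., truncating $\mu^{*N}$ to a subexponential ball without losing significant entropy. This is where the finite logarithmic moment of $\mu$ is used, through the bound $\E\log(1+\abs{X_N})\leq N\E\log(1+\abs{g_1})$ together with a tail argument, which controls $\abs{X_N}$ sufficiently to trap essentially all of the time-$N$ entropy within a subexponential ball.
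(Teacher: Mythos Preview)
Your overall architecture matches the paper's proof closely: upper bound via subadditivity and convergence of $H(\mu_i^{*n})$, lower bound by applying the inequality from the proof of Theorem~\ref{thm:h_grande_limite} to high convolution powers, together with Lemma~\ref{lem:eq_borne_inf_hn} and a ball of subexponential radius. The paper organizes the lower bound slightly differently, using a diagonal sequence $n_i\to\infty$ so that Theorem~\ref{thm:h_grande_limite} can be invoked as a black box (hypothesis~(2) then holds), but this is cosmetic.

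The genuine weak point is your final ``tail argument.'' Two issues. First, the bound $\E\log(1+\abs{X_N})\leq N\,\E\log(1+\abs{g_1})$ only yields $\Pbb(\abs{X_N}>e^{\epsilon N})\leq C/\epsilon$ via Markov, which does not tend to $0$ as $N\to\infty$; you need the stronger almost sure fact $\log\abs{X_N}/N\to 0$, which the paper takes from~\cite[Proposition~2.3.1]{aaronson_book}. Second, and more seriously, controlling the tail \emph{mass} $\mu^{*N}(\abs{g}>R_N)$ does not by itself bound the tail \emph{entropy} $\sum_{\abs{g}>R_N}\mu^{*N}(g)(-\log\mu^{*N}(g))$ by $o(N)$: a small mass spread over exponentially many atoms can still carry entropy of order $N$. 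The paper avoids this trap by not bounding the tail entropy at all. Instead it bounds the truncated entropy from \emph{below}: since $-\log\mu^{*N}(X_N)/N\to h(\mu)$ almost surely, the set $K_N=\{g:\mu^{*N}(g)\leq e^{-(1-\epsilon)Nh(\mu)},\ \abs{g}\leq e^{\epsilon N}\}$ has mass $\geq 1-\epsilon$ for large $N$, and on $K_N$ each summand $-\log\mu^{*N}(g)$ is at least $(1-\epsilon)Nh(\mu)$, giving directly $\sum_{\abs{g}\leq e^{\epsilon N}}\mu^{*N}(g)(-\log\mu^{*N}(g))\geq (1-\epsilon)^2 Nh(\mu)$. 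This is the step you should replace your tail argument with; it uses both the logarithmic moment (for the radius) and the Shannon--McMillan--Breiman convergence (for the atom sizes). Note also that the case $h(\mu)=0$ (in particular $\mu$ elementary) must be disposed of separately, since your appeal to atomlessness of the limiting stationary measure requires $\mu$ nonelementary.
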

The assumption $H(\mu_i) \to H(\mu)$ ensures that there is no
additional entropy in $\mu_i$ coming from neighborhoods of infinity
that would disappear in the limit. It is automatic if the support of
$\mu_i$ is uniformly bounded or if $\mu_i$ satisfies a uniform $L^1$
domination, but it is much weaker. For instance, it is allowed that
the $\mu_i$ have no finite logarithmic moment.

The main lemma for the proof is a lower bound on the entropy,
following from Theorem~\ref{thm:h_grande_limite}.

\begin{lem}
\label{lem:liminf_hn_grand}
Let $\Gamma$ be a hyperbolic group. Consider a probability measure
$\mu$ with finite time one entropy and finite logarithmic moment. Let
$\mu_i$ be a sequence of measures converging simply to $\mu$. Then
$\liminf h(\mu_i) \geq h(\mu)$.
\end{lem}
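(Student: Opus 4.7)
The natural strategy is to apply (the proof of) Theorem~\ref{thm:h_grande_limite} not to $\mu_i$ directly---the associated quantity $h_i$ need not tend to infinity---but to the convolutions $\mu_i^{*n}$ with $n$ going to infinity. The point is that $h(\mu_i^{*n}) = n\, h(\mu_i)$, and at the same time $H(\mu^{*n})/n \to h(\mu)$ by the very definition of asymptotic entropy, so one expects to recover $h(\mu)$ as a lower bound in the joint limit.

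First, I would extract a subsequence so that the stationary measures $\nu_i$ of $\mu_i$ on $\partial\Gamma$ converge weakly to a limit $\nu$. Since $\mu_i \to \mu$ simply, the support of $\mu_i$ eventually contains that of $\mu$, so $\mu_i$ may be assumed nonelementary for every $i$; continuity of the boundary action then identifies $\nu$ as the unique stationary measure of $\mu$, which is non-atomic. The key observation is that $\nu_i$ is also the stationary measure for every convolution power $\mu_i^{*n}$, so the non-atom hypothesis of Theorem~\ref{thm:h_grande_limite} is met for all the $\mu_i^{*n}$ simultaneously, once $i$ is large enough. Applying the quantitative inequality established in the proof of that theorem to $\mu_i^{*n}$, and inserting the bound of Lemma~\ref{lem:eq_borne_inf_hn}, one obtains: for any $\epsilon>0$ and any $n, R \geq 1$,
\begin{equation*}
n\, h(\mu_i) = h(\mu_i^{*n}) \geq (1-\epsilon)\left[\sum_{|g| \leq R} \mu_i^{*n}(g)\bigl(-\log \mu_i^{*n}(g)\bigr) - \log(2+R)\right] - C(\epsilon),
\end{equation*}
valid for all $i$ large enough (depending on $\epsilon$, not on $n$ or $R$). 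Letting $i \to \infty$ with $n$ and $R$ fixed, the truncated sum only involves finitely many $g$ and each term is continuous in the pointwise limit $\mu_i^{*n}(g) \to \mu^{*n}(g)$, so
\begin{equation*}
n\liminf_{i} h(\mu_i) \geq (1-\epsilon)\left[\sum_{|g| \leq R} \mu^{*n}(g)(-\log \mu^{*n}(g)) - \log(2+R)\right] - C(\epsilon).
\end{equation*}

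The final step is to send $n, R \to \infty$ in a coordinated way. For each fixed $n$, the truncated entropy tends to $H(\mu^{*n})$ as $R \to \infty$, while $H(\mu^{*n})/n \to h(\mu)$; one therefore chooses a diagonal sequence $R = R(n)$ such that the truncation loss is $o(n)$ and at the same time $\log R(n) = o(n)$. Dividing by $n$ and then letting $\epsilon \to 0$ gives the desired inequality $\liminf_i h(\mu_i) \geq h(\mu)$.

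The main obstacle is precisely this diagonal step: the logarithmic moment of $\mu$ is exactly what is needed to make both conditions on $R(n)$ compatible. A crude Markov estimate on the mass tail $\mu^{*n}(|g|>R)$ built from $\E\log(1+|X_n|) = O(n)$ is too weak on its own, so one has to bound the entropy tail $\sum_{|g|>R} \mu^{*n}(g)(-\log \mu^{*n}(g))$ directly, exploiting the summability of $\mu(g)\log(1+|g|)$ together with the concavity of $x\mapsto -x\log x$ to convert small mass near infinity into small entropy contribution uniformly in $n$.
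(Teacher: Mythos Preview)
Your overall architecture is exactly the paper's: apply Theorem~\ref{thm:h_grande_limite} not to $\mu_i$ but to a diagonal sequence $\tilde\mu_i=\mu_i^{*n_i}$, use that the stationary measure of $\mu_i^{*n}$ equals $\nu_i$, identify any weak limit of $\nu_i$ as the (atomless) stationary measure of $\mu$, and push the truncated-entropy bound of Lemma~\ref{lem:eq_borne_inf_hn} through the pointwise convergence $\mu_i^{*n}\to\mu^{*n}$ on the finite set $\{|g|\leq R\}$. So far, so good.

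The gap is in your last paragraph. You need, for some $R(n)$ with $\log R(n)=o(n)$, that the truncated sum $\sum_{|g|\le R(n)}\mu^{*n}(g)(-\log\mu^{*n}(g))$ is at least $(1-\delta)\,n\,h(\mu)$. You propose to obtain this by bounding the \emph{entropy tail} $\sum_{|g|>R}\mu^{*n}(g)(-\log\mu^{*n}(g))$ via concavity and the logarithmic moment, but this is not a complete argument: small tail mass does \emph{not} force small entropy contribution, because the tail can be spread over enormously many atoms of tiny weight. Concavity of $-x\log x$ alone gives no useful bound here without controlling the number of atoms, which you have no handle on.

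The paper sidesteps this entirely. Instead of bounding the tail, it lower-bounds the truncated sum directly by working on the set
\[
K_n=\{g:\ \mu^{*n}(g)\le e^{-(1-\epsilon)nh(\mu)},\ |g|\le e^{\epsilon n}\}.
\]
The first condition holds for most $g$ by the Shannon--McMillan--Breiman characterization of $h(\mu)$; the second holds for most $g$ because the finite logarithmic moment gives $\log|X_n|/n\to 0$ almost surely (see~\cite[Proposition~2.3.1]{aaronson_book}). Thus $\mu^{*n}(K_n)\ge 1-\epsilon$ for large $n$, and on $K_n$ each summand is at least $(1-\epsilon)nh(\mu)$, so the truncated sum with $R=e^{\epsilon n}$ is at least $(1-\epsilon)^2 nh(\mu)$, while $\log R=\epsilon n$. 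This is where the logarithmic moment actually enters, and it replaces your tail-entropy estimate with a clean pointwise lower bound. With this correction, the rest of your outline goes through as written.
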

\begin{proof}
Since the result is trivial if $h(\mu)=0$, we can assume that
$h(\mu)>0$.

Let $\epsilon>0$. For large $n$, most atoms for $\mu^{*n}$ have a
probability at most $e^{-(1-\epsilon) n h(\mu)}$. Moreover, since
$\mu$ has a finite logarithmic moment, $\log \abs{X_n}/n$ tends
almost surely to $0$ by~\cite[Proposition 2.3.1]{aaronson_book}.
Therefore, the set
  \begin{equation*}
  K_n=\{g \st \mu^{*n}(g) \leq e^{-(1-\epsilon)n h(\mu)},\ \abs{g} \leq e^{\epsilon n}\}
  \end{equation*}
has measure tending to $1$. In particular $\mu^{*n}(K_n) \geq
1-\epsilon$ for large $n$. We get
  \begin{align*}
  \sum_{\abs{g} \leq e^{\epsilon n}} \mu^{*n}(g)(-\log \mu^{*n}(g))
  &\geq \sum_{g\in K_n} \mu^{*n}(g)(-\log \mu^{*n}(g))
  \geq \sum_{g\in K_n} \mu^{*n}(g)(1-\epsilon) n h(\mu)
  \\&
  = \mu^{*n}(K_n) (1-\epsilon) n h(\mu)
  \geq (1-\epsilon)^2 n h(\mu).
  \end{align*}
For each fixed $n$, the measures $\mu_i^{*n}$ converge to $\mu^{*n}$
when $i$ tends to infinity. Hence, we get for large enough $i$ the
inequality
  \begin{equation*}
  \sum_{\abs{g} \leq e^{\epsilon n}} \mu_i^{*n}(g)(-\log \mu_i^{*n}(g)) \geq (1-\epsilon)^3 n h(\mu).
  \end{equation*}

Letting $\epsilon$ tend to $0$ (and, therefore, $n$ to infinity), we
deduce the existence of sequences  $n_i \to \infty$ and
$\epsilon_i\to 0$ such that, for any $i$,
  \begin{equation*}
  \sum_{\abs{g} \leq e^{\epsilon_i n_i}} \mu_i^{*n_i}(g)(-\log \mu_i^{*n_i}(g))
  \geq (1-\epsilon_i)^3 n_i h(\mu).
  \end{equation*}
Let $\tilde\mu_i = \mu_i^{*n_i}$. Its stationary measure $\nu_i$ is
also the stationary measure of $\mu_i$, by uniqueness. Any limit
point of $\nu_i$ is stationary for $\mu$, and is therefore atomless
since $\mu$ is nonelementary as $h(\mu)>0$. The assumptions of
Theorem~\ref{thm:h_grande_limite} are satisfied by the sequence
$\tilde\mu_i$. Moreover, Lemma~\ref{lem:eq_borne_inf_hn} yields
  \begin{equation*}
  h_i \geq (1-\epsilon_i)^3 n_i h(\mu) - 2\epsilon_i n_i \geq (1-C \epsilon_i) n_i h(\mu).
  \end{equation*}
Theorem~\ref{thm:h_grande_limite} ensures that $\liminf
h(\tilde\mu_i)/h_i\geq 1$. As $h(\tilde\mu_i) = n_i h(\mu_i)$, this
gives $\liminf h(\mu_i)\geq h(\mu)$ as desired.
\end{proof}

\begin{proof}[Proof of Theorem~\ref{thm:h_continuous}]
For fixed $n$, the sequence $\mu_i^{*n}$ converges simply to
$\mu^{*n}$. Moreover, $H(\mu_i^{*n})\to H(\mu^{*n})$ since there is
no loss of entropy at infinity by assumption. Choose $n$ such that
$H(\mu^{*n}) \leq n(1+\epsilon) h(\mu)$. We get $H(\mu_i^{*n}) /n
\leq (1+2\epsilon) h(\mu)$ for large enough $i$. As $h(\mu_i) \leq
H(\mu_i^{*n}) /n$, this shows that $\limsup h(\mu_i) \leq h(\mu)$
(this is the classical semi-continuity property of entropy, valid in
any group).

For the reverse inequality $\liminf h(\mu_i) \geq h(\mu)$, we apply
Lemma~\ref{lem:liminf_hn_grand}.
\end{proof}

\begin{rmk}
\label{rmk:recover_kaim} Let $h(\mu, \partial \Gamma) = \int_{\Gamma\times
\partial\Gamma} (-\log \dd g^{-1}_* \nu/\dd\nu)(\xi) \dd\mu(g) \dd\nu(\xi)$ where $\nu$ is the
stationary measure for $\mu$ on $\partial \Gamma$. In general, $h(\mu)\geq
h(\mu, \partial \Gamma)$ with equality if and only if $(\partial\Gamma,\nu)$
is the Poisson boundary of $(\Gamma,\mu)$. A theorem of
Kaimanovich~\cite{kaimanovich_poisson} asserts that, when $\mu$ has finite
entropy and finite logarithmic moment, $h(\mu,
\partial \Gamma)=h(\mu)$. We can recover this theorem using the previous
arguments. Indeed, what the proof of Theorem~\ref{thm:h_grande_limite} really
shows is that $\liminf h(\mu_i,
\partial \Gamma)/h_i \geq 1$. Hence, Lemma~\ref{lem:liminf_hn_grand} proves
that $\liminf h(\mu_i,
\partial \Gamma) \geq h(\mu)$ if $\mu_i$ converges simply to a
measure $\mu$ with a logarithmic moment. Taking $\mu_i=\mu$ for all
$i$, we obtain in particular $h(\mu, \partial \Gamma) \geq h(\mu)$,
as desired.
\end{rmk}

\subsection{A criterion to bound the entropy from below}
\label{subsec:tends_to_v}

In order to prove Theorem~\ref{thm:tends_to_v} on the entropy of the
uniform measure on balls, we want to apply
Theorem~\ref{thm:h_grande_limite}. Thus, we need a criterion to check
that limit points of stationary measures have no atom.

\begin{lem}
\label{lem:describe_nu}
Let $\Gamma$ be a hyperbolic group. Let $\mu_i$ be a sequence of
probability measures on $\Gamma$. Assume that, on the space
$\Gamma\cup \partial \Gamma$, the sequence $\mu_i$ converges to a
limit $\nu$ which is supported on $\partial\Gamma$. Assume moreover
that the limit points of $\check\mu_i$ (defined by
$\check\mu_i(g)=\mu_i(g^{-1})$) have no atom. Then the stationary
measures $\nu_i$ associated to $\mu_i$ also converge to $\nu$.
\end{lem}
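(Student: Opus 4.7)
The plan is to use the stationarity equation $\nu_i = \mu_i * \nu_i$ together with a compactness argument. I extract a subsequence along which $\nu_i \to \tilde\nu$ and $\check\mu_i \to \check\nu$ weakly on $\Gamma \cup \partial\Gamma$; by hypothesis $\check\nu$ has no atom, hence is supported on $\partial\Gamma$ and atomless there. The goal is then to show $\tilde\nu = \nu$.

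Fix $f \in C(\partial\Gamma)$ and extend it by Tietze to a continuous $F : \Gamma \cup \partial\Gamma \to \R$. Stationarity and Fubini give
\begin{equation*}
  \int f \dd\nu_i = \int_{\partial\Gamma} \phi_i(\eta) \dd\nu_i(\eta), \qquad \phi_i(\eta) \coloneqq \int_\Gamma f(g\eta) \dd\mu_i(g).
\end{equation*}
Since $\nu_i$ is a probability measure, it suffices to show that $\phi_i(\eta) \to \int f \dd\nu$ uniformly in $\eta$. The quantity $\int F \dd\mu_i$ already tends to $\int F \dd\nu = \int f \dd\nu$ by the hypothesis $\mu_i \to \nu$, so the task reduces to comparing $\phi_i(\eta)$ with $\int F \dd\mu_i$.

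The geometric input is the translation identity
\begin{equation*}
  \gromprod{g\eta}{g}{e} = \gromprod{\eta}{e}{g^{-1}} = \abs{g} - \gromprod{\eta}{g^{-1}}{e},
\end{equation*}
which shows that as soon as $\abs{g}$ is large and $\gromprod{\eta}{g^{-1}}{e}$ is bounded, $g\eta$ and $g$ are exponentially close in the compactification, so $\abs{f(g\eta) - F(g)}$ is small by uniform continuity of $F$. The complementary bad set consists of $g$ with $\abs{g}$ small, whose $\mu_i$-mass vanishes since $\mu_i$ escapes to infinity, together with $g$ such that $g^{-1}$ lies in a small neighborhood of $\eta$. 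To control the latter uniformly in $\eta$, I would establish the following uniform-atomlessness lemma: if $\check\mu_i \to \check\nu$ with $\check\nu$ atomless on the compact space $\partial\Gamma$, then for every $\delta > 0$ there exist $\epsilon_1 > 0$ and $i_0$ such that $\check\mu_i(B(\eta, \epsilon_1)) \leq \delta$ for all $\eta$ and all $i \geq i_0$, proved by covering $\partial\Gamma$ by finitely many small balls and using weak upper-semicontinuity on closed sets. Equivalently, $\mu_i\{g : g^{-1} \in B(\eta, \epsilon_1)\} \leq \delta$ uniformly in $\eta$.

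Combining these bounds, $\abs{\phi_i(\eta) - \int F \dd\mu_i}$ is at most $\epsilon_2 + 2\norm{F}_\infty \delta + o_i(1)$ uniformly in $\eta$, which forces $\int f \dd\nu_i \to \int f \dd\nu$ and hence $\tilde\nu = \nu$. The main obstacle, and the only point where the hypothesis on $\check\mu_i$ is used, is precisely this uniform-in-$\eta$ control of the bad set: without atomlessness of the limits of $\check\mu_i$, concentration of $g^{-1}$ near some $\eta_0$ would allow $\nu_i$ to develop a spurious mass at $\eta_0$ distinct from $\nu$, and the conclusion would fail.
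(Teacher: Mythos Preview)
Your proof is correct and follows essentially the same route as the paper's: both use the stationarity equation, the Gromov-product identity to show $g\eta$ is close to $g$ in the compactification, and the atomlessness of the limits of $\check\mu_i$ to control uniformly in $\eta$ the exceptional set where $g^{-1}$ lies near $\eta$. One minor imprecision: your ``translation identity'' $\gromprod{g\eta}{g}{e} = \abs{g} - \gromprod{\eta}{g^{-1}}{e}$ is exact for $\eta \in \Gamma$ but only holds up to a bounded additive constant when $\eta \in \partial\Gamma$ (since the Gromov product at infinity is defined via a liminf); the paper makes this explicit, but it does not affect your argument.
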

\begin{proof}
We fix a word distance $d$ on $\Gamma$. Let $f$ be a continuous function on
$\Gamma \cup \partial \Gamma$. Let us show that, uniformly in $\xi\in
\partial\Gamma$, the integral $\int f(g \xi) \dd\mu_i(g)$ is close to $\int f(g)
\dd\mu_i(g)$. We estimate the difference as
  \begin{align*}
  \abs*{\int (f(g \xi)-f(g)) \dd\mu_i(g) }
  \leq {} &
  \int \abs{ f(g\xi)-f(g)} 1( \gromprod{g\xi}{g}{e} > C)\dd\mu_i(g)
  \\&  + 2 \norm{f}_\infty \int 1(\gromprod{g\xi}{g}{e} \leq C)\dd\mu_i(g),
  \end{align*}
where $C$ is a fixed constant. If $C$ is large enough,
$\abs{f(x)-f(y)} \leq \epsilon$ when $\gromprod{x}{y}{e} > C$, by
uniform continuity of $f$. Hence, the first integral is bounded by
$\epsilon$. For the second integral, we use the formula
$\gromprod{gx}{g}{e}=\abs{g}-\gromprod{x}{g^{-1}}{e}$, valid for any
$x\in \Gamma$ (it follows readily from the
definition~\eqref{def:gromov_product} of the Gromov product). This
equality does not extend to the boundary since the Gromov product
there is only well defined up to an additive constant $D$.
Nevertheless, we get $\gromprod{g\xi}{g}{e} \geq
\abs{g}-\gromprod{\xi}{g^{-1}}{e}- D$. Hence, the second integral is
bounded by
  \begin{equation}
  \label{eq:oiwuvcpoiuwxvc}
  \mu_i\{ g\st \abs{g}-C-D \leq \gromprod{\xi}{g^{-1}}{e}\}.
  \end{equation}
If $\abs{g}$ is large, the points $g$ with $\gromprod{\xi}{g^{-1}}{e}
\geq \abs{g}-C-D$ are such that $g^{-1}$ belongs to a small
neighborhood of $\xi$ in $\Gamma\cup \partial \Gamma$. As the limit
points of $\check\mu_i$ are supported on $\partial \Gamma$ and have
no atom, it follows that~\eqref{eq:oiwuvcpoiuwxvc} converges to $0$
when $i$ tends to infinity, uniformly in $\xi$.

We have proved that
  \begin{equation*}
  \sup_{\xi \in \partial\Gamma} \abs*{\int f(g\xi) \dd\mu_i(g) - \int f(g) \dd\mu_i(g)} \to 0.
  \end{equation*}
By stationarity,
  \begin{equation*}
  \int_{\xi\in\partial\Gamma} f(\xi) \dd\nu_i(\xi)
  = \int_{\xi\in \partial\Gamma} \paren*{\int f(g\xi) \dd\mu_i(g)} \dd\nu_i(\xi).
  \end{equation*}
Combining these equations, we get $\int f(\xi)\dd\nu_i(\xi) - \int
f(g) \dd\mu_i(g)\to 0$. This shows that the limit points of $\nu_i$
and $\mu_i$ are the same.
\end{proof}

Let us now consider the uniform measure $\mu_i$ on the ball of radius
$i$, as in Theorem~\ref{thm:tends_to_v}. The next lemma follows from
the techniques of~\cite{coornaert}.
\begin{lem}
\label{lem:converge_PS}
Let $(\Gamma,d)$ be a metric hyperbolic group. Let $\rho_i$ be the
uniform measure on the ball of radius $i$. Let $\rho_\infty$ be the
Patterson-Sullivan of $(\Gamma,d)$ constructed in~\cite{coornaert}
(it is supported on $\partial\Gamma$ and atomless). Then the limit
points of $\rho_i$ are equivalent to $\rho_\infty$, with a density
bounded from above and from below.
\end{lem}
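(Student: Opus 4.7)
The plan is to compare $\rho_i$ with $\rho_\infty$ via Sullivan-type shadow estimates, all drawn from \cite{coornaert}. Fix a large constant $C$. For $g\in\Gamma$ let $\boO(g,C)\subset\partial\Gamma$ be the shadow and introduce the ``cone'' $\boC(g,C)=\{h\in\Gamma\st\gromprod{h}{g}{e}\geq\abs{g}-C\}$ of elements whose (quasi-)geo\-desic to $e$ passes within bounded distance of $g$. Coornaert's results supply, uniformly in $g$: (i) $\Card{B_n}\asymp e^{vn}$; (ii) the shadow lemma $\rho_\infty(\boO(g,C))\asymp e^{-v\abs{g}}$; (iii) the cone counting $\Card{\boC(g,C)\cap B_n}\asymp e^{v(n-\abs{g})}$ for $n\geq\abs{g}$; and (iv) a bounded multiplicity property of shadows on each thickened sphere $\Sbb^k$.

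First I would observe that any weak accumulation point $\rho$ of $(\rho_i)$ is supported on $\partial\Gamma$: for fixed $R$, the $\rho_i$-mass of $B_R$ is at most $\Card{B_R}/\Card{B_i}\to 0$. Combining (i) and (iii) yields $\rho_i(\boC(g,C))\asymp e^{-v\abs{g}}$ for $i$ sufficiently large with respect to $\abs{g}$, with constants independent of $g$ and $i$. By standard hyperbolic geometry, the closure of $\boC(g,C)$ in $\Gamma\cup\partial\Gamma$ is sandwiched between $\boO(g,C_1)$ and $\overline{\boO(g,C_2)}$ for constants $C_1<C<C_2$ depending only on the hyperbolicity and quasi-geodesic constants. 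A Portmanteau-type passage to the limit therefore gives
  \begin{equation*}
  \rho(\boO(g,C_1))\leq\liminf_i\rho_i(\boC(g,C))\leq\limsup_i\rho_i(\boC(g,C))\leq\rho(\overline{\boO(g,C_2)}),
  \end{equation*}
which combined with (ii) yields the shadow comparison $\rho(\boO(g,C))\asymp\rho_\infty(\boO(g,C))$, again with a multiplicative constant independent of $g$.

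To finish, I would upgrade this shadow-by-shadow comparison to an equivalence $\rho\sim\rho_\infty$ with Radon-Nikodym derivative pinched between two positive constants. Once $C$ is large enough, the family $\{\boO(g,C)\}_{g\in\Gamma}$ is a differentiation basis for $\rho_\infty$ on $\partial\Gamma$: by (ii), (iv), and the visual-metric estimate $d_{\partial\Gamma,\epsilon}(\xi,\eta)\asymp e^{-\epsilon\gromprod{\xi}{\eta}{e}}$, shadows of a fixed large size at points of comparable distance to $e$ have comparable diameter and sit in the same covering class as balls for $d_{\partial\Gamma,\epsilon}$. A Besicovitch/Vitali argument then transports the uniform shadow comparison to the stated equivalence. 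The main obstacle is not conceptual but bookkeeping: because $d$ is only assumed quasi-geodesic, the estimates (i)--(iv) must be verified in the quasi-geodesic setting and for the Patterson-Sullivan measure built from $d$ itself, which is the extension to quasi-geodesic spaces of \cite{coornaert} flagged by the paper after the definition of quasi-geodesic spaces.
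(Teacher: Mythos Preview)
Your argument is correct, and the overall shape---establish a shadow-by-shadow comparison between $\rho$ and $\rho_\infty$, then upgrade to uniform equivalence---is the same as the paper's. The execution differs, however.

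The paper does not work with cones in $\Gamma$ nor with a Portmanteau/differentiation argument. Instead, it first reduces to the uniform measure $\mu_i$ on a thickened sphere $S_i=\{g:\ i\le\abs{g}\le i+L\}$, then \emph{pushes} $\mu_i$ to a measure $\tilde\mu_i$ on $\partial\Gamma$ by sending each $g\in S_i$ to a point of its shadow $\boO(g,C)$. Since shadow diameters at level $i$ shrink uniformly, $\mu_i$ and $\tilde\mu_i$ share the same limit points. For $g\in S_i$ one has $\tilde\mu_i(\boO(g,C))\asymp e^{-iv}\asymp\rho_\infty(\boO(g,C))$, a comparison only at scale $i$. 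The upgrade to uniform equivalence is then done by hand: given a compact $A$, a neighborhood $U$ with $\rho_\infty(U)\le\rho_\infty(A)+\epsilon$, and an intermediate compact $B$ with $\tilde\mu(\partial B)=0$, one covers $B$ by the shadows at level $i$ that meet it, uses the multiplicity bound~$D$ on shadows to get $\tilde\mu_i(B)\le DK_2\,\rho_\infty(U)$, and lets $i\to\infty$.

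What each approach buys: your route is conceptually clean (one comparison valid at all scales, then a standard differentiation-basis lemma), at the cost of invoking Vitali/Besicovitch machinery and the cone-counting estimate~(iii), which is not stated in Coornaert though it follows readily. The paper's route is more elementary---no differentiation theory, just the regularity of Borel measures and the bounded multiplicity of shadows on a single sphere---but requires the push-forward trick to move the comparison entirely to $\partial\Gamma$.
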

\begin{proof}
Let $C$ be large enough. We will use the shadows $\boO(g,C)$ as
defined before the proof of Theorem~\ref{thm:h_grande_limite}. The
main property of $\rho_\infty$ is that it satisfies
  \begin{equation}
  \label{eq:asymp_PS}
  K_0^{-1} e^{-v \abs{g}} \leq \rho_\infty(\boO(g,C)) \leq K_0 e^{-v\abs{g}},
  \end{equation}
where $K_0$ is a constant only depending on $C$ and $v$ is the growth
of $(\Gamma,d)$ (Proposition~6.1 in~\cite{coornaert}).

Let $\mu_i$ be the uniform measure on thickened spheres $S_i=\{g \st i\leq
\abs{g}\leq i+L\}$, where $L$ is large enough so that the cardinality of
$S_i$ grows like $e^{iv}$, see the proof of Theorem 7.2 in~\cite{coornaert}.
Let us push $\mu_i$ to a measure $\tilde \mu_i$ on $\partial\Gamma$, by
choosing for each $g \in S_i$ a corresponding point in its shadow. It is
clear that $\mu_i$ and $\tilde\mu_i$ have the same limit points, since the
diameter of the shadows tends uniformly to $0$ when $i\to\infty$. We will
prove that the limit points of $\tilde\mu_i$ are equivalent to $\rho_\infty$.
The same result follows for $\mu_i$ and then $\rho_i$.

The shadows of $g \in S_i$ have a covering number which is bounded from above
by a constant $D$, and from below by $1$ if $C$ is large enough. Hence, the
measures $\tilde \mu_i$ satisfy
  \begin{equation*}
  K_1^{-1} e^{-iv} \leq \tilde\mu_i(\boO(g,C)) \leq K_1 e^{-iv},
  \end{equation*}
for any $g\in S_i$. This is comparable to $\rho_\infty(\boO(g,C))$
by~\eqref{eq:asymp_PS}, up to a multiplicative constant $K_2$. Consider a
limit $\tilde\mu$ of a sequence $\tilde\mu_{i_n}$, let us prove that it is
uniformly equivalent to $\rho_\infty$. We will only prove that $\tilde\mu
\leq DK_2 \rho_\infty$, the other inequality is proved in the same way. By
regularity of the measures, it suffices to check this inequality on compact
sets.

Let $A$ be a compact subset of $\partial \Gamma$, and $\epsilon>0$. By
regularity of the measure $\rho_\infty$, there is an open neighborhood $U$ of
$A$ with $\rho_\infty(U) \leq \rho_\infty(A)+\epsilon$. Consider $B$ a
compact neighborhood of $A$, included in $U$, with $\tilde\mu(\partial B)=0$
(such a set exists, since among the sets $B_r=\{\xi \st d(\xi, A)\leq r\}$,
at most countably of them many have a boundary with nonzero measure). For
large enough $i$, the shadows $\boO(g,C)$ with $g\in S_i$ which intersect $B$
are contained in $U$. Therefore,
\begin{equation*}
  \tilde \mu_i(B) \leq \sum_{g\in S_i, \boO(g,C)\cap B\not=\emptyset} \tilde \mu_i(\boO(g,C))
  \leq K_2 \sum_{g\in S_i, \boO(g,C)\cap B\not=\emptyset} \rho_\infty(\boO(g,C))
  \leq D K_2 \rho_\infty(U).
\end{equation*}
As $\tilde\mu(\partial B)=0$, the sequence $\tilde \mu_{i_n}(B)$ tends to
$\tilde\mu(B)$. We obtain $\tilde\mu(B) \leq D K_2 \rho_\infty(U)$. As $A$ is
included in $B$, we get $\tilde\mu(A) \leq DK_2 (\rho_\infty(A)+\epsilon)$.
Letting $\epsilon$ tend to $0$, this gives $\tilde\mu(A) \leq D K_2
\rho_\infty(A)$, as desired.
\end{proof}

\begin{proof}[Proof of Theorem~\ref{thm:tends_to_v}]
Let $\rho_i$ be the uniform measure on the ball of radius $i$ (which has
cardinality in $[C^{-1}e^{iv}, Ce^{iv}]$). We wish to apply
Theorem~\ref{thm:h_grande_limite} to this sequence of measures. First, by
Lemmas~\ref{lem:describe_nu} and~\ref{lem:converge_PS}, the limit points of
the stationary measures $\nu_i$ are equivalent to the Patterson-Sullivan
measure. Therefore, they have no atom. Second,
Lemma~\ref{lem:eq_borne_inf_hn} shows that the quantity $h_i$
in~\eqref{eq:def_hn} satisfies $h_i \geq iv - \log C -\log(2+i)$. This tends
to infinity. Hence, Theorem~\ref{thm:h_grande_limite} applies, and gives
$h(\rho_i) \geq (1-\epsilon)iv$ for large $i$.

Using the fundamental inequality $h\leq \ell v$ and the trivial bound
$\ell(\rho_i) \leq L(\rho_i) \leq i$, we get
  \begin{equation*}
  (1-\epsilon) iv \leq h(\rho_i) \leq \ell(\rho_i) v \leq iv.
  \end{equation*}
It follows that $h(\rho_i) \sim iv$ and $\ell(\rho_i) \sim i$.
\end{proof}

\begin{rmk}
Our technique also applies to estimate the entropy of other measures,
for instance the measure $\mu_s=\sum e^{-s\abs{g}}\delta_g / \sum
e^{-s\abs{g}}$ classically used in the construction of the
Patterson-Sullivan measure. Indeed, $\mu_s$ converges when $s\searrow
v$ to $\rho_\infty$, which has no atom. Moreover, writing $Z_s = \sum
e^{-s\abs{g}}$, we have $H(\mu_s)=sL(\mu_s) + \log Z_s$. One checks
that $\log Z_s$ is negligible with respect to $H(\mu_s)$, and that
the quantity $h_s$ from~\eqref{eq:def_hn} is also equivalent to
$H(\mu_s)$. Hence, Theorem~\ref{thm:h_grande_limite} gives
  \begin{equation*}
  H(\mu_s) (1+o(1)) \leq h_s(1+o(1)) \leq h(\mu_s) \leq \ell(\mu_s) v \leq L(\mu_s) v \leq H(\mu_s)(1+o(1)).
  \end{equation*}
These inequalities show that $h(\mu_s)/\ell(\mu_s) \to v$.
\end{rmk}

\begin{rmk}
One could imagine another strategy to find finitely supported
measures $\mu_i$ for which $h(\mu_i)/\ell(\mu_i)\to v$. First, find a
nice measure $\mu$ for which the stationary measure $\nu$ at infinity
is precisely the Patterson-Sullivan measure (which implies that
$h(\mu)=\ell(\mu) v$ since the Martin cocycle and the Busemann
cocycle coincide). Let $\mu_i$ be a truncation of $\mu$. Since it
converges to $\mu$, the continuity results for the drift and the
entropy imply that $h(\mu_i)/\ell(\mu_i) \to h(\mu)/\ell(\mu)=v$.

We were not able to implement successfully this strategy. Given a
measure $\nu$, there is a general technique due to Connell and
Muchnik~\cite{connell_muchnik} to get a measure $\mu$ on $\Gamma$
with $\mu*\nu=\nu$. This technique requires a continuity assumption
on $\xi \mapsto (\dd g_*\nu/\dd\nu)(\xi)$, which is not satisfied in
our setting for $\nu=\rho_\infty$. However, in nice groups such as
surface groups, this function is, for every $g$, continuous at all
but finitely many points. The technique of~\cite{connell_muchnik} can
be adapted to such a situation (in the proof of their Theorem~6.2,
one should just take sets $Y_n$ that avoid the discontinuities of the
spikes we have already used). Unfortunately, the resulting measure
$\mu$ (which satisfies $\mu*\nu=\nu$) has infinite moment and
infinite entropy, and is therefore useless for our purposes.
\end{rmk}

\section{Rigidity for admissible measures}

In this section, we prove Theorem~\ref{thm:hlv_indice_fini}. Assume that
$(\Gamma,d)$ is a hyperbolic group endowed with a word distance, which is not
virtually free. Let $\mu$ be a probability measure on $\Gamma$, with a
superexponential moment, such that $\Gamma_\mu^+$ is a finite index subgroup
of $\Gamma$. We want to prove that $h(\mu)<\ell(\mu)v$. We argue by
contradiction, assuming that $h(\mu)=\ell(\mu) v$. Assume first that
$\Gamma_\mu^+=\Gamma$.

Since we are assuming the equality $h(\mu)=\ell(\mu)v$, Theorem~\ref{thm:BHM}
implies that
\begin{equation*}
\abs{d_\mu(e,g)-v d(e,g)}\leq C.
\end{equation*}

As a warm-up, let us first deal with the baby case $C=0$. Then the distances
$d_\mu$ and $d$ are proportional, hence they define the same Busemann
boundary. The Busemann boundary $\partial_B\Gamma$ corresponding to $d$ is
totally discontinuous since the distance $d$ takes integer values (it is a
word distance). On the other hand, the Busemann boundary associated to the
Green metric $d_\mu$ is known as the Martin boundary of the random walk
$(\Gamma,\mu)$. By~\cite{ancona} and~\cite{gouezel_infinite_support}, it is
homeomorphic to the boundary $\partial\Gamma$ of $\Gamma$. Since the group
$\Gamma$ is not virtually free, its boundary $\partial\Gamma$ is not totally
discontinuous (see~\cite[Theorem 8.1]{kapovich_survey}), hence a
contradiction.

Let us now go back to the general situation, when $C$ is nonzero (but still
assuming $\Gamma_\mu^+=\Gamma$). The argument is more complicated, but it
still relies on the same facts: the boundary is not totally disconnected,
while the word distance is integer valued (we will not use directly this
fact, rather the fact that stable translation lengths are rational, see
Lemma~\ref{lem_rationnel}). These two opposite features will give rise to a
contradiction.

In order to get rid of the constant $C$, we will need an homogenized version
of the inequality $\abs{d_\mu(e,g)-v d(e,g)}\leq C$. This is
Lemma~\ref{lem_explicite} below. The homogenized quantity associated to the
distance $d$ is called the stable translation length. For an element $g$ of
$\Gamma$, it is defined by $l(g) = \lim\abs{g^n}/n$ (it exists by
subadditivity).

Recall that we write $c_M(g,\xi)$ for the Martin cocycle associated to the
random walk, defined in Proposition~\ref{prop:express_entropy}. It satisfies
the cocycle relation of Definition~\ref{def:cocycle}. We will not use its
probabilistic definition, but rather the fact that the Martin cocycle is the
Busemann cocycle associated to the Green distance $d_\mu$ of
Theorem~\ref{thm:BHM}. In other words, $c_M(g,\xi) = \lim_{x\to \xi}
d_\mu(g^{-1},x)-d_\mu(e,x)$ (and this limit exists).

\begin{lem}
\label{lem_explicite} For $g\in \Gamma$ with infinite order, $c_M(g, g^+) = v
l(g)$.
\end{lem}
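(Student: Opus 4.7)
\medskip

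\noindent\emph{Proof plan.} My strategy is to compute $c_M(g,g^+)$ by first reducing to large powers of $g$, so that additive constants get absorbed, and then recognizing the Martin cocycle as the Busemann cocycle of the Green distance $d_\mu$ in order to invoke the equality-case estimate $\abs{d_\mu(e,h)-v d(e,h)}\leq C$ furnished by Theorem~\ref{thm:BHM} (applicable since we are assuming $h(\mu)=\ell(\mu)v$).

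First, I would iterate the cocycle identity~\eqref{eq:cocycle}. Since $g$ fixes $g^+$, a direct induction yields $c_M(g^n,g^+)=n\,c_M(g,g^+)$ for every $n\geq 1$. Therefore it suffices to show that $c_M(g^n,g^+)=vn\,l(g)+O(1)$ with error bounded independently of $n$, and then let $n\to\infty$ after dividing by $n$.

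To evaluate $c_M(g^n,g^+)$, I would use the fact, recalled in the paper just before Lemma~\ref{lem_explicite}, that the Martin cocycle coincides with the Busemann cocycle of $d_\mu$ under the identification of the Martin boundary with $\partial\Gamma$ due to~\cite{ancona,gouezel_infinite_support}. Approximating $g^+$ by $g^N$, which converges to $g^+$ in $\Gamma\cup\partial\Gamma$ since $g^+$ is the attracting fixed point of $g$, and using left-invariance of $d_\mu$, I obtain
\begin{equation*}
c_M(g^n,g^+)\;=\;\lim_{N\to\infty}\bigl[d_\mu(g^{-n},g^N)-d_\mu(e,g^N)\bigr]\;=\;\lim_{N\to\infty}\bigl[d_\mu(e,g^{N+n})-d_\mu(e,g^N)\bigr].
\end{equation*}
Combined with the hypothesis $d_\mu(e,g^m)=v\abs{g^m}+O(1)$ and the classical fact that the powers of an infinite-order element in a word-hyperbolic group form a quasi-axis (so that $\abs{g^m}=m\,l(g)+O(1)$), both errors being bounded independently of $m$, this gives $c_M(g^n,g^+)=vn\,l(g)+O(1)$ uniformly in $n$, and the conclusion follows.

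The delicate point I would have to verify is that the Busemann-type limit along the particular sequence $g^N$ genuinely computes $c_M(g^n,g^+)$, rather than merely some value on the finer Busemann boundary $\partial_B\Gamma$ of $d_\mu$. This is exactly what the Martin-equals-Gromov identification ensures: it makes $\xi\mapsto c_M(g^n,\xi)$ continuous on $\partial\Gamma$, so the limit may be taken along any sequence converging to $g^+$ in $\Gamma\cup\partial\Gamma$, and in particular along $g^N$.
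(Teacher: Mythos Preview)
Your argument is correct and follows the same overall strategy as the paper's: both use the cocycle relation $c_M(g^n,g^+)=n\,c_M(g,g^+)$ to reduce to asymptotics in $n$, and both feed in the estimate $|d_\mu(e,\cdot)-v\,d(e,\cdot)|\leq C$ coming from Theorem~\ref{thm:BHM}. The execution differs only in packaging. The paper compares $c_M$ with the Busemann cocycle $c_B$ on the horoboundary $\partial_B\Gamma$, obtaining $|c_M(g,\pi_B(\xi))-v\,c_B(g,\xi)|\leq 2C$, and then uses $h_\xi(g^{-n})/n\to l(g)$ for any $\xi$ with $\pi_B(\xi)\neq g^-$. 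You instead evaluate $c_M(g^n,g^+)$ directly as the limit along the concrete sequence $g^N\to g^+$, and substitute the quasi-axis estimate $|g^m|=m\,l(g)+O(1)$. Your route is marginally more elementary in that it avoids the horoboundary altogether, at the price of invoking this extra (standard) fact about powers of hyperbolic elements; the paper's version fits the general cocycle framework of Section~\ref{sec:general} but is otherwise no more general.
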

\begin{proof}
Recall that we are assuming that the equality $h(\mu)=\ell(\mu)v$ holds,
therefore we have $\abs{d_\mu(e,g)-v d(e,g)}\leq C$. It follows that the
cocycle $c_M$ corresponding to $d_\mu$ and the cocycle $c_B$ corresponding to
the distance $d$ satisfy $\abs{c_M - v c_B} \leq 2C$. Note that $c_B$ is not
defined on the geometric boundary, but on the horoboundary, so the proper way
to write this inequality is $\abs{c_M(g, \pi_B(\xi)) - v c_B(g,\xi)} \leq 2C$
for any $g\in \Gamma$ and any $\xi \in
\partial_B \Gamma$.

Let $\xi \in \partial_B \Gamma$ with $\pi_B(\xi)\not=g^-$. Then $\lim
c_B(g^n,\xi)/n = \lim h_\xi(g^{-n})/n = l(g)$. We choose $\xi$ with
$\pi_B(\xi)=g^+$, to get
  \begin{equation*}
  \lim c_M(g^n, g^+)/n = \lim v c_B(g^n, \xi)/n \pm 2C/n = v l(g).
  \end{equation*}
As $g^+$ is $g$-invariant, the cocycle equation for $c_M$ on $\partial\Gamma$
gives $c_M(g, g^+) = c_M(g^n, g^+)/n$. This converges to $v l(g)$ when $n\to
\infty$ by the previous equation.
\end{proof}

The proof of Theorem~\ref{thm:hlv_indice_fini} uses the following general
result on cocycles.

\begin{prop}
\label{prop:cocycle} Let $\Gamma$ be a hyperbolic group which is not
virtually free. Let $c:\Gamma \times \partial \Gamma \to \R$ be a H\"older
cocycle, such that any hyperbolic element $g$ satisfies $c(g,g^+) \in \Z$.
Then there exists a hyperbolic element $g\in\Gamma$ with $c(g,g^-) =
c(g,g^+)$.
\end{prop}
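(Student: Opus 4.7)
The strategy is to argue by contradiction: assume that $c(g,g^+) \neq c(g,g^-)$ for every hyperbolic $g \in \Gamma$, and derive a contradiction from the topology of $\partial\Gamma$. First I would observe that $c(g,g^-) \in \Z$ as well: applying the hypothesis to $g^{-1}$ (hyperbolic with $(g^{-1})^+ = g^-$) gives $c(g^{-1},g^-) \in \Z$, and the cocycle identity $0 = c(e,g^-) = c(g, g^{-1}g^-) + c(g^{-1},g^-) = c(g,g^-) + c(g^{-1},g^-)$ then forces $c(g,g^-) \in \Z$. I therefore set
\begin{equation*}
L(g) := c(g,g^+) - c(g,g^-) \in \Z,
\end{equation*}
so that by assumption $L(g) \neq 0$ for every hyperbolic $g$. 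A direct manipulation of the cocycle identity, using $(hgh^{-1})^\pm = h g^\pm$, gives the conjugation invariance $L(hgh^{-1}) = L(g)$, while the identities $c(g^n, g^\pm) = n\, c(g, g^\pm)$ (obtained by iterating the cocycle relation at the fixed point) give the homogeneity $L(g^n) = n L(g)$.

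The plan is now to show that $L$ is constant on the set of hyperbolic elements; once established, the homogeneity $L(g^n) = n L(g)$ forces this constant to be $0$, contradicting the standing assumption and producing the desired hyperbolic $g$. To propagate equality of $L$, I would pick two hyperbolic elements $a, b$ with disjoint fixed-point pairs and consider the conjugate elements $a^N b^N$ and $b^N a^N$ (indeed $b^N \cdot (a^N b^N) \cdot b^{-N} = b^N a^N$). Expanding $L(a^N b^N) = L(b^N a^N)$ with the cocycle identity and the telescoping formula $c(a^N, \xi) = \sum_{k=0}^{N-1} c(a, a^k \xi)$, and using that $(a^N b^N)^+$ lies exponentially close to $a^+$ while $(a^N b^N)^-$ lies exponentially close to $b^-$ in any visual metric on $\partial\Gamma$, the target estimates are
\begin{equation*}
L(a^N b^N) = N L(b) + O(1), \qquad L(b^N a^N) = N L(a) + O(1),
\end{equation*}
from which $L(a) = L(b)$ follows upon letting $N \to \infty$. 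Since any two hyperbolic elements can be linked through a third in general position, this would give the constancy of $L$.

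The delicate point, and the place where I expect the bulk of the work, is to justify the $O(1)$ error terms. Each function $\xi \mapsto c(g,\xi)$ is H\"older, but its H\"older constant depends on $g$ and may blow up with $|g|$. In the virtually free case this blow-up is fast enough to ruin the expansion entirely: for the Busemann cocycle on a tree one gets $L(g) = 2 l(g)$, far from constant. This is where the hypothesis that $\Gamma$ is \emph{not} virtually free should enter decisively, via the fact that $\partial\Gamma$ is then connected (by Stallings' theorem on ends, possibly after passing to a finite-index one-ended subgroup). In this connected setting, the integer-valuedness of $L$ at every fixed-point pair, combined with the rigidity results for H\"older cocycles of Izumi--Neshveyev--Okayasu~\cite{izumi_hyperbolic} and Ancona-style estimates~\cite{ancona}, should force the H\"older errors to remain bounded along the above expansions; in the disconnected (tree) case the analogous control is genuinely unavailable, which is exactly consistent with the conclusion failing there.
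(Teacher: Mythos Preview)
Your approach has a genuine gap at the key step. The claimed expansions $L(a^N b^N) = N L(b) + O(1)$ and $L(b^N a^N) = N L(a) + O(1)$ are incorrect: the correct asymptotics are symmetric in $a$ and $b$, namely $L(a^N b^N) = N\bigl(L(a)+L(b)\bigr) + O(1)$ and likewise for $L(b^N a^N)$. You can check this already for the Busemann cocycle on a tree, where $L(g)=2l(g)$ and $l(a^N b^N)=N l(a)+N l(b)+O(1)$. More conceptually, the identity $L(a^N b^N)=L(b^N a^N)$ is an exact algebraic consequence of conjugacy invariance, so comparing asymptotic expansions of the two sides can never yield a nontrivial constraint; any genuine expansion must agree to all orders. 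In particular, no amount of control on the H\"older errors will rescue the argument, and the hypothesis that $\Gamma$ is not virtually free never gets a chance to enter. (The reference to ``Ancona-style estimates'' is also misplaced here: those concern Green functions of random walks, not general H\"older cocycles.)

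The paper's proof is quite different and makes essential, concrete use of the boundary topology. One reduces $c$ modulo $\Z$; the hypothesis $c(g,g^+)\in\Z$ says that $\bar c=c\bmod\Z$ vanishes on all periodic data, so a Livsic theorem for H\"older cocycles (Izumi--Neshveyev--Okayasu) gives a H\"older transfer function $\bar b:\partial\Gamma\to\R/\Z$ with $\bar c(g,\xi)=\bar b(g\xi)-\bar b(\xi)$. Since $\Gamma$ is not virtually free, $\partial\Gamma$ has a nontrivial connected component; one then finds a hyperbolic $g$ and a $g$-invariant arc $I\subset\partial\Gamma$ joining $g^-$ to $g^+$. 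On the simply connected arc $I$, the map $\bar b$ lifts to $\R$, and then $\xi\mapsto c(g,\xi)-b(g\xi)+b(\xi)$ is continuous, integer-valued, hence constant on $I$, giving $c(g,g^-)=c(g,g^+)$. The connectedness hypothesis is used precisely to produce the arc on which the $\R/\Z$-valued coboundary can be lifted; this is the mechanism you were missing.
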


Applied to the Busemann cocycle, this proposition implies that if a convex
cocompact negatively curved manifold has a fundamental group which is not
virtually free, then its length spectrum is not arithmetic, i.e., the lengths
of its closed geodesics generate a dense subgroup of $\R$. This result is
already known, see~\cite[Page~205]{dalbo}. It is proved in this article using
crossratios. This argument based on crossratios can be used to prove
Proposition~\ref{prop:cocycle} in full generality. However, we will give a
different, more direct, proof.

We will use the following topological lemma.
\begin{lem}
\label{lem:existe_axe} Let $g$ be a hyperbolic element in a hyperbolic group
$\Lambda$ with connected boundary. There exists an arc $I$ (i.e., a subset of
$\partial\Lambda$ homeomorphic to $[0,1]$) joining $g^-$ and $g^+$, invariant
under an iterate $g^i$ of $g$.
\end{lem}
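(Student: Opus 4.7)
Since $\Lambda$ is a hyperbolic group with connected boundary, the compact metric space $\partial\Lambda$ is locally connected (by the combined results of Bestvina--Mess and Swarup), hence is a Peano continuum; in particular any two of its points are joined by an arc. The plan is to construct the desired arc as a union of $g^i$-translates of a single ``seed arc'' lying in a fundamental annulus near $g^+$.

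First, I would pass to a suitable iterate $g^i$ and choose a connected open neighborhood $U$ of $g^+$ such that $g^i(\bar U)\subset U$ and $g^-\notin\bar U$; such $U$ exists by local connectedness combined with the North--South dynamics of $g$ at $g^+$. Setting $F=\bar U\setminus g^i(U)$, the $g^{in}$-translates of $F$ tile $\bar U\setminus\{g^+\}$ and overlap only on boundaries: $g^{in}F\cap g^{im}F=\emptyset$ for $|n-m|\geq 2$, while $g^{in}F\cap g^{i(n+1)}F=g^{i(n+1)}(\partial U)$. The key step is then to produce a seed arc $\gamma_0\subset F$ going from some $x\in\partial U$ to $g^i(x)\in g^i(\partial U)$, meeting the inner boundary $g^i(\partial U)$ only at its endpoint $g^i(x)$.

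Once such a $\gamma_0$ is in hand, the invariant arc is
\begin{equation*}
  I=\{g^-,g^+\}\cup\bigcup_{n\in\Z} g^{in}(\gamma_0).
\end{equation*}
Consecutive translates of $\gamma_0$ share exactly the single point $g^{i(n+1)}(x)$ and non-consecutive ones are disjoint, by the fundamental-domain identities above and the endpoint condition on $\gamma_0$; by North--South dynamics the translates accumulate uniformly on $g^+$ as $n\to+\infty$ and on $g^-$ as $n\to-\infty$. Thus $I$ is a continuous injective image of a compact interval, i.e.\ an arc from $g^-$ to $g^+$, and it is $g^i$-invariant by construction.

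The main obstacle is the seed-arc step: an arbitrary arc in $F$ from $\partial U$ to $g^i(\partial U)$ starts at some $x$ and ends at some $y$, with no reason a priori that $y=g^i(x)$. One route is to start with any arc $J$ from $g^-$ to $g^+$ (arcwise connectedness), extract the sub-arc between its last visit to $\partial U$ and first subsequent crossing of $g^i(\partial U)$, and then enforce $y=g^i(x)$ by a selection argument in $\partial U$. Alternatively, one can work in the quotient $Q_i=(\partial\Lambda\setminus\{g^-,g^+\})/\langle g^i\rangle$: the covering $\partial\Lambda\setminus\{g^-,g^+\}\to Q_i$ is a regular $\Z$-cover, and a simple loop in $Q_i$ whose homotopy class projects to a generator of the deck group lifts to a $g^i$-invariant embedded line in $\partial\Lambda\setminus\{g^-,g^+\}$; its two ends necessarily converge to $g^-$ and $g^+$, yielding an arc with the required properties.
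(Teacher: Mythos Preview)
Your overall architecture---build a $g^i$-invariant arc as the closure of $\bigcup_{n\in\Z}g^{in}(\gamma_0)$ for a suitable seed arc $\gamma_0$---is exactly the right shape, and is essentially what the paper does as well. The difficulty, as you correctly identify, is producing a seed arc whose endpoints are related by $g^i$. Unfortunately neither of your two proposed resolutions closes this gap.

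Your first route (``enforce $y=g^i(x)$ by a selection argument in $\partial U$'') is not an argument; nothing in the setup gives you any control linking the exit point on $\partial U$ to the entry point on $g^i(\partial U)$. Your second route merely restates the problem: a simple loop in $Q_i$ representing the generator of the deck group is \emph{exactly} the image of the $g^i$-invariant embedded line you are trying to build, so asserting its existence is circular. In a general Peano continuum there is no off-the-shelf theorem guaranteeing that a given class in $\pi_1$ (or its image in the deck group) is represented by an \emph{embedded} loop.

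The paper handles this with a concrete minimization trick that you are missing. One first uses Bowditch's result that $\partial\Lambda\setminus\{\xi\}$ has finitely many ends to pass to an iterate of $g$ stabilizing the ends near $g^-$; this guarantees that a point $\xi$ close to $g^-$ and its image $g\xi$ lie in the same end, hence can be joined by a small arc $J$ avoiding $g^\pm$. Now $J$ meets only finitely many of its translates $g^iJ$. Parametrize $J$ by $u:[0,1]\to\partial\Lambda$ and minimize $|t-s|$ over all pairs with $u(t)=g^iu(s)$ for some $i\neq 0$; by compactness the minimum is attained at some $(s,t,i)$ with $s\neq t$. The sub-arc $K=u([s,t])$ then runs from $\eta$ to $g^i\eta$, and the minimality forces $g^jK\cap K$ to be empty except possibly at the endpoints when $j=\pm i$. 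Thus $\bigcup_{n\in\Z}g^{ni}K$ is the desired invariant arc. This minimization step is the missing idea in your proposal.
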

\begin{proof}
We will use nontrivial results on the topology of $\partial \Lambda$. When it
is connected, then it is also locally connected by~\cite{swarup}. Hence, it
is also path connected and locally path connected, see~\cite[Theorem
3-16]{hocking_young}. Moreover, for any $\xi\in \partial \Lambda$, the space
$\partial \Lambda \setminus \{\xi\}$ has finitely many ends
by~\cite{bowditch}.

Consider $g$ as in the statement of the lemma. Its action permutes the ends
of $\partial\Lambda \setminus \{g^-\}$. Taking an iterate of $g$, we can
assume it stabilizes the ends. If $\xi$ is close to $g^-$, it is also the
case of $g\xi$. As they belong to the same end, one can join them by a small
arc $J$ that avoids $g^-$ (and $g^+$). Then $\bigcup_{n\in \Z} g^n J$ joins
$g^-$ to $g^+$, and it is invariant under $g$. However, it is not necessarily
an arc if $g^i J$ intersects $J$ in a nontrivial way for $i\not=0$. To get a
real arc, we will shorten $J$ as follows.

As $g^n J$ converges to $g^{\pm}$ when $n$ tends to $\pm\infty$, the arc $J$
can only intersect finitely many $g^i J$. Let us fix a parametrization
$u:[0,1] \to J$. The quantity
  \begin{equation*}
  \inf\{ \abs{t-s} \st s,t\in [0,1]\text{ and }\exists i\not=0, u(t)=g^i u(s)\}
  \end{equation*}
is realized by compactness (since $i$ remains bounded), for some parameters
$s,t,i$. Replacing $s,t,i$ with $t,s,-i$ if necessary, we may assume $i>0$.
As $g^-$ and $g^+$ are the only fixed points of $g^i$, we have $s\not =t$.
Let $K=u([s,t])$, this is an arc between $\eta=u(s)$ and $g^i \eta=u(t)$.
Moreover, $g^j K$ does not intersect $K$, except maybe at its endpoints for
$j=\pm i$: otherwise, there exists $x$ in the interior of $K$ such that $g^j
x$ also belongs to $K$, contradicting the minimality of $\abs{s-t}$.

It follows that $\bigcup_{n\in \Z} g^{ni} K$ is an arc from $g^-$ to $g^+$,
invariant under $g^i$.
\end{proof}

\begin{proof}[Proof of Proposition~\ref{prop:cocycle}]
Let us consider the cocycle $\bar c = c \mod \Z$. The assumption of the
proposition ensures that $\bar c(g,g^+)=0$ for all hyperbolic elements $g$.
In geometric terms, this would correspond to an assumption that the cocycle
has vanishing average on all closed orbits. Hence, we may apply a version of
Livsic's theorem, due in this context to~\cite{izumi_hyperbolic}
(Theorem~5.1). It ensures that the cocycle $\bar c$ is a coboundary: there
exists a H\"older continuous function $\bar b:\partial\Gamma \to \R/\Z$ such
that, for all $\xi \in
\partial\Gamma$, for all $g\in \Gamma$,
  \begin{equation}
  \label{eq_livsic}
  \bar c(g, \xi) = \bar b(g \xi)-\bar b(\xi).
  \end{equation}

Recall that, since the group $\Gamma$ is not virtually free, its boundary is
not totally discontinuous (see~\cite[Theorem 8.1]{kapovich_survey}). The
stabilizer of a nontrivial component $L$ of $\partial\Gamma$ is a subgroup
$\Lambda$ of $\Gamma$, quasi-convex hence hyperbolic, whose boundary is $L$
(see the discussion on top of Page~55 in~\cite{bowditch_2}).

Let us consider an infinite order element $g\in \Lambda$.
Lemma~\ref{lem:existe_axe} constructs an arc $I$ from $g^-$ to $g^+$ in
$\partial \Lambda\subset \partial\Gamma$, invariant under an iterate $g^i$ of
$g$. Replacing $g$ with $g^i$, we may assume $i=1$.

The restriction of the function $\bar b$ to the arc $I$ admits a continuous
lift $b:I\to \R$, as $I$ is simply connected. The function $F:\xi \mapsto
c(g,\xi)-b(g\xi)+b(\xi)$ is well defined on $I$, continuous, and it vanishes
modulo $\Z$ by~\eqref{eq_livsic}. Hence, it is constant. In particular, $c(g,
g^-) = F(g^-) = F(g^+) = c(g,g^+)$.
\end{proof}

In order to apply Proposition~\ref{prop:cocycle}, we will need the following
result on stable translation lengths in hyperbolic groups (\cite[Theorem
III.$\Gamma$.3.17]{bridson_haefliger}).
\begin{lem}
\label{lem_rationnel} Let $(\Gamma,d)$ be a hyperbolic group with a word
distance. Then there exists an integer $N$ such that, for any $g\in \Gamma$,
one has $N l(g)\in \Z$.
\end{lem}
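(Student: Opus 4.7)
The statement is trivial on torsion elements, since they all satisfy $l(g)=0$. So the real work is to bound the denominator of $l(g)$ for $g$ of infinite order, where we will exploit the fact that $d$ is integer-valued together with the structure theory of hyperbolic isometries.

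Fix an infinite-order $g\in\Gamma$. Since the Cayley graph of $(\Gamma,d)$ is $\delta$-hyperbolic and $g$ acts by isometries, $g$ is a hyperbolic isometry and admits a $g$-invariant quasi-geodesic $A_g$ joining $g^-$ to $g^+$, along which $g$ acts by an approximate translation of length $l(g)$, with defect bounded by a constant $K_\delta$ depending only on $\delta$ and the generating set. A standard ``short conjugate'' lemma then produces $g'$ conjugate to $g$ with $\abs{g'}\leq l(g)+K_\delta$, so that $e$ lies within $K_\delta$ of its own quasi-axis and
  \begin{equation*}
  \abs{ d(e,(g')^n)-n\,l(g)}\leq 2K_\delta\qquad\text{for all }n\geq 1.
  \end{equation*}
In particular $n\,l(g)$ is within $2K_\delta$ of the integer $d(e,(g')^n)$.

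The plan is to upgrade this approximation to an \emph{equality} for a uniformly bounded power of $g$. The key ingredient is the structure of the elementary subgroup $E(g)=\mathrm{Stab}\{g^-,g^+\}$: in a hyperbolic group it is virtually infinite cyclic, and the finiteness of the set of conjugacy classes of finite subgroups of $\Gamma$ yields a uniform bound $N_0$ on the index of the maximal infinite cyclic subgroup of $E(g)$ (independent of $g$). Hence $g^{N_0}$ lies in an infinite cyclic subgroup $\langle g_0\rangle$, and it suffices to show that $l(g_0)\in\frac{1}{N_1}\Z$ for a uniform $N_1$, whence $N\coloneqq N_0\cdot N_1$ works. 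For the primitive element $g_0$ the quasi-axis can be made ``exact'' in the following sense: the minimum displacement $m_{g_0^n}=\min_x d(x,g_0^n x)$ is an integer for every $n$, and using that $g_0$ generates the torsion-free part of $E(g)$ one shows $m_{g_0^n}=n\,l(g_0)$ exactly for $n$ large enough (say $n\geq n_0$ uniform).

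The main obstacle is making this last step quantitative, i.e., producing a vertex $x\in\Gamma$ lying on a genuine axis of $g_0^{n_0}$. This is where the combination of hyperbolicity and integrality of $d$ is essential: a pigeonhole argument on the bounded defects $d(x,g_0^n x)-n\,l(g_0)$ applied to a short conjugate, together with the finiteness of the orbit of the quasi-axis modulo the cyclic action of $\langle g_0\rangle$, forces the defect to vanish for some $n\leq n_0$ chosen uniformly in terms of $\delta$ and the generating set. Taking the least common multiple of $N_0$ and this uniform $n_0$ over all the finitely many ``types'' of quasi-axes yields the desired $N$.
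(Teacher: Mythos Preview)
The paper does not give a proof of this lemma; it simply cites \cite[Theorem~III.$\Gamma$.3.17]{bridson_haefliger}. Your outline is in the same spirit as that argument, but it contains an unnecessary detour and leaves the decisive step as an assertion rather than a proof.

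The detour: the reduction to a primitive element $g_0$ of $E(g)$ buys nothing. Primitivity is an algebraic property inside the virtually cyclic group $E(g)$ and carries no information about how $g_0$ sits in the Cayley graph of $\Gamma$; there is no reason the quasi-axis of a primitive element should be any more ``exact'' than that of a general hyperbolic element, and your sentence ``using that $g_0$ generates the torsion-free part of $E(g)$ one shows $m_{g_0^n}=n\,l(g_0)$ exactly'' has no visible mechanism behind it. The Bridson--Haefliger proof works directly on an arbitrary infinite-order $g$ (after passing to its shortest conjugate) and never invokes $E(g)$.

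The gap: your claim that ``a pigeonhole argument on the bounded defects $d(x,g_0^n x)-n\,l(g_0)$ \dots\ forces the defect to vanish for some $n\le n_0$'' is not a proof as stated. The defects are bounded \emph{real} numbers --- at this point you do not yet know $l(g_0)$ is rational --- so they do not lie in a finite set, and pigeonhole can only make two of them close, never force one to be zero. Your parenthetical ``finiteness of the orbit of the quasi-axis modulo the cyclic action of $\langle g_0\rangle$'' is much closer to the actual argument: what one pigeonholes is discrete vertex data, not the defects themselves. Concretely, if $u$ is a shortest conjugate of $g$ then the bi-infinite word $\dotsm u\cdot u\cdot u\dotsm$ is a uniform local (hence global) quasi-geodesic, so each $u^i$ lies within a fixed distance $C=C(\delta)$ of a geodesic $L$ with the same endpoints; letting $p_i\in L$ be a closest point to $u^i$, the vertices $u^{-i}p_i$ all lie in the finite ball $B(e,C)$, and a coincidence $u^{-i}p_i=u^{-j}p_j$ for some $0\le i<j\le\Card{B(e,C)}$ is what yields $l(g^{j-i})\in\Z$. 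That is the content you are gesturing at, but the two halves of your final paragraph are conflated and neither is carried out.
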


The combination of Lemma~\ref{lem_explicite} and Lemma~\ref{lem_rationnel}
shows that the cocycle $c'=Nc_M/v$ satisfies $c'(g, g^+) \in \Z$ for any
hyperbolic element $g$. Moreover, this cocycle is H\"older-continuous since the
Martin cocycle $c_M$ is itself H\"older-continuous. This follows
from~\cite{izumi_hyperbolic} if $\mu$ has finite support, and
from~\cite{gouezel_infinite_support} if it has a superexponential moment.
Now, Proposition~\ref{prop:cocycle} implies the existence of a hyperbolic
element $g$ such that $c_M(g, g^+)=c_M(g, g^-)$. This is a contradiction
since $c(g, g^+) = vl(g)>0$  and $c(g, g^-)=-c(g^{-1}, g^-)=-vl(g)<0$ again
by Lemma~\ref{lem_explicite}. This concludes the proof of
Theorem~\ref{thm:hlv_indice_fini} when $\Gamma_\mu^+=\Gamma$.

If $\Gamma_\mu^+$ is a finite index subgroup of $\Gamma$, the same proof
almost works in $\Gamma_\mu^+$ to conclude that $\Gamma_\mu^+$ is virtually
free if $h=\ell v$, implying that $\Gamma$ is also virtually free. The only
difficulty is that the distance we are considering on $\Gamma_\mu^+$ is not a
word distance for a system of generators of $\Gamma_\mu^+$. However, the only
properties of the distance we have really used are:
\begin{enumerate}
\item It is hyperbolic and quasi-isometric to a word distance (to apply
    Theorem~\ref{thm:BHM}).
\item The stable translation lengths are rational numbers with bounded
    denominators.
\end{enumerate}
These two properties are clearly satisfied for the restriction of the
distance $d$ to $\Gamma_\mu^+$. Hence, the above proof also works in this
case. This completes the proof of Theorem~\ref{thm:hlv_indice_fini}. \qed

\begin{rmk}
If $\Lambda$ is a quasi-convex subgroup of a hyperbolic group $\Gamma$, then
the restriction to $\Lambda$ of a word distance on $\Gamma$ also satisfies
the above two properties. Hence, Theorem~\ref{thm:hlv_indice_fini} also holds
in $\Lambda$ for such a distance.
\end{rmk}

\section{Growth of non-distorted points in subgroups}

Our goal in this section is to prove Theorem~\ref{thm:hlv_indice_infini} on
the entropy of a random walk on an infinite index subgroup $\Lambda$ of a
hyperbolic group $\Gamma$. Since the geometry of such random walks is
complicated to describe in general, our argument is indirect: we will show
that, in any infinite index subgroup, the number of points that the random
walk effectively visits is exponentially small compared to the growth of
$\Gamma$. This is trivial if the growth $v_\Lambda = \liminf_{n\to \infty}
\frac{\log \Card{B_n\cap \Lambda}}{n}$ is strictly smaller than $v=v_\Gamma$.
When $v_\Lambda=v$, on the other hand, we will argue that the random walk
does not typically visit all of $\Lambda$, but only a subset made of
non-distorted points. To prove Theorem~\ref{thm:hlv_indice_infini}, the main
step is to show that, even when $v_\Lambda=v$, the number of such
non-distorted points is exponentially smaller than $e^{nv}$. We introduce the
notion of non-distorted points in Paragraph~\ref{subsec:distorted}, prove
this main geometric estimate in Paragraph~\ref{subsec:count_nondist}, and
apply this to random walks in Paragraph~\ref{subsec:apply_distorted}.
Paragraph~\ref{subsec:vLambda_small} is devoted to the case $v_\Lambda<v$,
where unexpected phenomena happen even in distorted subgroups.

\subsection{Non-distorted points}
\label{subsec:distorted}

There are at least two different ways to define a notion of
non-distorted point.

\begin{definition}
Let $\Gamma$ be a finitely generated group endowed with a word
distance $d=d_\Gamma$, and let $\Lambda$ be a subgroup of $\Gamma$.
\begin{itemize}
\item For $\epsilon>0$ and $M>0$, we say that $g\in \Lambda$ is $(\epsilon,
    M)$-quasi-convex if any geodesic $\gamma$ from $e$ to $g$ spends at
    least a proportion $\epsilon$ of its time in the $M$-neighborhood of
    $\Lambda$, i.e.,
  \begin{equation*}
  \Card{ \{i\in [1,\abs{g}] \st d(\gamma(i), \Lambda) \leq M\} } \geq \epsilon \abs{g}.
  \end{equation*}
    We write $\Lambda_{QC(\epsilon, M)}$ for the set of points in
     $\Lambda$ which are $(\epsilon, M)$-quasi-convex.
\item Assume additionally that $\Lambda$ is finitely generated,
    and endowed with a word distance $d_\Lambda$. For $D>0$, we
    say that $g\in \Lambda$ is $D$-undistorted if $d_\Lambda(e,g)
    \leq D d_\Gamma(e,g)$. We write $\Lambda_{UD(D)}$ for the set
    of $D$-undistorted points.
\end{itemize}
\end{definition}

Up to a change in the constants, these notions do not depend on the choice of
the distance $d$. The first definition has the advantage to work for
infinitely generated subgroups, but it may seem less natural than the second
one. If $\Lambda$ is a quasi-convex subgroup of a hyperbolic group $\Gamma$,
then all its points are $(1,M)$-quasi-convex if $M$ is large enough, and all
its points are also $D$-undistorted for large enough $D$. In the general
case, a quasi-convex point does not have to be undistorted: it may happen
that the times $i$ such that $d(\gamma(i),\Lambda) \leq M$ are all included
in $[1, \abs{g}/2]$, while between $\abs{g}/2$ and $\abs{g}$ one needs to
make a huge detour to follow $\Lambda$, making $d_\Lambda(e,g)$ much larger
than $d_\Gamma(e,g)$. On the other hand, an undistorted point is
automatically quasi-convex, at least in hyperbolic groups:
\begin{prop}
Let $\Gamma$ be a hyperbolic group, let $\Lambda$ be a finitely
generated subgroup of $\Gamma$, and let $D>0$. There exist
$\epsilon>0$ and $M>0$ such that any $D$-undistorted point is also
$(\epsilon, M)$-quasi-convex, i.e., $\Lambda_{UD(D)} \subset
\Lambda_{QC(\epsilon, M)}$.
\end{prop}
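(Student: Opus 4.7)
The plan is to use the undistortion to construct a ``short'' path in $\Gamma$ from $e$ to $g$ passing through $\Lambda$, and then apply the exponential cost of excursions away from geodesics in hyperbolic spaces to show that the $\Gamma$-geodesic $\gamma$ from $e$ to $g$ cannot avoid a neighborhood of this auxiliary path on more than a bounded proportion of its length.

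First I would fix a finite symmetric generating set $S_\Lambda$ for $\Lambda$ and set $K=\max_{s\in S_\Lambda}d_\Gamma(e,s)$. Since $g$ is $D$-undistorted, write $g=s_{i_1}\cdots s_{i_m}$ with $m=d_\Lambda(e,g)\leq D\, d_\Gamma(e,g)$ and put $p_j=s_{i_1}\cdots s_{i_j}\in\Lambda$, so that $d_\Gamma(p_j,p_{j+1})\leq K$. Concatenating $\Gamma$-geodesics $[p_j,p_{j+1}]$ yields a continuous path $\sigma\colon[0,L]\to\Gamma$ from $e$ to $g$ with $L\leq Km\leq KD\, d_\Gamma(e,g)$, contained in the $K/2$-neighborhood of $\Lambda$. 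Let $\pi$ denote the nearest-point projection onto $\gamma$.

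The key geometric input is the standard exponential contraction of projections in a $\delta$-hyperbolic space: for any continuous sub-path $c$ of $\sigma$ lying entirely at distance $>M$ from $\gamma$, the projected path $\pi(c)$ has length at most $\mathrm{length}(c)\cdot e^{-\alpha M}$ for some $\alpha>0$ depending only on $\delta$ (the model case being equidistant curves in $\HH^2$, stretched by $\cosh M$). Summing over the maximal sub-paths of $\sigma$ contained in $B=\{t:d_\Gamma(\sigma(t),\gamma)>M\}$, the $\gamma$-length of $\pi(\sigma(B))$ is at most $L\, e^{-\alpha M}\leq KD\,d_\Gamma(e,g)\,e^{-\alpha M}$. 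Choose $M$ large enough that $KD\,e^{-\alpha M}\leq 1/2$. As $\pi\circ\sigma$ is coarsely $1$-Lipschitz and runs from $e$ to $g$ along $\gamma$, it comes within $O(\delta)$ of every integer position $k\in\{0,\dots,d_\Gamma(e,g)\}$; for at least half of these positions, some visit happens at a time $t\in B^c$, yielding $d_\Gamma(\gamma(k),\sigma(t))\leq M+O(\delta)$ and therefore $d_\Gamma(\gamma(k),\Lambda)\leq M+K/2+O(\delta)$. This is exactly $(\epsilon,M')$-quasi-convexity with $\epsilon=1/2$ (or slightly less, absorbing the $O(1)$ counting error for small $|g|$) and $M'=M+K/2+O(\delta)$.

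The main obstacle is pinning down the exponential projection contraction in a form applicable to a continuous path in an abstract $\delta$-hyperbolic space; its content is essentially the Patterson--Sullivan shadow decay used in Section~\ref{sec:general}, recast as a length inequality relating the length of a path outside $N_M(\gamma)$ to the length of its image on $\gamma$. Once this is granted, the remainder is a routine combination of the coarse $1$-Lipschitz property of $\pi$ and an elementary counting of integer positions on $\gamma$.
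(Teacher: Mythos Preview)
Your approach is correct and genuinely different from the paper's. Both arguments build the same auxiliary path $\sigma$ (the paper calls it $\gamma_\Lambda$) from the $\Lambda$-geodesic and project onto the $\Gamma$-geodesic $\gamma$, but they exploit hyperbolicity in opposite directions. The paper argues the contrapositive: it samples points $x_{iL}$ along $\gamma$, picks for each a preimage $y_{iL}\in\sigma$, and uses a bare tree approximation to show $d(y_{iL},y_{(i+1)L})\geq d(x_{iL},y_{iL})+L+d(x_{(i+1)L},y_{(i+1)L})-C_1$, so that $\abs{\sigma}\geq\sum d(x_{iL},\Lambda)-O(n)$; if many $x_{iL}$ are far from $\Lambda$ this forces $d_\Lambda(e,g)\gtrsim Mn/L$, and choosing $M$ \emph{linearly} in $D$ gives the contradiction. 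You instead bound directly the portion of $\gamma$ shadowed by the far-away part of $\sigma$, and choose $M$ so that this shadow is short; this gives a better (logarithmic, in your version) dependence of $M$ on $D$, though that gain is irrelevant for the application.

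Two small points. First, the exponential contraction you invoke is stronger than you need and is the step you yourself flag as delicate in a discrete $\delta$-hyperbolic space; the cleaner and entirely standard substitute is that a ball disjoint from a geodesic has nearest-point projection of diameter $\leq C\delta$, which gives $\diam(\pi(c))\leq C\delta\,\mathrm{length}(c)/M$ for any sub-path $c$ at distance $>M$ (cover $c$ by $\lceil\mathrm{length}(c)/M\rceil$ balls of radius $M$). Summing, $\pi(\sigma(B))$ covers at most $C\delta KD\,d_\Gamma(e,g)/M$ of $\gamma$, and taking $M=2C\delta KD$ finishes your argument with no change. Second, the analogy with ``Patterson--Sullivan shadow decay'' from Section~\ref{sec:general} is misplaced: that is a statement about boundary measures, not about lengths of projected paths; the relevant input is really the strong contraction of nearest-point projection onto geodesics (equivalently, thin triangles), which is exactly what the paper's tree-approximation step encodes.
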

\begin{proof}
Consider $g\in \Lambda$ which is not $(\epsilon, M)$-quasi-convex, we have to
show that $d_\Lambda(e,g)$ is much bigger than $n=d_\Gamma(e,g)$. The
intuition is that, away from a $\Gamma$-geodesic from $e$ to $g$, the
progress towards $g$ is much slower by hyperbolicity.

Let us consider a geodesic from $e$ to $g$ in $\Lambda$, with length
$d_\Lambda(e,g)$. Replacing each generator of $\Lambda$ by the product of a
uniformly bounded number of generators of $\Gamma$, we obtain a path
$\gamma_\Lambda$ in the Cayley graph of $\Gamma$, remaining in the
$C_0$-neighborhood of $\Lambda$ (for some $C_0>0$) and with length
$\abs{\gamma_\Lambda}\leq C_0 d_\Lambda(e,g)$.

Let us consider a geodesic $\gamma_\Gamma$ from $e$ to $g$ for the
distance $d_\Gamma$. For each $x \in \Gamma$, we can consider its
projection $\pi(x)$ on $\gamma_\Gamma$, i.e., the point on
$\gamma_\Gamma$ that is closest to $x$ (if several points correspond,
we take the closest one to $e$). This projection is $1$-Lipschitz. In
particular, the projection of $\gamma_\Lambda$ covers the whole
geodesic $\gamma_\Gamma$. For each $x_i \in \gamma_\Gamma$, let us
consider the first point $y_i\in\gamma_\Lambda$ projecting to $x_i$.

Let us fix an integer $L$, large enough with respect to the
hyperbolicity constant of $\Gamma$. Along $\gamma_\Gamma$, let us
consider the points at distance $kL$ from $e$, i.e., $x_0=e, x_{L},
x_{2L},\dotsc, x_{mL}$ with $m=\lfloor n/L \rfloor$. In particular,
$\abs{\gamma_\Lambda} \geq \sum_i d_\Gamma(y_{iL}, y_{(i+1)L})$.
Moreover, a tree approximation shows that $d_\Gamma(y_{iL},
y_{(i+1)L}) \geq d_\Gamma(y_{iL}, x_{iL}) + L + d_\Gamma(x_{(i+1)L},
y_{(i+1)L}) - C_1$ (where $C_1$ only depends on the hyperbolicity
constant of $\Gamma$). Choosing $L\geq C_1$, we get
  \begin{equation*}
  \abs{\gamma_\Lambda} \geq \sum_{i=0}^{m} d_\Gamma(x_{iL}, y_{iL})
  \geq \sum_{i=0}^{m} (d_\Gamma(x_{iL}, \Lambda)-C_0).
  \end{equation*}
Since we assume that $g$ is not $(\epsilon,M)$-quasi-convex, the set
of indices $i$ with $d(x_i, \Lambda) \leq M$ has cardinality at most
$\epsilon n$. Taking $M\geq C_0$, the previous equation is bounded
from below by
  \begin{equation*}
  (m+1- \epsilon n) M -(m+1)C_0 \geq (n/L -\epsilon n)M - nC_0/L.
  \end{equation*}
Finally, we get
  \begin{equation*}
  d_\Lambda(e,g) \geq \abs{\gamma_\Lambda}/C_0 \geq n (1/L-\epsilon)M/C_0 - n/L.
  \end{equation*}
If $\epsilon$ is small enough and $M$ is large enough so that
$(1/L-\epsilon)M/C_0 -1/L>D$, we obtain $d_\Lambda(e,g) > Dn$, i.e., $g\notin
\Lambda_{UD(D)}$, as desired.
\end{proof}

From this point on, we will mainly work with the notion of
quasi-convex points, since counting results on such points imply
results on undistorted points by the previous proposition.

\subsection{Non-distorted points in subgroups with \texorpdfstring{$v_\Lambda=v$}{vLambda=v}}
\label{subsec:count_nondist}

In this section, we show that there are exponentially few
quasi-convex points in infinite-index subgroups of hyperbolic groups.

\begin{thm}
\label{thm:compte_peu_distordus}
Let $\Gamma$ be a nonelementary hyperbolic group endowed with a word
distance. Let $\Lambda$ be an infinite index subgroup of $\Gamma$.
Then
  \begin{equation}
  \label{eq:card_Bn}
  \Card{B_n \cap \Lambda} = o(\Card{B_n}).
  \end{equation}
Moreover, for all $\epsilon>0$ and $M>0$, there exists $\eta>0$ such that,
for all large enough $n$,
  \begin{equation}
  \label{eq:count_QC}
  \Card{ B_n \cap \Lambda_{QC(\epsilon, M)}} \leq e^{-\eta n} \Card{B_n}.
  \end{equation}
\end{thm}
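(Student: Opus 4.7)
The plan is to split the proof based on the comparison between $v_\Lambda := \liminf_n \log\Card{B_n\cap\Lambda}/n$ and $v$. When $v_\Lambda<v$, both \eqref{eq:card_Bn} and \eqref{eq:count_QC} follow immediately from $\Card{B_n\cap\Lambda}\leq Ce^{v_\Lambda n}$ combined with $\Card{B_n}\geq C^{-1}e^{vn}$ and the inclusion $\Lambda_{QC(\epsilon,M)}\subset\Lambda$, with explicit rate $\eta = v-v_\Lambda$. The substantive case, where I focus my attention, is $v_\Lambda = v$ (which occurs for instance for infinite-index normal subgroups).

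For \eqref{eq:card_Bn} in this case, my first approach would be coset-translation. Since $\Lambda$ has infinite index, for each $N$ I can find distinct cosets $\gamma_1\Lambda,\dotsc,\gamma_N\Lambda$ inside a ball $B_{R_N}$. Each translate satisfies $\Card{\gamma_i\Lambda\cap B_n}\geq \Card{\Lambda\cap B_{n-R_N}}$, so the disjoint union $\bigsqcup_i(\gamma_i\Lambda\cap B_n)\subset B_n$ yields $\Card{\Lambda\cap B_{n-R_N}}\leq \Card{B_n}/N$. The naive coset count alone is trivial since $N(R)\lesssim e^{vR}$, so one must refine by exploiting that shadows of elements in different cosets land in distinct regions of the Patterson--Sullivan measure $\rho_\infty$; letting $N\to\infty$ then yields the density bound.

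The main step is \eqref{eq:count_QC}. The core geometric observation is that for $g\in\Lambda_{QC(\epsilon,M)}\cap \Sbb^n$, a geodesic $\gamma$ from $e$ to $g$ is within $M$ of $\Lambda$ at $\geq \epsilon n$ times, producing markers $\lambda_t\in\Lambda$ close to $\gamma(t)$ that decompose $g$ into $\Lambda$-factors. Via \eqref{eq:card_Bn}, for each fixed time $t$ the proportion of geodesic-endpoints $g\in\Sbb^n$ for which $\gamma_g(t)$ lies in the $M$-neighborhood of $\Lambda$ is $o(1)$ as $\min(t,n-t)\to\infty$. The strategy is then to upgrade this per-time vanishing density into an exponential Chernoff-type bound on the number of geodesics whose fraction of times near $\Lambda$ is $\geq\epsilon$.

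The main obstacle is precisely this Chernoff step: the events $\{\gamma_g(t)\in\Lambda_M\}$ are not independent across $t$, and the $o(1)$ rate from \eqref{eq:card_Bn} has no a priori exponential decay. To overcome this I would exploit the Markovian/automatic structure of geodesics in hyperbolic groups (as used, e.g., in Coornaert's construction of $\rho_\infty$), encoding geodesics via a finite-state automaton to reduce the count to a transfer operator estimate; a uniform spectral gap then yields the desired exponential rate $\eta$. This is where the non-commutative combinatorics of hyperbolic groups is really used, and is the delicate technical heart of the argument.
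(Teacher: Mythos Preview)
Your high-level strategy for \eqref{eq:count_QC}---encode geodesics via the automatic structure and run a Markov/transfer-operator argument---is the right one, and is what the paper does. But the sketch stops short of the idea that actually makes it work. The paper does not study the automaton $\boA$ itself; it passes to the \emph{fibred} automaton $\boA_\Lambda$ with vertex set $V\times(\Lambda\backslash\Gamma)$, where an edge labeled $s$ sends $(x,\Lambda g)$ to $(y,\Lambda gs)$. In this lift, ``$\gamma(t)$ is within $M$ of $\Lambda$'' becomes ``the path is in the \emph{finite} set $F=V\times\Lambda B_M$'', so quasi-convex points correspond to paths spending a positive fraction of time in $F$. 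The count then reduces to a recurrence estimate for a Markov chain on a countable graph: one needs that from every vertex in a maximal component of $\boA_\Lambda$ the chain can reach infinitely many vertices (hence is not positive recurrent), after which a large-deviation bound for the number of returns gives the exponential rate $\eta$. The nontrivial step is this ``infinite reach'' lemma; the paper proves it by showing that if only finitely many cosets were reachable one would have $B\Lambda B=\Gamma$ for a finite $B$, which forces $\Lambda$ to have finite index by Neumann's theorem on coset covers, and uses a subword-density result of Arzhantseva--Lysenok to produce the needed paths. None of this machinery is visible in your outline, and a generic ``spectral gap'' for the transfer operator on $\boA$ alone will not detect $\Lambda$ at all.

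Your treatment of \eqref{eq:card_Bn} has a real problem. First, the case split is not clean: from $v_\Lambda=\liminf \log\Card{B_n\cap\Lambda}/n<v$ you cannot deduce $\Card{B_n\cap\Lambda}\leq Ce^{v_\Lambda n}$; only the $\limsup$ would give that. More importantly, the coset-translation argument is a dead end: you correctly observe that the naive bound is vacuous because $N(R)\lesssim e^{vR}$, and the proposed Patterson--Sullivan fix (``shadows of elements in different cosets land in distinct regions'') is not an argument---there is no reason different cosets should have disjoint shadows, and any argument that does not use hyperbolicity must fail since \eqref{eq:card_Bn} is false in $\FF_2\times\Z$ for $\Lambda=\FF_2$. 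The paper instead proves \eqref{eq:card_Bn} with the \emph{same} fibred-automaton setup: paths from $(x_*,\Lambda e)$ to any $(x,\Lambda e)$ count $\Sbb^n\cap\Lambda$, and the null-recurrence of the lifted chain (again via the Neumann/Arzhantseva--Lysenok argument) gives $\Pbb_{\tilde x}(X_n=\tilde x')\to 0$, which is exactly the $o(1)$ statement.
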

One may wonder why we put the estimate~\eqref{eq:card_Bn} in the
statement of the theorem, while the main emphasis is on counting
quasi-convex points. It turns out that this estimate is not trivial,
and that its proof uses the same techniques as for the proof
of~\eqref{eq:count_QC}. To illustrate that it is not trivial, let us
remark that this estimate is not true without the hyperbolicity
assumption. For instance, in $\Gamma =\FF_2 \times \Z$ (with its
canonical generating system, and the corresponding word distance),
the infinite index subgroup $\Lambda=\FF_2$ satisfies $\Card{\Lambda
\cap B_n} / \Card{B_n} \geq c>0$.

Theorem~\ref{thm:compte_peu_distordus} is trivial if the growth rate
$v_\Lambda$ of $\Lambda$ is strictly smaller than the growth rate $v$
of $\Gamma$, since in this case $\Card{B_n \cap \Lambda}$ itself is
exponentially smaller than $\Card{B_n}$. However, this is not always
the case, even for finitely generated subgroups.

Consider for instance a compact hyperbolic $3$-manifold which fibers over the
circle, obtained as a suspension of a hyperbolic surface with a
pseudo-Anosov. Its fundamental group $\Gamma$ surjects into
$\Z=\pi_1(\Sbb^1)$. The kernel $\Lambda$ of this morphism $\phi$ is
the fundamental group of the fiber. It is finitely generated, with
infinite index, and $\Card{B_n \cap \Lambda} \sim c
\Card{B_n}/\sqrt{n}$, see~\cite{sharp_growth}.

Heuristically, one can understand in this case why there are
exponentially few quasi-convex points in $\Lambda$. Let us consider a
geodesic of length $n$ in $\Gamma$. It projects under $\phi$ to a
path in $\Z$, which behaves roughly like a random walk. In
particular, $e^{-nv} \Card{ \Sbb^n \cap \Lambda}$ behaves like the
probability that a random walk on $\Z$ comes back to the identity at
time $n$. This is of order $1/\sqrt{n}$, in accordance with the
rigorous results of~\cite{sharp_growth}. Such an element is
quasi-convex if the random walk in $\Z$ spends a big proportion of
its time close to the origin. A large deviation estimate shows that
this is exponentially unlikely.

The proof of the theorem consists in making this heuristic precise,
in the general case where the subgroup $\Lambda$ is not normal (so
that there is no morphism $\phi$ at hand). An important point in the
proof is that a hyperbolic group is automatic, i.e., there exists a
finite state automaton that recognizes a system of geodesics
parameterizing bijectively the points in the group. Counting points
in the group then amounts to a random walk on the graph of this
automaton, while counting points in $\Lambda$ amounts to a fibred
random walk, on this graph times $\Lambda \under \Gamma$. As this
space is infinite, the random walk spends most of its time outside of
finite sets, i.e., far away from $\Lambda$.

To formalize this argument, we will reduce the question to Markov
chains on graphs, where we will use the following probabilistic
lemma.
\begin{lem}
\label{lem:peu_de_retours} Consider a Markov chain $(X_n)$ on a countable set
$V$, with a stationary measure $m$ (i.e., $m(x) = \sum_y m(y) p(y,x)$ for all
$x$). Let $\tilde V$ be the set of points $x\in V$ such that $\sum_{x\to y}
m(y)=+\infty$, where we write $x\to y$ if there exists a positive probability
path from $x$ to $y$. Then, for all $x\in V$ and $x'\in \tilde V$,
  \begin{equation}
  \label{eq:pbb_tends_0}
  \Pbb_x(X_n=x') \to 0 \text{ when }n\to\infty.
  \end{equation}
Take $x\in \tilde V$ and $\epsilon>0$. There exists $\eta>0$ such that, for
all large enough $n$,
  \begin{equation}
  \label{eq:Markov_recur}
  \Pbb_x(X_n=x \text{ and $X_i$ visits $x$ at least $\epsilon n$ times in between})
  \leq e^{-\eta n}.
  \end{equation}
\end{lem}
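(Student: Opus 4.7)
The plan is to use the hypothesis $\sum_{x' \to y} m(y) = \infty$ to force $x'$ (and $x$) to be either null recurrent or transient, but never positive recurrent, and then to exploit this dichotomy in each of the two statements. To rule out positive recurrence at $x'$: if $x'$ were recurrent, then every $y$ with $x' \to y$ would also satisfy $y \to x'$ (otherwise the chain would have positive probability never to return to $x'$), so $\{y : x' \to y\}$ would coincide with the communicating class $C$ of $x'$, which is then closed. The restriction $m|_C$ is sub-invariant for the restricted irreducible recurrent chain on $C$, and is nonzero by the hypothesis, so by the classical uniqueness of invariant measures for such chains it is a positive multiple of the unique invariant measure on $C$; positive recurrence would force this measure to be finite, contradicting $m(C) = \infty$. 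Hence $p^{(n)}(x', x') \to 0$ by classical Markov chain theory, and for arbitrary $x$ the first-entrance decomposition
\begin{equation*}
\Pbb_x(X_n = x') = \sum_{k=1}^{n} \Pbb_x(T_{x'} = k)\, p^{(n-k)}(x', x'),
\end{equation*}
combined with dominated convergence on the summable dominator $k \mapsto \Pbb_x(T_{x'} = k)$, yields \eqref{eq:pbb_tends_0}.

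For \eqref{eq:Markov_recur}, I would let $T_1 < T_2 < \ldots$ denote the successive returns of $(X_n)$ to $x$ under $\Pbb_x$ and set $\tau_i = T_i - T_{i-1}$. If $x$ is transient with $q := \Pbb_x(\tau_1 < \infty) < 1$, the strong Markov property directly gives $\Pbb_x(N_n \geq \epsilon n) \leq q^{\lceil \epsilon n \rceil}$, which is exponentially small. If $x$ is null recurrent, the $\tau_i$ are i.i.d.\ with $\E[\tau_1] = +\infty$; the events $\{T_k = n\}$ are disjoint as $k$ varies, and a Chernoff bound applied to $T_k = \tau_1 + \dots + \tau_k$ yields, for any $\lambda > 0$,
\begin{equation*}
\Pbb_x(X_n = x,\, N_n \geq \epsilon n) \leq \sum_{k \geq \epsilon n} \Pbb_x(T_k \leq n) \leq \sum_{k \geq \epsilon n} e^{\lambda n} \phi(\lambda)^k = \frac{e^{\lambda n}\, \phi(\lambda)^{\lceil \epsilon n \rceil}}{1 - \phi(\lambda)},
\end{equation*}
where $\phi(\lambda) = \E[e^{-\lambda \tau_1}] \in (0,1)$.

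The only serious point, and the expected main obstacle, is then to find $\lambda > 0$ such that $\epsilon \log(1/\phi(\lambda)) > \lambda$, which turns the previous display into a bound of the form $C\, e^{-\eta n}$. This is precisely where the infinite-mean property enters: since $\phi(0) = 1$ and $-\phi'(0^+) = \E[\tau_1] = +\infty$, one has $\log(1/\phi(\lambda))/\lambda \to +\infty$ as $\lambda \to 0^+$, so any sufficiently small $\lambda > 0$ does the job. Thus both parts of the lemma reduce to the same core fact, namely that the hypothesis $x \in \tilde V$ prevents positive recurrence at $x$.
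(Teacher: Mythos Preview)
Your proof is correct and follows essentially the same route as the paper: rule out positive recurrence of $x'$ via uniqueness of the invariant/excessive measure on its irreducible class, deduce~\eqref{eq:pbb_tends_0} by reducing to the case $x=x'$, and handle~\eqref{eq:Markov_recur} by splitting into the transient case (geometric bound on the number of returns) and the null recurrent case (large deviations for the i.i.d.\ return times $\tau_i$ with infinite mean). The only cosmetic difference is in the null recurrent step: the paper truncates $\tau_i$ at a level $M$ with $\E[\tau_1 1_{\tau_1\leq M}]>1/\epsilon$ and invokes Cram\'er for bounded variables, whereas you run the Chernoff bound directly on the Laplace transform $\phi(\lambda)=\E[e^{-\lambda\tau_1}]$ and use $-\phi'(0^+)=\E[\tau_1]=+\infty$ to pick $\lambda$ small---these are two standard packagings of the same estimate.
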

\begin{proof}
In countable state Markov chains, a point $x$ can be either
transient, or null recurrent, or positive recurrent. Let us first
show that points in $\tilde V$ are not positive recurrent, by
contradiction. Otherwise, the points that can be reached from $x$
form an irreducible class $\boC$, which admits a stationary
probability measure $p$. The restriction of $m$ to $\boC$ is an
excessive measure. By uniqueness (see~\cite[Theorem 3.1.9]{revuz}),
the measure $m$ is proportional on $\boC$ to $p$. In particular, it
has finite mass there. This contradicts the assumption $\sum_{x\to y}
m(y)=+\infty$.

Let us now show that, for all $x\in V$ and $x'\in \tilde V$, the
probability $\Pbb_x(X_n=x')$ tends to $0$. Otherwise, conditioning on
the first visit to $x'$, we deduce that $\Pbb_{x'}(X_n=x')$ does not
tend to $0$. This implies that $x'$ is positive recurrent, a
contradiction.

Let us now prove~\eqref{eq:Markov_recur}. Consider $x\in \tilde V$,
it is either transient or null recurrent. If it is transient, the
probability $p$ to come back to $x$ is $<1$. Hence, the probability
to come back $\epsilon n$ times is bounded by $p^{\epsilon n}$, and
is therefore exponentially small as desired.

Assume now that $x$ is null recurrent: almost surely, the Markov
chain comes back to $x$, but the waiting time $\tau$ has infinite
expectation. Let $\tau_1,\tau_2,\dotsc$ be the length of the
successive excursions based at $x$. They are independent and
distributed like $\tau$, by the Markov property. The probability
in~\eqref{eq:Markov_recur} is bounded by $\Pbb( \sum_{i=1}^{\epsilon
n} \tau_i \leq n)$, which is bounded for any $M$ by $\Pbb(
\sum_{i=1}^{\epsilon n} \tau_i 1_{\tau_i \leq M} \leq n)$. The random
variables $ \tau_i 1_{\tau_i \leq M}$ are bounded, independent and
identically distributed. If $M$ is large enough, they have
expectation $>1/\epsilon$. A standard large deviation result then
shows that $\Pbb( \sum_{i=1}^{\epsilon n} \tau_i 1_{\tau_i \leq M}
\leq n)$ is exponentially small, as desired.
\end{proof}

We will also need the following technical lemma, which was explained
to us by B. Bekka.
\begin{lem}
\label{lem:indice_fini}
Let $\Lambda$ be a subgroup of a group $\Gamma$. Assume that there
exists a finite subset $B$ of $\Gamma$ such that  $B\Lambda
B=\Gamma$. Then $\Lambda$ has finite index in $\Gamma$.
\end{lem}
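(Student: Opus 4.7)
The plan is to rewrite $B\Lambda B$ as a finite union of left cosets of subgroups of $\Gamma$ and then invoke a classical covering lemma of B.H.~Neumann.

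The key algebraic identity is that, for any $b_1, b_2 \in \Gamma$, one has
\[
b_1 \Lambda b_2 \;=\; (b_1 b_2)\,(b_2^{-1}\Lambda b_2),
\]
which exhibits $b_1\Lambda b_2$ as a single left coset of the conjugate subgroup $b_2^{-1}\Lambda b_2$ in $\Gamma$. Consequently, the hypothesis $\Gamma = B\Lambda B = \bigcup_{b_1,b_2 \in B} b_1 \Lambda b_2$ can be restated as
\[
\Gamma \;=\; \bigcup_{(b_1,b_2)\in B\times B} (b_1 b_2)\,(b_2^{-1}\Lambda b_2),
\]
presenting $\Gamma$ as a finite union (of cardinality at most $|B|^2$) of left cosets of the subgroups $\{b^{-1}\Lambda b : b \in B\}$. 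All of these subgroups are conjugate to $\Lambda$, hence have the same index in $\Gamma$ as $\Lambda$ itself.

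The second ingredient is B.H.~Neumann's lemma on coset coverings: if a group $G$ is covered by finitely many left cosets $G=\bigcup_i g_iH_i$ of subgroups $H_i \leq G$, then at least one of the subgroups $H_i$ has finite index in $G$. Applying this to the covering above forces some conjugate $b^{-1}\Lambda b$ to have finite index in $\Gamma$, and invariance of the index under conjugation then gives $[\Gamma:\Lambda]<\infty$.

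There is no real obstacle here: once the rewriting $b_1\Lambda b_2 = (b_1b_2)(b_2^{-1}\Lambda b_2)$ is noticed, the conclusion is an immediate consequence of Neumann's lemma together with the fact that conjugate subgroups share the same index. The substantive step is simply recognizing that a two-sided $\Lambda$-sandwich $b_1\Lambda b_2$ is itself a left coset of a single (conjugate) subgroup.
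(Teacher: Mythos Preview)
Your proof is correct and follows essentially the same approach as the paper: rewrite each $b_1\Lambda b_2$ as a single coset of a conjugate of $\Lambda$, then invoke Neumann's covering lemma and the conjugacy-invariance of the index. The only cosmetic difference is that the paper uses the identity $b_1\Lambda b_2 = (b_1\Lambda b_1^{-1})(b_1 b_2)$, expressing it as a right coset of $b_1\Lambda b_1^{-1}$, whereas you use left cosets of $b_2^{-1}\Lambda b_2$; these are equivalent formulations.
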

\begin{proof}
We have by assumption $\Gamma= \bigcup_{i,j} b_i \Lambda b_j =
\bigcup_{i,j} \Lambda_i b_i b_j$, where $\Lambda_i=b_i \Lambda
b_i^{-1}$ is a conjugate of $\Lambda$ (and has therefore the same
index). A theorem of Neumann~\cite{neumann_cosets} ensures that a
group is never a finite union of right cosets of infinite index
subgroups. Hence, one of the $\Lambda_i$ has finite index in
$\Gamma$, and so has $\Lambda$.
\end{proof}

Let $\Gamma$ be a hyperbolic group, with a finite generating set $S$.
Consider a finite directed graph $\boA = (V, E, x_*)$ with vertex set $V$,
edges $E$, a distinguished vertex $x_*$, and a labeling $\alpha:E \to S$. We
associate to any path $\gamma$ in the graph (i.e., a sequence of edges
$\sigma_0,\sigma_1,\dotsc, \sigma_{m-1}$ where the endpoint of $\sigma_i$ is
the beginning of $\sigma_{i+1}$) a path in the Cayley graph starting from the
identity and following the edges labeled $\alpha(\sigma_0)$, then
$\alpha(\sigma_1)$, and so on. The endpoint of this path is
$\alpha_*(\gamma)\coloneqq \alpha(\sigma_0) \dotsm \alpha(\sigma_{m-1})$. We
always assume that any point can be reached by a path starting at $x_*$.

A hyperbolic group is \emph{automatic} (see, for
instance,~\cite{calegari_survey}): there exists such a graph with the
following properties.
\begin{enumerate}
\item For any path $\gamma$ in the graph, the corresponding path
    $\alpha(\gamma)$ is geodesic in the Cayley graph.
\item The map $\alpha_*$ induces a bijection between the set of
    paths in the graph starting from $x_*$ and the group
    $\Gamma$.
\end{enumerate}
In particular, the paths of length $n$ in the graph originating from
$x_*$ parameterize the sphere $\Sbb^n$ of radius $n$ in the group.
The existence of such a structure makes it for instance possible to
prove that the growth series of a hyperbolic group is rational. We
will use such an automaton to count the points in the subgroup
$\Lambda$, and in particular the quasi-convex points.

We define a transition matrix $A$, indexed by $V$. By definition,
$A_{xy}$ is the number of edges from $x$ to $y$. Hence, $(A^n)_{xy}$
is the number of paths of length $n$ from $x$ to $y$. In particular,
the number of paths of length $n$ starting from $x_*$ is $\sum_y
(A^n)_{x_* y}$. Write $u$ for the line vector with $1$ at position
$x_*$ and $0$ elsewhere, and $\tilde u$ for the column vector with
$1$ everywhere. This number of paths reads $u A^n \tilde u$.
Therefore, $\Card{\Sbb^n} = u A^n \tilde u$, proving the rationality
of the growth function of the group. Let $v$ be the growth rate of
balls in $\Gamma$. It satisfies $\Card{B_n} \leq C e^{nv}$,
by~\cite{coornaert}. Hence, the spectral radius of $A$ is $e^v$, and
$A$ has no Jordan block for this maximal eigenvalue.

To understand the points of the infinite index subgroup $\Lambda$ of
$\Gamma$, we consider an extension $\boA_\Lambda$ of $\boA$, with
fibers $\Lambda \under \Gamma$. Its vertex set $V_\Lambda$ is made of
the pairs $(x, \Lambda g) \in V \times \Lambda \under \Gamma$. For
any edge $\sigma$ in $\boA$, going from $x$ to $y$ and with label
$\alpha(\sigma)$, we put for any $g\in \Gamma$ an edge in
$\boA_\Lambda$ from $(x,\Lambda g)$ to $(y, \Lambda g
\alpha(\sigma))$. A path $\gamma$ in $\boA$, from $x$ to $y$, lifts
to a path $\tilde \gamma$ in $\boA_\Lambda$ originating from $(x,
\Lambda e)$. By construction, its endpoint is $(y, \Lambda
\alpha_*(\gamma))$. This shows that the paths in the graph
$\boA_\Lambda$ remember the current right coset of $\Lambda$.

The next lemma proves that the relevant components of this fibred
graph are infinite.
\begin{lem}
\label{lem:atteint_infini}
Let $\tilde x_0=(x_0,\Lambda g_0)$ belong to $\boA_\Lambda$. Let
$\boC$ be the component of $x_0$ in $\boA$ (i.e., the set of points
that can be reached from $x_0$ and from which one can go back to
$x_0$). Let $A_\boC$ be the restriction of the matrix $A$ to the
points in $\boC$. Assume that its spectral radius $\rho(A_{\boC})$ is
equal to $e^v$. Then, starting from $\tilde x_0$ in the graph
$\boC_\Lambda$ (the restriction of $\boA_\Lambda$ to $\boC\times
\Lambda\under \Gamma$), one can reach infinitely many different
points of $\boC_\Lambda$.
\end{lem}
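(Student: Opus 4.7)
The plan is to argue by contradiction: assume the set of points of $\boC_\Lambda$ reachable from $\tilde x_0 = (x_0, \Lambda g_0)$ is finite, and derive a contradiction with the infinite index of $\Lambda$ (equivalently, of the conjugate $H := g_0^{-1}\Lambda g_0$). Any reachable point has the form $(y, \Lambda g_0\, \alpha_*(\gamma))$ for some path $\gamma$ in $\boC$ from $x_0$ to $y$, so the hypothesis says that the set
\[
U := \{\alpha_*(\gamma) \st \gamma \text{ path in } \boC \text{ starting at } x_0\}
\]
meets only finitely many left cosets of $H$, i.e.\ $U \subset Hv_1 \cup \cdots \cup Hv_m$ for finitely many $v_i\in\Gamma$, with $v_1=e$ (the empty path at $x_0$ gives $e\in U$).

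The first step is a size estimate. Fix a path $\gamma_{x_0}$ in $\boA$ from $x_*$ to $x_0$; then $\gamma \mapsto \gamma_{x_0}\gamma$ embeds paths in $\boC$ from $x_0$ into paths from $x_*$, on which $\alpha_*$ is a bijection. Hence distinct paths from $x_0$ give distinct elements of $\Gamma$. Perron--Frobenius applied to the strongly connected nonnegative matrix $A_\boC$ of spectral radius $e^v$ yields $(A_\boC^n)_{x_0 x_0} \geq c\, e^{nv}$ along a suitable arithmetic progression of $n$'s; consequently the subsemigroup $\Gamma_0^+ := \{\alpha_*(\gamma) \st \gamma \text{ loop at } x_0 \text{ in } \boC\} \subset U$ satisfies $|\Gamma_0^+ \cap B_n| \geq c\, e^{nv}$ for infinitely many $n$. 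Combined with $\Gamma_0^+ \subset \bigcup Hv_i$, this forces $H$ to have maximal exponential growth rate $v$. By itself this is \emph{not} a contradiction, since infinite-index subgroups of hyperbolic groups may have full growth (e.g.\ fibre subgroups in fibered $3$-manifold groups, as recalled just after the statement of Theorem~\ref{thm:compte_peu_distordus}); the essential extra ingredient must be the semigroup structure of $\Gamma_0^+$.

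Since $\Gamma_0^+\cdot\Gamma_0^+ = \Gamma_0^+ \subset \bigcup Hv_i$, one obtains
\[
\Gamma_0^+(\Gamma_0^+)^{-1} \subset \bigcup_{i,j} H v_i v_j^{-1} H,
\]
a finite union of double $H$-cosets. I would then aim to show that (a) the subgroup $\Gamma_0 := \langle \Gamma_0^+\rangle$ has finite index in $\Gamma$---this should follow from the fact that a strongly connected component of maximal spectral radius in the automaton accounts for a positive proportion of the elements of each sphere, so its loops at $x_0$ are rich enough to generate a cofinite subgroup---and that (b) all iterated products of elements of $\Gamma_0^+\cup(\Gamma_0^+)^{-1}$ remain inside a \emph{fixed} finite union $HFH$ of double $H$-cosets, which would be obtained by repeatedly collapsing interior factors using the identity $\Gamma_0^+\cdot\Gamma_0^+ = \Gamma_0^+$. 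Once (a) and (b) are in hand, Lemma~\ref{lem:indice_fini} (Bekka) applied inside $\Gamma_0$ to the subgroup $H\cap\Gamma_0$ and the finite set $F$ gives $[\Gamma_0 : H\cap\Gamma_0]<\infty$, hence $[\Gamma:H]<\infty$, contradicting the infinite index of $\Lambda$ and completing the proof. I expect the main obstacle to be step (b): transforming the \emph{unbounded} iteration of alternating products that defines $\langle\Gamma_0^+\rangle$ into a \emph{bounded} union of double $H$-cosets is precisely the point where the rigidity forced by $\boC$ being a component of maximal spectral radius in the automatic structure of $\Gamma$ must be exploited.
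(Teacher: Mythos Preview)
Your setup of the contradiction is correct, and you correctly identify Lemma~\ref{lem:indice_fini} as the endgame. However, both steps (a) and (b) contain genuine gaps that your sketch does not close.

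For step~(a), the only implication ``$|\Gamma_0\cap B_n|\geq c\,e^{nv}$ for infinitely many $n$ $\Rightarrow$ $[\Gamma:\Gamma_0]<\infty$'' available in the paper is precisely~\eqref{eq:card_Bn} of Theorem~\ref{thm:compte_peu_distordus}, whose proof rests on the very lemma you are trying to establish. Invoking it here is circular, and you give no independent argument.

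For step~(b), the collapsing mechanism fails. The semigroup identity $\Gamma_0^+\cdot\Gamma_0^+=\Gamma_0^+$ simplifies products of \emph{positive} factors, but a generic element of $\langle\Gamma_0^+\rangle$ is an alternating word $a_1 b_1^{-1} a_2 b_2^{-1}\cdots$ with $a_k,b_k\in\Gamma_0^+$. Substituting $a_k,b_k\in\bigcup_i Hv_i$ places such a word in $H v_{i_1} v_{j_1}^{-1} H v_{i_2} v_{j_2}^{-1} H\cdots$, an iterated product of double $H$-cosets. When $H$ is not normal (and nothing forces it to be), a product $HfHf'H$ is typically an \emph{infinite} union of double cosets, so no bounded $HFH$ emerges. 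Your own diagnosis that (b) is the crux is right, but the obstacle is real, not merely technical.

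The paper's proof takes a completely different route and bypasses both issues. It never studies $\langle\Gamma_0^+\rangle$. Instead it invokes a theorem of Arzhantseva--Lysenok: there is a constant $C_0$, depending only on $(\Gamma,S)$, such that for \emph{every} $w\in\Gamma$ the set of group elements whose automatic geodesic avoids $w$ up to a $C_0$-thickening has cardinality $o(|B_n|)$. Since paths in $\boC$ starting at $x_0$ produce $\geq c\,e^{nv}$ distinct elements, some such path must see $w$; extending to a loop at $x_0$ of bounded extra length yields $\gamma_2$ with $\alpha_*(\gamma_2)=a_2 w b_2$ and $|a_2|,|b_2|\leq C_1$, where $C_1$ is \emph{independent of $w$}. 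The finiteness hypothesis then forces $g_0\,a_2 w b_2\in\Lambda g_i$ for one of finitely many $g_i$, hence $w\in B\Lambda B$ for a fixed ball $B$. Since $w$ was arbitrary, $\Gamma=B\Lambda B$, and Lemma~\ref{lem:indice_fini} gives the contradiction directly. The Arzhantseva--Lysenok input is exactly what turns ``$\Gamma_0^+$ is large'' into ``$\Gamma_0^+$ contains every $w$ up to uniformly bounded fringes'', which is the form Bekka's lemma needs; your semigroup manipulations cannot produce this uniformity.
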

\begin{proof}
It suffices to show that one can reach infinitely many points whose
component in $\boC$ is $x_0$. Assume by contradiction that one can
only reach a finite number of classes $(x_0, \Lambda g_i)$.

Given $w\in \Gamma$ and $C>0$, let $Y_{w,C}$ be the set of points in $\Gamma$
that have a geodesic expression in which, for any subword $\tilde w$ of this
expression and for any $a,b$ with length at most $C$, one has $w\not=a\tilde
w b$. In other words, the points in $Y_{w,C}$ are those that never see $w$
(nor even a thickening of $w$ of size $C$) in their geodesic expressions.
Theorem~3 in~\cite{arzhantseva_lysenok} proves the existence of $C_0$ such
that, for any $w$, the quantity $\Card{B_n \cap Y_{w,C_0}}/\Card{B_n}$ tends
to $0$ (the important point is that $C_0$ does not depend on $w$).

The number of paths in $\boC$ originating from $x_0$ grows at least like
$c\Card{B_n}$ since the spectral radius of $A_{\boC}$ is $e^v$. These paths
give rise to distinct points in $\Gamma$. Hence, there exists such a path
$\gamma_0$ such that $\alpha_*(\gamma_0)\notin Y_{w,C_0}$. In particular,
there exists a subpath $\gamma_1$ such that $\alpha_*(\gamma_1)$ can be
written as $a_1 w b_1$ with $\abs{a_1}\leq C_0$ and $\abs{b_1}\leq C_0$. We
can choose a path from $x_0$ to the starting point of $\gamma_1$, with fixed
length (since $\boC$ is finite), and another path from the endpoint of
$\gamma_1$ to $x_0$. Concatenating them, we get a path $\gamma_2$ from $x_0$
to itself with $\alpha_*(\gamma_2) = a_2 w b_2$ and $\abs{a_2},\abs{b_2}\leq
C_1=C_0+2\diam(\boC)$. By assumption, $\Lambda g_0 \alpha_*(\gamma_2)$ is one
of the finitely many $\Lambda g_i$ since we are returning to $x_0$. Hence,
there exists $\lambda\in \Lambda$ such that $g_0 a_2 w b_2 = \lambda g_i$.
This shows that $w\in B \Lambda B$, where $B$ is the ball of radius $C_1+
\max_i d(e,g_i)$. As this holds for any $w$, we have proved that $B\Lambda
B=\Gamma$. By Lemma~\ref{lem:indice_fini}, this shows that $\Lambda$ has
finite index in $\Gamma$, a contradiction.
\end{proof}

\begin{lem}
\label{lem:controle_K_tilde} Let $K(n,\tilde x_0,\epsilon_0)$ denote the set
of paths in $\boA_\Lambda$ starting at a point $\tilde x_0$, of length $n$,
coming back to $\tilde x_0$ at time $n$, and spending a proportion at least
$\epsilon_0$ of the time at $\tilde x_0$. Consider $\tilde x_0\in
\boA_\Lambda$ and $\epsilon_0>0$. Then there exist $\eta>0$ and $C>0$ such
that, for all $n\in \N$,
  \begin{equation*}
  \Card{K(n,\tilde x_0,\epsilon_0)} \leq C e^{n(v-\eta)}.
  \end{equation*}
\end{lem}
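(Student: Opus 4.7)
The plan is to convert path counts in $\boA_\Lambda$ into return probabilities of a Markov chain on $\boC_\Lambda$ via a Perron--Frobenius weighting, and then invoke Lemma~\ref{lem:atteint_infini} together with Lemma~\ref{lem:peu_de_retours}.

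Let $x_0$ be the projection of $\tilde x_0$ to $V$ and $\boC$ its strongly connected component in $\boA$. Since a path leaving $\boC$ can never return, every path in $K(n,\tilde x_0,\epsilon_0)$ projects to a closed path at $x_0$ staying in $\boC$. If $\rho(A_\boC)<e^v$, then $\Card{K(n,\tilde x_0,\epsilon_0)} \leq (A_\boC^n)_{x_0 x_0} \leq C\rho(A_\boC)^n$, which already yields the conclusion with $\eta = v - \log\rho(A_\boC)$. So from now on I assume $\rho(A_\boC)=e^v$.

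By Perron--Frobenius applied to the irreducible non-negative matrix $A_\boC$, there exist strictly positive vectors $\ell, r$ on $\boC$ with $\ell A_\boC = e^v \ell$ and $A_\boC r = e^v r$. I would then define a Markov chain on $\boC_\Lambda$ by assigning to each edge of $\boA_\Lambda$ from $(x,\Lambda g)$ to $(y,\Lambda g\alpha(\sigma))$ the transition probability $r(y)/(e^v r(x))$. Summing over outgoing edges shows that this is stochastic, and a direct computation gives that $\tilde m(x,\Lambda g) \coloneqq \ell(x)r(x)$ is a stationary measure on $\boC_\Lambda$, independent of the coset. By Lemma~\ref{lem:atteint_infini} the chain reaches infinitely many points from $\tilde x_0$, each with $\tilde m$-mass at least $\min_{x\in\boC}\ell(x)r(x) > 0$, hence $\sum_{\tilde x_0 \to \tilde y} \tilde m(\tilde y) = +\infty$ and $\tilde x_0 \in \tilde V$.

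Lemma~\ref{lem:peu_de_retours} then produces $\eta>0$ such that, for $n$ large, the probability that the chain is at $\tilde x_0$ at time $n$ and visits $\tilde x_0$ at least $\epsilon_0 n$ times in between is at most $e^{-\eta n}$. The crucial telescoping observation is that any closed path of length $n$ at $\tilde x_0$ has Markov probability $\prod_{i=0}^{n-1} r(x_{i+1})/(e^v r(x_i)) = r(x_0)/(e^{nv}r(x_0)) = e^{-nv}$, independent of the intermediate path. Summing this identity over $K(n,\tilde x_0,\epsilon_0)$ gives $\Card{K(n,\tilde x_0,\epsilon_0)}\cdot e^{-nv} \leq e^{-\eta n}$, which yields the desired bound. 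The main technical point to check is that Perron--Frobenius still produces a strictly positive eigenvector when $A_\boC$ fails to be aperiodic, but this is standard for irreducible non-negative matrices.
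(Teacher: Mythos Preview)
Your proposal is correct and follows essentially the same approach as the paper: split on whether $\rho(A_\boC)<e^v$, and in the maximal case use the Perron--Frobenius right eigenvector to turn path counts into a Markov chain on $\boC_\Lambda$, then appeal to Lemma~\ref{lem:atteint_infini} and Lemma~\ref{lem:peu_de_retours}. The only cosmetic differences are that the paper names the stationary probability on $\boC$ abstractly rather than writing it as $\ell(x)r(x)$, and it states the identity $\Pbb_x(X_n=y)=e^{-nv}(A_\boC^n)_{xy}\,q(y)/q(x)$ directly instead of spelling out the telescoping product.
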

\begin{proof}
Write $\tilde x_0=(x_0, \Lambda g_0)$, let $\boC$ be the component of
$x_0$ in $\boA$. If the spectral radius of the restricted transition
matrix $A_{\boC}$ is $<e^v$, we simply bound $\Card{K(n,\tilde
x_0,\epsilon_0)}$ by the number of paths in $\boC$ from $x_0$ to
itself. This is at most $\norm{A^n_\boC}$, which is exponentially
smaller than $e^{nv}$ as desired.

Assume now that $\rho(A_\boC)=e^v$. We will understand the number of
paths in $\boC$ (and in its lift $\boC_\Lambda$) in terms of a Markov
chain. The matrix $A_{\boC}$ has a unique eigenvector $q$
corresponding to the eigenvalue $e^v$, it is positive by
Perron-Frobenius's theorem. By definition, $p(x,y)=e^{-v} A_{xy}
q(y)/q(x)$ satisfies, for any $x\in\boC$,
  \begin{equation*}
  \sum_{y\in \boC} p(x,y) = \frac{e^{-v}}{q(x)} \sum A_{xy} q(y) = 1.
  \end{equation*}
This means that $p(x,y)$ is a transition kernel on $\boC$. Denote by
$(X_n)_{n\in \N}$ the corresponding Markov chain. By construction,
  \begin{equation*}
  \Pbb_x(X_n=y) = e^{-nv} (A^n)_{xy} q(y)/q(x).
  \end{equation*}
Moreover, $(A^n)_{xy}$ is the number of paths of length $n$ in $\boA$
from $x$ to $y$. Hence, up to a bounded multiplicative factor
$q(y)/q(x)$, the transition probabilities of the Markov chain $X_n$
count the number of paths in the graph $\boC$. Let $m$ denote the
unique stationary probability for the Markov chain on $\boC$.

We lift everything to $\boC_\Lambda$, assigning to an edge the
transition probability of its projection in $\boC$. The stationary
measure $m$ lifts to a stationary measure $m_\Lambda$, which is
simply the product of $m$ and of the counting measure in the
direction $\Lambda \under \Gamma$. Denoting by $X^\Lambda_n$ the
Markov chain in $\boC_\Lambda$, we have
  \begin{equation*}
  e^{-nv}\Card{K(n,\tilde x_0, \epsilon_0)} = \Pbb_{\tilde x_0}(X^\Lambda_n=\tilde x_0\text{
      and $X^\Lambda_i$ visits $\tilde x_0$ at least $\epsilon_0 n$ times in between}).
  \end{equation*}
By Lemma~\ref{lem:atteint_infini}, the Markov chain starting from
$\tilde x_0$ can reach infinitely many points. Equivalently, since
$m$ is bounded from below, it can reach a set of infinite
$m_\Lambda$-measure. Therefore, Lemma~\ref{lem:peu_de_retours}
applies, and shows that the above quantity is exponentially small.
\end{proof}

\begin{proof}[Proof of Theorem~\ref{thm:compte_peu_distordus}]
Let us first prove~\eqref{eq:count_QC}. Counting the points in $ \Sbb^n\cap
\Lambda_{QC(\epsilon,M)}$ amounts to counting the paths of length $n$ in
$\boA_\Lambda$, starting from $(x_*, \Lambda e)$ and spending a proportion at
least $\epsilon$ of their time in the finite subset $F=V \times \Lambda B_M
\subset V_\Lambda$. Such a path spends a proportion at least
$\epsilon_0=\epsilon/\Card{F}$ of its time at a given point $\tilde x\in F$.
Let $k$ and $k+m$ denote the first and last visits to $\tilde x$ (with $m\geq
\epsilon_0 n$ since there are at least $\epsilon_0 n$ visits). Such a path is
the concatenation of a path from $(x_*,\Lambda e)$ to $\tilde x$ of length
$k$ (their number is bounded by the corresponding number of paths in $\boA$,
at most $\norm{A^k} \leq C e^{kv}$), of a path in $K(m,\tilde x,\epsilon_0)$,
and of a path starting from $\tilde x$ of length $n-k-m$ (their number is
again bounded by the number of corresponding paths in $\boA$, at most $C
e^{(n-k-m)v}$). Hence, their number is at most $C e^{(n-m)v} \Card{K(m,\tilde
x, \epsilon_0)}$. Summing over the points $\tilde x \in F$, over the at most
$n$ possible values of $k$, and the values of $m$, we get the inequality
  \begin{equation*}
  \Card{\Sbb^n\cap\Lambda_{QC(\epsilon,M)}} \leq Cn e^{nv}\sum_{\tilde x\in F}
    \sum_{m=\epsilon_0 n}^n e^{-mv}\Card{K(m,\tilde x, \epsilon_0)}.
  \end{equation*}
Lemma~\ref{lem:controle_K_tilde} shows that this is exponentially
smaller than $e^{nv}$.

Let us now prove~\eqref{eq:card_Bn}, using similar arguments. A point in
$\Sbb^n\cap\Lambda$ corresponds to a path of length $n$ in $\boA_\Lambda$,
starting from $(x_*,\Lambda e)$ and ending at a point $(x,\Lambda e)$. We say
that a component $\boC$ in the graph $\boA$ is maximal if the spectral radius
of the corresponding restricted matrix $A_\boC$ is $e^v$. Since the matrix
$A$ has no Jordan block corresponding to the eigenvalue $e^v$, a path in the
graph encounters at most one maximal component. The paths in $\boA_\Lambda$
whose projection in $\boA$ spends a time $k$ in non-maximal components give
an overall contribution to $\Card{\Sbb^n\cap\Lambda}$ bounded by $C e^{(n-k)v
+ k(v-\eta)} \leq Ce^{-\eta k} \Card{B_n}$. Given $\epsilon>0$, their
contribution for $k\geq k_0(\epsilon)$ is bounded by $\epsilon\Card{B_n}$.
Hence, it suffices to control the paths for fixed $k$. Let us fix the
beginning of such a path, from $(x_*,\Lambda e)$ to a point $(x_0, \Lambda
g_0)$ where $x_0$ is in a maximal component $\boC$, and its end from $(x_1,
\Lambda g_1)$ with $x_1\in \boC$ to a point $(x,\Lambda e)$. To conclude, one
should show that the number of paths of length $n$ from $(x_0, \Lambda g_0)$
to $(x_1, \Lambda g_1)$ is $o(e^{nv})$. This follows from the probabilistic
interpretation in the proof of Lemma~\ref{lem:controle_K_tilde} and
from~\eqref{eq:pbb_tends_0}.
\end{proof}

\subsection{Non-distorted points in subgroups with \texorpdfstring{$v_\Lambda <
v$}{vLambda<v}}
\label{subsec:vLambda_small}

Let $\Lambda$ be a subgroup of a hyperbolic group $\Gamma$. Let
$v_\Lambda$ and $v_\Gamma$ be their respective growths, for a word
distance on $\Gamma$. If $v_\Lambda=v_\Gamma$,
Theorem~\ref{thm:compte_peu_distordus} proves that there is a
dichotomy:
\begin{enumerate}
\item Either $\Lambda$ is quasi-convex (equivalently, $\Lambda$
    has finite index in $\Gamma$). Then $\Card{B_n \cap \Lambda}
    \geq ce^{n v_\Lambda}$, and all points in $\Lambda$ are
    quasi-convex.
\item Or $\Lambda$ is not quasi-convex (equivalently, it has
    infinite index in $\Gamma$). Then $\Card{B_n \cap \Lambda}=
    o(e^{nv_\Lambda})$, and there are exponentially few
    quasi-convex points in $\Lambda$.
\end{enumerate}

Consider now a general subgroup $\Lambda$ with $v_\Lambda<v_\Gamma$.
If it quasi-convex, then (1) above is still satisfied: $\Card{B_n
\cap \Lambda} \geq ce^{nv_\Lambda}$ by~\cite{coornaert}, and all
points in $\Lambda$ are quasi-convex. One may ask if these properties
are equivalent, and if they characterize quasi-convex subgroups. This
question is reminiscent of a question of Sullivan in hyperbolic
geometry: Are convex cocompact groups the only ones to have finite
Patterson-Sullivan measure? Peign\'e showed in~\cite{peigne_finite_BM}
that the answer to this question is negative. His counterexamples
adapt to our situation, giving also a negative answer to our
question.

\begin{prop}
\label{prop:vLambda_small} There exists a finitely generated subgroup
$\Lambda$ of a hyperbolic group $\Gamma$ endowed with a word distance, which
is not quasi-convex, but for which $C^{-1} e^{n v_\Lambda} \leq \Card{B_n\cap
\Lambda} \leq C e^{nv_\Lambda}$. Moreover, most points of $\Lambda$ are
quasi-convex: there exist $\epsilon$ and $\eta$ such that
  \begin{equation}
  \label{eq:many_quasiconvex}
  \Card{B_n \cap \Lambda \setminus \Lambda_{QC(\epsilon,0)}} \leq Ce^{n (v_\Lambda-\eta)}.
  \end{equation}
\end{prop}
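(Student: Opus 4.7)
The proposition is proved by an explicit construction adapting Peign\'e~\cite{peigne_finite_BM}, who produces a geometrically finite but non-convex-cocompact Kleinian group whose Poincar\'e series nonetheless behaves as in the convex cocompact case. The combinatorial analogue of a "parabolic" in that construction will be an element lying inside an exponentially distorted finitely generated subgroup $H$ of the ambient word hyperbolic group $\Gamma$. A natural candidate for the pair $(\Gamma,H)$ is $\Gamma=\pi_1(M)$ for a closed hyperbolic $3$-manifold $M$ fibering over $\Sbb^1$ with $H$ the fiber surface group, which is finitely generated, normal, and exponentially distorted in $\Gamma$.

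I would then construct $\Lambda$ as a "ping-pong free product" of a quasi-convex subgroup $H_0\leq H$ (for instance the fundamental group of an essential subsurface with geodesic boundary, malnormal in $H$) with one or more loxodromic Schottky generators $\gamma_1,\dotsc,\gamma_k\in \Gamma$, chosen with fixed points in $\partial \Gamma$ disjoint from the limit set of $H_0$. This yields a finitely generated subgroup $\Lambda\leq \Gamma$ that is free as an abstract group over $H_0$ and the $\gamma_i$, with a canonical normal form as alternating words $h_0\gamma_{i_1}^{m_1}h_1\gamma_{i_2}^{m_2}\dotsm$ with $h_j\in H_0$ and $m_j\in\Z\setminus\{0\}$. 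Non-quasi-convexity of $\Lambda$ in $\Gamma$ then follows directly: if $\Lambda$ were quasi-convex in $\Gamma$, it would be undistorted, and since $H_0$ is a free factor of $\Lambda$ it would be undistorted in $\Lambda$, hence undistorted in $\Gamma$, contradicting the (inherited) distortion of $H_0\leq H$.

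The heart of the argument lies in choosing the parameters (the choice of $H_0$ and the translation lengths of the $\gamma_i$) so that the Poincar\'e series $\sum_{g\in\Lambda}e^{-s\abs{g}_\Gamma}$ has purely exponential growth with the correct two-sided bound. Using hyperbolicity of $\Gamma$ together with the ping-pong geometry, the $\Gamma$-length of an element in normal form is, up to bounded additive error, the sum of the $\Gamma$-lengths of its syllables. Therefore the Poincar\'e series factors (up to bounded error) as a product of a "distorted factor" $\varphi(s)=\sum_{h\in H_0}e^{-s\abs{h}_\Gamma}$ and finitely many "hyperbolic factors" that are geometric series in the $\gamma_i$. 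Because $H_0$ is exponentially distorted in $\Gamma$, $\varphi$ has convergence abscissa strictly smaller than the intrinsic growth rate of $H_0$; by choosing $k$ and the translation lengths of the $\gamma_i$ suitably, one forces the critical exponent $v_\Lambda$ of the product to lie in the interior of the convergence domain of $\varphi$, which yields the purely exponential estimate $\Card{B_n\cap \Lambda}\asymp e^{nv_\Lambda}$ exactly as in Peign\'e's divergence-type setting.

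Finally, \eqref{eq:many_quasiconvex} is obtained by a large deviation argument on the normal form. A point $g\in \Lambda$ fails to be $(\epsilon,0)$-quasi-convex only if its $\Gamma$-geodesic to the identity spends a large proportion of time outside the orbit of $\Lambda$, which, by hyperbolicity and ping-pong geometry, occurs only when an abnormally large fraction of $\abs{g}_\Gamma$ is contributed by syllables $h_j$ that are "deep" in $H_0$ (meaning that $\Gamma$-geodesics from $e$ to $h_j$ take shortcuts outside $H_0$). Since $\varphi$ converges at $s=v_\Lambda$, the typical proportion of $\Gamma$-length coming from such deep syllables is bounded away from $1$, and a Chernoff estimate on the essentially independent syllable structure provides the rate $\eta>0$. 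The principal obstacle is the third paragraph, namely the precise additive accounting of syllable $\Gamma$-lengths and the matching of the critical exponent of the product with the convergent regime of $\varphi$, which is the technical core of Peign\'e's construction and requires careful choices of $H_0$ and the Schottky data.
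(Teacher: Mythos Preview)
Your proposal has a genuine gap in the choice of $H_0$. You take $H_0$ to be the fundamental group of an essential proper subsurface of the fiber $\Sigma$, hence a finitely generated subgroup of infinite index in $H=\pi_1(\Sigma)$. But by the tameness theorem (Agol, Calegari--Gabai) together with Canary's covering theorem, every finitely generated subgroup of a closed hyperbolic $3$-manifold group is either geometrically finite or a virtual fiber; since $H_0$ is a free group it cannot be a surface fiber, so it is geometrically finite, hence quasi-convex and \emph{undistorted} in $\Gamma$. Your non-quasi-convexity argument for $\Lambda$ therefore collapses (indeed the ping-pong free product of $H_0$ with Schottky elements will itself be quasi-convex in $\Gamma$), and the later claim that ``$\varphi$ has convergence abscissa strictly smaller than the intrinsic growth rate of $H_0$'' is false for the same reason. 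The only way to get a distorted factor inside $\Gamma=\pi_1(M)$ is to take $H_0=H$ itself, but then $H$ is normal with limit set all of $\partial\Gamma$, and there is no room for ping-pong with loxodromic elements of $\Gamma$.

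The paper avoids this obstruction by \emph{enlarging the ambient group} rather than working inside the fibered $3$-manifold group: it sets $\Gamma=\tilde G * \FF_k$ and $\Lambda=G*\FF_k$, where $G$ is the (non-quasi-convex) fiber surface group inside $\tilde G=\pi_1(M)$. The free product structure gives exact additivity of word length across syllables (no bounded-error bookkeeping), so the growth series of $\Lambda$ is computed exactly as $F_\Lambda=F_G F_H/(1-(F_G-1)(F_H-1))$. Choosing $k$ large forces the first pole $z_\Lambda$ to lie strictly inside the convergence disk of $F_G$; the pole is simple, and a standard Tauberian argument gives $\Card{\Sbb^n\cap\Lambda}\sim c\,e^{nv_\Lambda}$. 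For~\eqref{eq:many_quasiconvex} the paper observes that along a $\Gamma$-geodesic to $g_1h_1\dotsm$ every prefix ending inside an $h$-block already lies in $\Lambda$, so failure of $(\epsilon,0)$-quasi-convexity forces $\sum\abs{g_i}\geq(1-\epsilon)n$; an exponential tilt of the generating series (replacing $z$ by $e^{\alpha}z$ in $F_G$ and by $e^{-2\alpha/\epsilon}z$ in $F_H$) then shows this event has exponentially small count. This is the large-deviation estimate you sketch, but made exact by the free-product ambient group.
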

\begin{proof}
The example is the same as in~\cite{peigne_finite_BM}, but his
geometric proofs are replaced by combinatorial arguments based on
generating series.

Let $G$ be a finitely generated non-quasi-convex subgroup of a
hyperbolic group $\tilde G$ (take for instance for $\tilde G$ the
fundamental group of a hyperbolic $3$-manifold which fibers over the
circle, and for $G$ the fundamental group of the fiber of this
fibration). Let $H=\FF_k$, with $k$ large enough so that $v_H \geq
v_G$. We take $\Lambda=G*H\subset \Gamma= \tilde G* H$. It is not
quasi-convex, because of the factor $G$. Writing $v_\Lambda$ for its
growth, we claim that, for some $c>0$,
  \begin{equation}
  \label{eq:card_Sbbn}
  \Card{\Sbb^n \cap\Lambda} \sim c e^{n v_\Lambda}.
  \end{equation}
We compute with generating series. Let $F_G(z)$ be the growth series for $G$,
given by $F_G(z)=\sum_{n\geq 0} \Card{ \Sbb^n \cap G}z^n$. Likewise, we
define $F_H$ and $F_\Lambda$. Since any word in $\Lambda$ has a canonical
decomposition in terms of words in $G$ and $H$, a classical computation
(see~\cite[Prop. VI.A.4]{harpe_topics}) gives
  \begin{equation}
  \label{eq:generating}
  F_\Lambda = \frac{F_G F_H}{1-(F_G-1)(F_H-1)}.
  \end{equation}
Let $z_G=e^{-v_G} \geq z_H=e^{-v_H}$ be the convergence radii of
$F_G$ and $F_H$. At $z_H$, we have $F_H(z_H)=+\infty$, since the
cardinality of spheres in the free group is exactly of the order of
$e^{n v_H}$. When $z$ increases to $z_H$, the function
$(F_G(z)-1)(F_H(z)-1)$ takes the value $1$, at a number
$z=z_\Lambda$. Since this is the first singularity of $F_\Lambda$, we
have $z_\Lambda=e^{-v_\Lambda}$. Moreover, the function $F_\Lambda$
is meromorphic at $z_\Lambda$, with a pole of order $1$ (since the
function $(F_G-1)(F_H-1)$ has positive derivative, being a power
series with nonnegative coefficients). It follows from a simple
tauberian theorem (see, for
instance,~\cite[Theorem~IV.10]{flajolet_sedgewick}) that the
coefficients of $F_\Lambda$ behave like $c z_\Lambda^{-n}$,
proving~\eqref{eq:card_Sbbn}.

Let us estimate the number of non-quasi-convex points in $\Lambda$. Consider
a word $w\in \Lambda$ of length $n$, for instance starting with a factor in
$G$ and ending with a factor in $H$. It can be written as $g_1 h_1 g_2
h_2\dotsb h_s$. Along a geodesic from $e$ to $w$, all the words $g_1 h$ (with
$h$ prefix of $h_1$) belong to $\Lambda$. So do all the words $g_1 h_1 g_2 h$
with $h$ prefix of $h_2$, and so on. Therefore, the proportion of time that
the geodesic spends outside of $\Lambda$ is at most $\sum \abs{g_i} /n$. Such
a point in $\Lambda\setminus\Lambda_{QC(\epsilon,0)}$ satisfies $\sum
\abs{g_i} \geq (1-\epsilon) n$ and $\sum \abs{h_i} \leq \epsilon n$. Assuming
$\epsilon\leq 1/2$, this gives $\sum \abs{h_i}\leq (\epsilon/2) \sum
\abs{g_i}$. In particular, for any $\alpha>0$, we have $e^{ \alpha (\sum
\abs{g_i} - 2\epsilon^{-1} \sum \abs{h_i})} \geq 1$. Let $u_n =
\Card{\Sbb^n\cap \Lambda \setminus \Lambda_{QC(\epsilon,0)}}$, its generating
series satisfies the following equation (where we only write in details the
words starting with $G$ and ending in $H$, the other ones being completely
analogous):
  \begin{align*}
  \sum u_n z^n &
  \leq \sum_{\ell\geq 1} \sum_{a_1+b_1+a_2+\dotsb + b_\ell = n}
  e^{\alpha( \sum a_i - 2\epsilon^{-1} \sum b_i)} \Card{\Sbb^{a_1} \cap G}
  \Card{\Sbb^{b_1} \cap H} \dotsm \Card{\Sbb^{b_\ell} \cap H} z^n + \dotsc
  \\&
  = \sum_{\ell \geq 1} \bigl[(F_G(e^{\alpha} z)-1) (F_H(e^{-2\alpha \epsilon^{-1}} z)-1)\bigr]^\ell+\dotsc
  \\&
  = \frac{F_G(e^\alpha z) F_H(e^{-2\alpha \epsilon^{-1}} z)}{1-(F_G(e^\alpha z)-1)(F_H(e^{-2\alpha \epsilon^{-1}} z)-1)}.
  \end{align*}
This is the same formula as in~\eqref{eq:generating}, but the factor
$z$ has been shifted in $F_G$ and $F_H$. Choose $\alpha>0$ such that
$e^\alpha z_\Lambda < z_G$, and then $\epsilon$ small enough so that
$(F_G(e^\alpha z_\Lambda)-1)(F_H(e^{-2\alpha \epsilon^{-1}}
z_\Lambda)-1)<1$. We deduce that the series $\sum u_n z^n$ converges
for $z=z_\Lambda$, and even slightly to its right. It follows that
$u_n$ is exponentially small compared to $z_\Lambda^{-n}$. This
proves~\eqref{eq:many_quasiconvex}.
\end{proof}

\subsection{Application to random walks in infinite index subgroups}
\label{subsec:apply_distorted}

In this paragraph, we use Theorem~\ref{thm:compte_peu_distordus} to
prove Theorem~\ref{thm:hlv_indice_infini} on random walks given by a
measure $\mu$ on a hyperbolic group $\Gamma$, assuming that
$\Gamma_\mu$ has infinite index in $\Gamma$.

Before proving Theorem~\ref{thm:hlv_indice_infini}, we give another easier
result, pertaining to the case where $\mu$ has a finite moment for a word
distance on $\Gamma_\mu$ (which should be finitely generated): In this case,
the random walk typically visits undistorted points. This easy statement is
not used later on, but it gives a heuristic explanation to
Theorem~\ref{thm:hlv_indice_infini}.
\begin{lem}
Let $\Lambda$ be a finitely generated subgroup of a finitely
generated group $\Gamma$. Let $d_\Lambda$ and $d_\Gamma$ be the two
corresponding word distances. Consider a probability measure $\mu$ on
$\Lambda$, with a moment of order $1$ for $d_\Lambda$ (and therefore
for $d_\Gamma$), with nonzero drift for $d_\Gamma$. Let $X_n$ denote
the corresponding random walk. There exists $D>0$ such that $\Pbb(X_n
\in \Lambda_{UD(D)}) \to 1$.
\end{lem}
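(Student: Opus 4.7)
The plan is to invoke the existence of the drifts in both metrics $d_\Lambda$ and $d_\Gamma$ as almost-sure limits, and to compare them.

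First, I would check moment compatibility. Each generator of $\Lambda$ can be expressed as a $d_\Gamma$-word of bounded length in $\Gamma$, so there is a constant $C$ with $d_\Gamma(e,g)\leq C d_\Lambda(e,g)$ for every $g\in \Lambda$. In particular, the finite first $d_\Lambda$-moment of $\mu$ implies a finite first $d_\Gamma$-moment, and both quantities
\begin{equation*}
\ell_\Lambda(\mu)=\lim_{n\to\infty} \tfrac{1}{n}\E[d_\Lambda(e,X_n)], \qquad
\ell_\Gamma(\mu)=\lim_{n\to\infty} \tfrac{1}{n}\E[d_\Gamma(e,X_n)]
\end{equation*}
are well-defined and finite by subadditivity. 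By hypothesis $\ell_\Gamma(\mu)>0$.

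Next, I would apply Kingman's subadditive ergodic theorem (or the classical almost-sure version of Guivarc'h's result on the drift of random walks on groups) separately to each of the two subadditive cocycles $d_\Lambda(e,X_n)$ and $d_\Gamma(e,X_n)$: both are subadditive with respect to the shift on the Bernoulli space $(\Lambda^{\N},\mu^{\otimes\N})$, since $d_\bullet(e,g_1\cdots g_n)\leq d_\bullet(e,g_1\cdots g_k)+d_\bullet(e,g_{k+1}\cdots g_n)$ by left-invariance. Both have finite first moment, so almost surely
\begin{equation*}
\frac{d_\Lambda(e,X_n)}{n}\xrightarrow[n\to\infty]{} \ell_\Lambda(\mu), \qquad
\frac{d_\Gamma(e,X_n)}{n}\xrightarrow[n\to\infty]{} \ell_\Gamma(\mu).
\end{equation*}

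Finally, choose any $D>\ell_\Lambda(\mu)/\ell_\Gamma(\mu)$ (this ratio is a finite real number since the denominator is positive). Since $\ell_\Gamma(\mu)>0$, the almost-sure convergence above implies
\begin{equation*}
\frac{d_\Lambda(e,X_n)}{d_\Gamma(e,X_n)}\xrightarrow[n\to\infty]{\text{a.s.}} \frac{\ell_\Lambda(\mu)}{\ell_\Gamma(\mu)}<D,
\end{equation*}
hence $\Pbb\paren*{d_\Lambda(e,X_n)\leq D\,d_\Gamma(e,X_n)}\to 1$, that is, $\Pbb(X_n\in\Lambda_{UD(D)})\to 1$. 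No step presents a serious obstacle here; the content is essentially that both drifts exist and that positivity of $\ell_\Gamma$ lets one divide. The statement is in fact much weaker than Theorem~\ref{thm:hlv_indice_infini}, which is why the authors present it only as a heuristic warm-up, since it requires the stronger assumption of a finite $d_\Lambda$-moment.
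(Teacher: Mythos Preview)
Your proof is correct and follows essentially the same approach as the paper: both establish the almost-sure existence of the two drifts $\ell_\Lambda$ and $\ell_\Gamma$, then choose $D>\ell_\Lambda/\ell_\Gamma$ and conclude from the ratio. Your version is simply more explicit about the justifications (moment compatibility via $d_\Gamma\leq C d_\Lambda$, and invoking Kingman for the almost-sure limits), whereas the paper compresses the argument into three sentences.
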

\begin{proof}
Almost surely, $d_\Gamma(e,X_n)\sim \ell_\Gamma n$, for some nonzero
drift $\ell_\Gamma$. In the same way, $d_\Lambda(e,X_n) \sim
\ell_\Lambda n$. For any $D>\ell_\Lambda/\ell_\Gamma$, we get almost
surely $d_\Lambda(e,X_n) \leq D d_\Gamma(e,X_n)$ for large enough
$n$, i.e., $X_n \in \Lambda_{UD(D)}$.
\end{proof}

This lemma readily implies Theorem~\ref{thm:hlv_indice_infini} under the
additional assumption that $\Lambda$ is finitely generated and that $\mu$ has
a moment of order $1$ for $d_\Lambda$. Indeed, for large $n$, with
probability at least $1/2$, the point $X_n$ belongs to $B_{(\ell+\epsilon)n}
\cap \Lambda_{UD(D)}$, whose cardinality is bounded by $Ce^{(\ell+\epsilon)n
(v-\eta)}$ according to Theorem~\ref{thm:compte_peu_distordus}.
Lemma~\ref{lem:charac_entropy} yields $h\leq (\ell+\epsilon)(v-\eta)$, hence
$h\leq \ell (v-\eta) < \ell v$, completing the proof.

However, the assumptions of Theorem~\ref{thm:hlv_indice_infini} are
much weaker: even when $\Lambda$ is finitely generated, it is much
more restrictive to require a moment of order $1$ on $\Lambda$ than
on $\Gamma$, precisely because the $\Gamma$-distance is smaller than
the $\Lambda$-distance on distorted points, which make up most of
$\Lambda$. The general proof will not use undistorted points (which
are not even defined when $\Lambda$ is not finitely generated), but
rather quasi-convex points: we will show that, typically, the random
walk concentrates on quasi-convex points. With the previous argument,
Theorem~\ref{thm:hlv_indice_infini} readily follows from the next
lemma.

\begin{lem}
Let $\Lambda$ be a subgroup of a hyperbolic group $\Gamma$ endowed with a
word distance $d=d_\Gamma$. Let us consider a probability measure $\mu$ on
$\Lambda$, with a moment of order $1$ for $d_\Gamma$. There exist
$\epsilon>0$ and $M>0$ such that $\Pbb(X_n \in \Lambda_{QC(\epsilon, M)})
\geq 1/2$ for large enough $n$.
\end{lem}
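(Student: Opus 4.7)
My plan is to show that the trajectory $(X_k)_{k=0}^n$ fellow-travels any geodesic $\gamma_n$ from $e$ to $X_n$, so that a linear fraction of the $X_k$'s lie within a bounded distance $M$ of $\gamma_n$. Since every $X_k\in \Lambda$, this produces a linear number of points on $\gamma_n$ within $M$ of $\Lambda$, which gives quasi-convexity after a combinatorial counting argument.

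First, I would use that, by the ergodic theorem and the first moment assumption, $\abs{X_n}/n\to \ell>0$ almost surely, so for $n$ large the event $A_n=\{(\ell/2)n\leq \abs{X_n}\leq (\ell+1)n\}$ has probability close to $1$. The essential geometric input will be a \emph{uniform fellow-traveler estimate}: for any $\eta>0$ there exists a constant $M_0$ (depending on $\mu$, $\Gamma$, $\eta$ but not on $k$ or $n$) such that $\Pbb(d(X_k,\gamma_n)>M_0)\leq \eta$ for all $0\leq k\leq n$. By hyperbolicity this amounts to controlling the ``detour''
\begin{equation*}
\abs{X_k}+\abs{X_k^{-1}X_n}-\abs{X_n}=2(\abs{X_k}-\gromprod{X_k}{X_n}{e})\geq 0;
\end{equation*}
since $X_k^{-1}X_n$ is independent of $X_k$ and distributed as $X_{n-k}$, and $X_n$ converges almost surely to a point of $\partial\Gamma$, this detour has a tight distribution uniformly in $k$, and one can choose $M_0$ so that the above tail is bounded by $\eta$.

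Next, I would fix an integer $K$ with $\ell K>8M_0$ and look at the subsampled times $k_j=jK$ for $1\leq j\leq \lfloor n/K\rfloor$. For each such $j$, the drift convergence gives $\abs{\abs{X_{k_j}}-\ell k_j}\leq \ell K/4$ with probability arbitrarily close to $1$, while the fellow-traveler estimate gives $d(X_{k_j},\gamma_n)\leq M_0$ with probability at least $1-\eta$. A Markov inequality applied to the number of bad $j$'s then yields: with probability at least $1/2$, the event $A_n$ holds together with at least $(9/10)\lfloor n/K\rfloor$ ``good'' $j$'s. For each good $j$, I project $X_{k_j}\in\Lambda$ onto $\gamma_n$, obtaining a point within $M_0$ of $\Lambda$ at arclength $j\ell K\pm (\ell K/4+M_0)$ from $e$. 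By the choice of $K$, the intervals of radius $M_0$ around these projections are disjoint, so they contribute at least $(9/10)\lfloor n/K\rfloor\cdot (2M_0+1)$ integer positions on $\gamma_n$ lying within $2M_0$ of $\Lambda$. Since $\abs{X_n}\leq (\ell+1)n$ on $A_n$, the proportion of such positions is bounded below by a positive constant $\epsilon$ depending only on $\ell$, $L(\mu)$, $M_0$, $K$, yielding $X_n\in \Lambda_{QC(\epsilon,2M_0)}$ with probability $\geq 1/2$.

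The main obstacle is the uniform fellow-traveler estimate above: the bare ergodic theorem $\abs{X_k}-\ell k=o(k)$ only controls $d(X_k,\gamma_n)$ up to $o(n)$, not a bounded constant. A genuine hyperbolic-geometric input---namely a deviation inequality controlling the Gromov product $\gromprod{X_k}{X_n}{e}$ uniformly in $k$, ultimately relying on the almost-sure convergence of $X_n$ to the boundary---is what makes $M_0$ uniform.
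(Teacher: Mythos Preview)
Your overall architecture (fellow-travel the geodesic, subsample at multiples of $K$, use a first-moment/Markov bound to get linearly many good times, project onto $\gamma_n$) matches the paper's. But the step that separates the projections is wrong as written. You claim that ``the drift convergence gives $\bigl|\,\abs{X_{k_j}}-\ell k_j\,\bigr|\leq \ell K/4$ with probability arbitrarily close to $1$'' for each $j$. The ergodic theorem only gives $\abs{X_k}-\ell k=o(k)$; for a fixed constant like $\ell K/4$ this fails badly for large $j$ (under a CLT the fluctuation is of order $\sqrt{k_j}$, so the probability tends to $0$). Hence you cannot locate the projection of $X_{k_j}$ at arclength $j\ell K\pm(\ell K/4+M_0)$, and the disjointness of the $M_0$-intervals collapses. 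Without separation you could have $0.9\,n/K$ good times all projecting near the same point of $\gamma_n$, which gives no lower bound on the proportion of $\gamma_n$ near $\Lambda$.

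The paper avoids this by recasting everything bilaterally. One considers the two-sided walk, the limit points $\xi^{\pm}$, and the union $S(\xi^-,\xi^+)$ of geodesics between them. The ``good'' event at time $i$ packages \emph{both} ingredients you need: (a) $d(X_i,S)\leq K$ and (b) for all $\abs{k}\geq K$, $d(\pi(X_i),\pi(X_{i+k}))\geq L$, where $\pi$ is projection onto $S$. The point is that this event is shift-equivariant, so its probability is the same at every $i$; one chooses $K$ so that it exceeds $9/10$. A Markov bound then gives linearly many good $i$'s, and condition (b) \emph{by itself} forces the projections to be pairwise distinct---no drift estimate is used. Finally one transfers from $S$ to a finite geodesic $\gamma$ from $e$ to $X_n$ (using that $e$ and $X_n$ are themselves good, so $\gamma$ tracks $S$ in between). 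In short: your uniform fellow-traveler estimate is essentially statement (a), and it is provable exactly via this stationarity trick; but you also need a stationary replacement for the separation, namely (b), rather than the false pointwise drift bound.

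A minor omission: you assume $\ell>0$, but the lemma does not. If $\mu$ is elementary (hence $\ell=0$ is possible), $\Gamma_\mu$ is virtually cyclic and all its elements are $(1,M)$-quasi-convex in $\Lambda$ for suitable $M$, so the statement is trivial; you should dispose of this case first.
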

\begin{proof}
The lemma is trivial if $\mu$ is elementary, since all the elements
of $\Gamma_\mu\subset\Lambda$ are then quasi-convex. We may therefore
assume that $\mu$ is non-elementary.

The random walk at time $n$ is given by $X_n=g_1 \dotsm g_n$, where $g_i$ are
independent and distributed like $\mu$. We will show that most products $g_1
\dotsm g_i$ (which belong to $\Lambda$) are within distance $M$ of a geodesic
from $e$ to $X_n$ (this amounts to the classical fact that trajectories of
the random walk follow geodesics in the group), and moreover that they
approximate a proportion at least $\epsilon$ of the points on this geodesic.
This will give $X_n \in \Lambda_{QC(\epsilon, M)}$ as desired. The second
point is more delicate: we should for instance exclude the situation where,
given a geodesic $\gamma$, one has $X_n=\gamma(a(n))$ where $a(n)$ is the
smallest square larger than $n$. In this case, $X_n$ follows the geodesic
$\gamma$ at linear speed, but nevertheless the proportion of $\gamma$ it
visits tends to $0$. This behavior will be excluded thanks to the fact that,
with high probability, the jumps of the random walk are bounded.

The argument is probabilistic and formulated in terms of the bilateral
version of the random walk. On $\Omega=\Gamma^{\Z}$ with the product measure
$\Pbb=\mu^{\otimes \Z}$, let $g_n$ be the $n$-th coordinate. The $g_n$ are
independent, identically distributed, and correspond to the increments of a
random walk  $(X_n)_{n\in \Z}$ with $X_0=e$ and $X_{n}^{-1}X_{n+1}=g_{n+1}$.
Almost surely, $X_n$ converges when $n\to \pm \infty$ towards two random
variables $\xi^{\pm}\in
\partial \Gamma$, with $\xi^+ \not=\xi^-$ almost surely since these random
variables are independent and atomless. Following
Kaimanovich~\cite{kaimanovich_poisson}, denote by $S(\xi^-, \xi^+)$ the union
of all the geodesics from $\xi^-$ to $\xi^+$. Let $\pi$ be the projection on
$S(\xi^-,\xi^+)$, i.e., $\pi(g)$ is the closest point to $g$ on
$S(\xi^-,\xi^+)$. It is not uniquely defined, but two possible choices are
within distance $C_0$, for some $C_0$ only depending on $\Gamma$.

Let us choose $L>0$ large enough (how large will only depend on the
hyperbolicity constant of the space). Any measurable function is
bounded on sets with arbitrarily large measure. Hence, there exists
$K>0$ such that, with probability at least $9/10$,
  \begin{enumerate}
  \item For every $\abs{k}\geq K$, the projections $\pi(X_k)$ are
      distant from $\pi(X_0)$ by at least $L$ (and they are
      closer to $\xi^+$ if $k>0$, and to $\xi^-$ if $k<0$).
  \item We have $d(e, S(\xi^-,\xi^+))\leq K$.
  \end{enumerate}
As everything is equivariant, we deduce that, for all $i\in \Z$, the
point $X_i$ satisfies the same properties with probability at least
$9/10$, i.e.,
  \begin{equation}
  \label{eq:bonnes_props}
  d(X_i, S(\xi^-,\xi^+))\leq K \text{ and, for all $\abs{k}\geq K$, } d(\pi(X_i), \pi(X_{i+k})) \geq L.
  \end{equation}

Let $n$ be a large integer. Write $m=\lfloor n/K \rfloor$. Among the
integers $K, 2K, \dotsc, mK \leq n$, we consider the set
$I_n(\omega)$ of those $i$ such that $X_i$
satisfies~\eqref{eq:bonnes_props}. We have $\E(\Card{I_n}) \geq
m\cdot 9/10$. As $\Card{I_n}\leq m$, we get
  \begin{equation*}
  \frac{9m}{10} \leq \E(\Card{I_n}) \leq \frac{m}{10}\Pbb( \Card{I_n} < m/10) + m \Pbb( \Card{I_n} \geq m/10)
  =\frac{m}{10} + \frac{9m}{10} \Pbb( \Card{I_n} \geq m/10).
  \end{equation*}
This gives $\Pbb ( \Card{I_n} \geq m/10 ) \geq 8/9$. Let $\eta = 1/(20 K)$.
Let $\Omega_n$ be the set of $\omega$ such that $\Card{I_n(\omega)} \geq \eta
n+1$, and $X_0$ and $X_n$ satisfy~\eqref{eq:bonnes_props}, and $d(X_n, e)
\leq 2\ell n$ (where $\ell$ is the drift of $\mu$). It satisfies
$\Pbb(\Omega_n) \geq 1/2$ if $n$ is large enough. This is the set of good
trajectories for which we can control the position of many of the $X_i$.

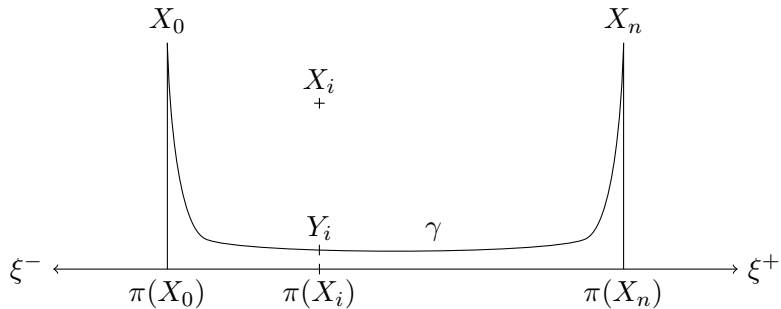
\begin{figure}
\centering
\begin{tikzpicture}
\def\tickheight{0.07}
\draw [<->] (-1.5,0) node[left] {$\xi^-$}
      -- (0,0) node[below] {$\pi(X_0)$}
      -- (2,0) node[below] {$\pi(X_i)$}
      -- (6,0) node[below] {$\pi(X_n)$}
      -- (7.5,0) node[right] {$\xi^+$};
\draw (0, 3) node[above] {$X_0$} -- (0,0)
      (6, 3) node[above] {$X_n$} -- (6,0)
      (2,2.2-\tickheight)--(2,2.2+\tickheight)
      (2-\tickheight,2.2)--(2+\tickheight,2.2)
      (2,2.2) node[above] {$X_i$}
      (2, -\tickheight) -- (2, \tickheight);
\draw [name path=MainGeod] plot[smooth, tension=0.3] coordinates {(0,3) (0.5, 0.4) (5.5, 0.4) (6,3)};
\path [name path=aboveXi] (2,0) -- (2,4);
\draw [name intersections={of=MainGeod and aboveXi}]
      (intersection-1) ++ (0,-\tickheight) -- ++(0, 2*\tickheight)
      (intersection-1) node[above] {$Y_i$};
\path [name path=aboveMiddle] (3.5,0) -- (3.5,4);
\draw [name intersections={of=MainGeod and aboveMiddle}]
      (intersection-1) node[above] {$\gamma$};
\end{tikzpicture}
\caption{The projections on $\gamma$ and $S$}
\label{fig:projections}
\end{figure}

Let $\omega\in \Omega_n$. We write $Y_i$ for a projection of $X_i$ on
a geodesic $\gamma$ from $e$ to $X_n$. Let $\tilde
I_n=I_n\setminus\{m K\}$, so that the elements of $\tilde I_n$ are at
distance at least $K$ of $0$ and $n$. As $X_0$ and $X_n$
satisfy~\eqref{eq:bonnes_props}, the projections $\pi(X_i)$ for $i\in
\tilde I_n$ are located between $\pi(X_0)$ and $\pi(X_n)$, and are at
a distance at least $L$ of these points (see
Figure~\ref{fig:projections}). If $L$ is large enough, we obtain
$d(\pi(X_i), Y_i)\leq C_1$ by hyperbolicity, where $C_1$ only depends
on $\Gamma$. This gives
  \begin{equation*}
  d(Y_i,\Lambda) \leq d(Y_i,\pi(X_i)) + d(\pi(X_i), X_i) \leq C_1+K,
  \end{equation*}
thanks to~\eqref{eq:bonnes_props} for $X_i$. When $i\not=j$ belong to
$\tilde I_n$, we have $d(\pi(X_i), \pi(X_j)) \geq L$ again thanks
to~\eqref{eq:bonnes_props}, hence $d(Y_i, Y_j) \geq L - 2C_1$. If $L$
was chosen larger than $2C_1+1$, this shows that $Y_i\not=Y_j$. We
have found along $\gamma$ at least $\Card{I_n}-1$ distinct points,
within distance $C_1+K$ of $\Lambda$. Moreover, for large enough $n$,
  \begin{equation*}
  \Card{I_n}-1 \geq \eta n  \geq 2\ell n \cdot (\eta/2\ell) \geq d(e,X_n) \cdot (\eta/2\ell).
  \end{equation*}
Let $\epsilon = \eta/2\ell$ and $M=C_1+K$. We have shown that, for $\omega
\in \Omega_n$ (whose probability is at least $1/2$), the point $X_n(\omega)$
belongs to $\Lambda_{QC(\epsilon,M)}$.
\end{proof}

\section{Construction of maximizing measures}

In this section, we prove Theorem~\ref{thm:hsurl_atteint}: Given any
finite subset $\Sigma$ in a hyperbolic group $\Gamma$, there exists a
measure $\mu_\Sigma$ maximizing the quantity $h(\mu)/\ell(\mu)$ over
all measures $\mu$ supported on $\Sigma$ with $\ell(\mu)>0$. To prove
this result, we start with a sequence of measures $\mu_i$ supported
on $\Sigma$ such that $h(\mu_i)/\ell(\mu_i)$ converges to the maximum
$M$ of these quantities. We are looking for $\mu_\Sigma$ with
$h(\mu_\Sigma)/\ell(\mu_\Sigma)=M$. Replacing $\mu_i$ with
$(\mu_i+\delta_e)/2$ (this multiplies entropy and drift by $1/2$, and
does not change their ratio) and adding $e$ to $\Sigma$, we can
always assume $\mu_i(e)\geq 1/2$, to avoid periodicity problems.

Extracting a subsequence, we can ensure that $\mu_i$ converges to a
limit probability measure $\mu$. We treat separately the two
following cases:
\begin{enumerate}
\item $\Gamma_\mu$ is non-elementary.
\item $\Gamma_\mu$ is elementary.
\end{enumerate}

Let us handle first the easy case, where $\Gamma_\mu$ is
non-elementary. In this case, the entropy and the drift are
continuous at $\mu$, by Proposition~\ref{prop:ell_continuous} and
Theorem~\ref{thm:h_continuous}, both due to Erschler and Kaimanovich
in~\cite{erschler_kaim}. Therefore, $h(\mu_i)/\ell(\mu_i)$ tend to
$h(\mu)/\ell(\mu)$, since in this case $\ell(\mu)>0$. One can thus take $\mu_\Sigma=\mu$.

\medskip

The case where $\Gamma_\mu$ is elementary is much more interesting. Let us
describe heuristically what should happen, in a simple case. We assume that
$\mu_i = (1-\epsilon) \mu + \epsilon \nu$ where $\nu$ is a fixed measure, and
$\epsilon$ tends to $0$. The random walk for $\mu_i$ can be described as
follows. At each jump, one picks $\mu$ (with probability $1-\epsilon$) or
$\nu$ (with probability $\epsilon$), then one jumps according to the chosen
measure. After time $N$, the measure $\nu$ is chosen roughly $\epsilon N$
times, with intervals of length $1/\epsilon$ in between, where $\mu$ is
chosen. Thus, $\mu_i^{*N}$ behaves roughly like
$(\mu^{*1/\epsilon}*\nu)^{\epsilon N}$.

When $\Gamma_\mu$ is finite, the measure $\mu^{*1/\epsilon}$ is
close, when $\epsilon$ is small, to the uniform measure $\pi$ on
$\Gamma_\mu$. Therefore, $\mu_i^{*N}$ is close to $(\pi *
\nu)^{\epsilon N}$. We deduce $h(\mu_i) \sim \epsilon h(\pi*\nu)$ and
$\ell(\mu_i) \sim \epsilon \ell(\pi*\nu)$. In particular,
$h(\mu_i)/\ell(\mu_i) \to h(\pi*\nu)/\ell(\pi*\nu)$. One can take
$\mu_\Sigma=\pi*\nu$.

When $\Gamma_\mu$ is infinite, it is virtually cyclic. Assuming that
$\mu$ is centered for simplicity, the walk given by
$\mu^{*1/\epsilon}$ arrives essentially at distance
$1/\sqrt{\epsilon}$ of the origin, by the central limit theorem.
Then, one jumps according to $\nu$, in a direction transverse to
$\Gamma_\mu$, preventing further cancellations. Hence, the walk given
by $(\mu^{*1/\epsilon}*\nu)^{\epsilon N}$ is at distance roughly
$\epsilon N/\sqrt{\epsilon}$ from the origin, yielding
$\ell(\mu_i)\sim \sqrt{\epsilon}$. On the other hand, each step
$\mu^{*1/\epsilon}$ only visits $1/\epsilon$ points, hence the
measure $(\mu^{*1/\epsilon}*\nu)^{\epsilon N}$ is supported by
roughly $(1/\epsilon)^{\epsilon N}$ points, yielding $h(\mu_i)\sim
\epsilon\abs{\log \epsilon}$. In particular, $h(\mu_i) =
o(\ell(\mu_i))$. This implies that $h(\mu_i)/\ell(\mu_i)$, which
tends to $0$, can not tend to the maximum $M$. Therefore, this case
can not happen.

\bigskip

The rigorous argument is considerably more delicate. One difficulty
is that $\mu_i$ does not decompose in general as $(1-\epsilon) \mu +
\epsilon \nu$: there can be in $\mu_i$ points with a very small
probability (which are not seen by $\mu$), but much larger than
$\epsilon$, the probability to visit a nonelementary subset of
$\Gamma$. These points will play an important role on the relevant
time scale, i.e., $1/\epsilon$. Hence, we have to describe the
different time scales that happen in $\mu_i$.

For each $a\in \Sigma$, we have a weight $\mu_i(a)$, which tends to
$0$ if $a$ is not in the support of $\mu$. Reordering the $a_k$ and
extracting a subsequence, we can assume that $\Sigma=\{a_1,\dotsc,
a_p\}$ with $\mu_i(a_1)\geq \dotsb \geq \mu_i(a_p)$ (and $a_1=e$).
Extracting a further subsequence, we may also assume that
$\mu_i(a_k)/\mu_i(a_{k-1})$ converges for all $k$, towards a limit in
$[0,1]$.

Let $\Gamma_k$ be the subgroup generated by $a_1,\dotsc, a_k$. We
consider the smallest $r$ such that $\Gamma_r$ is non-elementary.
Then, we consider the biggest $s<r$ such that $\mu_i(r)=o(\mu_i(s))$.
Roughly speaking, the random walk has enough time to spread
on the elementary subgroup $\Gamma_s$, before seeing $a_r$.
It turns out that the asymptotic behavior will depend on the nature of
$\Gamma_s$ (finite or virtually cyclic infinite).

We will decompose the measure $\mu_i$ as the sum of two components
$(1-\epsilon_i)\alpha_i + \epsilon_i \beta_i$, where $\epsilon_i$ tends to
$0$, the measure $\alpha_i$ mainly lives on $\Gamma_s$, and the measure
$\beta_i$ corresponds to the remaining part of $\mu_i$, on $\{a_{s+1},\dotsc,
a_p\}$. The precise construction depends on the nature of $\Gamma_s$:
\begin{itemize}
\item \emph{If $\Gamma_s$ is finite.} Let $\beta_i^{(0)}$ be the
    normalized restriction of $\mu_i$ to $\{a_{s+1},\dotsc,
    a_p\}$. To avoid periodicity problems, we rather consider
    $\beta_i =(\delta_e + \beta_i^{(0)})/2$. We decompose $\mu_i
    = (1-\epsilon_i) \alpha_i + \epsilon_i \beta_i$, where
    $\alpha_i$ is supported on $a_1,\dotsc, a_s$. By
    construction, the probability of any element in the support
    of $\alpha_i$ is much bigger than $\epsilon_i$.
\item \emph{If $\Gamma_s$ is virtually cyclic infinite.} The
    group $\Gamma_s$ contains a hyperbolic element $g_0$, with
    repelling and attracting points at infinity denoted by
    $g_0^-$ and $g_0^+$. The elements of $\Gamma_s$ all fix the
    set $\{g_0^-, g_0^+\}$. We take for $\alpha_i$ the normalized
    restriction of $\mu_i$ to those elements in $\Sigma$ that fix
    $\{g_0^-, g_0^+\}$, and for $\beta_i$ the normalized
    restriction of $\mu_i$ to the other elements. Once again, we
    can write $\mu_i = (1-\epsilon_i) \alpha_i + \epsilon_i
    \beta_i$.
\end{itemize}
In both cases, $\epsilon_i$ is comparable to the probability
$\mu_i(a_r)$, and is therefore negligible with respect to
$\mu_i(a_s)$. We will write $\mu_i = \mu_\epsilon$ (and, in the same
way, we will replace all indices $i$ with $\epsilon$, since the main
parameter is $\epsilon=\epsilon_i$). The measure $\mu_\epsilon$
converges to $\mu$ when $\epsilon$ tends to $0$, while
$\beta_\epsilon$ tends to a probability measure $\beta$, supported on
$e,a_{s+1},\dotsc, a_p$. If the measures $\mu_\epsilon$ are symmetric
to begin with, the measures $\alpha_\epsilon$ and $\beta_\epsilon$
are also symmetric by construction.

\medskip

To generate the random walk given by $\mu_\epsilon$, one can first
independently choose random measures $\rho_n$: one takes
$\rho_n=\alpha_\epsilon$ with probability $1-\epsilon$, and
$\rho_n=\beta_\epsilon$ with probability $\epsilon$. Then, one chooses
elements $g_n$ randomly according to $\rho_n$, and one multiplies them: the
product $g_1\dotsm g_n$ is distributed like the random walk given by
$\mu_\epsilon$ at time $n$.

\medskip

We will group together successive $g_k$, into blocks where the
equidistribution on $\Gamma_s$ can be seen. More precisely, denote by $t_1,
t_2,\dotsc$ the successive times where $\rho_n = \beta_\epsilon$ (and
$t_0=0$). They are stopping times, the successive differences are independent
and identically distributed, with a geometric distribution of parameter
$\epsilon$ (i.e., $\Pbb(t_1=n) = (1-\epsilon)^{n-1} \epsilon$), with mean
$1/\epsilon$. Write $L_N = g_{t_{N-1} + 1} \dotsm g_{t_N}$. By construction,
the $L_i$ are independent, identically distributed, and the random walk they
define, i.e., $L_1 \dotsm L_N$, is a subsequence of the original random walk
$g_1 \dotsm g_n$. Let $\lambda_\epsilon$ be the distribution of $L_i$ on
$\Gamma$, i.e.,
  \begin{equation*}
  \lambda_\epsilon = \sum_{n=0}^\infty (1-\epsilon)^{n} \epsilon \alpha_\epsilon^{*n} * \beta_\epsilon.
  \end{equation*}

\begin{lem}
\label{lem:exprime_gamma}
The measure $\lambda_\epsilon$ has finite first moment and finite
time one entropy. Moreover, $\ell(\mu_\epsilon) = \epsilon
\ell(\lambda_\epsilon)$ and $h(\mu_\epsilon) = \epsilon
h(\lambda_\epsilon)$.
\end{lem}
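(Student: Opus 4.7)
The plan proceeds in three steps, corresponding to the four conclusions of the lemma (the finiteness parts being bundled).

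First, for $L(\lambda_\epsilon) < \infty$ and $H(\lambda_\epsilon) < \infty$: since $\alpha_\epsilon$ and $\beta_\epsilon$ are both supported on the finite set $\Sigma$, each $g_i$ satisfies $\abs{g_i} \leq M := \max_{g\in\Sigma}\abs{g}$, so by the triangle inequality $\abs{L_1} \leq M t_1$, giving $\E \abs{L_1} \leq M/\epsilon$. For the entropy, $L_1$ is a deterministic function of the tuple $(t_1, g_1, \ldots, g_{t_1})$, so by data processing and the chain rule $H(\lambda_\epsilon) = H(L_1) \leq H(t_1) + (1/\epsilon - 1) H(\alpha_\epsilon) + H(\beta_\epsilon)$, each summand finite (the geometric distribution has finite entropy, and $\alpha_\epsilon, \beta_\epsilon$ are finitely supported).

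Second, for the drift equality, I apply the strong law of large numbers three times: $\abs{X_n}/n \to \ell(\mu_\epsilon)$ almost surely (using $L(\mu_\epsilon) < \infty$); $\abs{Y_N}/N \to \ell(\lambda_\epsilon)$ almost surely for the i.i.d.\ sequence $Y_N = L_1 \dotsm L_N$; and $t_N/N \to 1/\epsilon$ almost surely since $t_N$ is a sum of $N$ i.i.d.\ geometric increments of mean $1/\epsilon$. Combining these with the identity $Y_N = X_{t_N}$ yields
\begin{equation*}
  \ell(\lambda_\epsilon) = \lim_N \frac{\abs{X_{t_N}}}{N} = \lim_N \frac{\abs{X_{t_N}}}{t_N} \cdot \frac{t_N}{N} = \frac{\ell(\mu_\epsilon)}{\epsilon} \quad \text{a.s.,}
\end{equation*}
which rearranges to the desired identity.

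Third, for the entropy equality I employ the same subordination strategy. By Derriennic's almost-sure Shannon-McMillan-Breiman theorem for random walks on countable groups (applicable once the time-one entropies are known to be finite), $-\log \mu_\epsilon^{*n}(X_n)/n \to h(\mu_\epsilon)$ and $-\log \lambda_\epsilon^{*N}(Y_N)/N \to h(\lambda_\epsilon)$ almost surely. Substituting $n = t_N$ in the first (using $t_N/N \to 1/\epsilon$) and $Y_N = X_{t_N}$ in the second yields two almost-sure limits at the scale $N$:
\begin{equation*}
  -\frac{\log \mu_\epsilon^{*t_N}(X_{t_N})}{N} \longrightarrow \frac{h(\mu_\epsilon)}{\epsilon}, \qquad -\frac{\log \lambda_\epsilon^{*N}(X_{t_N})}{N} \longrightarrow h(\lambda_\epsilon),
\end{equation*}
and the desired identity will follow once these two limits are shown to coincide. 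The main obstacle is therefore to prove $\bigl|\log \mu_\epsilon^{*t_N}(X_{t_N}) - \log \lambda_\epsilon^{*N}(X_{t_N})\bigr| = o(N)$ almost surely. My plan is to exploit the decomposition $\lambda_\epsilon^{*N}(g) = \sum_n \Pbb(t_N = n) \Pbb(X_n = g \mid t_N = n)$: Hoeffding-type concentration reduces the sum to $n$ in a window of size $O(\sqrt{N}\log N)$ around $N/\epsilon$, and on this window $\Pbb(X_n = \cdot \mid t_N = n)$ agrees with $\mu_\epsilon^{*n}$ up to factors polynomial in $n$ (arising from the concentration of the binomial count $N_n$ near $\epsilon n$ and the fact that conditioning on $\rho_n = \beta$ contributes a harmless factor of $N/n$). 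Since polynomial-in-$N$ factors are negligible at the logarithmic scale $N$, the two limits coincide.
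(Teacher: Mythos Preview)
Your first two steps (finiteness and the drift identity) match the paper's argument essentially verbatim: bound $\abs{L_1}$ by a constant times the geometric variable $t_1$, and chain the almost-sure limits $\abs{X_{t_N}}/t_N \to \ell(\mu_\epsilon)$ and $t_N/N \to 1/\epsilon$.

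Your third step, however, diverges from the paper and carries a genuine gap. The paper does \emph{not} compare $\mu_\epsilon^{*t_N}(X_{t_N})$ and $\lambda_\epsilon^{*N}(X_{t_N})$ pointwise via Shannon--McMillan--Breiman; instead it uses the set-based characterization of entropy (Lemma~\ref{lem:charac_entropy}). One takes a set $K_n$ of cardinality at most $e^{(h(\mu_\epsilon)+\eta)n}$ carrying $\mu_\epsilon^{*n}$-mass $\geq 1/2$, observes that $L_1\dotsm L_N = X_{t_N}$ lies with probability $\geq 1/3$ in a $C\eta' n$-neighborhood of $K_n$ (since $t_N$ concentrates near $n = N/\epsilon$), and bounds the cardinality of that neighborhood. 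This gives $h(\lambda_\epsilon) \leq h(\mu_\epsilon)/\epsilon$, and the reverse inequality is symmetric.

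The gap in your approach is the unproved claim that $\Pbb(X_n = \cdot \mid t_N = n)$ agrees with $\mu_\epsilon^{*n}(\cdot)$ up to factors polynomial in $n$. An upper bound does follow from binomial concentration: the conditioning restricts to arrangements with exactly $N$ $\beta$-steps, and the probability of that event is $\gtrsim n^{-1/2}$. But the \emph{lower} bound fails in general. Conditioning on $t_N = n$ forces $\rho_n = \beta_\epsilon$ and exactly $N$ $\beta$-steps; a point $g$ whose dominant contribution to $\mu_\epsilon^{*n}(g)$ comes from arrangements with a different $\beta$-count, or with $\rho_n = \alpha_\epsilon$, can have $\Pbb(X_n = g \mid t_N = n)$ exponentially smaller than $\mu_\epsilon^{*n}(g)$ (or even zero). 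You would then also need $\mu_\epsilon^{*n}(g)$ to be comparable across all $n$ in the $O(\sqrt{N}\log N)$ window, which is a second nontrivial claim you have not addressed. Neither issue is resolved by the one-line justification you give. The set-based route avoids both difficulties, since it never compares pointwise probabilities, only cardinalities of carrying sets.
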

\begin{proof}
As the mean of $t_1$ is $1/\epsilon$, the random walk generated by
$\lambda_\epsilon$ is essentially the random walk generated by
$\mu_\epsilon$, but on a time scale $1/\epsilon$. This justifies
heuristically the statement.

For the rigorous proof, let us first check that $\lambda_\epsilon$
has finite first moment (and hence finite time one entropy). Since
all the measures have finite support, we have $\abs{L_1} \leq C t_1$.
Since a geometric distribution has moments of all order, the same is
true for $\abs{L_1}$.

The strong law of large numbers ensures that, almost surely, $t_N
\sim N/\epsilon$. Therefore, almost surely,
  \begin{equation*}
  \ell(\lambda_\epsilon) = \lim \frac{\abs{L_1 \dotsm L_N}}{N}
  = \lim \frac{\abs{g_1 \dotsm g_{t_N}}}{N}
  = \lim \frac{\abs{g_1 \dotsm g_{t_N}}}{t_N} \cdot \frac{t_N}{N}
  = \ell(\mu_\epsilon) \cdot 1/\epsilon.
  \end{equation*}
This proves the statement of the lemma for the drift.

For the entropy, we use the characterization of
Lemma~\ref{lem:charac_entropy}. We will show that $h(\mu_\epsilon)\leq
\epsilon h(\lambda_\epsilon)$ and $h(\mu_\epsilon)\geq \epsilon
h(\lambda_\epsilon)$. Let $K_n$ be a set of cardinality at most
$e^{(h(\mu_\epsilon)+\eta)n}$ which contains $g_1 \dotsm g_n$ with
probability at least $1/2$. Let $N=\epsilon n$. With large probability, $t_N$
is close to $n$, up to $\eta' n$ (where $\eta'$ is arbitrarily small). Hence,
with probability at least $1/3$, the point $L_1 \dotsm L_N$ belongs to the $C
\eta' n$-neighborhood of $K_n$, whose cardinality is at most
  \begin{equation*}
  \Card{K_n} \cdot e^{C' \eta' n}
  \leq e^{(h(\mu_\epsilon)+\eta+C'\eta')n}
  = e^{(h(\mu_\epsilon)+\eta+C'\eta')N/\epsilon}.
  \end{equation*}
As $\eta$ and $\eta'$ are arbitrary, this shows that $h(\lambda_\epsilon)
\leq h(\mu_\epsilon)/\epsilon$. The converse inequality is proved in the same
way.
\end{proof}

The previous lemma shows that we should understand
$\lambda_\epsilon$. We define an auxiliary probability measure
$\tilde\alpha_\epsilon$ so that $\lambda_\epsilon =
\tilde\alpha_\epsilon * \beta_\epsilon$, by
  \begin{equation}
  \label{eq:def_lambda_epsilon_tilde}
  \tilde\alpha_\epsilon = \sum_{n=0}^\infty (1-\epsilon)^n \epsilon
  \alpha_\epsilon^{*n}.
  \end{equation}
In this formula, most weight is concentrated around those $n$ of the
order of $1/\epsilon$. Hence, we have to understand the iterates of
$\alpha_\epsilon$ in time $1/\epsilon$. When $\Gamma_s$ is finite, we
will see that it has enough time to equidistribute on $\Gamma_s$
(even though $\alpha_\epsilon$ may give a very small weight to some
elements, this weight is by construction much larger than $\epsilon$,
so that $1/\epsilon$ iterates are enough to equidistribute). When
$\Gamma_s$ is virtually cyclic, we will see that the random walk has
enough time to drift away significantly from the identity.

\medskip

In both cases, we will need quantitative results on basic groups, but in
weakly elliptic cases (i.e., the transition probabilities are not bounded
from below). There are techniques to get quantitative estimates in such
settings, especially comparison techniques (due for instance to Varopoulos,
Diaconis, Saloff-Coste): one can compare weakly elliptic walks to elliptic
ones (which we understand well) thanks to Dirichlet forms arguments: these
arguments make it possible to transfer results from the latter to the former
(modulo some loss in the constants, due to the lack of ellipticity). We will
rely on such results when $\Gamma_s$ is infinite. When it is finite, such
techniques can also be used, but we will rather give a more elementary
argument.

\medskip

We start with the case where $\Gamma_s$ is finite. We need to
quantify the speed of convergence to the stationary measure in finite
groups, with the following lemma.
\begin{lem}
\label{lem:vitesse_converge} Let $\Lambda$ be a finite group. Let
$\Sigma_\Lambda\subset \Lambda$ be a generating subset (it does not have to
be symmetric). Let $\pi_\Lambda$ be the uniform measure on $\Lambda$, and let
$d(\mu,\pi_\Lambda)$ be the euclidean distance between a measure $\mu$ and
$\pi_\Lambda$ (i.e., $\left(\sum (\mu(g)-\pi_\Lambda(g))^2\right)^{1/2}$).
For any $\delta>0$, there exists $K>0$ with the following property. Let
$\eta>0$. Consider a probability measure $\mu$ on $\Lambda$ with
 $\mu(\sigma) \geq \eta$ for any $\sigma\in
\Sigma_\Lambda \cup \{e\}$. Then, for all $n>K/\eta$,
  \begin{equation*}
  d(\mu^{*n}, \pi_\Lambda) \leq \delta.
  \end{equation*}
\end{lem}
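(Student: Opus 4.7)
The plan is to cast the statement as a quantitative spectral gap bound for the right-convolution operator $T_\mu : f \mapsto f*\mu$ acting on $L^2(\Lambda)$ equipped with the counting-measure inner product. The subspace $L^2_0(\Lambda)$ of mean-zero functions is $T_\mu$-invariant, since summing $f*\mu$ over $\Lambda$ gives $(\sum f)(\sum \mu) = \sum f$. As $\pi_\Lambda$ is a fixed point of right-convolution, one has $\mu^{*n} - \pi_\Lambda = T_\mu^{n-1}(\mu-\pi_\Lambda)$ with $\mu - \pi_\Lambda \in L^2_0$. Hence any bound $\norm{T_\mu|_{L^2_0}}_{\mathrm{op}} \leq 1 - c\eta$ with $c=c(\Lambda,\Sigma_\Lambda)>0$ yields, by iteration,
\begin{equation*}
d(\mu^{*n}, \pi_\Lambda) = \norm{T_\mu^{n-1}(\mu - \pi_\Lambda)}_2 \leq (1-c\eta)^{n-1} \norm{\mu - \pi_\Lambda}_2 \leq 2 e^{-(n-1)c\eta},
\end{equation*}
which falls below $\delta$ as soon as $n > K/\eta$ for $K = K(\delta, \Lambda, \Sigma_\Lambda)$ large enough. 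Note that naive Doeblin-type lower bounds of the form $\mu^{*N_0} \geq \eta^{N_0}$ on all of $\Lambda$ give the much worse exponent $\eta^{-N_0}$; the point of the spectral approach is to recover the sharp linear-in-$1/\eta$ rate.

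To produce this spectral gap, I would use the convex decomposition $\mu = p\eta \cdot \nu + (1-p\eta) \cdot \mu'$, where $p = \Card{\Sigma_\Lambda \cup \{e\}}$, $\nu$ is the uniform probability on $\Sigma_\Lambda \cup \{e\}$, and $\mu'$ is the residual probability measure (the inequality $p\eta \leq 1$ is automatic since $\mu$ is a probability). The operator splits linearly as $T_\mu = p\eta T_\nu + (1-p\eta) T_{\mu'}$, and since convolution by any probability is an $L^2$-contraction (Young's inequality), $\norm{T_{\mu'}|_{L^2_0}}_{\mathrm{op}} \leq 1$. The triangle inequality then yields
\begin{equation*}
\norm{T_\mu|_{L^2_0}}_{\mathrm{op}} \leq 1 - p\eta(1 - \lambda_0), \qquad \lambda_0 \coloneqq \norm{T_\nu|_{L^2_0}}_{\mathrm{op}},
\end{equation*}
and crucially $\lambda_0$ depends only on $\Lambda$ and $\Sigma_\Lambda$, not on $\mu$ or on $\eta$.

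The main (and essentially only nontrivial) obstacle is to establish that $\lambda_0 < 1$. I would argue by strict convexity of the Hilbert norm. Since $L^2_0(\Lambda)$ is finite-dimensional the operator norm is attained, so if $\lambda_0 = 1$ there is $f \in L^2_0$ with $\norm{f}_2 = 1$ and $\norm{T_\nu f}_2 = 1$. Writing $T_\nu f = \sum_{\sigma \in \Sigma_\Lambda \cup \{e\}} \nu(\sigma) R_\sigma f$ as a convex combination of the right-translates $R_\sigma f$, each of norm $1$ (as each $R_\sigma$ is an isometry of $L^2(\Lambda)$), the equality case of strict convexity forces all $R_\sigma f$ to coincide; in particular $R_\sigma f = R_e f = f$ for every $\sigma \in \Sigma_\Lambda$. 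Since $\Sigma_\Lambda$ generates $\Lambda$, the function $f$ is right-$\Lambda$-invariant, hence constant, and therefore zero as an element of $L^2_0$, contradicting $\norm{f}_2 = 1$. With $\lambda_0<1$ in hand, the choices $c = p(1-\lambda_0)$ and $K = c^{-1}\log(2/\delta) + 1$ close out the proof via the iteration of the first paragraph.
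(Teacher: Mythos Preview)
Your proof is correct and follows essentially the same route as the paper's: both decompose $\mu$ as a convex combination involving the uniform measure on $\Sigma_\Lambda\cup\{e\}$, establish a strict $L^2$ spectral gap for convolution by that uniform measure on the mean-zero subspace, and then iterate via the triangle inequality to obtain the bound $(1-c\eta)^n$. The only cosmetic differences are that the paper works with left convolution on signed measures (you use right convolution on functions), uses the coefficient $\eta$ rather than your $p\eta$ in the decomposition, and proves the gap via the equality case of Cauchy--Schwarz in an explicit bilinear identity rather than your strict-convexity argument---these are equivalent formulations of the same Hilbert-space fact.
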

In other words, the time to see the equidistribution towards the
stationary measure is bounded by $1/\eta$, where $\eta$ is the
minimum of the transition probabilities on $\Sigma_\Lambda$.
\begin{proof}
Endow the space $\mathcal{M}(\Lambda)$ of signed measures on $\Lambda$ with
the scalar product corresponding to the quadratic form $\abs{\nu}^2=\sum
\nu(g)^2$. Denote by $H=\{\nu \st \sum \nu(g)=0\}$ the hyperplane
$\pi_\Lambda^\perp$ of zero mass measures. For any probability $\rho$, denote
by $M_\rho$ the left-convolution operator on $\mathcal{M}(\Lambda)$, that is
$M_\rho(\nu)=\rho*\nu$. Since convolution preserves mass, $H$ is
$M_{\rho}$-invariant. Let us prove that the operator norm of $M_\rho$ is
bounded by $1$. Indeed, put $u_{\rho}(g)=\sum_{h \in \Lambda}
\rho(h)\rho(hg)$, this is a probability on $\Lambda$. We have
 \begin{align*}
 \abs{M_\rho\nu}^2 & =  \sum_{g\in \Lambda} \left(M_\rho \nu(g)\right)^2
 = \sum_{(g,h_1,h_2)\in \Lambda^3} \rho(gh_1^{-1})\rho(gh_2^{-1})\nu(h_1)\nu(h_2)
 \\&
 =\sum_{(h_1,h_2)\in \Lambda^2} \nu(h_1)\nu(h_2)u_{\rho}(h_1h_2^{-1})
 = \sum_{(g,h)\in \Lambda^2} \nu(h)\nu(g^{-1}h)u_{\rho}(g)
 \\&
 \leq \sum_{g \in \Lambda} \abs{\nu}^2 u_{\rho}(g)=\abs{\nu}^2.
 \end{align*}
This proves that $\norm{M_\rho}\leq 1$. Now fix $\rho_o$ to be the uniform
probability on the set $\Sigma_{\Lambda}\cup\{e\}$. Notice that
$u_{\rho_o}(g)>0$ for any $g\in \Sigma_{\Lambda}\cup\{e\}$, since
$\rho_o(e)>0$. We claim that $M_{\rho_o}$ restricted to $H$ has an operator
norm $c<1$. Would it be not the case, there would exist $\nu\in H-\{0\}$ such
that the previous inequalities would be equalities. Thanks to the equality
case in the Cauchy-Schwarz inequality, this implies that, for any $g\in
\Sigma_{\Lambda}$, the two measures $h\mapsto \nu(h)$ and $h\mapsto
\nu(g^{-1}h)$ are positively proportional. Since their norm are equal, they
must be equal. Since $\Sigma_\Lambda$ generates $\Lambda$, $\nu$ is
$\Lambda$-invariant and belongs to $H$, so it must be zero.

\medskip

By assumption, the probability $\mu$ can be decomposed as
\[
  \mu= \eta \rho_o + (1-\eta) \nu,
\]
where $\nu$ is some probability. This implies that $M_\mu$ restricted to $H$
has operator norm at most $\eta c + (1-\eta)$. Therefore,
\[
  d(\mu^{*n}, \pi_\Lambda)=\abs{\mu^{*n} -\pi_\Lambda}=\abs{M_\mu^n (\delta_e-\pi_\Lambda)}\leq 2(1-(1-c)\eta)^n.
\]
This inequality implies the result.
\end{proof}

We can now describe the asymptotic behavior of $\mu_\epsilon$ when
the group $\Gamma_s$ is finite.
\begin{lem}
Assume that $\Gamma_s$ is finite. Define a new probability measure
$\lambda = \pi_{\Gamma_s} * \beta$ (it generates a non-elementary
subgroup). When $\epsilon$ tends to $0$, we have $h(\mu_\epsilon)\sim
\epsilon h(\lambda)$ and $\ell(\mu_\epsilon) \sim \epsilon
\ell(\lambda)$.
\end{lem}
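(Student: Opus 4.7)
The plan is to combine Lemma~\ref{lem:exprime_gamma}, which reduces the problem to showing $\ell(\lambda_\epsilon)\to\ell(\lambda)$ and $h(\lambda_\epsilon)\to h(\lambda)$, with the continuity results (Proposition~\ref{prop:ell_continuous} and Theorem~\ref{thm:h_continuous}), applied to the sequence $\lambda_\epsilon=\tilde\alpha_\epsilon*\beta_\epsilon$ converging to $\lambda=\pi_{\Gamma_s}*\beta$. The substantive point is that the auxiliary measure $\tilde\alpha_\epsilon$ defined in~\eqref{eq:def_lambda_epsilon_tilde} equidistributes on $\Gamma_s$ when $\epsilon\to 0$.

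First, I would verify the ambient setup. Since $\alpha_\epsilon$ is supported on $\{a_1,\dotsc,a_s\}\subset\Gamma_s$ and $\beta_\epsilon$ on $\{e,a_{s+1},\dotsc,a_p\}$, the measure $\lambda_\epsilon$ is supported on the fixed finite set $\Gamma_s\cdot\{e,a_{s+1},\dotsc,a_p\}$. Hence $L(\lambda_\epsilon)$ and $H(\lambda_\epsilon)$ are uniformly bounded, and as soon as $\lambda_\epsilon\to\lambda$ simply we get $L(\lambda_\epsilon)\to L(\lambda)$ and $H(\lambda_\epsilon)\to H(\lambda)$ automatically. Moreover $\lambda$ is nonelementary: its support contains $\Gamma_s\cdot\{e,a_{s+1},\dotsc,a_p\}$, which generates a subgroup containing~$\Gamma_r$, hence nonelementary by the choice of~$r$. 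Thus the hypotheses of Proposition~\ref{prop:ell_continuous} and Theorem~\ref{thm:h_continuous} are satisfied once convergence is established.

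The main step is therefore to prove $\tilde\alpha_\epsilon\to\pi_{\Gamma_s}$. Set $\eta(\epsilon)=\min_{1\leq k\leq s}\alpha_\epsilon(a_k)$. By construction of $s$ and~$r$, we have $\mu_\epsilon(a_s)/\mu_\epsilon(a_r)\to\infty$, while $\epsilon$ is comparable to $\mu_\epsilon(a_r)$; consequently $\eta(\epsilon)/\epsilon\to\infty$. Since $\Sigma_{\Gamma_s}=\{a_1,\dotsc,a_s\}$ generates~$\Gamma_s$ and $a_1=e\in\Sigma_{\Gamma_s}$, Lemma~\ref{lem:vitesse_converge} applies: for any $\delta>0$ there exists $K=K(\delta)$ such that $d(\alpha_\epsilon^{*n},\pi_{\Gamma_s})\leq\delta$ whenever $n>K/\eta(\epsilon)$. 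Splitting
\begin{equation*}
\tilde\alpha_\epsilon=\sum_{n\leq K/\eta(\epsilon)}(1-\epsilon)^n\epsilon\,\alpha_\epsilon^{*n}+\sum_{n>K/\eta(\epsilon)}(1-\epsilon)^n\epsilon\,\alpha_\epsilon^{*n},
\end{equation*}
the first sum has total mass $1-(1-\epsilon)^{\lfloor K/\eta(\epsilon)\rfloor+1}=O(\epsilon/\eta(\epsilon))\to 0$, while on the second sum $\alpha_\epsilon^{*n}$ is within $\delta$ of $\pi_{\Gamma_s}$ in euclidean (hence total variation, up to $\sqrt{|\Gamma_s|}$) distance. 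Letting $\epsilon\to 0$ and then $\delta\to 0$ yields $\tilde\alpha_\epsilon\to\pi_{\Gamma_s}$ in total variation, and therefore $\lambda_\epsilon=\tilde\alpha_\epsilon*\beta_\epsilon\to\pi_{\Gamma_s}*\beta=\lambda$ since $\beta_\epsilon\to\beta$ by construction.

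Combining: Proposition~\ref{prop:ell_continuous} gives $\ell(\lambda_\epsilon)\to\ell(\lambda)$, Theorem~\ref{thm:h_continuous} gives $h(\lambda_\epsilon)\to h(\lambda)$, and Lemma~\ref{lem:exprime_gamma} then yields $\ell(\mu_\epsilon)=\epsilon\ell(\lambda_\epsilon)\sim\epsilon\ell(\lambda)$ and $h(\mu_\epsilon)=\epsilon h(\lambda_\epsilon)\sim\epsilon h(\lambda)$, as desired. The only real difficulty is the equidistribution of $\tilde\alpha_\epsilon$; it hinges on the key separation of scales $\eta(\epsilon)\gg\epsilon$, which is precisely why $s$ was chosen as the largest index below $r$ with $\mu_\epsilon(a_r)=o(\mu_\epsilon(a_s))$.
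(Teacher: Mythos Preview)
Your proof is correct and follows essentially the same approach as the paper: reduce via Lemma~\ref{lem:exprime_gamma} to showing $\lambda_\epsilon\to\lambda$, establish this by proving $\tilde\alpha_\epsilon\to\pi_{\Gamma_s}$ using Lemma~\ref{lem:vitesse_converge} together with the scale separation $\eta(\epsilon)\gg\epsilon$, and then invoke the continuity results for drift and entropy at the nonelementary limit $\lambda$. Your write-up is in fact slightly more explicit than the paper's in two places: you spell out why the uniformly bounded support of $\lambda_\epsilon$ forces $L(\lambda_\epsilon)\to L(\lambda)$ and $H(\lambda_\epsilon)\to H(\lambda)$ (needed for the continuity hypotheses), and you make the tail/bulk splitting of the geometric sum defining $\tilde\alpha_\epsilon$ quantitative rather than appealing to the heuristic that $t_1$ is of order $1/\epsilon$.
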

\begin{proof}
The random variable $t_1$, being geometric of parameter $\epsilon$,
is of the order of $1/\epsilon$ with high probability (i.e., for any
$\delta>0$, there exists $u>0$ such that $\Pbb(t_1 > u/\epsilon) \geq
1-\delta$). Writing $\Sigma_s=\{a_1,\dotsc, a_s\}$ for the support of
$\alpha_\epsilon$, we have $\min_{\sigma\in \Sigma_s}
\alpha_\epsilon(\sigma) = (1-\epsilon)^{-1}\mu_\epsilon(a_s)$, which
is much bigger than $\epsilon$ by definition of $s$.
Lemma~\ref{lem:vitesse_converge} shows that the measures
$\alpha_\epsilon^{*n}$ are close to $\pi_{\Gamma_s}$ for $n\geq
u/\epsilon$. This implies that $\tilde\alpha_\epsilon$ (defined
in~\eqref{eq:def_lambda_epsilon_tilde}) converges to $\pi_{\Gamma_s}$
when $\epsilon\to 0$. As $\beta_\epsilon$ converges to $\beta$, this
shows that $\lambda_\epsilon$ converges to $\lambda$.

The support of the measure $\lambda$ contains $\Gamma_s$ and
$a_{s+1},\dotsc, a_r$ (as the support of $\beta$ contains $\{e,
a_{s+1},\dotsc, a_r\}$ by construction). Hence, $\Gamma_\lambda$
contains the non-elementary subgroup $\Gamma_r$. It follows that the
entropy and the drift are continuous at $\lambda$, by
Proposition~\ref{prop:ell_continuous} and
Theorem~\ref{thm:h_continuous}. We get $h(\lambda_\epsilon) \to
h(\lambda)$ and $\ell(\lambda_\epsilon) \to \ell(\lambda)$. With
Lemma~\ref{lem:exprime_gamma}, this completes the proof.
\end{proof}

We deduce from the lemma that $h(\mu_\epsilon)/\ell(\mu_\epsilon)$
tends to $h(\lambda)/\ell(\lambda)$. Hence, the measure
$\mu_\Sigma=\lambda$ satisfies the conclusion of the theorem, at
least in the non-symmetric case. In the symmetric case, where we are
looking for a symmetric measure $\mu_\Sigma$, the measure $\lambda =
\pi_{\Gamma_s}*\beta$ is not an answer to the problem. However,
$\lambda'=\pi_{\Gamma_s}*\beta*\pi_{\Gamma_s}$ is symmetric, and it
clearly has the same entropy and drift as $\lambda$ (since
$\pi_{\Gamma_s}*\pi_{\Gamma_s}=\pi_{\Gamma_s}$). Hence, we can take
$\mu_\Sigma=\lambda'$. This completes the proof of
Theorem~\ref{thm:hsurl_atteint} when the group $\Gamma_s$ is finite.

\begin{ex}
\label{ex:not_in_Sigma}
Let $\Gamma=\Z/2 * \Z/4$, with $\Sigma=\{a,b,b^{-1}\}$ (where $a$ is
the generator of $\Z/2$ and $b$ the generator of $\Z/4$), with the
word distance coming from $\Sigma$. \cite[Section
5.1]{mairesse_matheus} shows that the supremum over measures
supported on $\Sigma$ of $h(\mu)/\ell(\mu)$ is the growth $v$ of the
group (note that $\Gamma$ is virtually free), and that it is not
realized by a measure supported on $\Sigma$. This shows that, in
Theorem~\ref{thm:hsurl_atteint}, the fact that $\mu_\Sigma$ may need
a support larger than $\Sigma$ is not an artefact of the proof.

In this example, any symmetric measure on $\Sigma$ is of the form
$\mu_\epsilon = (1-\epsilon)\delta_a + \epsilon \beta$ where $\beta$ is
uniform on $\{b,b^{-1}\}$. The above proof shows that, when $\epsilon$ tends
to $0$, $h(\mu_\epsilon)/\ell(\mu_\epsilon)$ converges to
$h(\lambda)/\ell(\lambda)$ where $\lambda = \pi_{\Gamma_s}*\beta =
\frac{1}{2}(\delta_e + \delta_a)*\frac{1}{2}(\delta_b + \delta_{b^{-1}})$ is
the uniform measure on $\{b,b^{-1},ab,ab^{-1} \}$.
\end{ex}

It remains to treat the case where $\Gamma_s$ is virtually cyclic infinite.
Such a group surjects onto $\Z$ or $\Z \rtimes \Z/2$ (the infinite dihedral
group), with finite kernel. From the point of view of the random walk, most
things happen in the quotient. Hence, it would suffice to understand these
two groups (separating in the case of $\Z$ the centered and non-centered
cases). We will rather give direct arguments which do not use this reduction
and which avoid separating cases. Let $t\leq s$ be the smallest index such
that $\{a_1,\dotsm, a_t\}$ generates an infinite group. Let
$\eta=\eta(\epsilon)=\mu_\epsilon(a_t)$, this parameter governs the
equidistribution speed on $\Gamma_s$ (or, at least, on $\Gamma_t$, which has
finite index in $\Gamma_s$ since these two groups are virtually cyclic
infinite). We will find the asymptotics of the entropy and the drift in terms
of $\eta/\epsilon$ (which tends to infinity by definition of $s$). We start
with the entropy (for which an upper bound suffices). Note that the random
walk directed by $\alpha_\epsilon$ does not live on $\Gamma_s$, but on a
possibly bigger group since we have put in $\alpha_\epsilon$ all the points
that fix the set $\{g_0^-, g_0^+\}$ (this will be important in the control of
the drift below). Let $\tilde\Gamma_s$ be the group they generate, it is
still virtually cyclic (see, for instance,~\cite[Th\'eor\`eme 37 page
157]{ghys_hyperbolique}), and it contains $\Gamma_s$ as a finite index
subgroup.

\begin{lem}
\label{lem:majore_H}
There exists a constant $C$ such that $h(\lambda_\epsilon) \leq C
\log (\eta/\epsilon)$.
\end{lem}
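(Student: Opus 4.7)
The strategy is to bound the asymptotic entropy by the time-one entropy: since $h(\lambda_\epsilon) = \inf_n H(\lambda_\epsilon^{*n})/n \leq H(\lambda_\epsilon)$, and entropy is subadditive under convolution ($H(\mu*\nu) \leq H(\mu)+H(\nu)$ because $XY$ is a deterministic function of the independent pair $(X,Y)$), we reduce to bounding $H(\tilde\alpha_\epsilon) + H(\beta_\epsilon)$. The measure $\beta_\epsilon$ is supported in the finite set $\Sigma$, so $H(\beta_\epsilon) \leq \log \Card{\Sigma} = O(1)$, and the whole problem reduces to showing $H(\tilde\alpha_\epsilon) \leq C\log(\eta/\epsilon)$.

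Since $\tilde\alpha_\epsilon$ is supported on the virtually cyclic infinite group $\tilde\Gamma_s$, there is a surjective homomorphism $\phi : \tilde\Gamma_s \to Q$ onto $Q = \Z$ or $Q = \Z \rtimes \Z/2$ with finite kernel; in particular $\phi$ is a quasi-isometry, and $\phi(\Gamma_{t-1})$ is a finite subgroup of $Q$ (hence of cardinality at most $2$). Conditioning on $\phi$ gives $H(\tilde\alpha_\epsilon) \leq H(\phi_*\tilde\alpha_\epsilon) + \log\Card{\ker\phi}$, so it suffices to bound the $Q$-entropy. Realize $\tilde\alpha_\epsilon$ as the law of $Y = X_1 \dotsb X_T$, with $(X_i)$ i.i.d.\ of law $\alpha_\epsilon$ and $T$ independent geometric with $\Pbb(T=n) = (1-\epsilon)^n \epsilon$, and let $N$ be the number of indices $i \leq T$ with $X_i \notin \Gamma_{t-1}$. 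By the ordering of $\Sigma$ by decreasing $\mu_\epsilon$-mass, $q := \alpha_\epsilon(\Sigma\setminus\Gamma_{t-1}) \leq \sum_{j\geq t}\alpha_\epsilon(a_j) \leq 2\Card{\Sigma}\eta = O(\eta)$, and a standard thinning argument (looking only at steps that are either ``stop'' or ``bad'') shows $N$ is geometric with parameter $p = \epsilon/(\epsilon+(1-\epsilon)q)$, so $1/p = O(\eta/\epsilon)$, $H(N) \leq \log(1/p)+O(1) = O(\log(\eta/\epsilon))$ and $\E[N] = O(\eta/\epsilon)$.

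Decompose the product $Y$ into $N+1$ maximal blocks of consecutive $X_i \in \Gamma_{t-1}$ separated by the $N$ bad factors: each block product lies in $\Gamma_{t-1}$, so its $\phi$-image lies in $\phi(\Gamma_{t-1})$ and has $Q$-word length at most a constant $C$ (the finite subgroup sits in a bounded ball of $Q$), while each bad factor satisfies $\abs{\phi(X_{i_k})}_Q \leq \max_{a\in\Sigma} \abs{\phi(a)}_Q = O(1)$. Summing yields $\abs{\phi(Y)}_Q \leq C(N+1)$, and the quasi-isometry property gives $\abs{Y}_{\tilde\Gamma_s} \leq C'(N+1)$. Since $\tilde\Gamma_s$ is virtually cyclic, its ball of radius $R$ has $O(R)$ elements, hence $H(\phi_*\tilde\alpha_\epsilon \mid N=n) \leq \log(C'(n+1))$; combining via $H(Y) \leq H(N) + \E[H(Y\mid N)] \leq H(N) + \log(C'(\E[N]+1))$ (Jensen) gives $H(\tilde\alpha_\epsilon) = O(\log(\eta/\epsilon))$. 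The main obstacle is precisely this cancellation bound $\abs{\phi(Y)}_Q \leq C(N+1)$: the naive subadditive estimate $\sum_i \abs{\phi(X_i)}_Q$ is of order $T = 1/\epsilon$, far too weak, and the crucial input is that products of consecutive elements of the \emph{finite} group $\Gamma_{t-1}$ telescope into a bounded piece of $Q$, so only the $N = O(\eta/\epsilon)$ bad factors contribute additively to the quotient length.
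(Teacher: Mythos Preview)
Your proof is correct and follows essentially the same strategy as the paper: reduce to bounding $H(\tilde\alpha_\epsilon)$, observe that the ``effective length'' of a $\tilde\alpha_\epsilon$-sample in the virtually cyclic group $\tilde\Gamma_s$ is controlled by the number $N$ of jumps outside the finite group $K=\Gamma_{t-1}$, show $\E[N]=O(\eta/\epsilon)$, and conclude via linear growth of virtually cyclic groups.

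The presentational differences are minor but worth noting. The paper works with a word pseudo-distance $d'$ on $\tilde\Gamma_s$ assigning length $0$ to elements of $K$, whereas you pass to the quotient $\phi:\tilde\Gamma_s\to Q$; these are equivalent devices for making the $K$-blocks disappear. The paper computes $\E_{\tilde\alpha_\epsilon}[d'(e,g)]\leq C\eta/\epsilon$ by a direct generating-function calculation and then invokes the black-box lemma of Erschler--Karlsson (entropy $\leq C\log A+C$ for a measure on a virtually cyclic group with first moment $A$). You instead identify the law of $N$ as geometric via a thinning argument and derive the entropy bound by conditioning on $N$ and using Jensen, which is a clean self-contained alternative to the cited lemma. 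Either route yields the same bound with comparable effort.
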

\begin{proof}
Let $K$ be the group generated by $\{a_1,\dotsc, a_{t-1}\}$. It is
finite by definition of $t$. Let $\Sigma'$ be the set of points among
$a_t,\dotsc, a_p$ which stabilize $\{g_0^-, g_0^+\}$. The group
$\tilde\Gamma_s$ is generated by $K$ and $\Sigma'$. Let us consider
the associated word pseudo-distance $d'$, where we decide that
elements in $K$ have $0$ length. This pseudo-distance is
quasi-isometric to the usual distance, and it satisfies $d'(e,
xk)=d'(e,x)$ for all $x\in \tilde\Gamma_s$ and all $k\in K$.

Let us first estimate the average distance to the origin for an
element given by $\tilde\alpha_\epsilon$. We decompose
$\alpha_\epsilon$ as the average of a measure supported on
$\{a_1,\dotsc, a_{t-1}\}\subset K$, and of a measure supported on
$\Sigma'$ (the contribution of the latter has a mass $m(\epsilon)$
bounded by $(p-t+1)\eta \leq C\eta$). The measure
$\alpha_\epsilon^{*n}$ can be obtained by picking at each step one of
these two measures (according to their respective weight), and then
jumping according to a random element for this measure. When we use
the first measure, the $d'$-distance to the origin does not change by
definition. Hence, the distance to the origin is bounded by the
number of choices of the second measure. We obtain
  \begin{align*}
  \E_{\tilde\alpha_\epsilon}(d'(e,g))
  &\leq \sum_{n=0}^\infty (1-\epsilon)^{n}
  \epsilon \sum_{i=0}^n \binom{n}{i} m(\epsilon)^i (1-m(\epsilon))^{n-i} \cdot Ci
  \\&
  = C m(\epsilon)
  \sum_{n=0}^\infty (1-\epsilon)^n \epsilon \sum_{i=1}^n n \binom{n-1}{i-1} m(\epsilon)^{i-1} (1-m(\epsilon))^{n-i}
  \\&
  = C m(\epsilon) \sum_{n=0}^\infty (1-\epsilon)^n \epsilon n
  = C m(\epsilon) (1-\epsilon)/\epsilon
  \leq C \eta/\epsilon.
  \end{align*}

A measure supported on the integers with first moment $A$ has entropy
bounded by $C \log A +C$ (see, for instance, \cite[Lemma
2]{erschler_karlsson}). The proof also applies to virtually cyclic
situations (the finite thickening does not change anything).
Therefore, we get $H(\tilde\alpha_\epsilon) \leq C
\log(\eta/\epsilon)+C$.

Finally,
  \begin{equation*}
  H(\lambda_\epsilon) = H(\tilde\alpha_\epsilon*\beta_\epsilon) \leq
  H(\tilde\alpha_\epsilon) + H(\beta_\epsilon)
  \leq C \log(\eta/\epsilon)+C,
  \end{equation*}
since the support of $\beta_\epsilon$ is uniformly bounded. As
$\eta/\epsilon \to \infty$, this gives $H(\lambda_\epsilon) \leq C
\log(\eta/\epsilon)$. Finally, we estimate $h(\lambda_\epsilon) =
\inf_{n>0} H(\lambda_\epsilon^{*n})/n \leq H(\lambda_\epsilon)$ to
get the conclusion of the lemma.
\end{proof}

For the drift, we need to be more precise since we need a lower bound
to conclude. We will use a lemma giving lower bounds on the
equidistribution speed in virtually cyclic infinite groups, using comparison techniques.
\begin{lem}
\label{lem:equidistribution_virt_cyclique}
Let $\Lambda$ be a virtually cyclic infinite group. Let
$\Sigma_\Lambda\subset \Lambda$ be a finite subset generating an
infinite subgroup of $\Lambda$. There exists a constant $C$ with the
following property. Let $\eta>0$. Let $\mu$ be a probability measure
on $\Lambda$ with $\mu(e) \geq 1/2$ and $\mu(\sigma) \geq \eta$ for
any $\sigma\in \Sigma_\Lambda$. Then, for all $n\geq 1$,
  \begin{equation*}
  \sup_{g\in \Lambda} \mu^{*n}(g) \leq C (\eta n)^{-1/2}.
  \end{equation*}
\end{lem}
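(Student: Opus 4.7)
The plan is to reduce $\Lambda$ to a standard quotient and then apply a quantitative Fourier-type local limit estimate, exploiting both the aperiodicity hypothesis $\mu(e)\geq 1/2$ and the macroscopic jump hypothesis $\mu(\sigma)\geq\eta$.

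First I would quotient $\Lambda$ by a finite normal subgroup $F$ so that $\bar\Lambda=\Lambda/F$ is either $\Z$ or the infinite dihedral group $D_\infty=\Z\rtimes\Z/2$; such an $F$ always exists because $\Lambda$ is virtually cyclic and infinite. Since $\bar\mu^{*n}(\bar g)=\sum_{h\in \bar g F}\mu^{*n}(h)\geq \mu^{*n}(g)$ for any lift $g$ of $\bar g$, one has $\sup_g\mu^{*n}(g)\leq \sup_{\bar g}\bar\mu^{*n}(\bar g)$, so it suffices to prove the estimate for the projected measure $\bar\mu$. This projected measure still satisfies $\bar\mu(\bar e)\geq 1/2$ and $\bar\mu(\bar\sigma)\geq\eta$ for each $\bar\sigma$ in the image of $\Sigma_\Lambda$, and this image still generates an infinite subgroup of $\bar\Lambda$.

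For $\bar\Lambda=\Z$ the core step is Fourier inversion: $\sup_k\bar\mu^{*n}(k)\leq (2\pi)^{-1}\int_{-\pi}^{\pi}|\hat{\bar\mu}(\theta)|^n\,d\theta$. I would combine the identity $1-|\hat{\bar\mu}(\theta)|^2=\sum_k(\bar\mu*\check{\bar\mu})(k)(1-\cos(k\theta))$ with the observation that $\bar\mu(0)\geq 1/2$ forces $(\bar\mu*\check{\bar\mu})(\pm\sigma)\geq \eta/2$ for every $\sigma$ in the image of $\Sigma_\Lambda$, giving $1-|\hat{\bar\mu}(\theta)|^2\geq \eta\,P(\theta)$ with $P(\theta)=\sum_{\sigma}(1-\cos(\sigma\theta))$. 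The generating hypothesis confines the zeros of $P$ in $[-\pi,\pi]$ to a finite set (dual to the subgroup of $\Z$ generated by the $\sigma$'s), $P$ is uniformly quadratic near each zero, and $P$ is bounded below by a positive constant depending only on $\Sigma_\Lambda$ away from these zeros. A Gaussian integration $\int e^{-cn\eta\theta^2/2}\,d\theta=O((n\eta)^{-1/2})$ near each zero, combined with exponential decay elsewhere, delivers the bound.

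For $\bar\Lambda=D_\infty$ I would run the same argument using the unitary dual of $D_\infty$ (four one-dimensional characters plus a one-parameter family of two-dimensional irreducibles $\pi_\theta$, $\theta\in(0,\pi)$), bounding $\sup_g\bar\mu^{*n}(g)$ via the Plancherel formula by an integral involving $\|\pi_\theta(\bar\mu)\|^n$; the same symmetrization trick produces a quadratic lower bound on $1-\|\pi_\theta(\bar\mu)\|^2$ near the trivial representation, using that the image of $\Sigma_\Lambda$ generates an infinite subgroup and hence an infinite-order element after a bounded product. Alternatively one can pass to the index-$2$ translation subgroup $\Z\leq D_\infty$ via a first-return decomposition, reducing entirely to the Fourier computation above. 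The principal obstacle I anticipate is keeping all constants uniform in $\mu$ (depending only on $\Lambda$ and $\Sigma_\Lambda$): the hypothesis $\mu(e)\geq 1/2$ is exactly what furnishes the lower bound on $\bar\mu*\check{\bar\mu}$ at generators with a universal constant, and simultaneously excludes pathological accumulation of mass on non-trivial frequencies of the walk's support subgroup.
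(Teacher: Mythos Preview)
Your approach is correct but genuinely different from the paper's. The paper works directly on $\Lambda$ without quotienting, using only that $\Lambda$ has linear growth: it fixes the uniform reference measure $\rho$ on $\Sigma_\Lambda\cup\Sigma_\Lambda^{-1}\cup\{e\}$, invokes the Nash inequality $\norm{f}_2^6\leq C\norm{f}_1^4\,\boE_\rho(f,f)$, compares Dirichlet forms via the symmetrization $\nu=\check\mu*\mu$ (for which $\mu(e)\geq 1/2$ guarantees $\nu(\sigma)\geq\eta/2$ on $\Sigma_\Lambda\cup\Sigma_\Lambda^{-1}$, hence $\boE_\rho\leq C\eta^{-1}\boE_\nu$), and then feeds this into the standard Nash-to-ultracontractivity machinery to conclude $\norm{P_\mu^n}_{L^1\to L^\infty}\leq C(\eta n)^{-1/2}$. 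Your Fourier route on the quotient $\Lambda/F\in\{\Z,D_\infty\}$ is more explicit but requires the case split; both approaches pivot on the same symmetrization $\nu=\check\mu*\mu$ to extract the linear dependence on $\eta$, and both use $\mu(e)\geq 1/2$ only to produce that lower bound $\nu(\sigma)\geq\eta/2$ at the generators.

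One caution on your $D_\infty$ sketch: the phrase ``an infinite-order element after a bounded product'' suggests passing to a power $\nu^{*m}$ to reach a translation $\tau$, which would give only $\nu^{*m}(\tau)\gtrsim\eta^m$ and hence the weaker bound $(\eta^m n)^{-1/2}$. The linear-in-$\eta$ bound comes instead from working with the generators directly: one has $1-\norm{\pi_\theta(\nu)}_{\mathrm{op}}\geq(\eta/2)\,Q(\theta)$ with $Q(\theta)=\inf_{\norm{v}=1}\sum_{\sigma\in\Sigma_\Lambda\cup\Sigma_\Lambda^{-1}}\bigl(1-\operatorname{Re}\langle v,\pi_\theta(\sigma)v\rangle\bigr)$, and $Q$ depends only on $\Sigma_\Lambda$, vanishes quadratically at $\theta\in\{0,\pi\}$, and is bounded below elsewhere. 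The infinite-subgroup hypothesis is used only to ensure that not every $\sigma$ has trivial translation part, which is exactly what forces $Q''(0)>0$.
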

The interest of the lemma is that $C$ does not depend on the measure
$\mu$, and that we obtain an explicit control on $\mu^{*n}$ just in
terms of a lower bound on the transition probabilities of $\mu$.
\begin{proof}
We use the comparison method. Let $\rho$ be the uniform measure on
$e$, $\Sigma_\Lambda$ and $\Sigma_\Lambda^{-1}$. The random walk it
generates does not have to be transitive (since $\Sigma_\Lambda$ does
not necessarily generate the whole group $\Lambda$), but $\Lambda$ is
partitioned into finitely many classes where it is transitive (and
isomorphic to the random walk on the group generated by
$\Sigma_\Lambda$). Moreover, it is symmetric, and therefore
reversible for the counting measure $m$ on $\Lambda$. The Dirichlet
form associated to $\rho$ is by definition
  \begin{equation*}
  \boE_\rho(f,f) = \frac{1}{2} \sum_{x,y} \abs{f(x)-f(y)}^2 \rho(x^{-1}y),
  \end{equation*}
for any $f:\Lambda\to \C$. As $\Lambda$ has linear growth, the following Nash
inequality holds (see, for instance, \cite[Proposition 14.1]{woess}).
  \begin{equation*}
  \norm{f}_{L^2}^6 \leq C \norm{f}_{L^1}^4 \boE_\rho(f,f),
  \end{equation*}
where all norms are defined with respect to the measure $m$ on
$\Lambda$. Let $P_\mu$ be the Markov operator associated to $\mu$. It
satisfies
  \begin{equation*}
  \norm{f}_{L^2}^2 -\norm{P_\mu f}^2_{L^2} = \langle f, f\rangle - \langle P_\mu f, P_\mu f \rangle
  = \langle (I - P_\mu^* P_\mu) f, f\rangle.
  \end{equation*}
The operator $P_\mu^* P_\mu$ is the Markov operator associated to the
symmetric probability measure $\nu=\check{\mu}*\mu$, which satisfies
$\nu(\sigma) \geq \eta/2$ for $\sigma\in \Sigma_\Lambda \cup
\Sigma_\Lambda^{-1}$ and $\nu(e) \geq 1/4$ (since $\mu(e) \geq 1/2$).
Therefore, $\rho(g) \leq C \eta^{-1} \nu(g)$ for all $g$. We deduce
  \begin{align*}
  \norm{f}_{L^2}^2 -\norm{P_\mu f}^2_{L^2} &= \sum \overline{f(x)} (f(x)-f(y)) \nu(x^{-1}y)
  = \frac{1}{2} \sum \abs{f(x)-f(y)}^2 \nu(x^{-1}y)
  \\&
  \geq \frac{\eta}{2C} \sum \abs{f(x)-f(y)}^2 \rho(x^{-1}y)
  = \frac{\eta}{C} \boE_\rho(f,f).
  \end{align*}
Combining this inequality with Nash inequality, we obtain
  \begin{equation*}
  \norm{f}_{L^2}^6 \leq C\eta^{-1} \norm{f}_{L^1}^4  (\norm{f}_{L^2}^2 -\norm{P_\mu f}^2_{L^2}).
  \end{equation*}
The operator $P_\mu^*$ satisfies the same inequality, for the same
reason. Composing these inequalities, we obtain an estimate for the
norm of $P_\mu^n$ from $L^1$ to $L^\infty$ (this
is~\cite[Lemma~VII.2.6]{varopoulos_book}), of the form
  \begin{equation*}
  \norm{P_\mu^n}_{L^1 \to L^\infty} \leq (C'\eta^{-1}/n)^{1/2}.
  \end{equation*}
Applying this inequality to the function $\delta_e$, we get the
desired result.
\end{proof}

The previous lemma implies that, if $C'$ is large enough, a
neighborhood of size $(\eta n)^{1/2} / C'$ of the identity has
probability for $\mu^{*n}$ at most $1/2$. Hence, the average distance
to the origin is at least of the order of $(\eta n)^{1/2}$.

Now, we study the stationary measure for $\beta_\epsilon *
\tilde\alpha_\epsilon$ on $\partial\Gamma$. We recall that $g_0$ is a
hyperbolic element in $\Gamma_s$, fixed once and for all.
\begin{lem}
\label{lem:U_statio}
There exists a neighborhood $U$ of $\{g_0^-, g_0^+\}$ in
$\partial\Gamma$ such that the stationary measure $\nu_\epsilon$ of
$\beta_\epsilon * \tilde\alpha_\epsilon$ satisfies
$\nu_\epsilon(U)\to 0$.
\end{lem}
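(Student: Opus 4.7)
The plan rests on three ingredients: (i) $\Supp(\beta_\epsilon) \subset \Sigma \setminus E$, where $E$ denotes the maximal elementary subgroup stabilizing $\{g_0^-, g_0^+\}$, so every $b \in \Supp(\beta_\epsilon)$ moves $\{g_0^-, g_0^+\}$ off itself; (ii) the measures $\tilde\alpha_\epsilon$ spread out on $\tilde\Gamma_s \subset E$, as quantified by Lemma~\ref{lem:equidistribution_virt_cyclique}; and (iii) large elements of $\tilde\Gamma_s$ act on $\partial\Gamma$ by north--south dynamics, squeezing most of $\partial\Gamma$ into a small neighborhood of $\{g_0^-, g_0^+\}$. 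Point (i) relies on the standard hyperbolic-group fact that $b \notin E$ implies $b\{g_0^-, g_0^+\} \cap \{g_0^-, g_0^+\} = \emptyset$, since otherwise the conjugate $bg_0b^{-1}$ would share one fixed point with $g_0$ on $\partial\Gamma$, forcing it to share both, hence $b \in E$. Uniform continuity of the action of $\Sigma$ on $\partial\Gamma$ then produces an open neighborhood $V$ of $\{g_0^-, g_0^+\}$ with $bV \cap V = \emptyset$ for every $b \in \Sigma \setminus E$.

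For (ii), summing the geometric series defining $\tilde\alpha_\epsilon$ and applying Lemma~\ref{lem:equidistribution_virt_cyclique} to each $\alpha_\epsilon^{*n}$ yields $\sup_{g} \tilde\alpha_\epsilon(g) = O(\sqrt{\epsilon/\eta})$; since $\tilde\Gamma_s$ is virtually cyclic (hence of linear growth), this forces $\tilde\alpha_\epsilon(\{|a| \leq M\}) \to 0$ for every fixed $M$. Combined with (iii), for any $\delta > 0$ I can choose $M(\delta)$ so that $a\xi \in V$ whenever $a \in \tilde\Gamma_s$, $|a| \geq M(\delta)$, and $d(\xi, \{g_0^-, g_0^+\}) \geq \delta$. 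Writing $X_\infty = b_1 a_1 X_\infty'$ with $X_\infty'$ an independent copy of $X_\infty$, the event $\{X_\infty \in V\}$ forces $a_1 X_\infty' \in b_1^{-1} V \subset \partial\Gamma \setminus V$, so either $|a_1| < M(\delta)$ or $X_\infty' \in V_\delta := \{\xi : d(\xi, \{g_0^-, g_0^+\}) < \delta\}$. This yields the main inequality
\begin{equation*}
\nu_\epsilon(V) \leq \tilde\alpha_\epsilon(\{|a| < M(\delta)\}) + \nu_\epsilon(V_\delta) = o_\epsilon(1) + \nu_\epsilon(V_\delta).
\end{equation*}

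The hard part is closing this self-referential bound, since $V_\delta \subset V$ could a priori carry most of the $V$-mass. I would extract a subsequential weak limit $\nu$ of $\nu_\epsilon$: the inequality then yields $\nu(\bar V) \leq \nu(\bar V_\delta)$ for generic $\delta$, and letting $\delta \to 0$ forces any residual $V$-mass of $\nu$ to be concentrated at $\{g_0^-, g_0^+\}$. To eliminate these potential atoms, I would pass to the limit in $\nu_\epsilon = (\beta_\epsilon * \tilde\alpha_\epsilon) * \nu_\epsilon$: for $\xi \notin \{g_0^-, g_0^+\}$, the function $\xi \mapsto \int f(ba\xi) \dd\beta_\epsilon(b) \dd\tilde\alpha_\epsilon(a)$ converges uniformly on compact subsets to $\int f \,\dd(\beta_0 * \tilde\alpha_0)$, a probability measure supported on $\{b g_0^\pm : b \in \Supp(\beta_0)\} \subset \partial\Gamma \setminus V$. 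Combined with the atomlessness of each $\nu_\epsilon$ (inherited from the nonelementarity of $\beta_\epsilon * \tilde\alpha_\epsilon$), this limit equation identifies $\nu$ with $\beta_0 * \tilde\alpha_0$, and hence $\nu(V) = 0$. The main obstacle is precisely this atom-elimination step: the limiting action is discontinuous at $\{g_0^-, g_0^+\}$, so one must carefully track how $\nu_\epsilon$-mass near these two points could or could not survive into the limit before ruling it out.
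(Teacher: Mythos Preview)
Your ingredients (i)--(iii) are exactly the right ones, and the choice of $V$ with $bV\cap V=\emptyset$ for all $b\in\Supp(\beta_\epsilon)$ matches the paper. The divergence is in how you use (ii)--(iii), and it leads you into the self-referential bound that you correctly identify as unfinished.

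The paper avoids that bound altogether by proving something stronger than your north--south statement: it shows that $(\tilde\alpha_\epsilon*\delta_z)(U^c)\to 0$ \emph{uniformly over all} $z\in\partial\Gamma$, with no exclusion of the $\delta$-neighborhood of $\{g_0^-,g_0^+\}$. The mechanism is not ``large $a$ sends points far from the fixed set into $U$'' but rather an orbit-counting fact: writing each $a\in\tilde\Gamma_s$ as $g_0^k\gamma_i$ with $\gamma_i$ in a fixed finite set, one has
\[
\Card{\{k\in\Z : g_0^k w\notin U\}}\leq C
\]
uniformly in $w\in\partial\Gamma$, by compactness of $(\partial\Gamma\setminus\{g_0^-,g_0^+\})/\langle g_0\rangle$. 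Since $\sup_{k,i}\alpha_\epsilon^{*n}(g_0^k\gamma_i)\leq C(\eta n)^{-1/2}$ by Lemma~\ref{lem:equidistribution_virt_cyclique}, and since typical $n$ in the geometric sum is of order $1/\epsilon\gg 1/\eta$, the total $\tilde\alpha_\epsilon$-mass landing outside $U$ from \emph{any} starting point $z$ is $O((\epsilon/\eta)^{1/2})\to 0$. This is the step you are missing: your version of (iii) loses uniformity precisely near $g_0^\pm$, which is why you end up chasing the mass into smaller and smaller $V_\delta$.

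With the uniform statement in hand, the proof closes in one line and needs no weak limits:
\[
\nu_\epsilon(U^c)=(\beta_\epsilon*\tilde\alpha_\epsilon*\nu_\epsilon)(U^c)
\geq (\tilde\alpha_\epsilon*\nu_\epsilon)(U)\to 1,
\]
the inequality holding because $b(U)\subset U^c$ for every $b\in\Supp(\beta_\epsilon)$. Your weak-limit route is not hopeless, but the atom-elimination step is a genuine gap as stated: atomlessness of $\nu_\epsilon$ does not pass to weak limits, and the limiting stationarity equation is discontinuous exactly at the two points you need to control. The paper's orbit-counting argument sidesteps all of this.
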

\begin{proof}
Let us first show that, for any neighborhood $U$ of $\{g_0^-,
g_0^+\}$, then $(\tilde\alpha_\epsilon * \delta_z)(U^c)$ tends to
$0$, uniformly in $z\in \partial \Gamma$. This is not surprising
since a typical element for $\tilde\alpha_\epsilon$ is large in the
virtually cyclic group $\tilde\Gamma_s$, and sends most points into
$U$. To make this argument rigorous, we will use
Lemma~\ref{lem:equidistribution_virt_cyclique}. The
definition~\eqref{eq:def_lambda_epsilon_tilde} shows that it suffices
to prove that $(\alpha_\epsilon^{*n}*\delta_z)(U^c)$ is small for
$n\geq u/\epsilon$.

The subgroup generated by $g_0$ has finite index in $\tilde\Gamma_s$.
Hence, any element in $\tilde\Gamma_s$ can be written as $g_0^k
\gamma_i$, for $\gamma_i$ in a finite set. Thus, the measure
$\alpha_\epsilon^{*n}$ can be written as $\sum c_n(k,i) \delta_{g_0^k
\gamma_i}$, for some coefficients $c_n(k,i)$.
Lemma~\ref{lem:equidistribution_virt_cyclique} (applied to
$\Lambda=\tilde\Gamma_s$ with $\Sigma_\Lambda=\{a_1,\dotsc, a_t\}$)
ensures that $\sup_{k,i} c_n(k,i) \leq C /(\eta n)^{1/2}$. When
$n\geq u/\epsilon$, this quantity tends to $0$ since
$\epsilon=o(\eta)$. We have
  \begin{equation*}
  (\alpha_\epsilon^{*n}*\delta_z)(U^c) = \sum_{k,i} c_n(k,i) 1( g_0^k \gamma_i z\notin U).
  \end{equation*}
As the element $g_0$ is hyperbolic, there exists $C$ such that, for
any $w\in \partial \Gamma$,
  \begin{equation*}
  \Card{ \{k\in \Z \st g_0^k w \notin U\} }\leq C.
  \end{equation*}
The uniformity in $w$ follows from the compactness of
$(\partial\Gamma\setminus \{g_0^-, g_0^+\})/\langle g_0 \rangle$. We
obtain
  \begin{equation*}
  (\alpha_\epsilon^{*n}*\delta_z)(U^c) \leq \bigl( \sup_{k,i} c_n(k,i)\bigr) \sum_i
    \Card{ \{k\in \Z \st g_0^k \gamma_i z \notin U\} }
  \leq C \sup_{k,i} c_n(k,i) \leq C/(\eta n)^{1/2}.
  \end{equation*}
This shows that $(\alpha_\epsilon^{*n}*\delta_z)(U^c)$ is small,  as
desired.

As $\tilde\alpha_\epsilon * \delta_z(U^c)$ tends to $0$ uniformly in
$z$, we deduce that $(\tilde\alpha_\epsilon
* \nu_\epsilon)(U^c)$ also tends to $0$, and therefore that
$(\tilde\alpha_\epsilon * \nu_\epsilon)(U)$ tends to $1$.

Let $A=\{g_0^-, g_0^+\}$. We claim that, for all $g$ such that $gA
\cap A\not=\emptyset$, then $gA=A$. Indeed, if $g(g_0^-)\in A$ for
instance, then $g^{-1} g_0 g$ is a hyperbolic element stabilizing
$g_0^-$. It also stabilizes $g_0^+$, by~\cite[Th\'eor\`eme 30 page
154]{ghys_hyperbolique}, i.e., $g_0g(g_0^+)=g(g_0^+)$. Hence,
$g(g_0^+)$ is a fixed point of $g_0$, i.e., $g(g_0^+)\in A$.

By definition of $\beta_\epsilon$, the finitely many elements of its
support do not fix $A$. They even satisfy $gA\cap A=\emptyset$ for
all $g$ in this support, by the previous argument. If $U$ is small
enough, we get $gU\cap U=\emptyset$, i.e., $g(U)\subset U^c$.

Finally,
  \begin{equation*}
  \nu_\epsilon(U^c) = (\beta_\epsilon*\tilde\alpha_\epsilon * \nu_\epsilon)(U^c)
  \geq (\tilde\alpha_\epsilon * \nu_\epsilon)(U),
  \end{equation*}
which tends to $1$ when $\epsilon$ tends to $0$.
\end{proof}

\begin{lem}
\label{lem:minore_fuite}
The drift $\ell(\lambda_\epsilon)$ satisfies
$\ell(\lambda_\epsilon)\geq c \cdot  (\eta/\epsilon)^{1/2}$.
\end{lem}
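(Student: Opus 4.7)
The plan is to apply the Busemann-boundary formula of Proposition~\ref{prop:express_drift} to the product structure $\lambda_\epsilon = \tilde\alpha_\epsilon * \beta_\epsilon$, then exploit Lemma~\ref{lem:U_statio} together with the geometry of $\tilde\Gamma_s$ to rule out cancellations. Let $\sigma_B$ be a $\lambda_\epsilon$-stationary probability measure on $\partial_B\Gamma$ and set $\tau_B = \beta_\epsilon * \sigma_B$; stationarity of $\sigma_B$ rewrites as $\sigma_B = \tilde\alpha_\epsilon*\tau_B$, so $\tau_B$ is $(\beta_\epsilon*\tilde\alpha_\epsilon)$-stationary on $\partial_B\Gamma$ and its projection to $\partial\Gamma$ is the measure $\nu_\epsilon$ of Lemma~\ref{lem:U_statio}. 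Writing $g=g_1 g_2$ with $g_1\sim\tilde\alpha_\epsilon$, $g_2\sim\beta_\epsilon$ and applying the cocycle identity~\eqref{eq:cocycle} splits the drift as
\begin{equation*}
\ell(\lambda_\epsilon) = \int c_B(g_1,\xi')\,\dd\tilde\alpha_\epsilon(g_1)\,\dd\tau_B(\xi')
 + \int c_B(g_2,\xi)\,\dd\beta_\epsilon(g_2)\,\dd\sigma_B(\xi),
\end{equation*}
and the second term is bounded in absolute value by $L(\beta_\epsilon)=O(1)$ since $\beta_\epsilon$ has uniformly bounded support.

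The crux is the geometric estimate that for any neighborhood $U$ of $\{g_0^-,g_0^+\}$ in $\partial\Gamma$ there is $C_U$ such that $c_B(g_1,\xi')\geq \abs{g_1}-C_U$ for every $g_1\in\tilde\Gamma_s$ and every $\xi'\in\partial_B\Gamma$ with $\pi_B(\xi')\notin U$. This uses the identity $c_B(g_1,\xi')=h_{\xi'}(g_1^{-1})=\abs{g_1}-2\gromprod{g_1^{-1}}{\pi_B(\xi')}{e}+O(1)$ together with the fact that $\tilde\Gamma_s$ is virtually cyclic with limit set $\{g_0^-,g_0^+\}$: any geodesic from $e$ to $g_1^{-1}\in \tilde\Gamma_s$ stays within a uniformly bounded neighborhood of a ray heading toward one of $g_0^\pm$, so if $\pi_B(\xi')\notin U$ the Gromov product $\gromprod{g_1^{-1}}{\pi_B(\xi')}{e}$ is bounded by a constant depending only on $U$. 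For $g_1$ with $\abs{g_1}$ smaller than a fixed threshold the estimate holds trivially by $\abs{c_B(g_1,\xi')}\leq\abs{g_1}$.

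Choose $U$ small enough that $\nu_\epsilon(U)\leq 1/4$ for all small $\epsilon$, using Lemma~\ref{lem:U_statio}. Splitting the first integral according to whether $\pi_B(\xi')\in U$, applying the estimate above on the complement and the trivial bound $c_B(g_1,\xi')\geq -\abs{g_1}$ on $U$, one gets
\begin{equation*}
\int c_B(g_1,\xi')\,\dd\tilde\alpha_\epsilon(g_1)\,\dd\tau_B(\xi')
\geq \tfrac{3}{4}\bigl(L(\tilde\alpha_\epsilon)-C_U\bigr) - \tfrac{1}{4}L(\tilde\alpha_\epsilon)
= \tfrac{1}{2}L(\tilde\alpha_\epsilon)-O(1).
\end{equation*}
To close the argument I estimate $L(\tilde\alpha_\epsilon)$: Lemma~\ref{lem:equidistribution_virt_cyclique} applied in $\tilde\Gamma_s$ with $\Sigma_\Lambda=\{a_1,\dotsc,a_t\}$ gives $\sup_g \alpha_\epsilon^{*n}(g)\leq C(\eta n)^{-1/2}$, hence (using that balls in the virtually cyclic group $\tilde\Gamma_s$ have linear growth) $\mathbb{E}_{\alpha_\epsilon^{*n}}\abs{g}\gtrsim (\eta n)^{1/2}$; averaging against the geometric distribution of mean $1/\epsilon$ in~\eqref{eq:def_lambda_epsilon_tilde} gives $L(\tilde\alpha_\epsilon)\gtrsim (\eta/\epsilon)^{1/2}$, which completes the proof. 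The main obstacle is the uniform geometric estimate on $c_B(g_1,\xi')$: one has to exclude the possibility that the Gromov product at infinity is large, which is precisely where the virtually cyclic structure of $\tilde\Gamma_s$ and the avoidance property of $\nu_\epsilon$ both come into play.
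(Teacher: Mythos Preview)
Your proof is correct and follows essentially the same route as the paper: apply Proposition~\ref{prop:express_drift}, split the cocycle along the factorization $\lambda_\epsilon=\tilde\alpha_\epsilon*\beta_\epsilon$, bound the $\beta_\epsilon$-part trivially, use Lemma~\ref{lem:U_statio} on the pushed measure $\tau_B=\beta_\epsilon*\sigma_B$ together with the fact that the limit set of $\tilde\Gamma_s$ is $\{g_0^-,g_0^+\}$ to get $c_B(g_1,\xi')\geq \abs{g_1}-C_U$ off $\pi_B^{-1}U$, and conclude via Lemma~\ref{lem:equidistribution_virt_cyclique} that $L(\tilde\alpha_\epsilon)\gtrsim(\eta/\epsilon)^{1/2}$. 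Your justification of the key geometric inequality through the identity $h_{\xi'}(g_1^{-1})=\abs{g_1}-2\gromprod{g_1^{-1}}{\pi_B(\xi')}{e}+O(1)$ makes explicit what the paper leaves implicit, but the argument is otherwise the same.
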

\begin{proof}
Let $\rho_\epsilon$ be a stationary measure for $\lambda_\epsilon$,
on the Busemann boundary $\partial_B \Gamma$. By
Proposition~\ref{prop:express_drift},
  \begin{equation*}
  \ell(\lambda_\epsilon) = \int c_B(g,\xi) \dd\rho_\epsilon(\xi) \dd\lambda_\epsilon(g),
  \end{equation*}
where $c_B(g,\xi)=h_\xi(g^{-1})$ is the Busemann cocycle. As
$\lambda_\epsilon=\tilde\alpha_\epsilon*\beta_\epsilon$, this gives
  \begin{equation*}
  \ell(\lambda_\epsilon) = \int c_B(L b,\xi) \dd\rho_\epsilon(\xi) \dd \tilde\alpha_\epsilon(L) \dd\beta_\epsilon(b).
  \end{equation*}
With the cocycle relation~\eqref{eq:cocycle}, this becomes
  \begin{equation*}
  \ell(\lambda_\epsilon)
  = \int c_B(L, b\xi)\dd\rho_\epsilon(\xi) \dd \tilde\alpha_\epsilon(L) \dd\beta_\epsilon(b)
  + \int c_B(b,\xi)\dd\rho_\epsilon(\xi) \dd \tilde\alpha_\epsilon(L) \dd\beta_\epsilon(b).
  \end{equation*}
The second integral is bounded independently of $\epsilon$ since the
support of $\beta_\epsilon$ is finite. In the first integral,
$\xi'=b\xi$ is distributed according to the measure
$\tilde\rho_\epsilon\coloneqq \beta_\epsilon*\rho_\epsilon$, which is
stationary for $\beta_\epsilon*\tilde\alpha_\epsilon$.
Lemma~\ref{lem:U_statio} implies that its projection $(\pi_B)_*
\tilde\rho_\epsilon$ on the geometric boundary, which is again
stationary for $\beta_\epsilon*\tilde\alpha_\epsilon$, gives a small
measure to a neighborhood $U$ of $\{g_0^-, g_0^+\}$.

As the limit set of $\tilde\Gamma_s$ is $\{g_0^-, g_0^+\}$, there exists a
constant $C$ such that, for all $\xi \notin \pi_B^{-1}U$ and $g\in
\tilde\Gamma_s$, we have $\abs{h_\xi(g^{-1})-d(e,g)} \leq C$. For $\xi \in
\pi_B^{-1}U$, we only use the trivial bound $h_\xi(g^{-1}) \geq - d(e,g)$,
since horofunctions are $1$-Lipschitz and vanish at the origin. We get
  \begin{align*}
  \ell(\lambda_\epsilon) &
  \geq \int_{(L,\xi)\in \Gamma\times \pi_B^{-1}U^c} d(e,L)\dd \tilde\alpha_\epsilon(L) \dd\tilde\rho_\epsilon(\xi)
    -  \int_{(L,\xi)\in \Gamma\times \pi_B^{-1}U} d(e,L)  \dd \tilde\alpha_\epsilon(L) \dd\tilde\rho_\epsilon(\xi)
    -C
  \\&
  = \paren*{ \int d(e,L)\dd \tilde\alpha_\epsilon(L)}(\tilde\rho_\epsilon(\pi_B^{-1}U^c) - \tilde\rho_\epsilon(\pi_B^{-1}U))
    -C.
  \end{align*}
For small enough $\epsilon$, we have
$\tilde\rho_\epsilon(\pi_B^{-1}U) \leq 1/4$ (and therefore
$\tilde\rho_\epsilon(\pi_B^{-1}U^c) \geq 3/4$). Moreover,
Lemma~\ref{lem:equidistribution_virt_cyclique} ensures that the
average distance to the origin for the measure
$\tilde\alpha_\epsilon$ is at least $c\cdot (\eta/\epsilon)^{1/2}$.
Hence, the previous formula completes the proof.
\end{proof}

Combining Lemmas~\ref{lem:majore_H} and~\ref{lem:minore_fuite}, we
get
  \begin{equation*}
  h(\lambda_\epsilon)/\ell(\lambda_\epsilon) \leq C \log(\eta/\epsilon) / (\eta/\epsilon)^{1/2}.
  \end{equation*}
This tends to $0$ since $\eta/\epsilon$ tends to infinity. We deduce
from Lemma~\ref{lem:exprime_gamma} that
$h(\mu_\epsilon)/\ell(\mu_\epsilon)$ tends to $0$. This is a
contradiction since we were assuming that it converges to the maximum
$M$, which is positive.

This concludes the proof of Theorem~\ref{thm:hsurl_atteint}. \qed

\medskip

The study of the case where $\Gamma_s$ is virtually cyclic infinite
gives in particular the following result.
\begin{thm}
\label{thm:infinite_limit}
Let $(\Gamma,d)$ be a metric hyperbolic group. Let $\Sigma$ be a
finite subset of $\Gamma$ which generates a non-elementary group. Let
$\mu_i$ be a sequence of measures on $\Sigma$, with $h(\mu_i)>0$,
converging to a probability measure $\mu$ such that $\Gamma_\mu$ is
infinite virtually cyclic. Then $h(\mu_i)/\ell(\mu_i)\to 0$.
\end{thm}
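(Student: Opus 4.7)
The plan is to reduce the theorem to the virtually cyclic analysis already carried out inside the proof of Theorem~\ref{thm:hsurl_atteint}, namely Lemmas~\ref{lem:exprime_gamma}, \ref{lem:majore_H} and \ref{lem:minore_fuite}. I would begin by fixing an infinite order element $g_0 \in \Gamma_\mu$ with attracting/repelling fixed points $g_0^\pm \in \partial\Gamma$; since $\Gamma_\mu$ is virtually cyclic, every element of $\Supp(\mu)$ stabilizes the unordered pair $\{g_0^-, g_0^+\}$. Write $\Sigma = \Sigma_0 \sqcup \Sigma_1$, where $\Sigma_0$ collects the elements of $\Sigma$ fixing $\{g_0^-, g_0^+\}$. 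Non-elementarity of the subgroup generated by $\Sigma$ forces $\Sigma_1 \neq \emptyset$ (the stabilizer of a pair of boundary points in a hyperbolic group is virtually cyclic), while $\Supp(\mu) \subset \Sigma_0$ forces $\epsilon_i \coloneqq \mu_i(\Sigma_1) \to 0$. Take $\alpha_i$ and $\beta_i$ to be the normalized restrictions of $\mu_i$ to $\Sigma_0$ and $\Sigma_1$ respectively, so that $\mu_i = (1-\epsilon_i)\alpha_i + \epsilon_i \beta_i$, matching exactly the setup of the virtually cyclic case in the proof of Theorem~\ref{thm:hsurl_atteint}.

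Next, I would select a finite subset $F \subset \Sigma_0 \cap \Supp(\mu)$ generating an infinite subgroup of $\Gamma_\mu$; this is possible because $\Gamma_\mu$ is infinite and finitely generated by $\Supp(\mu)$. By pointwise convergence $\mu_i \to \mu$, the quantity $\eta_i \coloneqq \min_{a \in F} \mu_i(a)$ is bounded below by a positive constant for large $i$, so $\eta_i/\epsilon_i \to \infty$. The three quoted lemmas now apply with only trivial modifications, because the group generated by $\Supp(\alpha_i)$ sits inside the stabilizer of $\{g_0^-, g_0^+\}$ in $\Gamma$, which is itself virtually cyclic, and because their only requirement on the parameters is that $\eta_i/\epsilon_i \to \infty$. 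Concretely, Lemma~\ref{lem:exprime_gamma} yields $h(\mu_i)/\ell(\mu_i) = h(\lambda_{\epsilon_i})/\ell(\lambda_{\epsilon_i})$, Lemma~\ref{lem:majore_H} yields $h(\lambda_{\epsilon_i}) \leq C \log(\eta_i/\epsilon_i)$, and Lemma~\ref{lem:minore_fuite} yields $\ell(\lambda_{\epsilon_i}) \geq c (\eta_i/\epsilon_i)^{1/2}$. Combining these inequalities gives $h(\mu_i)/\ell(\mu_i) \leq C \log(\eta_i/\epsilon_i)/(\eta_i/\epsilon_i)^{1/2}$, which tends to $0$ as $i \to \infty$.

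I do not anticipate any serious obstacle, since all the analytic content has already been established in the proof of Theorem~\ref{thm:hsurl_atteint}; the work is essentially bookkeeping to match hypotheses. The one point deserving care is the selection of $F$ inside $\Supp(\mu)$: it relies on the fact that $\Gamma_\mu$ is infinite and finitely generated by $\Supp(\mu)$, so that some finite subfamily of $\Supp(\mu)$ already generates an infinite subgroup. The assumption $h(\mu_i) > 0$ is used only to ensure via the fundamental inequality that $\ell(\mu_i) > 0$, so that the ratio $h(\mu_i)/\ell(\mu_i)$ is well-defined.
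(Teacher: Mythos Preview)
Your proposal is correct and matches the paper's own approach: Theorem~\ref{thm:infinite_limit} is stated there precisely as a byproduct of the virtually cyclic case in the proof of Theorem~\ref{thm:hsurl_atteint}, via Lemmas~\ref{lem:exprime_gamma}, \ref{lem:majore_H} and~\ref{lem:minore_fuite}, and your direct splitting $\Sigma=\Sigma_0\sqcup\Sigma_1$ with $\eta_i$ bounded below (since $F\subset\Supp(\mu)$) is a clean specialization of that argument. The one detail worth making explicit is the preliminary lazification $\mu_i \mapsto (\mu_i+\delta_e)/2$ (harmless for $h/\ell$), needed so that $\alpha_i(e)\geq 1/2$ as required by Lemma~\ref{lem:equidistribution_virt_cyclique} inside the proof of Lemma~\ref{lem:minore_fuite}.
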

Note that the precise value of $\ell(\mu_i)$ depends on the choice of
the distance, but if two distances are equivalent then the associated
drifts vary within the same constants. Hence, the convergence
$h(\mu_i)/\ell(\mu_i)\to 0$ does not depend on the distance.

We recover results of Le Prince~\cite{leprince}: In any metric
hyperbolic group, there exist admissible probability measures with
$h/\ell<v$. The construction of Le Prince is rather similar to the
examples given by Theorem~\ref{thm:infinite_limit}.

\begin{ex}
We can use the above proof to also find an example where
$h(\mu_\epsilon)/\ell(\mu_\epsilon) \to 0$ although $\mu_\epsilon$
tends to a measure $\mu$ for which $\Gamma_\mu$ is finite and
nontrivial. Consider $\Gamma = \Z/2
\times \FF_2 = \{0,1\}\times \langle a,b \rangle$, endowed with the probability
measure $\mu_\epsilon$ given by
  \begin{equation*}
  \mu_\epsilon(0,e)=\mu_\epsilon(1,e)=1/2-\epsilon-\epsilon^2,\quad
  \mu_\epsilon(0,a)=\mu_\epsilon(0,a^{-1})=\epsilon,
  \quad
  \mu_\epsilon(0,b)=\mu_\epsilon(0,b^{-1})=\epsilon^2.
  \end{equation*}
The measure $\mu_\epsilon$ converges to
$\mu=(\delta_{(0,e)}+\delta_{(1,e)})/2$. With the above notations,
$\Gamma_\mu = \Z/2 \times \{e\}$ but $\Gamma_s = \Z/2 \times \langle a \rangle$
is virtually cyclic infinite (so that
$h(\mu_\epsilon)/\ell(\mu_\epsilon) \to 0$) and $\Gamma_r = \Gamma$.
\end{ex}

\section{Examples for non-symmetric measures}
\label{sec:non-symmetric}

In this section, we describe the additional difficulties that arise
if one tries to prove Theorem~\ref{thm:principal} for non-symmetric
measures. The main problem is that the random walk lives on the
subsemigroup $\Gamma_\mu^+$, which is not a subgroup any more. While
many cases can be handled with the tools we have described in this
article, one case can not be treated in this way: when the
subsemigroup $\Gamma_\mu^+$ has no nice geometric properties (it is
not quasi-convex, it is not a subgroup), but $\Gamma_\mu=\Gamma$.

Let us first show that the growth properties of such a subsemigroup
can be more complicated than what happens for subgroups. If $\Lambda$
is a subgroup of $\Gamma$, either $\Card{B_n\cap \Lambda} \asymp
e^{nv}$, or $\Card{B_n\cap \Lambda} = o(e^{nv})$ (the first case
happens if and only if $\Lambda$ has finite index in $\Gamma$, see
the discussion at the beginning of
Paragraph~\ref{subsec:vLambda_small}). Unfortunately, the behavior of
semigroups can be more complicated.

\begin{prop}
In $\FF_2$, there exists a subsemigroup $\Lambda^+$ such that
$\liminf \Card{B_n\cap \Lambda^+}/\Card{B_n}=0$ and $\limsup
\Card{B_n \cap \Lambda^+}/\Card{B_n}>0$.
\end{prop}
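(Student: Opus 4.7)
The plan is to construct $\Lambda^+$ as the subsemigroup generated by the positive letters $a,b$ together with a lacunary family of ``mixed'' blocks. Fix an extremely rapidly growing sequence $n_1 < n_2 < \dotsb$ (for instance $n_{k+1} \geq 2^{n_k}$). For each $k$ let $T_k$ be the set of reduced words in $\FF_2$ of length exactly $n_k$ which both start and end with $a$ and contain at least one negative letter; then $\Card{T_k} \asymp 3^{n_k}$. Take $\Lambda^+$ to be the subsemigroup generated by $\{a,b\} \cup \bigcup_k T_k$. Because every generator both starts and ends with a positive letter, any concatenation of generators is already freely reduced; in particular every element of $\Lambda^+$ starts and ends with a positive letter, and its negative-letter positions are grouped into disjoint ``blocks'' of lengths drawn from $\{n_1, n_2, \dotsc\}$, each block coming from an insertion of some element of the corresponding $T_{k_i}$.

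For the $\limsup$ assertion, at $n=n_k$ one has $T_k \subset B_n \cap \Lambda^+$ and therefore $\Card{B_n \cap \Lambda^+} \geq \Card{T_k} \gtrsim 3^{n_k} \asymp \Card{B_n}$, so the ratio is bounded below along this subsequence. For the $\liminf = 0$ assertion, pick $n$ with $n_k \ll n \ll n_{k+1}$ (for example $n = \lfloor \sqrt{n_{k+1}} \rfloor$, made possible by $n_{k+1}/n_k \to \infty$). An element of $B_n \cap \Lambda^+$ is either a purely positive word (contributing at most $\sum_{\ell \leq n} 2^\ell \asymp 2^n$) or contains one or more negative blocks of sizes in $\{n_1, \dotsc, n_k\}$ (no $n_{k+1}$-block fits since $n < n_{k+1}$). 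The number of reduced words with exactly one inserted $T_j$-block is counted by (position)$\times$(prefix)$\times$(block content)$\times$(suffix), giving at most $C\, n \cdot 3^{n_j} \cdot 2^{n-n_j}$ words; summing over $j \leq k$ and handling the multi-block case by iterating this bound yields $\Card{B_n \cap \Lambda^+} \leq 2^n + C\, n \cdot 3^{n_k} \cdot 2^{n-n_k}$ plus lower-order terms. Since $\Card{B_n} \asymp 3^n$, the ratio is bounded by $(2/3)^n + C n (2/3)^{n-n_k}$, which tends to $0$ because $n - n_k \to \infty$.

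The main obstacle is the upper bound in the gap regime, since a priori a given element of $\Lambda^+$ can admit several decompositions as a product of generators, and a naive count summing $\prod \Card{T_{k_i}}$ over decompositions blows up. The resolution is to count reduced words directly according to the geometry of their negative blocks rather than by decompositions; insisting that each $T_k$-word contains a negative letter makes the block decomposition of any element of $\Lambda^+$ canonical and hence tractable. Rigorously controlling the multi-block contributions is the most technical point, but follows by iterating the one-block estimate together with the rapid decay of $(2/3)^{n-n_k}$, which is guaranteed by the super-exponential growth of $n_k$.
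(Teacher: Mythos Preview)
Your construction works, but the quantitative upper bound you claim in the gap regime is false, and this is not a matter of missing details. Take $n$ with $n_k \ll n \ll n_{k+1}$ and look only at concatenations of $\lfloor n/n_k\rfloor$ elements of $T_k$ (no single letters, no smaller blocks). These give distinct reduced words of length about $n$ in $\Lambda^+$, and there are roughly $\Card{T_k}^{n/n_k}\asymp (c\,3^{n_k})^{n/n_k}=c^{n/n_k}3^{n}$ of them for some $0<c<1$. This already exceeds your claimed bound $2^n+Cn\cdot 3^{n_k}\cdot 2^{\,n-n_k}\asymp 2^{n}(3/2)^{n_k}$ as soon as $n/n_k$ is moderately large. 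The underlying problem is that your ``canonical block decomposition'' does not exist: for instance $a\cdot(ab^{-1}a)\cdot a$, $aa\cdot(ab^{-1}a)$ and $(ab^{-1}a)\cdot aa$ are three decompositions of the same element. So ``iterating the one-block bound'' really counts decompositions, not elements, and for $k$ large that count grows faster than $3^n$.

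The clean fix --- and this is exactly the paper's argument --- is a forbidden-subword growth bound. Every generator starts and ends with $a$ and has length at most $n_k$, so no element of the subsemigroup generated by $\{a,b\}\cup T_1\cup\dotsb\cup T_k$ can contain $(b^{-1})^{n_k-1}$ as a subword; hence that subsemigroup has growth rate $\lambda_k<3$, and for $n<n_{k+1}$ one gets $\Card{B_n\cap\Lambda^+}\leq C_k\lambda_k^{\,n}=o(3^n)$, after which one simply chooses $n_{k+1}$ large enough. The paper's construction is in fact tidier than yours: it drops the single letters $a,b$ altogether and takes as generators all reduced words of length $n_j$ starting and ending with $a$. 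The $\liminf$ argument is then literally one line (the subword $b^{n_{j-1}}$ is forbidden), with no need to isolate ``purely positive'' words.
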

\begin{proof}
Let $\Sbb^n_{a,a}$ denote the geodesic words in $\FF_2=\langle
a,b\rangle$ of length $n$ which start and end with $a$. Let $n_j$ be
a sequence tending very quickly to infinity. Let $\Lambda^+$ be the
subsemigroup generated by $\bigcup \Sbb^{n_j}_{a,a}$. Then
$\Card{B_{n_j} \cap \Lambda^+} \geq c \Card{B_{n_j}}$. We claim that
  \begin{equation*}
  \Card{B_{n_j-1} \cap \Lambda^+}/\Card{B_{n_j-1}} \to 0.
  \end{equation*}
Indeed, the subsemigroup $\Lambda^+_{j-1}$ generated by
$\bigcup_{k<j}  \Sbb^{n_k}_{a,a}$ has a growth rate which is
$<e^{nv}$, since some subwords such as $b^{n_{j-1}}$ are forbidden in
this subsemigroup. Hence, if $n_j$ is large enough with respect to
$n_{j-1}$, we have $\Card{\Sbb^{n_j-1}\cap \Lambda^+}
=\Card{\Sbb^{n_j-1}\cap \Lambda^+_{j-1}} = o(e^{(n_j-1) v})$.
\end{proof}
In this example, most points in $ \Sbb^{n_j}\cap \Lambda^+$ are
introduced by $\Sbb^{n_j}_{a,a}$. This shows that $\Lambda^+$ is far
from being quasi-convex. In particular, techniques based only on
non-quasi-convexity and sub- or super-multiplicativity will never
show that $\Card{B_n\cap \Lambda^+} = o(\Card{B_n})$ for
subsemigroups.

Now, we give an example of a well-behaved measure (apart from the fact that
it is not symmetric, not admissible and not finitely supported) for which
$h=\ell v$. The construction is done in free products. The idea is to forbid
simplifications, so that we have an explicit control on the random walk at
time $n$. To enforce this behavior, we will work in a free product $\Gamma_1
* \Gamma_2$, and consider a probability measure supported on elements of the
form $g_1 g_2$ with $g_i \in \Gamma_i\setminus\{e\}$. The next statement
applies to some non virtually free hyperbolic groups, for instance the free
product of two surface groups. It also applies to some non-hyperbolic groups,
more precisely to all finitely generated groups without torsion and with
infinitely many ends, by Stallings' theorem. It would be of interest to
extend it to all groups with infinitely many ends. For this, we would need to
also handle amalgamated free products and HNN extensions.
\begin{prop}
\label{prop:nonsymmetric_equal} Let $\Gamma_1$ and $\Gamma_2$ be two
nontrivial groups, generated respectively by finite symmetric sets $S_1$ and
$S_2$. Let $\Gamma=\Gamma_1*\Gamma_2$ with the generating set $S=S_1 \cup
S_2$ and the corresponding word distance. There exists on $\Gamma$ a
(nonsymmetric, nonadmissible) probability measure $\mu$, with an exponential
moment and nonzero entropy, satisfying $h(\mu) = \ell(\mu)v$.
\end{prop}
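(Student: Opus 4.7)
My plan is to exploit the unique-normal-form structure of free products by taking $\mu$ supported on two-letter words $g_1 g_2$ with $g_1 \in \Gamma_1\setminus\{e\}$ and $g_2 \in \Gamma_2\setminus\{e\}$: any concatenation of such blocks is already in reduced normal form in $\Gamma = \Gamma_1 * \Gamma_2$, so no cancellation can ever shorten a random walk step, and the factorization of the $n$-th position $X_n$ into its $n$ increments can be read off from its normal form. Concretely, I would set
\[
  \mu(g_1 g_2) = e^{-v(\abs{g_1}+\abs{g_2})}, \qquad g_1\in\Gamma_1\setminus\{e\},\ g_2\in\Gamma_2\setminus\{e\},
\]
and $\mu(g)=0$ otherwise, then verify the required properties in turn.

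First, that $\mu$ is a probability measure. Let $F_i(z) = \sum_{g\in\Gamma_i} z^{\abs{g}}$ be the growth series of $\Gamma_i$. Summing over the normal form of an element of $\Gamma$ (alternating products of nontrivial letters from $\Gamma_1$ and $\Gamma_2$, with four patterns according to the first and last letter) yields the classical identity
\[
  F(z) = \frac{F_1(z)\,F_2(z)}{1-(F_1(z)-1)(F_2(z)-1)}.
\]
Excluding the degenerate case $\Gamma_1=\Gamma_2=\Z/2$ (where $\Gamma$ is virtually cyclic and the conclusion is vacuous), one has $v>\max(v_1,v_2)$, so the first positive singularity of $F$ is a pole at $z_0=e^{-v}$, at which $(F_1(e^{-v})-1)(F_2(e^{-v})-1)=1$; this is precisely $\sum_g \mu(g)=1$.

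Second, for the identity $h(\mu)=v\,\ell(\mu)$: writing $X_n = Y_1\cdots Y_n$ with $Y_i = g_1^{(i)}g_2^{(i)}$ i.i.d.\ of law $\mu$, the product $g_1^{(1)} g_2^{(1)} \cdots g_1^{(n)} g_2^{(n)}$ is already in reduced normal form, so $\abs{X_n} = \sum_{i=1}^n(\abs{g_1^{(i)}} + \abs{g_2^{(i)}})$ and the map $(Y_1,\ldots,Y_n)\mapsto X_n$ is injective on $\Supp(\mu)^n$. Consequently
\[
  \mu^{*n}(X_n) = \prod_{i=1}^n \mu(Y_i) = e^{-v\,\abs{X_n}}.
\]
Dividing $-\log$ by $n$ and passing to the almost-sure limit yields $h(\mu) = v\,\ell(\mu)$.

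The remaining checks are routine. Nonsymmetry follows because inverses of support elements start in $\Gamma_2$; non-admissibility because $\Gamma_\mu^+$ consists of alternating words starting in $\Gamma_1$ and ending in $\Gamma_2$ and so misses, e.g., all of $\Gamma_1$; the exponential moment from $\sum \mu(g) e^{\alpha\abs{g}} = (F_1(e^{\alpha-v})-1)(F_2(e^{\alpha-v})-1)<\infty$ for $0<\alpha<v-\max(v_1,v_2)$; and nonzero entropy from $\ell(\mu)=L(\mu)>0$ (no cancellation) combined with $h=v\ell$. The only real technical point is the normalization identity at $z_0=e^{-v}$, which reflects the standard fact that in the nondegenerate case the growth of a free product is genuinely created by the free-product structure rather than by one of the factors; extending the argument to amalgamated products or HNN extensions would require a similar but more delicate construction in which concatenations of chosen building blocks retain enough of their length, which is why the statement is restricted to ordinary free products.
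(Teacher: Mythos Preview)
Your proof is correct and follows essentially the same construction as the paper: the measure $\mu(g_1 g_2)=e^{-v\abs{g_1 g_2}}$ on $\Gamma_1^*\cdot\Gamma_2^*$, with the normalization $(F_1(e^{-v})-1)(F_2(e^{-v})-1)=1$ derived from the free-product growth formula, and the key identity $-\log\mu^{*n}(X_n)=v\abs{X_n}$ from unique normal form. You are more explicit than the paper on several points it leaves implicit (the exponential-moment computation, nonsymmetry and non-admissibility, and the exclusion of the degenerate case $\Gamma_1=\Gamma_2=\Z/2$ where $v=0$ and no measure has nonzero entropy); the only place where the paper is slightly more self-contained is that it derives $F_i(e^{-v_i})=+\infty$ from submultiplicativity of spheres and thus obtains the normalization and the strict inequality $v>\max(v_1,v_2)$ in one stroke, whereas you invoke $v>\max(v_1,v_2)$ as a known fact.
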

\begin{proof}
For $i=1,2$, let $\Gamma_i^* = \Gamma_i \setminus \{e\}$. We claim
that
  \begin{equation}\label{proba}
  \sum_{g_1\in\Gamma_1^*,g_2\in\Gamma_2^*} e^{-v\abs{g_1 g_2}} = 1,
  \end{equation}
where $v$ is the growth rate of $\Gamma$.

Let $F_i(z)$ be the growth series of $\Gamma_i$, i.e., $F_i(z) =
\sum_{g\in\Gamma_i} z^{\abs{g}}$. The spheres $\Sbb^n_i\in \Gamma_i$
satisfy $\Sbb^{n+m}_i \subset \Sbb^n_i \cdot \Sbb^m_i$. Hence, the
sequence $\log \Card{\Sbb^n_i}$ is subadditive. This implies that
$\log \Card{\Sbb^n_i}/n$ converges to its infimum $v_i$, and moreover
that $\Card{\Sbb^n_i} \geq e^{nv_i}$. We deduce that the radius of
convergence of $F_i$ is $e^{-v_i}$, and moreover
$F_i(e^{-v_i})=+\infty$.

Let $F(z)$ be the growth series of $\Gamma$. As in the proof of
Proposition~\ref{prop:vLambda_small}, it is given by
  \begin{equation*}
  F(z) = \frac{F_1(z) F_2(z)}{1- (F_1(z)-1)(F_2(z)-1)}.
  \end{equation*}
Assume for instance $v_1\geq v_2$. As $F_1(e^{-v_1})=+\infty$, the
function $(F_1(z)-1)(F_2(z)-1)$ takes the value $1$ when $z$
increases to $e^{-v_1}$, at a point which is precisely the radius of
convergence $e^{-v}$ of $F$. This shows that
$(F_1(e^{-v})-1)(F_2(e^{-v})-1)=1$ . This is precisely the
equality~\eqref{proba}.

We define a probability measure $\mu$ on $\Gamma$ as follows: for
$(g_1,g_2)\in \Gamma_1^* \times \Gamma_2^*$, let
  \begin{equation*}
  \mu(g_1 g_2)=e^{-v\abs{g_1 g_2}}.
  \end{equation*}
Since there is only one way to generate the word $g_1^1 g_2^1\dotsm
g_1^n g_2^n$ using $\mu$, we have
 \begin{equation*}
 \mu^{*n}(g_1^1 g_2^1\dotsm g_1^n g_2^n) = e^{-v\sum_i\abs{g_1^i g_2^i}}.
 \end{equation*}
Denoting by $X_n$ the position of the random walk at time $n$, it
follows that $-\log \mu^{*n}(X_n)=v\abs{X_n}$. Dividing by $n$ and
letting $n$ tend to infinity, this gives $h(\mu)=\ell(\mu)v$.
\end{proof}

If one is interested in measures with finite support, one can only get the
following approximation result. It has the same flavor as
Theorem~\ref{thm:tends_to_v}, but it is both stronger since it also applies
to some non-hyperbolic groups, and weaker since the measures it produces are
not admissible nor symmetric.
\begin{prop}
Let $\Gamma_1$ and $\Gamma_2$ be two nontrivial groups, generated
respectively by finite symmetric sets $S_1$ and $S_2$. Let
$\Gamma=\Gamma_1*\Gamma_2$ with the generating set $S=S_1 \cup S_2$
and the corresponding word distance. Then
  \begin{equation*}
  \sup \big\{h(\mu)/\ell(\mu) \st
    \mu \text{ finitely supported probability measure in } \Gamma, \ell(\mu)>0\big\} = v.
  \end{equation*}
\end{prop}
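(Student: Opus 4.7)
The upper bound $\sup \leq v$ is immediate from Guivarc'h's fundamental inequality $h(\mu) \leq \ell(\mu) v$ applied to each finitely supported $\mu$. The task is therefore to exhibit a sequence of finitely supported probability measures with $\ell > 0$ whose ratio $h/\ell$ tends to $v$. The natural candidates are the truncations of the measure constructed in Proposition~\ref{prop:nonsymmetric_equal}.

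Explicitly, for each $R \geq 2$, let
\begin{equation*}
Z_R = \sum_{\substack{g_1 \in \Gamma_1^*,\, g_2 \in \Gamma_2^* \\ \abs{g_1 g_2} \leq R}} e^{-v\abs{g_1 g_2}},
\qquad \mu_R(g_1 g_2) = Z_R^{-1} e^{-v\abs{g_1 g_2}}
\end{equation*}
on the (finite, nonempty) set $\{g_1 g_2 : g_i \in \Gamma_i^*,\, \abs{g_1 g_2}\leq R\}$. I would then reuse the key observation from the proof of Proposition~\ref{prop:nonsymmetric_equal}: any product $h_1 h_2 \cdots h_n$ with $h_i = g_1^i g_2^i$ in $\Supp(\mu_R)$ is automatically in free normal form in $\Gamma_1 * \Gamma_2$, its decomposition into the $n$ syllable-pairs $h_i$ is unique, and consequently $\abs{h_1\cdots h_n} = \sum_i \abs{h_i}$ deterministically. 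Writing $X_n$ for the random walk, this gives $\ell(\mu_R) = L(\mu_R)$ and
\begin{equation*}
\mu_R^{*n}(X_n) = \prod_{i=1}^n \mu_R(h_i) = Z_R^{-n} e^{-v\abs{X_n}},
\end{equation*}
so that $-\log \mu_R^{*n}(X_n) = v\abs{X_n} + n\log Z_R$. Averaging and dividing by $n$ yields $H(\mu_R^{*n})/n = v L(\mu_R) + \log Z_R$, constant in $n$, and therefore
\begin{equation*}
h(\mu_R) = v\, \ell(\mu_R) + \log Z_R.
\end{equation*}

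Hence $h(\mu_R)/\ell(\mu_R) = v + \log Z_R / L(\mu_R)$. By the identity~\eqref{proba} we have $Z_R \nearrow 1$ as $R\to\infty$, so $\log Z_R \to 0$. Since every element of $\Supp(\mu_R)$ has length at least $2$, $L(\mu_R) \geq 2$ uniformly, and the correction term tends to $0$. Thus $h(\mu_R)/\ell(\mu_R) \to v$, combining with the fundamental inequality to give $\sup = v$.

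The approach has no real obstacle: the free-product normal form reduces the whole statement to bookkeeping with the deterministic identity $\abs{X_n} = \sum \abs{h_i}$ and the summation formula~\eqref{proba}. The only point requiring (very mild) care is that the argument works whether $L(\mu) = \sum(\abs{g_1}+\abs{g_2})e^{-v(\abs{g_1}+\abs{g_2})}$ is finite or infinite, since in both cases $L(\mu_R)$ stays bounded below by $2$ while $\log Z_R \to 0$.
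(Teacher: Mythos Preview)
Your proof is correct and takes a genuinely different route from the paper's. Both arguments exploit the same structural fact: elements of the form $g_1 g_2$ with $g_i \in \Gamma_i^*$ concatenate without cancellation, so the $n$-fold convolution is completely explicit. The paper, however, uses the \emph{uniform} measure $\mu_p$ on the sphere $\Sbb^p_{1,2}$ of words of fixed length $p$ starting in $\Gamma_1$ and ending in $\Gamma_2$; it then needs a short pigeonhole argument to show $\limsup_p \log \Card{\Sbb^p_{1,2}}/p = v$, after which $h(\mu_p)/\ell(\mu_p) = \log\Card{\Sbb^p_{1,2}}/p$ directly. Your approach truncates the measure of Proposition~\ref{prop:nonsymmetric_equal} instead, which buys you the exact identity $h(\mu_R) = v\,\ell(\mu_R) + \log Z_R$ for free from~\eqref{proba}, with no separate counting step. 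Your version is marginally slicker in that it reuses Proposition~\ref{prop:nonsymmetric_equal} rather than reproving a growth estimate; the paper's version has the minor advantage that its measures $\mu_p$ are supported on a single sphere and are thus the simplest possible witnesses.
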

\begin{proof}
Any element in $\Gamma$ can be canonically decomposed as a word in
elements of $\Gamma_1$ and $\Gamma_2$. Let $\Sbb^p_{i,j}$ be the set
of elements of length $p$ that start with an element in $\Gamma_i$
and end with an element in $\Gamma_j$. We have the decomposition
  \begin{equation*}
  \Sbb^p = \Sbb^p_{1,1} \cup \Sbb^p_{1,2} \cup \Sbb^p_{2,1} \cup \Sbb^p_{2,2}.
  \end{equation*}
One term in this decomposition has cardinality at least
$\Card{\Sbb^p}/4$. Hence, there exist $i,j$ such that $\limsup \log
\Card{\Sbb^p_{i,j}}/p = v$. Multiplying by fixed elements at the
beginning and at the end to go from $\Gamma_1$ to $\Gamma_i$, and
from $\Gamma_j$ to $\Gamma_2$, we get
  \begin{equation}
  \label{eq:limsup_OK}
  \limsup \log \Card{\Sbb^p_{1,2}}/p = v.
  \end{equation}

Let $\mu_p$ be the uniform probability measure on $\Sbb^p_{1,2}$. By
construction, there are no simplifications when one iterates $\mu_p$.
Hence, $\mu_p^{*n}$ is the uniform probability measure on
$(\Sbb^p_{1,2})^{*n}$, whose cardinality is $\Card{\Sbb^p_{1,2}}^n$.
We get $H(\mu_p^{*n})=n \log\Card{\Sbb^p_{1,2}}$ and
$L(\mu_p^{*n})=np$. Therefore, $h(\mu_p) = \log \Card{\Sbb^p_{1,2}}$
and $\ell(\mu_p)=p$, giving
  \begin{equation*}
  h(\mu_p)/\ell(\mu_p) = \log \Card{\Sbb^p_{1,2}}/p.
  \end{equation*}
Together with~\eqref{eq:limsup_OK}, this proves the proposition.
\end{proof}

\bibliography{biblio}

\def\cprime{$'$}
\providecommand{\bysame}{\leavevmode\hbox to3em{\hrulefill}\thinspace}
\providecommand{\MR}{\relax\ifhmode\unskip\space\fi MR }
\providecommand{\MRhref}[2]{%
  \href{http://www.ams.org/mathscinet-getitem?mr=#1}{#2}
}
\providecommand{\href}[2]{#2}
\begin{thebibliography}{VSCC92}

\bibitem[Aar97]{aaronson_book}
Jon Aaronson, \emph{An introduction to infinite ergodic theory}, Mathematical
  Surveys and Monographs, vol.~50, American Mathematical Society, Providence,
  RI, 1997. \MR{MR1450400}

\bibitem[AL02]{arzhantseva_lysenok}
Goulnara~N. Arzhantseva and Igor~G. Lysenok, \emph{Growth tightness for word
  hyperbolic groups}, Math. Z. \textbf{241} (2002), 597--611. \MR{MR1938706}

\bibitem[Anc87]{ancona}
Alano Ancona, \emph{Negatively curved manifolds, elliptic operators, and the
  {M}artin boundary}, Ann. of Math. (2) \textbf{125} (1987), 495--536.
  \MR{MR890161}

\bibitem[BH99]{bridson_haefliger}
Martin~R. Bridson and Andr{\'e} Haefliger, \emph{Metric spaces of non-positive
  curvature}, Grundlehren der Mathematischen Wissenschaften [Fundamental
  Principles of Mathematical Sciences], vol. 319, Springer-Verlag, Berlin,
  1999. \MR{MR1744486}

\bibitem[BHM11]{BHM_2}
S{\'e}bastien Blach{\`e}re, Peter Ha{\"{\i}}ssinsky, and Pierre Mathieu,
  \emph{Harmonic measures versus quasiconformal measures for hyperbolic
  groups}, Ann. Sci. \'Ec. Norm. Sup\'er. (4) \textbf{44} (2011), 683--721.
  \MR{MR2919980}

\bibitem[Bou12]{bourgain_spectral_SL2}
Jean Bourgain, \emph{Finitely supported measures on {$SL_2(\mathbb{R})$} which
  are absolutely continuous at infinity}, Geometric aspects of functional
  analysis, Lecture Notes in Math., vol. 2050, Springer, Heidelberg, 2012,
  pp.~133--141. \MR{MR2985129}

\bibitem[Bow98a]{bowditch_2}
Brian~H. Bowditch, \emph{Boundaries of strongly accessible hyperbolic groups},
  The {E}pstein birthday schrift, Geom. Topol. Monogr., vol.~1, Geom. Topol.
  Publ., Coventry, 1998, pp.~51--97. \MR{MR1668331}

\bibitem[Bow98b]{bowditch}
\bysame, \emph{Cut points and canonical splittings of hyperbolic groups}, Acta
  Math. \textbf{180} (1998), 145--186. \MR{MR1638764}

\bibitem[Cal13]{calegari_survey}
Danny Calegari, \emph{The ergodic theory of hyperbolic groups}, Geometry and
  topology down under, Contemp. Math., vol. 597, Amer. Math. Soc., Providence,
  RI, 2013, pp.~15--52. \MR{MR3186668}

\bibitem[CM07]{connell_muchnik}
Chris Connell and Roman Muchnik, \emph{Harmonicity of quasiconformal measures
  and {P}oisson boundaries of hyperbolic spaces}, Geom. Funct. Anal.
  \textbf{17} (2007), 707--769. \MR{MR2346273}

\bibitem[Coo93]{coornaert}
Michel Coornaert, \emph{Mesures de {P}atterson-{S}ullivan sur le bord d'un
  espace hyperbolique au sens de {G}romov}, Pacific J. Math. \textbf{159}
  (1993), 241--270. \MR{MR1214072}

\bibitem[Dal99]{dalbo}
Fran{\c{c}}oise Dal'bo, \emph{Remarques sur le spectre des longueurs d'une
  surface et comptages}, Bol. Soc. Brasil. Mat. (N.S.) \textbf{30} (1999),
  199--221. \MR{MR1703039}

\bibitem[dlH00]{harpe_topics}
Pierre de~la Harpe, \emph{Topics in geometric group theory}, Chicago Lectures
  in Mathematics, University of Chicago Press, Chicago, IL, 2000.
  \MR{MR1786869}

\bibitem[EK10]{erschler_karlsson}
Anna Erschler and Anders Karlsson, \emph{Homomorphisms to {$\mathbb{R}$}
  constructed from random walks}, Ann. Inst. Fourier (Grenoble) \textbf{60}
  (2010), 2095--2113. \MR{MR2791651}

\bibitem[EK13]{erschler_kaim}
Anna Erschler and Vadim Kaimanovich, \emph{Continuity of asymptotic
  characteristics for random walks on hyperbolic groups}, Funktsional. Anal. i
  Prilozhen. \textbf{47} (2013), 84--89. \MR{MR3113872}

\bibitem[FS09]{flajolet_sedgewick}
Philippe Flajolet and Robert Sedgewick, \emph{Analytic combinatorics},
  Cambridge University Press, Cambridge, 2009. \MR{MR2483235}

\bibitem[GdlH90]{ghys_hyperbolique}
{\'E}tienne Ghys and Pierre de~la Harpe (eds.), \emph{Sur les groupes
  hyperboliques d'apr\`es {M}ikhael {G}romov}, Progress in Mathematics,
  vol.~83, Birkh\"auser Boston Inc., Boston, MA, 1990, Papers from the Swiss
  Seminar on Hyperbolic Groups held in Bern, 1988. \MR{MR1086648}

\bibitem[GLJ93]{guivarch_lejan}
Yves Guivarc'h and Yves Le~Jan, \emph{Asymptotic winding of the geodesic flow
  on modular surfaces and continued fractions}, Ann. Sci. {\'E}cole Norm. Sup.
  (4) \textbf{26} (1993), 23--50. \MR{MR1209912}

\bibitem[Gou13]{gouezel_infinite_support}
S{\'e}bastien Gou{\"e}zel, \emph{Martin boundary of measures with infinite
  support in hyperbolic groups}, Preprint, 2013.

\bibitem[Gui80]{guivarch_fundamental_ineq}
Yves Guivarc'h, \emph{Sur la loi des grands nombres et le rayon spectral d'une
  marche al\'eatoire}, Conference on {R}andom {W}alks ({K}leebach, 1979)
  ({F}rench), Ast\'erisque, vol.~74, Soc. Math. France, Paris, 1980,
  pp.~47--98, 3. \MR{MR588157}

\bibitem[Ha{\"{\i}}13]{haissinsky}
Peter Ha{\"{\i}}ssinsky, \emph{Marches al\'eatoires sur les groupes
  hyperboliques}, G\'eom\'etrie ergodique, Monogr. Enseign. Math., vol.~43,
  Enseignement Math., Geneva, 2013, pp.~199--265. \MR{MR3220556}

\bibitem[HY61]{hocking_young}
John~G. Hocking and Gail~S. Young, \emph{Topology}, Addison-Wesley Publishing
  Co., 1961. \MR{MR0125557}

\bibitem[INO08]{izumi_hyperbolic}
Masaki Izumi, Sergey Neshveyev, and Rui Okayasu, \emph{The ratio set of the
  harmonic measure of a random walk on a hyperbolic group}, Israel J. Math.
  \textbf{163} (2008), 285--316. \MR{MR2391133}

\bibitem[Kai00]{kaimanovich_poisson}
Vadim~A. Kaimanovich, \emph{The {P}oisson formula for groups with hyperbolic
  properties}, Ann. of Math. (2) \textbf{152} (2000), 659--692. \MR{MR1815698}

\bibitem[KB02]{kapovich_survey}
Ilya Kapovich and Nadia Benakli, \emph{Boundaries of hyperbolic groups},
  Combinatorial and geometric group theory ({N}ew {Y}ork, 2000/{H}oboken, {NJ},
  2001), Contemp. Math., vol. 296, Amer. Math. Soc., Providence, RI, 2002,
  pp.~39--93. \MR{MR1921706}

\bibitem[KL11]{karlsson_ledrappier_noncommutative}
Anders Karlsson and Fran{\c{c}}ois Ledrappier, \emph{Noncommutative ergodic
  theorems}, Geometry, rigidity, and group actions, Chicago Lectures in Math.,
  Univ. Chicago Press, Chicago, IL, 2011, pp.~396--418. \MR{MR2807838}

\bibitem[Lal14]{lalley}
Steve Lalley, \emph{Random walks on hyperbolic groups},
  http://galton.uchicago.edu/$\sim$lalley/Talks/paris-talkC.pdf, 2014.

\bibitem[Led95]{ledrappier_rigidity}
Fran{\c{c}}ois Ledrappier, \emph{Applications of dynamics to compact manifolds
  of negative curvature}, Proceedings of the {I}nternational {C}ongress of
  {M}athematicians, {V}ol.\ 1, 2 ({Z}\"urich, 1994), Birkh\"auser, Basel, 1995,
  pp.~1195--1202. \MR{MR1404020}

\bibitem[LP07]{leprince}
Vincent Le~Prince, \emph{Dimensional properties of the harmonic measure for a
  random walk on a hyperbolic group}, Trans. Amer. Math. Soc. \textbf{359}
  (2007), 2881--2898 (electronic). \MR{MR2286061}

\bibitem[MM07]{mairesse_matheus}
Jean Mairesse and Fr{\'e}d{\'e}ric Math{\'e}us, \emph{Random walks on free
  products of cyclic groups}, J. Lond. Math. Soc. (2) \textbf{75} (2007),
  47--66. \MR{MR2302729}

\bibitem[Neu54]{neumann_cosets}
Bernhard~H. Neumann, \emph{Groups covered by finitely many cosets}, Publ. Math.
  Debrecen \textbf{3} (1954), 227--242 (1955). \MR{MR0072138}

\bibitem[Pei03]{peigne_finite_BM}
Marc Peign{\'e}, \emph{On the {P}atterson-{S}ullivan measure of some discrete
  group of isometries}, Israel J. Math. \textbf{133} (2003), 77--88.
  \MR{MR1968423}

\bibitem[Rev84]{revuz}
Daniel Revuz, \emph{Markov chains}, second ed., North-Holland Mathematical
  Library, vol.~11, North-Holland Publishing Co., Amsterdam, 1984.
  \MR{MR758799}

\bibitem[Sha98]{sharp_growth}
Richard Sharp, \emph{Relative growth series in some hyperbolic groups}, Math.
  Ann. \textbf{312} (1998), 125--132. \MR{MR1645953}

\bibitem[Swa96]{swarup}
Gadde~A. Swarup, \emph{On the cut point conjecture}, Electron. Res. Announc.
  Amer. Math. Soc. \textbf{2} (1996), 98--100 (electronic). \MR{MR1412948}

\bibitem[Tan14]{tanaka}
Ryokichi Tanaka, \emph{Hausdorff spectrum of harmonic measure}, arXiv:1411.2312
  [math.PR], 2014.

\bibitem[Ver00]{vershik}
Anatoly Vershik, \emph{Dynamic theory of growth in groups: entropy, boundaries,
  examples}, Uspekhi Mat. Nauk \textbf{55} (2000), 59--128. \MR{MR1786730}

\bibitem[VSCC92]{varopoulos_book}
Nicholas~Th. Varopoulos, Laurent Saloff-Coste, and Thierry Coulhon,
  \emph{Analysis and geometry on groups}, Cambridge Tracts in Mathematics, vol.
  100, Cambridge University Press, Cambridge, 1992. \MR{MR1218884}

\bibitem[Woe00]{woess}
Wolfgang Woess, \emph{Random walks on infinite graphs and groups}, Cambridge
  Tracts in Mathematics, vol. 138, Cambridge University Press, Cambridge, 2000.
  \MR{MR1743100}

\end{thebibliography}
\bibliographystyle{amsalpha}
\end{document}